\documentclass[12pt, oneside]{gsm-l}
\usepackage[scaled=.98,space]{erewhon}
\usepackage[erewhon, smallerops, vvarbb]{newtxmath}

\usepackage{chifoot}

\usepackage{nicefrac}
\usepackage{mathtools}
\usepackage{geometry}
\geometry{paperheight=9.in, paperwidth=6.05in}
\geometry{hmargin=.55in, top=.91in, bottom=.59in}
\expandafter\def\expandafter\normalsize\expandafter{%
    \normalsize
    \setlength\abovedisplayskip{4pt}
    \setlength\belowdisplayskip{4pt}
    \setlength\abovedisplayshortskip{4pt}
    \setlength\belowdisplayshortskip{4pt}
}
\setlength{\emergencystretch}{4em}

\usepackage{microtype}
\usepackage{bookmark}
\usepackage[capitalize,nameinlink,noabbrev]{cleveref}
\hypersetup{hidelinks}
\usepackage{enumitem}
\newcommand{\Onebb}{\mathbb 1}
\newcommand{\Cbb}{\mathbb C}
\newcommand{\Fbb}{\mathbb F}
\newcommand{\Nbb}{\mathbb N}
\newcommand{\Qbb}{\mathbb Q}
\newcommand{\Rbb}{\mathbb R}
\newcommand{\Sbb}{\mathbb S}
\newcommand{\Tbb}{\mathbb T}
\newcommand{\Zbb}{\mathbb Z}
\newcommand{\Acal}{\mathcal A}
\newcommand{\Bcal}{\mathcal B}
\newcommand{\Fcal}{\mathcal F}
\newcommand{\Kcal}{\mathcal K}
\newcommand{\Mcal}{\mathcal M}
\newcommand{\Ncal}{\mathcal N}
\newcommand{\Pcal}{\mathcal P}
\newcommand{\Qcal}{\mathcal Q}
\newcommand{\Scal}{\mathcal S}
\newcommand{\Tcal}{\mathcal T}
\newcommand{\Ucal}{\mathcal U}
\newcommand{\Cscr}{\mathscr C}
\newcommand{\Pscr}{\mathscr P}
\newcommand{\cfrak}{{\mathfrak c}}

\renewcommand{\to}{\rightarrow}

\newcommand{\into}{\hookrightarrow}

\newcommand{\normal}{\vartriangleleft}
\newcommand{\dd}{\,\text{d}}
\newcommand{\ddd}{\text{d}}
\newcommand{\Linfty}{L_\infty}
\newcommand{\LinftyS}{L_\infty^*}
\newcommand{\linfty}{\ell_\infty}
\newcommand{\linftyS}{\ell_\infty^*}
\newcommand{\Lone}{L_1}
\newcommand{\Ltwo}{L_2}
\newcommand{\Cp}{\Cscr^+}
\newcommand{\Means}{M}
\newcommand{\LIM}{\text{LIM}}
\newcommand{\TLIM}{\text{TLIM}}
\newcommand{\TRIM}{\text{TRIM}}
\newcommand{\TIM}{\text{TIM}}
\newcommand{\LUC}{\textit{LUC}}
\newcommand{\FC}{\text{FC}}
\newcommand{\VN}{VN(G)}
\newcommand{\AG}{A(G)}
\newcommand{\Ghat}{\widehat{G}}
\newcommand{\cl}{\text{c}\ell}
\DeclareMathOperator{\supp}{supp}

\DeclareMathOperator{\conv}{conv}
\DeclareMathOperator{\Sdiff}{\triangle}
\DeclareMathOperator*{\plim}{\mathit{p}-lim}
\DeclareMathOperator*{\qlim}{\mathit{q}-lim}
\def\mmath#1{\text{\scalebox{1.1}{$#1$}}}
\def\sub#1{_\mmath{#1}}
\newcommand{\ie}{\textit{i.e.}}
\newcommand{\pp}{pp.\ }

\DeclarePairedDelimiter{\card}{\#(}{)}
\DeclarePairedDelimiter{\Haar}{|}{|}
\DeclarePairedDelimiter{\paren}{(}{)}
\DeclarePairedDelimiter{\set}{\{}{\}}

\DeclarePairedDelimiter{\norm}{\|}{\|}
\DeclarePairedDelimiter{\net}{\{}{\}}
\DeclarePairedDelimiter{\seq}{\{}{\}_{n=1}^{\infty}}
\DeclarePairedDelimiter{\abs}{|}{|}
\DeclarePairedDelimiter{\inner}{\langle}{\rangle}

\theoremstyle{definition}
\newtheorem{debug}{NOT TO BE USED}[chapter]
\newtheorem{thm}[debug]{Theorem}
\newtheorem{lem}[debug]{Lemma}
\newtheorem{cor}[debug]{Corollary}

\newtheorem{prop}[debug]{Proposition}
\newtheorem{rem}[debug]{Remark}
\newtheorem{que}[debug]{Question}
\newtheorem{defn}[debug]{Definition}
\newtheorem{point}[debug]{}

\makeatletter

\makeatother

\makeatletter
\def\@tocline#1#2#3#4#5#6#7{\relax
  \ifnum #1>\c@tocdepth 
  \else
    \par \addpenalty\@secpenalty\addvspace{#2}%
        \begingroup \hyphenpenalty\@M
    \@ifempty{#4}{%
      \@tempdima\csname r@tocindent\number#1\endcsname\relax
    }{%
      \@tempdima#4\relax
    }%
    \parindent\z@ \leftskip#3\relax \advance\leftskip\@tempdima\relax
    \rightskip\@pnumwidth plus4em \parfillskip-\@pnumwidth
    #5\leavevmode\hskip-\@tempdima #6\nobreak\relax
    \ifnum #1=0
    \hfil\hbox to\@pnumwidth{}
    \else
    \hfil\hbox to\@pnumwidth{\@tocpagenum{#7}}\fi
    \par
    \nobreak
    \endgroup
  \fi}
\makeatother

\begin{document}
\frontmatter
\thispagestyle{empty}
\begin{center}
\textbf{\huge Topological Invariant Means on Locally Compact Groups}
\end{center}

\vspace*{2pc}

\begin{center}
{\large \textbf{John Hopfensperger}}
\\ {\large May 2021}
\end{center}

\vspace*{2pc}
\begin{center}
{\large Ching Chou}
\\{\large Adviser}
\end{center}

\vspace*{8pc}

\begin{center}
{\large\noindent Dissertation submitted to the Faculty of the Graduate School

of the University at Buffalo, State University of New York,

in partial fulfillment of the requirements for the degree of

Doctor of Philosophy,

Department of Mathematics}
\end{center}

\clearpage
\thispagestyle{empty}
\vspace*{40pc}
\begin{center}
{\footnotesize \textcopyright\ John Hopfensperger 2021}
\end{center}

\clearpage
\thispagestyle{empty}
\vspace*{7pc}
\begin{center}
{\large For my sister.}
\end{center}

\clearpage
\thispagestyle{empty}

\chapter*{Abstract}
\thispagestyle{empty}
Suppose $G$ is an amenable locally compact group.
If $\{F_\gamma\} = \{F_\gamma\}_{\gamma\in\Gamma}$ is a F\o{}lner net for $G$,
associate it with the net $\{\Onebb_{F_\gamma} / |F_\gamma|\} \subset \Lone(G) \subset \LinftyS(G)$.
Thus, every accumulation point of $\{F_\gamma\}$ is a topological left-invariant mean on $G$.
The following are examples of results proved in the present thesis:
\begin{enumerate}
	\item There exists a F\o{}lner net which has as its accumulation points
	a set of $2^{2^{\kappa}}$ distinct topological left-invariant means on $G$,
	where $\kappa$ is the smallest cardinality of a covering of $G$ by compact subsets.

	\item If $G$ is unimodular and $\mu$ is a topological left-invariant mean on $G$,
	there exists a F\o{}lner net which has $\mu$ as its unique accumulation point.

	\item Suppose $L \subset G$ is a lattice subgroup.
	There is a natural bijection of the left-invariant means on $L$
	with the topological left-invariant means on $G$ if and only if $G/L$ is compact.

	\item Every topological left-invariant mean on $G$ is also topological right-invariant if and only if
	$G$ has precompact conjugacy classes.
\end{enumerate}
These results lie at the intersection of functional analysis with general topology.
Problems in this area can often be solved with standard tools when $G$ is $\sigma$-compact or metrizable,
but require more interesting arguments in the general case.
\clearpage

\tableofcontents

\mainmatter
\setcounter{page}{0}
\chapter*{Notation}\label{Notation}
\vspace*{-0.5pc}
{\large \textbf{Spaces}}

\medskip
\begin{center}
$\begin{array}{cl}
\Nbb                    &   \{0, 1, 2, \ldots\}\\
\Qbb                    &   \text{rational numbers}\\
\Cbb					&	\text{complex numbers}\\
\beta X 				&	\text{Stone-\v{C}ech compactification of } X\\
\Pscr(X)                &   \text{power set of } X\\
\linfty(X) 				&	\text{bounded functions on } X\\
C(X) 					&	\text{continuous bounded functions on } X\\
G 						& \text{locally compact Hausdorff group}\\
\LUC(G)					&	\text{left-uniformly continuous bounded functions on } G\\
\Lone(G)				&	\text{Haar integrable functions on } G\\
\Linfty(G)				&	\text{dual of } \Lone(G)\\
P_1(G) 					&   \{f\in \Lone(G) : f\geq 0 \text{ and } \|f\|_1 = 1\}\\
\Means(G) 				&   \{\mu\in\LinftyS(G) : \mu\geq 0 \text{ and } \mu(1) = 1\}\\
\end{array}$
\end{center}

\medskip

{\large \textbf{Subsets of a group}}

\medskip

\begin{center}
$\begin{array}{cl}
\varnothing 				&   \text{empty set}\\
A \setminus B 			& \text{difference of sets}\\
A \Sdiff B				&	\text{symmetric difference of sets}\\
A \Subset B   			&	A \text{ is a finite nonempty subset of } B\\
A^{-1} 					& \{a^{-1} : a\in A\}\\
AB 						& \{ab : a\in A \text{ and } b\in B\}\\
\card{A}				&	\text{cardinality of } A\\
\Haar{A} 				&	\text{Haar measure of } A\\
\Onebb_{A}				&	\text{characteristic function of } A\\
\mu_A = \Onebb_A / \Haar{A}	&   \text{Normalized characteristic function of }A\\
\cl(A) 					&	\text{closure of } A
\end{array}$
\end{center}
\pagebreak
{\large \textbf{Translations}}

\medskip
\begin{center}
$\begin{array}{cl}
l_xf(y) = f(x^{-1}y)	&	[f \text{ defined on a group}]\\
L_xf(y) = f(xy)			&	[f \text{ defined on any semigroup or group}]\\
r_xf(y) = f(yx)			&	[f \text{ defined on any semigroup or group}]\\
\Delta  				&	\text{modular function}\\
f^\dag(x) = f(x^{-1}) \Delta(x^{-1})        &   \text{left-right involution } [f\in\Lone(G)]\\
R_t f = (L_t f^\dag)^\dag		&	[f\in\Lone(G)]\\
\end{array}$
\end{center}



\chapter{The roots of amenability}
In this chapter, we answer some interesting questions about invariant means on $\Nbb$ and $\Tbb$,
which will be generalized in later chapters.

\section*{Invariant means}
\begin{point}
Let $S$ be either a discrete semigroup or a locally compact group.
Given a function $f: S \to \Cbb$, the left translate $L_x f$ is defined by $L_x f(y) = f(xy)$.\footnote{
    This is the contravariant left-translation, satisfying $L_{x} L_{y} = L_{yx}$.
    Once we specialize from semigroups to groups, we will prefer the covariant left-translation
    defined by $l_x f(y) = f(x^{-1}y)$.
}
Suppose $\Bcal$ is a space of bounded functions on $S$,
closed under left translation and containing the constant functions.
A \textit{mean} on $\Bcal$ is a linear functional $\mu$ satisfying any two (and hence all three)
of the following properties:
\begin{enumerate}
	\item $\mu$ is unital, that is $\mu(1) = 1$.
	\item $\mu$ is positive, that is $\mu(f)\geq 0$ for each $f\geq 0$.
	\item $\mu$ has norm one.
\end{enumerate}
If $\mu$ satisfies the following property, it said to be \textit{left-invariant}:
\begin{enumerate}
	\setcounter{enumi}{3}
	\item $\mu(f - L_xf) = 0$ for all $f\in \Bcal$ and $x\in S$.
\end{enumerate}
\end{point}

\begin{point}\label{defn amenable}
A discrete semigroup $S$ is said to be \textit{amenable} if $\linfty(S)$ admits a left-invariant mean.
A locally compact group $G$ is said to be amenable if $\Linfty(G)$ admits a left-invariant mean.
A locally compact group $G$ is said to be \textit{amenable-as-discrete} if $\linfty(G)$ admits a left-invariant mean.
\end{point}

\begin{point}
Any compact group $G$ is amenable, as the (normalized) Haar integral is an invariant mean on $\Linfty(G)$.
For finite groups this is trivial;
mathematicians have known for thousands of years that the arithmetic mean of finitely many values is invariant under permutations.
The first infinite group shown to be amenable was the circle group $\Tbb$,
whose Haar integral was constructed by Lebesgue \cite{Lebesgue1904}.
\end{point}

\begin{prop}
There is no countably additive left-invariant mean on $\linfty(\Tbb)$.\footnote{
	Originally due to Vitali \cite{Vitali1905}, this theorem remains true
	when $\Tbb$ is replaced by any nondiscrete locally compact group, see \cite[16.13]{HR1}.
}
\end{prop}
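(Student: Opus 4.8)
The plan is to reinterpret a countably additive mean as a rotation-invariant countably additive probability measure defined on \emph{all} subsets of $\Tbb$, and then to exhibit a set whose translates contradict this. Given such a mean $\mu$, define a set function $m$ on $\Pscr(\Tbb)$ by $m(A) = \mu(\Onebb_A)$. Positivity and unitality give $0 \le m(A) \le 1$ and $m(\Tbb) = 1$; finite additivity follows from linearity together with $\Onebb_{A\sqcup B} = \Onebb_A + \Onebb_B$; and countable additivity of $\mu$ (continuity along the increasing sequence of partial sums $\sum_{k\le n}\Onebb_{A_k} \uparrow \Onebb_{\bigsqcup_k A_k}$) upgrades this to $m\paren{\bigsqcup_k A_k} = \sum_k m(A_k)$ for pairwise disjoint $A_k$. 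Left-invariance of $\mu$, written through $L_x\Onebb_A = \Onebb_{x^{-1}A}$, yields $m(xA)=m(A)$ for every $x\in\Tbb$.

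Next I would build a Vitali set. Let $D\subset\Tbb$ be the (countably infinite) subgroup of roots of unity, equivalently the image of $\Qbb/\Zbb$ under the identification $\Tbb \cong \Rbb/\Zbb$. The cosets $\set{xD : x\in\Tbb}$ partition $\Tbb$, so by the Axiom of Choice I may select one representative from each coset to form a set $V$. A short check---two points of $V$ lying in a common coset of $D$ must coincide---shows that the translates $\set{dV : d\in D}$ are pairwise disjoint and cover $\Tbb$, giving the decomposition $\Tbb = \bigsqcup_{d\in D} dV$ into countably many disjoint rotates of a single set.

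Finally I combine the two ingredients. By countable additivity and then invariance,
\[
1 = m(\Tbb) = \sum_{d\in D} m(dV) = \sum_{d\in D} m(V).
\]
The right-hand side is a sum of countably infinitely many copies of the single value $m(V)$, hence equals $0$ when $m(V)=0$ and $+\infty$ when $m(V)>0$; in neither case can it equal $1$. This contradiction shows no such $\mu$ exists. The argument is short and presents no serious obstacle; the only points demanding care are the bookkeeping that turns countable additivity of the functional into countable additivity of the set function $m$, and the verification that the chosen representatives really do tile $\Tbb$ under translation by $D$. It is worth flagging that the construction of $V$ is irreducibly non-constructive: the Axiom of Choice is genuinely needed here, since the conclusion fails in models of $\mathrm{ZF}$ in which every subset of $\Tbb$ is measurable.
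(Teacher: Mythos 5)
Your proof is correct and follows essentially the same route as the paper's: the subgroup $D$ of roots of unity is exactly the paper's $H = e^{2\pi i \Qbb}$, your transversal $V$ is its set $E$, and the contradiction $1 = \sum_{d\in D} m(V)$ is the same Vitali-type computation, merely phrased via the induced set function $m$ rather than directly on the functional. No gaps.
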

\begin{proof}
Let $H = e^{2\pi i \Qbb}$ be the subgroup of rational points in $\Tbb$.
Let $E\subset \Tbb$ be a transversal for $\Tbb / H$, \ie\ a selection of one point from each coset.
If there exists a countably additive left-invariant mean $\mu\in\linftyS(\Tbb)$, we obtain the following equality,
\[1 = \mu(1)
= \mu\Big(\sum_{h\in H} l_h \Onebb_{E}\Big)
= \sum_{h\in H} l_h \mu(\Onebb_E) = \sum_{n=1}^\infty \mu(\Onebb_E),\]
which cannot be satisfied by any value of $\mu(\Onebb_E)$.
\end{proof}

\begin{point}
We have enough background to ask a few interesting questions.
\begin{enumerate}
	\item Can $\Linfty(G)$ admit an invariant mean if $G$ is not compact?
	\item Does $\linfty(\Tbb)$ admit \textit{any} invariant mean?
	\\ In other words, is $\Tbb$ amenable-as-discrete?
	\item Is the Haar integral the unique invariant mean on $\Linfty(\Tbb)$?
\end{enumerate}
We could dispatch all three questions with nothing more than the Hahn-Banach theorem.
However, we will acquaint ourselves with the use of F\o{}lner sequences and ultrafilters in the following introduction.
\end{point}

\section*{\texorpdfstring
	{The Stone-\v{C}ech compactification and ultrafilters}
	{The Stone-Cech compactification and ultrafilters}
}\label{Ultrafilter section}

\begin{point}
Let $A$ be a commutative Banach algebra with unit $1$.
The \textit{Gelfand spectrum} of $A$ is the set $\Acal$ of nonzero multiplicative functionals,
\[\Acal = \{h\in A^* : h(1) = 1, \text{ and } h(xy) = h(x)h(y) \text{ for all }x,y\in A\},\]
which is $w^*$-closed, hence compact by the Banach-Alaoglu theorem.

The \textit{Gelfand transform} of $A$ is the homomorphism
\[\widehat{\ }: A \to C(\Acal) \text{ defined by } x\mapsto \widehat{x},
	\text{ where } \widehat{x}(h) = h(x),\]
which trivially satisfies $\|\widehat{x}\| \leq \|x\|$.
\end{point}

\begin{prop}[{\cite[Theorem 1.20]{Folland}}]\label{Gelfand transform for Cstar}
When $A$ is a commutative $C^*$-algebra, the Gelfand transform satisfies $\widehat{x^*} = \overline{\widehat{x}}$.
The range $\widehat{A}$ is therefore dense by the Stone-Weierstrass theorem.
Furthermore, the spectral radius coincides with the norm for $C^*$-algebras,
so the Gelfand transform is an isometric $*$-isomorphism.
\end{prop}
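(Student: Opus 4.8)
The plan is to establish the three assertions in sequence, with the crux being the identity $\widehat{x^*} = \overline{\widehat x}$: once the Gelfand transform is known to respect the involution, the density and isometry statements follow from softer general principles. The whole of the first assertion reduces to a single claim, namely that \emph{if $a = a^*$ is self-adjoint then $h(a) \in \Rbb$ for every character $h \in \Acal$}, equivalently that $\widehat a$ is real-valued. Granting this, I would decompose an arbitrary $x$ into self-adjoint parts $a = \tfrac12(x + x^*)$ and $b = \tfrac1{2i}(x - x^*)$, so that $x = a + ib$ and $x^* = a - ib$; linearity of each $h$ together with the reality of $\widehat a$ and $\widehat b$ then gives $\widehat{x^*} = \widehat a - i\widehat b = \overline{\widehat a + i \widehat b} = \overline{\widehat x}$.

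To prove the reality claim---which I expect to be the main obstacle, and the only place where the $C^*$ axiom enters essentially---I would fix $a = a^*$ and a character $h$, write $h(a) = \alpha + i\beta$ with $\alpha, \beta \in \Rbb$, and test against the perturbations $a + it$ for real $t$. On one hand $\abs{h(a + it)}^2 = \alpha^2 + (\beta + t)^2$; on the other hand each character satisfies $\abs{h(y)} \le \norm y$ (as noted above), so $\abs{h(a+it)}^2 \le \norm{a+it}^2$, and the $C^*$ identity computes $\norm{a+it}^2 = \norm{(a-it)(a+it)} = \norm{a^2 + t^2} \le \norm a^2 + t^2$, using $a^* = a$. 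Combining these bounds yields $\alpha^2 + \beta^2 + 2\beta t \le \norm a^2$ for every real $t$, which forces $\beta = 0$ upon letting $t \to \pm\infty$. Hence $h(a) = \alpha$ is real.

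For the second assertion, observe that $\widehat A$ is a subalgebra of $C(\Acal)$ containing the constants (since $\widehat 1 = 1$) and separating the points of $\Acal$ (if $h_1 \ne h_2$ then $h_1(x) \ne h_2(x)$ for some $x$, \ie\ $\widehat x(h_1) \ne \widehat x(h_2)$); the first assertion shows it is moreover closed under complex conjugation, so the Stone-Weierstrass theorem gives that $\widehat A$ is dense in $C(\Acal)$. For the third assertion I would invoke the standard identification of the spectrum of $x$ with the range $\set{h(x) : h \in \Acal}$ of $\widehat x$, so that the spectral radius equals $\norm{\widehat x}_\infty$. Applied to the self-adjoint element $x^* x$, together with the $C^*$ identity, this gives $\norm x^2 = \norm{x^* x} = \norm{\widehat{x^* x}}_\infty = \norm{\,\abs{\widehat x}^2}_\infty = \norm{\widehat x}_\infty^2$, so the transform is isometric. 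An isometry out of the complete algebra $A$ has closed range, and a closed dense subspace of $C(\Acal)$ is all of it; combined with the $*$-homomorphism property, this exhibits the Gelfand transform as an isometric $*$-isomorphism $A \cong C(\Acal)$.
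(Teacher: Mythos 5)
Your proof is correct and follows exactly the route the paper sketches (and cites from Folland): the Arens perturbation argument $a+it$ to show characters are real on self-adjoint elements, hence $\widehat{x^*}=\overline{\widehat{x}}$; Stone--Weierstrass for density; and spectral radius equals norm applied to $x^*x$ for the isometry, with completeness forcing surjectivity. No gaps worth noting.
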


\begin{point}
If $X$ is a completely regular space, let $\beta X$ be the Gelfand spectrum of $C(X)$.
$X$ maps homeomorphically onto a dense subset of $\beta X$ via $x\mapsto \delta_x$.
The Gelfand transform $C(X) \to C(\beta X)$ is an isomorphism by \cref{Gelfand transform for Cstar},
and it satisfies $\widehat{f}(\delta_x) = f(x)$.
We associate $X$ with its image in $\beta X$,
so $\widehat{f}$ becomes the unique continuous extension of $f$ from $X$ to $\beta X$.

The fact that each $f\in C(X)$ has a unique continuous extension to $\beta X$
is enough to conclude that any continuous map of $X$ into a compact Hausdorff space
has a unique continuous extension to $\beta X$, see \cite[Theorem 10.7]{GillmanJerison}.
The name for $\beta X$ is the \textit{Stone-\v{C}ech compactification of $X$.}
\end{point}

\begin{point}
Suppose $D$ is a discrete topological space, so $C(D) = \linfty(D)$.
Consider the map from $\beta D$ to $\Pscr(\Pscr(D))$ given by
\[h \mapsto p_h = \{E \subset D : h(\Onebb_E) = 1\}.\]
For each $h\in\beta D$ and $E\subset D$, we have $h(\Onebb_E) = h(\Onebb_E^2) = h(\Onebb_E)^2$,
hence $h(\Onebb_E)$ is either $0$ or $1$.
Since each functional on $\linfty(D)$ is determined by its restriction to the characteristic functions,
we conclude $h\mapsto p_h$ is one-to-one.

An \textit{ultrafilter on D} is a maximal family $p\in\Pscr(\Pscr(D))$
closed under finite intersections but not containing $\varnothing$.
It's not hard to prove $h\mapsto p_h$ is a bijection from $\beta D$ onto the set of ultrafilters on $D$.
We associate $\beta D$ with the set of ultrafilters on $D$ via this bijection.
\end{point}

\begin{point}\label{defn of tilde f}
Suppose $\{x_\gamma\}_{\gamma\in\Gamma}$ is an indexed subset of a compact Hausdorff space $X$.
Regarding $\Gamma$ as a discrete space,
let $f: \Gamma \to X$ be the continuous function $\gamma \mapsto x_\gamma$
with continuous extension $\tilde{f}: \beta \Gamma \to X$.
Given $p\in\beta\Gamma$, $\tilde{f}(p)$ is the unique point in the intersection $\bigcap_{E\in p} \cl(f[E])$.
The standard notation for this point is $\plim_\gamma x_\gamma$.
\end{point}

\begin{point}
A \textit{directed set} is a partially ordered set in which any two elements have a common upper bound.
A \textit{net} is a family $\{x_\gamma\} = \{x_\gamma\}_{\gamma\in \Gamma}$ indexed by a directed set.
A \textit{tail of $\{x_\gamma\}$} is a set of the form $\{x_\beta\}_{\beta \succ \alpha}$ where $\alpha\in\Gamma$.
Suppose $\{x_\gamma\} \subset X$ is a net in a topological space.
A point $y\in X$ is called a \textit{accumulation point of $\{x_\gamma\}$}
if every neighborhood of $y$ intersects every tail of $\{x_\gamma\}$.

In other words, the set of accumulation points of $\{x_\gamma\}$ is
\[\textstyle A = \bigcap_{\alpha \in \Gamma} \cl(\{ x_\gamma : \gamma \succ \alpha \}).\]
If this intersection is the singleton $\{y\}$, we say $\{x_\gamma\}$ converges and write $x_\gamma \to y$.
If $\{x_\gamma\}$ lies in a compact space $X$,
then $A$ is a nested intersection of nonempty compact sets, hence $A\neq\varnothing$.
\end{point}

\begin{point}
For example, let $\Rbb^{\Rbb}$ be
the space of all real functions with the topology of pointwise convergence.
Let $\Pscr_{f}(\Rbb)$ be the finite subsets of $\Rbb$, ordered by inclusion.
Then $\Onebb_{\Rbb}$ is the unique accumulation point of the net $\{\Onebb_{F}\}_{F \in \Pscr_{f}(\Rbb)}$.
\end{point}

\begin{point}
Suppose $\{x_\gamma\}_{\gamma\in\Gamma}$ is a net in a compact Hausdorff space $X$.
Let $\Gamma^* \subset \beta \Gamma$ be the set of ultrafilters
that contain $T_\alpha = \{\beta \in \Gamma : \beta \succ \alpha\}$ for each $\alpha\in\Gamma$.
Then $p\mapsto \plim_\gamma x_\gamma$ maps $\Gamma^*$ onto the accumulation points of $\{x_\gamma\}$.

For example, $\Nbb^*$ is precisely $\beta \Nbb \setminus \Nbb$.
If $\{x_n\}$ is a bounded real sequence, then $p\mapsto \plim_n x_n$ maps $\Nbb^*$
onto the subsequential limit points of $\{x_n\}$.
\end{point}

\begin{prop} \label{Lemma Cardinality of Lambda*}
Suppose $\Gamma$ is a directed set of cardinality $\kappa$,
such that every tail $T_\alpha = \{\beta \in\Gamma : \beta \succ \alpha\}$ has cardinality $\kappa$.
Then $\Gamma^*$ has cardinality $2^{2^\kappa}$.
\end{prop}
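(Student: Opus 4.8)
The plan is to establish the two inequalities separately. The bound $\card{\Gamma^*} \le 2^{2^\kappa}$ is immediate: every ultrafilter on $\Gamma$ is a subset of $\Pscr(\Gamma)$, so $\beta\Gamma$, and hence its subset $\Gamma^*$, injects into $\Pscr(\Pscr(\Gamma))$, a set of cardinality $2^{2^\kappa}$. All the work is in the reverse inequality, and the strategy I would use is the classical device of an independent family, adapted to the filter $\mathcal F$ generated by the tails $T_\alpha$ (the directedness of $\Gamma$ makes $\set{T_\alpha}$ a filter base, and $\Gamma^*$ is exactly the set of ultrafilters extending $\mathcal F$). If I can produce a family $\set{A_X}$ indexed by $X \subseteq \Gamma$, of size $2^\kappa$, that is \emph{independent modulo $\mathcal F$}, meaning every finite Boolean cell $\bigcap_{i} A_{X_i} \cap \bigcap_j (\Gamma \setminus A_{Y_j})$ over distinct indices meets every tail, then each of the $2^{2^\kappa}$ functions $\phi \colon \Pscr(\Gamma) \to \{0,1\}$ determines, via the finite intersection property, an ultrafilter $p_\phi \supseteq \mathcal F$ with $A_X \in p_\phi \iff \phi(X) = 1$. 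Distinct $\phi$ force distinct $p_\phi$, giving $2^{2^\kappa}$ points of $\Gamma^*$.

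To build the family I would start from the standard Hausdorff construction on the auxiliary set $D = \set{(s,S) : s \Subset \Gamma,\ S \subseteq \Pscr(s)}$, which has cardinality $\kappa$; setting $A_X^D = \set{(s,S) \in D : s \cap X \in S}$ gives a family indexed by $X \subseteq \Gamma$ (hence of size $2^\kappa$) whose every finite cell is nonempty, by the usual separation-of-finite-traces argument. The point is to transport this to $\Gamma$ in a way that respects $\mathcal F$. For this I would construct a map $g \colon \Gamma \to D$ whose restriction to \emph{every} tail is surjective, and then set $A_X = g^{-1}(A_X^D)$. The map $g$ is built by transfinite recursion of length $\kappa$: enumerating the $\kappa$ pairs $(d,\alpha) \in D \times \Gamma$, at each stage I pick a fresh point of the tail $T_\alpha$, which is possible because $\card{T_\alpha} = \kappa$ exceeds the fewer than $\kappa$ points already spent, and send it to $d$; off the chosen points $g$ is arbitrary.

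Independence modulo $\mathcal F$ is then automatic: a cell of the $A_X$ is the $g$-preimage of the corresponding nonempty cell of the $A_X^D$, and since $g$ maps each tail onto all of $D$, that preimage meets every tail. The remaining steps, namely checking the finite intersection property of $\mathcal F$ together with a chosen cell (here the directedness of $\set{T_\alpha}$ collapses finitely many tails into one), extending to an ultrafilter, and verifying that $X \mapsto (A_X \in p_\phi)$ separates the $p_\phi$, are routine. I expect the one genuinely delicate point to be the reconciliation of combinatorial independence with membership in the filter: the hypothesis that every tail has full cardinality $\kappa$ is exactly what the recursion for $g$ consumes, and it is what prevents a cell from accidentally avoiding a tail. (The hypotheses force $\kappa$ infinite, since a finite directed set has a greatest element, whose tail cannot have full cardinality.)
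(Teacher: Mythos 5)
Your argument is correct, but it takes a different route from the paper: the paper disposes of the lower bound in one line by citing \cite[Theorem 3.62]{HindmanStraussBook}, which states that for an infinite set $\Gamma$ of cardinality $\kappa$ and a family $\Scal$ of at most $\kappa$ subsets all of whose finite intersections have cardinality $\kappa$, there are $2^{2^\kappa}$ ($\kappa$-uniform) ultrafilters containing $\Scal$ --- applied with $\Scal$ the set of tails. What you have written is, in essence, a self-contained proof of that cited theorem: the Hausdorff independent family on the auxiliary set $D$ of pairs $(s,S)$ with $s\Subset\Gamma$ and $S\subseteq\Pscr(s)$, pulled back along a map $g\colon\Gamma\to D$ that is surjective on every tail, so that every finite Boolean cell of the family $\{A_X\}$ meets every $T_\alpha$ and the $2^{2^\kappa}$ choices of which $A_X$ to include each extend, together with the tail filter, to distinct ultrafilters in $\Gamma^*$. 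All the steps check out: $\#(D)=\kappa$, the separation-of-finite-traces argument for nonemptiness of the cells of $\{A_X^D\}$, the transfinite recursion building $g$ (which is exactly where the hypothesis $\#(T_\alpha)=\kappa$ is consumed, as you note), and the passage from cells-meeting-tails to the finite intersection property via directedness. The trade-off is the usual one: the citation is shorter and gives the slightly stronger $\kappa$-uniformity statement for free, while your version makes visible precisely how the tail-cardinality hypothesis enters. One pedantic caveat: the statement as written admits the degenerate case $\kappa=1$ (a singleton $\Gamma$ with non-strict $\succ$), where the conclusion fails; both your proof and the cited theorem tacitly assume $\kappa$ is infinite, which is the only case the paper ever uses.
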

\begin{proof}
This follows immediately from \cite[Theorem 3.62]{HindmanStraussBook}, which actually says something a bit stronger:
Let $\Gamma$ be an infinite set with cardinality $\kappa$, 
and let $\Scal$ be a collection of at most $\kappa$ subsets of $\Gamma$
such that $\card{\bigcap F} = \kappa$ for any finite $F \subset \Scal$.
Then there are $2^{2^\kappa}$ $\kappa$-uniform ultrafilters containing $\Scal$.\footnote{
	An ultrafilter $p$ is said to be $\kappa$-uniform if $\card{E} \geq \kappa$ for each $E\in p$.
}
\end{proof}

\section*{Generalized limits and Banach limits}

\begin{point}
Let $\linfty$ be the space of bounded sequences.
A \textit{generalized limit} is a linear functional $\mu$ such that:
\begin{enumerate}
	\item If $\lim_n x_n$ exists, then $\mu(x) = \lim_n x_n$.
	\item For all $x\in\linfty$, $\liminf_n x_n \leq \mu(x) \leq \limsup_n x_n$.
\end{enumerate}
It follows that each generalized limit lies in $M(\Nbb)$, the set of means on $\linfty$.
Notice that $M(\Nbb)$ is a closed subset of the the unit shell of $\linftyS$,
which is compact by the Banach-Alaoglu theorem.
\end{point}

\begin{prop}
The set of convergent sequences is the largest set on which all generalized limits agree.
\end{prop}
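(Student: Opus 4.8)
The plan is to identify the phrase ``largest set on which all generalized limits agree'' with the concrete set $S = \{x\in\linfty : \mu(x) = \nu(x) \text{ for all generalized limits } \mu,\nu\}$, and then to prove that $S$ coincides with the space $c$ of convergent sequences by a two-sided inclusion. The inclusion $c\subseteq S$ is immediate from property (1): if $\lim_n x_n$ exists then every generalized limit $\mu$ satisfies $\mu(x)=\lim_n x_n$, so all generalized limits agree on $c$. The content of the proposition is the reverse inclusion $S\subseteq c$, which I would establish by contraposition: given a bounded sequence that fails to converge, I must produce two generalized limits taking different values on it.

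The tool for this is the family of $p$-limits along free ultrafilters. First I would record that for each $p\in\Nbb^*$ the functional $x\mapsto\plim_n x_n$ is itself a generalized limit, not merely a mean. Linearity is clear, and both defining properties follow from the fact that every cofinite subset of $\Nbb$ belongs to $p$. Indeed, for each $\epsilon>0$ the sets $\{n : x_n < \limsup_n x_n + \epsilon\}$ and $\{n : x_n > \liminf_n x_n - \epsilon\}$ are cofinite, hence lie in $p$, which forces $\liminf_n x_n \leq \plim_n x_n \leq \limsup_n x_n$; in particular $\plim_n x_n = \lim_n x_n$ whenever the ordinary limit exists. Thus each element of $\Nbb^*$ furnishes a generalized limit, and the supply is rich.

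Now suppose $x$ is bounded but not convergent, so that $a := \liminf_n x_n < \limsup_n x_n =: b$. Both $a$ and $b$ are subsequential limit points of $\{x_n\}$, so I would invoke the earlier observation that $p\mapsto\plim_n x_n$ maps $\Nbb^*$ \emph{onto} the set of subsequential limit points of $\{x_n\}$ to obtain ultrafilters $p,q\in\Nbb^*$ with $\plim_n x_n = a$ along $p$ and $\qlim_n x_n = b$ along $q$. The resulting $p$-limit and $q$-limit are generalized limits that disagree at $x$, so $x\notin S$. Combining the two inclusions yields $S = c$; since $S$ is by construction the maximal set on which all generalized limits agree, this exhibits $c$ as the largest such set.

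I expect no serious obstacle, as the difficulty has been front-loaded into the ultrafilter machinery already developed. The only points requiring a moment's care are the verification that $p$-limits are genuine generalized limits rather than arbitrary means — that is, the two inequalities above — and the bookkeeping behind the surjectivity statement, namely that a free ultrafilter containing a subsequence converging to a prescribed subsequential limit realizes that limit as its $p$-limit. Both of these reduce to the single fact that cofinite sets sit inside every element of $\Nbb^*$, so the argument stays short.
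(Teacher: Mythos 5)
Your proof is correct and follows essentially the same route as the paper: the paper realizes two distinct subsequential limits of a non-convergent $x$ as values of generalized limits by taking accumulation points of the point-evaluation functionals $\widehat{m_k}$, $\widehat{n_k}$ along two subsequences, which is the same compactness/ultrafilter mechanism as your explicit $p$-limits over $\Nbb^*$. Your version is slightly more careful in that you actually verify the $p$-limits satisfy the two defining inequalities of a generalized limit (the paper calls this ``evident''), but the underlying argument is identical.
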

\begin{proof}
For each $n\in\Nbb$, define $\widehat{n} \in \linftyS$ by $\widehat{n}(x) = x_n$.
Notice $\{\widehat{n} : n\in \Nbb\}$ lies in $M(\Nbb)$.
If $x\in\linfty$ is not convergent, 
suppose $a = \lim_k x_{m_k}$ and $b= \lim_k x_{n_k}$ are distinct subsequential limits.
Let $\mu$ be an accumulation point of $\{\widehat{m_k}\}_{k\in\Nbb}$,
and $\nu$ an accumulation point of $\{\widehat{n_k}\}_{k\in\Nbb}$.
Now $\mu,\nu$ are evidently both generalized limits, but $\mu(x) = a \neq b = \nu(x)$.
\end{proof}

\begin{point}
Regarding $\beta \Nbb$ as means on $\linfty$, we see the set of nonprincipal ultrafilters
$\Nbb^*\subset \beta \Nbb$ consists of generalized limits.
By \cref{Lemma Cardinality of Lambda*}, $\card{\Nbb^*} = 2^{\cfrak}$.
For comparison, $\card{\linfty} = \Nbb^{\cfrak} = 2^\cfrak$,
hence $\card{\linftyS} = (2^\cfrak)^\cfrak = 2^\cfrak$.
In other words, there are as many generalized limits on $\linfty$ as there are functionals!
\end{point}

\begin{point}
Consider the periodic sequence $(0,1,0,1,\ldots)$.
We would intuitively like the limit of this sequence to be
$\nicefrac{1}{2}$, whereas
the limit along any ultrafilter will be either $0$ or $1$.
For this reason alone, ultrafilters are not the most satisfying generalization of the limit concept.

On the other hand, suppose $\mu\in\linftyS$ is an invariant mean and $x$ is a periodic sequence.
Then $\mu(x)$ is simply the average of $x$ over its period.
Perhaps this felicity is why Banach chose to exhibit invariant means
as his examples of generalized limits in \cite[II \S 3]{Banach32}.
Because of their appearance in this seminal text,
invariant means on $\linfty$ are also known as \textit{Banach limits}.
\end{point}

\begin{point}\label{H construction of banach limit}
\textbf{(First construction of a Banach limit.)}

Let $H\subset \linfty$ be the closed subspace generated by
\[ \{x - L_m x : x\in\linfty,\ m\in\Nbb \}. \]
A mean $\mu\in\linftyS$ is left-invariant if and only if it vanishes on $H$.
Assuming $1$ lies outside $H$,
we can define a functional $p$ on $H + \Cbb$ by
\[p(h+\lambda) = \lambda, \text{ where } h\in H \text{ and } \lambda\in\Cbb.\]
Any norm-one extension of $p$ to the domain $\linfty$ is an invariant mean,
and there exist such extensions by the Hahn-Banach theorem.

To prove $1$ lies outside $H$, it suffices to show
\[ \|1 - (x - L_m x) \|_\infty \geq 1 \text{ for each }x\in\linfty \text{ and }m\in\Nbb.\]
If this is not the case, then the subsequence $(x_0, x_{m}, x_{2m}, \ldots)$ increases without bound, a contradiction.
\qed
\end{point}

\begin{point}\label{Folner construction of Banach limit}
\textbf{(Second construction of a Banach limit.)}

For each $n\geq 0$, define $F_n = \{0, \ldots, n\}$.
The sequence $\{F_n\}$ is called a \textit{F\o{}lner sequence} for $\Nbb$, since it satisfies
\[\textstyle\lim_n\ {\card{F_n \Sdiff m+ F_n}}/{\card{F_n}} = 0 \text{ for each } m\in\Nbb.\]
To each set $F_n$ corresponds $\mu_n\in\linftyS$ defined by
\[ \mu_n(x) = \frac{1}{\card{F_n}} \sum_{k\in F_n} x_k
= \text{ the arithmetic mean of } x \text{ over }F_n.\]
Given $x\in\linfty$ and $m\in \Nbb$, we have
\[ \mu_n(x - L_m x)
    = \frac{1}{\card{F_n}} \sum_{k\in F_n} x_k
    - \frac{1}{\card{F_n}} \sum_{k\in m + F_n} x_k\]
which yields
\begin{equation}\label{Folner equation for N}
|\mu_n(x - L_m x)| \leq \|x\|_\infty \cdot \card{F_n \Sdiff m+F_n} / \card{F_n}.
\end{equation}
Suppose $\mu\in\linftyS$ is any accumulation point of $\{\mu_n : n\in\Nbb\}$.
Given $(x - L_mx)$, there exists an increasing sequence $\{n_k\}$ such that
\[\textstyle \mu(x - L_m x) = \lim_k \mu_{n_k}(x - L_m x) = 0.\]
Therefore, $\mu$ is an invariant mean.
\qed
\end{point}

\begin{point}
We know there are $2^\cfrak$ generalized limits.
Could there possibly be so many Banach limits?
This question was first answered affirmatively by Chou \cite{Chou69},
although his proof was rather different than the one that follows.
\end{point}

\begin{prop}
There are $2^\cfrak$ invariant means on $\linfty$.
\end{prop}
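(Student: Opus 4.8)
The plan is to establish the two inequalities separately. The upper bound is immediate: every Banach limit belongs to $\Means(\Nbb) \subset \linftyS$, and we have already observed $\card{\linftyS} = 2^{\cfrak}$, so there are at most $2^{\cfrak}$ invariant means. For the lower bound I would exhibit an injection from $\Nbb^*$, which has cardinality $2^\cfrak$ by \cref{Lemma Cardinality of Lambda*}, into the set of Banach limits.

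The construction I would use is a \emph{block} Følner sequence rather than the intervals $\{0,\dots,n\}$ of the second construction of a Banach limit (\ref{Folner construction of Banach limit}). Partition $\Nbb$ into consecutive finite intervals $I_1, I_2, \dots$ with $\card{I_n}\to\infty$. Then $\card{I_n \Sdiff m + I_n} = 2m$ once $\card{I_n} > m$, so $\card{I_n \Sdiff m + I_n}/\card{I_n}\to 0$ and $\{I_n\}$ is again a Følner sequence; the bound \eqref{Folner equation for N} then shows every accumulation point of $\{\mu_{I_n}\}$ is an invariant mean. Regarding the $\mu_{I_n}$ as a net indexed by $\Nbb$, the map $p \mapsto \mu_p := \plim_n \mu_{I_n}$ carries $\Nbb^*$ onto the accumulation points, each of which is therefore a Banach limit.

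It remains to see that $p\mapsto \mu_p$ is injective, and this is where the block structure does the work. For $S\subseteq\Nbb$ set $A_S = \bigcup_{n\in S} I_n$. Because the $I_n$ are disjoint, $\mu_{I_n}(\Onebb_{A_S}) = \card{I_n\cap A_S}/\card{I_n} = \Onebb_S(n)$, so that $\mu_p(\Onebb_{A_S}) = \plim_n \Onebb_S(n)$ equals $1$ if $S\in p$ and $0$ otherwise. Thus $\mu_p$ literally records which sets belong to $p$: given distinct ultrafilters $p\neq q$, choose $S\in p\setminus q$ and observe $\mu_p(\Onebb_{A_S}) = 1 \ne 0 = \mu_q(\Onebb_{A_S})$. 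Hence the $2^\cfrak$ ultrafilters of $\Nbb^*$ yield $2^\cfrak$ distinct Banach limits, matching the upper bound.

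The one genuine subtlety---and the reason the second construction cannot be used verbatim---is that for the interval sets $F_n = \{0,\dots,n\}$ the test values $\mu_{F_n}(\Onebb_A) = \card{A\cap F_n}/\card{F_n}$ vary by at most $1/\card{F_n}$ from one index to the next, so they cannot separate ultrafilters differing only on a sparse set. Passing to blocks removes this slow-variation obstruction: the values $\mu_{I_n}(\Onebb_{A_S})$ may jump freely between $0$ and $1$, which is exactly what lets a single family of test functions detect the entire ultrafilter. Identifying and circumventing this obstruction is the main point of the argument; everything else is bookkeeping.
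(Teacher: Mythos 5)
Your proposal is correct and is essentially the paper's own argument: the paper takes exactly such a block F\o{}lner sequence ($F_n = \{C_n, \ldots, C_n+n\}$ with $C_n = \sum_{k=0}^n k$, i.e.\ consecutive disjoint intervals of increasing length) and distinguishes ultrafilters $p \neq q$ via the characteristic function of $\bigcup_{n\in E} F_n$ for $E \in p \setminus q$, just as you do with $\Onebb_{A_S}$. Your closing remark about why the nested intervals $\{0,\dots,n\}$ fail to separate ultrafilters is a nice observation the paper leaves implicit, but the substance of the proof is the same.
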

\begin{proof}
Let $C_n =\sum_{k=0}^n k$.
Define a F\o{}lner sequence $\{F_n\}$ by
\[F_n = \{C_n, C_n + 1, \ldots, C_n + n\} \text{ for } n\geq 0.\]
As in \cref{Folner construction of Banach limit}, define $\mu_n\in\linftyS$
to be the arithmetic mean of $x$ over $F_n$.
By \cref{Folner equation for N}, the map $\beta \Nbb \to \linftyS$ given by
$p \mapsto \plim_n \mu_n$ sends nonprincipal ultrafilters to invariant means.
It suffices to show this map is injective.

Suppose $p$ and $q$ are distinct ultrafilters on $\Nbb$, say $E\in p$ but $E^c \in q$.
Notice $A = \bigcup_{n\in E} F_n$ is disjoint from $B = \bigcup_{n\in E^c} F_n$.
Since $\plim_n \mu_n(\Onebb_A) = 1$, we conclude $\plim_n \mu_n(\Onebb_B) = 0$,
whereas $\qlim_n \mu_n(\Onebb_B) = 1.$
\end{proof}

\begin{point}
Since the previous argument was essentially about sets,
it would have been natural to write $\mu(A)$ in place of $\mu(\Onebb_A)$.
Rather than regard this as a mere shorthand,
we associate means and measures as follows.
\begin{itemize}[left=0pt .. \parindent]
	\item A mean $\mu\in\linftyS(G)$ induces a finitely additive probability measure $m$ on $G$,
	via $m(E) = \mu(\Onebb_E)$.
	\item The subspace $H\subset \linfty(G)$ of simple functions is dense.
	A finitely additive probability measure $m$ induces a mean $\mu$ on $H$ via
	\[\mu(\lambda_1 \Onebb_{E_1} + \cdots + \lambda_n \Onebb_{E_n}) = \lambda_1 m(E_1) + \cdots \lambda_n m(E_n),\]
	which extends uniquely to a mean on $\linfty(G)$.
\end{itemize}
\end{point}

\section*{Abelian groups, free groups, and (discrete) F\o{}lner nets}
A pair $(K,\epsilon)$ always denotes $K\Subset G$ and $\epsilon > 0$,
where $\Subset$ is the notation for ``finite nonempty subset.''

\begin{point}
A set $F\Subset G$ is said to be \textit{$(K, \epsilon)$-invariant} if
\[\card{F \Sdiff kF} / \card{F} < \epsilon \text{ for each }k\in K.\]
$G$ is said to satisfy the \textit{discrete F\o{}lner condition} if
there exists a $(K,\epsilon)$-invariant set $F$ for each pair $(K,\epsilon)$.
A net $\{F_\lambda\}$ of nonempty finite subsets of $G$ is called a \textit{discrete F\o{}lner net} if
\[\textstyle \lim_\lambda\ \card{F_\lambda \Sdiff x F_\lambda} / \card{F_\lambda} = 0 \text{ for each }x \in G.\]
\end{point}

\begin{prop}
$(\text{1}) \iff (\text{2}) \Rightarrow (\text{3})$ for the following statements.
\begin{enumerate}[left=0pt .. \parindent]
	\item $G$ satisfies the discrete F\o{}lner condition.
	\item $G$ admits a discrete F\o{}lner net $\{F_\lambda\}$.
	\item $G$ is amenable-as-discrete.
\end{enumerate}
\end{prop}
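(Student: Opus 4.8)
The plan is to establish $(2)\Rightarrow(1)$, then $(1)\Rightarrow(2)$, and finally $(2)\Rightarrow(3)$, the last being the group-theoretic analogue of the second Banach-limit construction in \cref{Folner construction of Banach limit}. Note that the genuinely hard converse $(3)\Rightarrow(1)$ is \emph{not} asserted here, so no Day--Namioka type argument is required.

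For $(2)\Rightarrow(1)$, I would fix a pair $(K,\epsilon)$ and invoke the defining limit of the F\o{}lner net $\{F_\lambda\}_{\lambda\in\Gamma}$: for each of the \emph{finitely many} $x\in K$, the ratio $\card{F_\lambda \Sdiff xF_\lambda}/\card{F_\lambda}$ tends to $0$, hence drops below $\epsilon$ on some tail of $\Gamma$. Because $\Gamma$ is directed, these finitely many tails have a common index $\lambda$, and the corresponding $F_\lambda$ is simultaneously $(K,\epsilon)$-invariant. For $(1)\Rightarrow(2)$, I would take as index set the collection of pairs $(K,\epsilon)$, directed by declaring $(K,\epsilon)\preceq(K',\epsilon')$ precisely when $K\subseteq K'$ and $\epsilon'\leq\epsilon$; this is directed since $(K_1\cup K_2,\min(\epsilon_1,\epsilon_2))$ dominates both $(K_i,\epsilon_i)$. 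For each pair, the discrete F\o{}lner condition supplies a $(K,\epsilon)$-invariant set $F_{(K,\epsilon)}$, and the resulting net is F\o{}lner: given $x\in G$ and $\epsilon>0$, every index past $(\{x\},\epsilon)$ yields $\card{F\Sdiff xF}/\card{F}<\epsilon$.

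For $(2)\Rightarrow(3)$, I would associate to each $F_\lambda$ the mean $\mu_\lambda\in\linftyS(G)$ given by the arithmetic average $\mu_\lambda(f)=\card{F_\lambda}^{-1}\sum_{k\in F_\lambda}f(k)$, exactly as in \cref{Folner construction of Banach limit}. A direct computation mirroring \cref{Folner equation for N}, after the change of variables $k\mapsto xk$, gives the F\o{}lner inequality $\abs{\mu_\lambda(f-L_xf)}\leq\norm{f}_\infty\cdot\card{F_\lambda\Sdiff xF_\lambda}/\card{F_\lambda}$, so that $\mu_\lambda(f-L_xf)\to 0$ for every fixed $f\in\linfty(G)$ and $x\in G$. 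The set of means is $w^*$-compact by Banach--Alaoglu, so $\{\mu_\lambda\}$ admits an accumulation point $\mu$; passing to a convergent subnet along a fixed function $f-L_xf$ yields $\mu(f-L_xf)=0$ for all $f$ and $x$. Hence $\mu$ is a left-invariant mean, and $G$ is amenable-as-discrete.

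I expect the only real subtlety to lie in the $(1)\Rightarrow(2)$ direction: one must check that the family of pairs $(K,\epsilon)$ is genuinely directed, and that the selected net satisfies the F\o{}lner \emph{limit} for every single element $x\in G$ rather than merely for the finite test sets $K$ used in its construction. Once the F\o{}lner inequality is in hand, the implication $(2)\Rightarrow(3)$ is routine, being a verbatim transcription of the $\Nbb$ case with additive translation $m+F_n$ replaced by multiplicative translation $xF_\lambda$.
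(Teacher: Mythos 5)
Your proposal is correct and follows essentially the same route as the paper: the same directed set of pairs $(K,\epsilon)$ for $(1)\Rightarrow(2)$, the same common-upper-bound argument over the finitely many tails for $(2)\Rightarrow(1)$, and the same arithmetic-mean net with a $w^*$-accumulation point for $(2)\Rightarrow(3)$. No gaps.
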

\begin{proof}
$(1) \Rightarrow (2)$.
Let $\Gamma$ be the set of all $(K,\epsilon)$ pairs, ordered by
\[(K, \delta) \preceq (J, \epsilon) \iff K\subseteq J \text{ and } \delta \geq \epsilon.\]
For each $\gamma\in\Gamma$, choose a $\gamma$-invariant set $F_\gamma.$
$\{F_\gamma\}_{\gamma \in \Gamma}$ is a F\o{}lner net.

$(2) \Rightarrow (1).$
We are given a discrete F\o{}lner net $\{F_\gamma\}_{\gamma \in \Gamma}$ and a pair $(K, \epsilon)$.
For each $k\in K$, choose $\gamma_k \in \Gamma$ such that
\[\card{F_{\gamma} \Sdiff k F_{\gamma}} / \card{F_{\gamma}} < \epsilon \text{ for each } \gamma \succeq \gamma_k.\]
If $\beta$ is a common upper bound for $\{\gamma_k : k\in K\}$,
then $F_\beta$ is $(K, \epsilon)$-invariant.

$(2) \Rightarrow (3).$
Given a discrete F\o{}lner net $\{F_\gamma\}$, define a net $\{\mu_\gamma\} \subset \linftyS(G)$ by
\[\mu_\gamma(f) = \text{the arithmetic mean of }f \text{ over }F_\gamma, \text{ where } f\in\linfty(G).\]
As in \cref{Folner construction of Banach limit}, any accumulation point of this net is a left-invariant mean.
\end{proof}

\begin{point}
Suppose $H$ is a group, and $F\Subset H$ with $H = \bigcup_{n=1}^\infty F^n$.
In this case, $F$ is called a generating set.
Since $\#(F^{m+n}) \leq \#(F^m) \#(F^n)$, we see $\lim_n \sqrt[n]{\#(F^n)}$ exists.
If this limit is $1$, we say $H$ has \textit{subexponential growth},
otherwise we say it has \textit{exponential growth}.
By \cite[Proposition 2.5]{mann2011}, the type of growth of $H$
(subexponential or exponential) does not depend on the choice of generating set.
\end{point}

\begin{prop}\label{not exp therefore amenable}
If every finitely-generated subgroup $H < G$ has subexponential growth,
then $G$ satisfies the discrete F\o{}lner condition.\footnote{
	The converse is not true, and there exist relatively simple counterexamples such as the lamplighter group.
}
\end{prop}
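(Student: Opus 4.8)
The plan is to localize everything to the finitely-generated subgroup $H = \langle K \rangle$ and to exhibit a $(K,\epsilon)$-invariant set as a ball in its word metric. Fix a pair $(K,\epsilon)$ and put $S = K \cup K^{-1} \cup \{e\}$, a finite symmetric generating set for $H$ containing the identity. Writing $B_n = S^n$, the inclusion $e\in S$ yields the nested chain $B_n \subseteq B_{n+1}$, each $B_n$ is finite and nonempty, and the submultiplicativity $\card{B_{m+n}} \leq \card{B_m}\card{B_n}$ guarantees (as already noted in the discussion of growth) that $g := \lim_n \sqrt[n]{\card{B_n}}$ exists. Subexponential growth of $H$, available by hypothesis since $H$ is finitely generated, means exactly $g = 1$.

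The key step is to show that the consecutive ratios $\card{B_{n+1}}/\card{B_n}$ have $\liminf$ equal to $1$. Because the balls are nested, every such ratio is $\geq 1$, so it suffices to rule out $\liminf > 1$. Suppose instead $\liminf_n \card{B_{n+1}}/\card{B_n} = 1+\delta$ for some $\delta > 0$. Then for all large $n$ one has $\card{B_{n+1}} \geq (1+\delta/2)\card{B_n}$, so $\card{B_n}$ eventually grows at least geometrically and $\sqrt[n]{\card{B_n}} \to 1 + \delta/2 > 1$, contradicting $g = 1$. Hence there is a subsequence $\{n_k\}$ along which $\card{B_{n_k+1}}/\card{B_{n_k}} \to 1$.

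It remains to check that $F = B_n$ is $(K,\epsilon)$-invariant for a suitable $n$ from this subsequence. For $k \in K \subseteq S$ we have $kB_n \subseteq B_{n+1}$, so $kF \setminus F \subseteq B_{n+1}\setminus B_n$ and thus $\card{kF \setminus F} \leq \card{B_{n+1}} - \card{B_n}$. Since left translation is a bijection, $\card{F \setminus kF} = \card{kF \setminus F}$, and therefore
\[
\frac{\card{F \Sdiff kF}}{\card{F}} \;\leq\; \frac{2\big(\card{B_{n+1}} - \card{B_n}\big)}{\card{B_n}} \;=\; 2\!\left(\frac{\card{B_{n+1}}}{\card{B_n}} - 1\right).
\]
Choosing $n = n_k$ large enough that the right-hand side drops below $\epsilon$ makes this bound hold simultaneously for every $k \in K$, so $F$ is $(K,\epsilon)$-invariant. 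As $(K,\epsilon)$ was arbitrary, $G$ satisfies the discrete F\o{}lner condition.

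The main obstacle is the middle step: passing from the geometric-mean statement $\sqrt[n]{\card{B_n}}\to 1$ to control of the individual ratios $\card{B_{n+1}}/\card{B_n}$ along a subsequence. The growth hypothesis bounds only the running products, not single quotients, so a direct limit computation is unavailable and one genuinely needs the contradiction argument above; everything else is a routine counting estimate on translates of a ball.
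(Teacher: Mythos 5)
Your proof is correct. It differs from the paper's in direction and in what it exhibits: the paper argues the contrapositive, assuming no $(K,\epsilon)$-invariant set exists, deducing the uniform expansion $\card{KF} \geq (1+\epsilon)\card{F}$ for all $F \Subset G$ (after adjoining $e$ to $K$), and iterating this to get $\card{K^n} \geq (1+\epsilon)^{n-1}\card{K}$, hence exponential growth of $\langle K \cup K^{-1}\rangle$. You instead run the implication forward: from $\sqrt[n]{\card{B_n}} \to 1$ you extract a subsequence of balls with $\card{B_{n_k+1}}/\card{B_{n_k}} \to 1$ and check directly that these balls are $(K,\epsilon)$-invariant. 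The underlying counting is the same --- your sub-lemma that $\liminf_n \card{B_{n+1}}/\card{B_n} = 1$ is proved by exactly the geometric-iteration contradiction the paper uses globally --- but your version has the concrete payoff of identifying the F\o{}lner sets as word-metric balls, whereas the paper's argument is purely an existence-by-contradiction and never exhibits them. The paper's formulation has a different advantage: the explicit uniform expansion statement $(\forall F \Subset G)\ \card{KF}/\card{F} \geq 1+\epsilon$ is reused verbatim as the starting point of the proof of \cref{discrete Folner condition}, which is why the author packages the argument that way. Both proofs are complete; the only point worth double-checking in yours is that $S = K \cup K^{-1} \cup \{e\}$ really generates $H$ via positive powers (it does, being symmetric and containing $e$), so that the growth hypothesis, which is independent of the choice of generating set, applies to the balls $B_n = S^n$.
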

\begin{proof}
Suppose $G$ admits no $(K, \epsilon)$-invariant set.
Then $G$ also admits no\linebreak $(K\cup\{e\}, \epsilon)$-invariant set.
Replacing $K$ by $K\cup\{e\}$, we have
\[(\forall F \Subset G)\ \card{KF} / \card{F} \geq 1+\epsilon.\]
In particular, $\#(K^n) \geq (1+\epsilon)^{n-1} \#(K)$.
Let $H$ be the subgroup generated by $J= K \cup K^{-1}$.
We see $\lim_n \sqrt[n]{\#(J^n)} \geq (1+\epsilon)$, so $H$ has exponential growth.
\end{proof}

\begin{point}\label{abelian groups are amenable-as-discrete}
For example, suppose $G$ is abelian and $K\Subset G$.
$\card{K^n}$ is at most the number of $\card{K}$-tuples of non-negative numbers whose sum is $n$.
This number grows subexponentially, on the order of $n^{\card{K}-1}$.
Hence $G$ is amenable-as-discrete.
\end{point}

\begin{prop}
The free group $\Fbb_2 = \langle a,b \rangle$ is not amenable.
\end{prop}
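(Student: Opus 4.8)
The plan is to exhibit the well-known paradoxical decomposition of $\Fbb_2$ and derive a contradiction from the existence of an invariant mean. Since $\Fbb_2$ is discrete, amenability means that $\linfty(\Fbb_2)$ admits a left-invariant mean, so first I would pass to the mean--measure correspondence established earlier: such a $\mu$ induces a finitely additive, left-invariant probability measure $m$ via $m(E) = \mu(\Onebb_E)$, satisfying $m(xE) = m(E)$ for every $x\in\Fbb_2$ and $E\subset\Fbb_2$. It then suffices to show no such $m$ can exist.

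Next I would partition $\Fbb_2$ according to the first letter of the unique reduced word representing each element. For $s\in\set{a, a^{-1}, b, b^{-1}}$, let $W(s)$ denote the set of reduced words beginning with $s$; then
\[\Fbb_2 = \set{e} \cup W(a) \cup W(a^{-1}) \cup W(b) \cup W(b^{-1})\]
is a disjoint union, since every nonidentity element begins with exactly one of the four letters.

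The crux of the argument is the following translation identity. If $w\in W(a)$, then $w = a w'$ for a reduced word $w'$ not beginning with $a^{-1}$, so $a^{-1}w = w'$ ranges over every reduced word \emph{except} those beginning with $a^{-1}$; hence
\[a^{-1} W(a) = \Fbb_2 \setminus W(a^{-1}), \qquad\text{and likewise}\qquad b^{-1} W(b) = \Fbb_2 \setminus W(b^{-1}).\]
Verifying carefully that left translation by $a^{-1}$ carries $W(a)$ bijectively onto the complement of $W(a^{-1})$ is the main obstacle, as this is precisely the point at which the freeness of the generators enters. Everything else is bookkeeping.

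Finally I would extract the contradiction. Applying left-invariance to the first identity gives $m(W(a)) = m(a^{-1}W(a)) = m(\Fbb_2\setminus W(a^{-1})) = 1 - m(W(a^{-1}))$, so that $m(W(a)) + m(W(a^{-1})) = 1$; the same reasoning yields $m(W(b)) + m(W(b^{-1})) = 1$. But $W(a), W(a^{-1}), W(b), W(b^{-1})$ are disjoint subsets of $\Fbb_2$, so by finite additivity and positivity their measures sum to at most $m(\Fbb_2) = 1$, forcing $1 \geq 2$. This contradiction shows $\linfty(\Fbb_2)$ admits no left-invariant mean, so $\Fbb_2$ is not amenable.
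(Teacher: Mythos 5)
Your proof is correct and takes essentially the same approach as the paper: a paradoxical decomposition of $\Fbb_2$ by the first letter of reduced words, combined with left-invariance and finite additivity of the induced measure $m$. If anything, your four-piece version, resting on the exact identity $a^{-1}W(a) = \Fbb_2 \setminus W(a^{-1})$, is the more careful one --- the paper's two-piece shortcut asserts $aA = \Fbb_2$ for $A = W(a)\cup W(a^{-1})$, which is not literally true (e.g.\ $ab \notin aA$), though the intended argument is the same as yours.
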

\begin{proof}
Suppose $\mu\in\linftyS(\Fbb_2)$ is a left-invariant mean.
Define
\begin{align*}
& A = \big\{\text{reduced words beginning in } a \text{ or } a^{-1}\big\} \subset \Fbb_2 \\
& B = \big\{\text{reduced words beginning in } b \text{ or } b^{-1}\big\} \subset \Fbb_2.
\end{align*}
Notice that $aA = bB = \Fbb_2$. We have the following contradiction:
\[2 = \mu(aA) + \mu(bB) = \mu(A) + \mu(B) = \mu(A \sqcup B) \leq 1.\qedhere\]
\end{proof}

\begin{prop}\label{discrete Folner condition}
If $G$ is amenable-as-discrete, it satisfies the discrete F\o{}lner condition.
\end{prop}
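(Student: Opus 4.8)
The statement is the hard direction of F\o{}lner's theorem, so I expect the real work to lie in converting \emph{weak} approximate invariance of a mean into \emph{norm} approximate invariance of a density, and then extracting a genuine F\o{}lner set from that density. First I would fix a pair $(K,\epsilon)$ and produce a left-invariant mean $\mu\in\linftyS(G)$ using the hypothesis that $G$ is amenable-as-discrete. The point evaluations $\delta_x$ are means, their weak*-closure is the set of multiplicative means (the ultrafilters), and these are exactly the extreme points of $\Means(G)$; so by Krein--Milman $\Means(G)$ is the weak*-closed convex hull of $\{\delta_x : x\in G\}$, and the finitely supported probability densities in $\ell_1(G)$ (finite convex combinations of the $\delta_x$) are weak*-dense in $\Means(G)$. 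Hence there is a net $\{\nu_\alpha\}$ of such densities with $\nu_\alpha \to \mu$ in the weak* topology.

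Next I would exploit invariance. A direct change of variables in $\langle \nu, L_x f\rangle = \sum_y \nu(y) f(xy)$ shows the adjoint of $L_x$ on $\linfty(G)$ restricts on $\ell_1(G)$ to the covariant translation $l_x$, and left-invariance of $\mu$ says precisely $L_x^*\mu = \mu$. Since $L_x^*$ is weak*-continuous, $l_x\nu_\alpha = L_x^*\nu_\alpha \to L_x^*\mu = \mu$, so $l_x\nu_\alpha - \nu_\alpha \to 0$ weak*; as these differences live in $\ell_1(G)$, this is weak convergence to $0$ in $\ell_1(G)$ for each $x$. Now comes \textbf{the hard part}: the set
\[ C = \big\{\,(l_x\nu - \nu)_{x\in K} : \nu \in \ell_1(G),\ \nu\geq 0,\ \norm{\nu}_1 = 1,\ \nu \text{ finitely supported}\,\big\} \subset \bigoplus_{x\in K}\ell_1(G)\]
is convex (the image of a convex set under an affine map) and, by the previous step, contains $0$ in its weak closure. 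Because a convex set has the same weak and norm closures (Mazur's theorem), $0$ lies in the \emph{norm} closure of $C$. Taking $\delta = \epsilon/(2\card{K})$, I thereby obtain a single finitely supported probability density $\nu$ with $\norm{l_x\nu - \nu}_1 < \delta$ simultaneously for all $x\in K$.

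Finally I would pass from $\nu$ to a set by a level-set (``layer-cake'') argument. Writing $F_t = \{y : \nu(y) > t\}$, each $F_t$ is finite (contained in $\supp\nu$), the level sets of $l_x\nu$ are $xF_t$, and the pointwise identity $|a-b| = \int_0^\infty |\Onebb_{\{a>t\}} - \Onebb_{\{b>t\}}|\dd t$ summed over $G$ (Tonelli) yields
\[\norm{l_x\nu - \nu}_1 = \int_0^\infty \card{x F_t \Sdiff F_t}\dd t \qquad\text{and}\qquad \int_0^\infty \card{F_t}\dd t = \norm{\nu}_1 = 1.\]
Summing the first identity over $x\in K$ gives $\int_0^\infty \sum_{x\in K}\card{xF_t\Sdiff F_t}\dd t < \card{K}\,\delta < \epsilon$. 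If no $F_t$ were $(K,\epsilon)$-invariant, then $\sum_{x\in K}\card{xF_t\Sdiff F_t} \geq \epsilon\,\card{F_t}$ for every $t$ (trivially so when $F_t=\varnothing$), and integrating against the second identity would force $\epsilon \leq \card{K}\,\delta < \epsilon$, a contradiction. Hence some $F_t$ satisfies $\sum_{x\in K}\card{xF_t\Sdiff F_t} < \epsilon\,\card{F_t}$; this $F_t$ is nonempty (the inequality fails for the empty set) and finite, and in particular $\card{kF_t\Sdiff F_t} < \epsilon\,\card{F_t}$ for each $k\in K$. Thus $F = F_t$ is the desired $(K,\epsilon)$-invariant set, and since $(K,\epsilon)$ was arbitrary, $G$ satisfies the discrete F\o{}lner condition.
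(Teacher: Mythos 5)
Your proof is correct, but it follows a genuinely different route from the one in the paper. You run the classical Day--Namioka machine: approximate an invariant mean weak\(^*\) by finitely supported probability densities, upgrade weak convergence of \(l_x\nu_\alpha - \nu_\alpha\) to norm convergence via Mazur's theorem applied to the convex set \(C\), and then extract a F\o{}lner set from an almost-invariant density by the level-set identity \(\norm{l_x\nu - \nu}_1 = \int_0^\infty \card{xF_t\Sdiff F_t}\dd t\). Every step checks out, including the Krein--Milman detour for density of \(\conv\{\delta_x\}\) (a Hahn--Banach separation as in \cref{P1(G) is dense in M(G)} would do the same job more directly) and the handling of \(F_t = \varnothing\) in the final averaging. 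The paper instead proves the contrapositive by a combinatorial argument following Kerr--Li: if the F\o{}lner condition fails, then (via the doubling estimate from \cref{not exp therefore amenable}) some finite \(K\) satisfies \(\card{KF}\geq 2\card{F}\) for all \(F\Subset G\), and a Zorn's lemma/Hall-type argument produces an injective map \(m: G\times\{1,2\}\to G\) with \(m(x,i)\in Kx\), yielding two families of disjoint translates that force any invariant mean to assign total mass at least \(2\) --- a paradoxical-decomposition contradiction. What each approach buys: the paper's argument is purely combinatorial, avoids the weak-to-norm upgrade entirely, and is shorter than F\o{}lner's original proof; yours is the functional-analytic template that the thesis itself redeploys later for locally compact groups (\cref{Days theorem} and \cref{exists pointwise folner net}), so it generalizes immediately beyond the discrete case and moreover shows that every left-invariant mean, not just some mean, is approximated by F\o{}lner averages.
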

\begin{proof}
The following proof appears in greater detail in \cite[Theorem 4.4]{Kerr-Li}.
It is considerably simpler than the original proof due to F\o{}lner \cite{F_lner1955}.

Suppose $G$ does not satisfy F\o{}lner's condition.
By \cref{not exp therefore amenable}, there exists $K\Subset G$
such that $\card{KF} \geq (1+\epsilon)\card{F}$ for each $F\Subset G$.
Let $n$ be large enough that $(1+\epsilon)^n \geq 2$. Replacing $K$ by $K^n$, we have
\[\card{KF} / \card{F} \geq 2 \text{ for each }F\Subset G.\]
Let $\Omega$ be the set of all maps $f: G\times \{1,2\} \to \Pscr(G)$ such that
\begin{enumerate}[label={(\alph*)}]
	\item $f(x,i) \subseteq Kx \text{ for each }(x,i) \in G\times \{1,2\}$, and
	\item $\big|\bigcup_{(x,i) \in S} f(x,i)\big| \geq |S| \text{ for each } S\Subset G\times\{1,2\}$.
\end{enumerate}
$\Omega$ is nonempty, since it contains the map $(x,i) \mapsto Kx$.
Order $\Omega$ by
\begin{itemize}
	\item $f \preceq g \iff f(x,i) \subseteq g(x,i) \text{ for each }(x,i) \in G\times \{1,2\}$.
\end{itemize}
By Zorn's lemma, $\Omega$ contains a minimal element $m$.
It's an exercise in combinatorics to prove each $m(x,i)$ is a singleton.
This being done, we may regard $m$ as a map $G\times \{1,2\} \to G$.
Since $m\in \Omega$, we have
\begin{enumerate}[label={(\alph*$'$)}]
	\item $m(x, i) \in Kx$ for each $(x,i) \in G\times\{1,2\}$, and
	\item $m$ is injective.
\end{enumerate}
For each $k\in K$, define the sets
\[A_k = \{x\in G : m(x,1) = kx\} \text{ and } B_k = \{x \in G : m(x,2) = kx\}.\]
Now each of $\{A_k : k\in K\}$ and $\{B_k : k\in K\}$ are partitions of $G$.
On the other hand, $\{k A_k : k\in K\} \cup \{k B_k : k\in K\}$ is a disjoint family.
Assuming there exists a left-invariant mean $\mu\in\linftyS(G)$, we obtain the following contradiction:
\[2 = \mu\Big(\bigcup_{k\in K} A_k\Big) + \mu\Big(\bigcup_{k\in K} B_k\Big)
	= \mu\Big(\bigcup_{k\in K} kA_k \cup \bigcup_{k\in K} kB_k\Big) \leq 1. \qedhere\]
\end{proof}

\section*{Lebesgue's integral}
\begin{point}
Lebesgue \cite{Lebesgue1904} gave the following axiomatic definition of the Lebesgue integral,
where $a,b,c \in \Rbb$ and $f, g\in B(\Rbb)$, the bounded Borel functions on $\Rbb$ with sup-norm.
\begin{enumerate}
	\item $\int_a^b f(x) \dd{x} = \int_{a+c}^{b+c} f(x-c)\dd{x}$.
	\item $\int_a^b f(x) \dd{x} + \int_b^c f(x) \dd{x} + \int_c^a f(x) \dd{x} = 0 $.
	\item $\int_a^b [f(x) + g(x)] \dd{x} = \int_a^b f(x) \dd{x} + \int_a^b g(x) \dd{x} $.
	\item If $f\geq 0$ on $(a,b)$, then $\int_a^b f(x) \dd{x} \geq 0 $.
	\item $\int_0^1 1 \dd{x} = 1 $.
	\item If $f_n(x) \uparrow f(x)$, then $\int_a^b f_n(x) \dd{x} \uparrow \int_a^b f(x) \dd{x}$.
\end{enumerate}
Lebesgue asked whether property (6) is redundant.
\end{point}

\begin{point}\label{Lebesgue axioms}
To reformulate Lebesgue's axioms in modern terminology,
notice that he only considers the integral over a finite interval $(a, b)$.
By properties (1) and (3),
\[\int_a^b f(x) \dd{x} = \sum_{n= \lfloor a\rfloor}^{\lceil b \rceil} \int_0^1 g(x+n) \dd{x},
\text{ where } g(x) =
\begin{Bmatrix*}[l]
f(x)	& x \in (a,b)\\
0		& \text{else}
\end{Bmatrix*}.\]
Thus the Lebesgue integral is determined by its restriction to $B([0,1])$.\footnote{
	Define $\iota: ([0,1]) \to B(\Rbb)$ by $\iota(f)(x) = f(x)$ for $x\in[0,1]$, and $\iota(f)(x)=0$ otherwise.
	We associate $B([0,1])$ with its image under $\iota$, which is an isometric embedding.}
Since Lebesgue measure is non-atomic, we may replace $[0,1]$ with the circle group $\Tbb$,
and reformulate Lebesgue's definition as follows:
\begin{enumerate}
	\item $I: B(\Tbb) \to \Cbb$ is a positive linear functional.
	\item $I(1) = 1$.
	\item $I(l_x f) = I(f)$ for each $x\in G$ and $f\in B(\Tbb)$.
	\item $I$ is countably additive.
\end{enumerate}
In these terms, Lebesgue's question is whether property (4) is redundant.
\end{point}

\begin{prop}
Property (4) is redundant if $B(\Tbb)$ is replaced by $C(\Tbb)$,
the algebra of continuous bounded functions on $\Tbb$.
\end{prop}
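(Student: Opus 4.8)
The plan is to show that conditions (1)--(3) already pin down $I$ completely, forcing it to coincide with the Lebesgue (Haar) integral, which is itself countably additive; hence (4) carries no additional information and is redundant. First I would record that a positive, unital linear functional on $C(\Tbb)$ automatically has norm one: for real $f$ the positivity of $\norm{f}_\infty\cdot 1 \pm f$ gives $\abs{I(f)}\leq \norm{f}_\infty$, and the complex case follows from this. Thus $I$ is continuous, and it suffices to compute $I$ on a dense subspace of $C(\Tbb)$.

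The key step is the character computation. Writing $\Tbb$ multiplicatively and letting $\chi_n(z)=z^n$ denote the characters for $n\in\Zbb$, the identity $l_x\chi_n = x^{-n}\chi_n$ combined with left-invariance gives $I(\chi_n)=I(l_x\chi_n)=x^{-n}I(\chi_n)$ for every $x\in\Tbb$. For $n\neq 0$ there is an $x$ with $x^{-n}\neq 1$, which forces $I(\chi_n)=0$; while $I(\chi_0)=I(1)=1$. The trigonometric polynomials $\mathrm{span}\{\chi_n : n\in\Zbb\}$ separate points of $\Tbb$, contain the constants, and are closed under conjugation since $\overline{\chi_n}=\chi_{-n}$, so by the Stone-Weierstrass theorem they are dense in $C(\Tbb)$. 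The Lebesgue integral $f\mapsto\int_\Tbb f\dd{m}$ is likewise a norm-one functional with $\int_\Tbb \chi_n\dd{m}=\delta_{n,0}$, so $I$ and this integral agree on a dense subspace and therefore coincide on all of $C(\Tbb)$.

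Finally, the Haar integral is countably additive (this is the monotone convergence theorem for Haar measure), so the \emph{unique} functional satisfying (1)--(3) automatically satisfies (4), which is the assertion. I expect the only real subtlety here to be conceptual rather than computational: the whole argument rests on the \emph{continuity} of $I$ together with the \emph{density} of the trigonometric polynomials in sup-norm, and it is precisely this density that fails once $C(\Tbb)$ is enlarged to $B(\Tbb)$. On $B(\Tbb)$ the invariance relations still annihilate every non-trivial character, but characteristic functions are no longer uniformly approximable by trigonometric polynomials, so the invariant mean is far from unique, and there countable additivity genuinely singles out the Haar integral. This contrast is exactly what makes (4) redundant on $C(\Tbb)$ but essential on $B(\Tbb)$.
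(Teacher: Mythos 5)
Your proof is correct, but it takes a genuinely different route from the one in the paper. The paper's proof never touches the invariance axiom: it observes that positivity and unitality force $\|I\|=1$, and then invokes the Riesz--Kakutani representation theorem, so that $I$ is integration against a regular Borel measure and is therefore automatically countably additive --- in other words, (4) already follows from (1) and (2) alone. You instead use all three of (1)--(3): after establishing continuity, you kill the nontrivial characters via $l_x\chi_n = x^{-n}\chi_n$, invoke Stone--Weierstrass to get density of the trigonometric polynomials, and conclude that $I$ \emph{is} the Haar integral, hence countably additive. Your argument proves the stronger statement that the invariant mean on $C(\Tbb)$ is unique (a fact the paper's proof does not yield), at the cost of being less economical for the stated claim; the paper's argument is shorter and shows that the invariance hypothesis is not even needed for the redundancy of (4). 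Your closing remark correctly identifies why the argument collapses on $B(\Tbb)$ --- the failure of sup-norm density of trigonometric polynomials --- which is precisely the gap that \cref{Banachs theorem} exploits. One minor point worth tightening: the passage from the real to the complex case of $|I(f)|\leq\|f\|_\infty$ deserves the standard rotation trick (write $|I(f)| = I(e^{i\theta}f)$ and take real parts) rather than being left implicit, though mere boundedness would suffice for your density argument anyway.
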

\begin{proof}
Since $I\geq 0$ and $I(1) = 1$, we see $\|I\| = 1$.
The Riesz-Kakutani representation theorem (\cref{Riesz Kakutani})
states that any bounded linear functional on $C(\Tbb)$
may be written as the integral over a regular Borel measure on $\Tbb$, which is countably additive.
\end{proof}

\begin{defn}
Given $E\Subset G$, and a function $f: G\to \Rbb$, define
\[A_E(f) = \frac{1}{\card{E} } \sum_{x\in E} l_x f,\ \ \text{equivalently}\ \ 
A_E(f) = \frac{\Onebb_E}{\card{E}}*f.\]
\end{defn}

\begin{prop}\label{H = S if amenable-as-discrete}
Consider the following subspaces of $B(G)$.
\begin{enumerate}[left=0pt .. \parindent]
	\item $H$, the closed subspace generated by $\{f - l_x f: f\in B(G),\ x\in G\}$
	\item $S = \{f\in B(G) : (\forall \epsilon > 0)\ (\exists F\Subset G)\ 
	\|A_F(f)\| < \epsilon \}$
\end{enumerate}
If $G$ is amenable-as-discrete, then $S = H$.
\end{prop}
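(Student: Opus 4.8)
The plan is to sandwich $S$ between $H$ and an auxiliary set built from all finitely supported probability measures, and to show every inclusion is an equality. Throughout I write $A_E f = \mu_E * f$ and lean on two facts: convolution by a probability measure is a sup-norm contraction on $B(G)$, and $l_x f = \delta_x * f$, so that $f - l_x f = (\delta_e - \delta_x)*f$. Let $P = \conv\{\delta_x : x \in G\}$ be the finitely supported probability measures and set $S' = \{f \in B(G) : \inf_{\nu \in P}\norm{\nu * f} = 0\}$; since $\mu_E \in P$ we have $S \subseteq S'$ for free.

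The easy inclusion $S \subseteq H$ needs no hypothesis: if $f \in S$ and $A_{E_n}f \to 0$, then $f - A_{E_n}f = \frac{1}{\card{E_n}}\sum_{x \in E_n}(f - l_x f) \in H$, and as $H$ is closed the limit $f$ lies in $H$. The substance is the reverse chain $H \subseteq S' \subseteq S$, and this is where amenability enters.

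For $H \subseteq S'$ I would argue by contraposition, using a separating functional that I then symmetrize with an invariant mean. If $f \notin S'$, then $0$ lies outside the closed convex set $\overline{C_f}$, where $C_f = \{\nu * f : \nu \in P\} = \conv\{l_x f : x \in G\}$, so Hahn--Banach produces a bounded functional $\psi$ and $\alpha > 0$ with $\mathrm{Re}\,\psi(l_x f) \geq \alpha$ for every $x$. Now let $m$ be a right-invariant mean on $\linfty(G)$ --- which exists because $G$ is amenable-as-discrete and the inversion $x \mapsto x^{-1}$ carries left-invariant means to right-invariant ones --- and define $\Psi(h) = m_x\big(\psi(l_x h)\big)$. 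The identity $l_x l_y = l_{xy}$ together with right-invariance of $m$ makes $\Psi$ left-invariant, so $\Psi$ annihilates $H$; but positivity and unitality of $m$ give $\mathrm{Re}\,\Psi(f) = m_x(\mathrm{Re}\,\psi(l_x f)) \geq \alpha > 0$, whence $\Psi(f) \neq 0$ and $f \notin H$. This step is the crux: it is precisely where the naive ``average $f$, then average $g$'' proof of additivity breaks down for nonabelian $G$, and the invariant mean repairs it.

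For $S' \subseteq S$ --- the uniformization step --- I would convolve on the left by a long Følner set. Given $\nu \in P$ with $\norm{\nu * f} < \epsilon$ and $K = \supp\nu$, the identity $\mu_E * \delta_y = \mu_{Ey}$ yields $\norm{\mu_E * \nu - \mu_E}_1 \leq \sum_{y \in K}\nu(\{y\})\,\card{Ey \Sdiff E}/\card{E}$, which the right-handed discrete Følner condition (obtained from \cref{discrete Folner condition} by passing to inverses) drives below $\epsilon/\norm{f}$ for a suitable $E$. Then $\norm{A_E f} \leq \norm{(\mu_E - \mu_E * \nu)*f} + \norm{\mu_E * (\nu * f)} < \epsilon + \epsilon$, so $f \in S$. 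Stringing the inclusions together gives $S \subseteq H \subseteq S' \subseteq S$, whence $S = H$. The hardest point to get right is the $\Psi$ construction --- in particular matching the handedness of the invariant mean to the handedness forced by $l_x l_y = l_{xy}$ --- while the uniformization step is routine once one observes that a long Følner set absorbs left convolution by any fixed finitely supported measure.
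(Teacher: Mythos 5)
Your proof is correct, but it takes a genuinely different route from the paper's. The paper gets $S \subseteq H$ by duality (every left-invariant functional annihilates $S$, and $H$ is the common kernel of the left-invariant functionals), and gets $H \subseteq S$ by checking that each generator $f - l_x f$ lies in $S$ via the bound $\|A_{F_\gamma}(f - l_x f)\| \leq \|A_{F_\gamma \Sdiff xF_\gamma}(f)\| \to 0$ along a F\o{}lner net, then invoking the assertion that $S$ is a closed subspace. You instead close the circle $S \subseteq H \subseteq S' \subseteq S$: a direct telescoping argument for $S \subseteq H$, a Hahn--Banach separation symmetrized by a right-invariant mean for $H \subseteq S'$, and a right-F\o{}lner absorption estimate for $S' \subseteq S$. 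The notable payoff of your arrangement is that it never requires $S$ (or $S'$) to be proved a subspace in advance --- which is exactly the step the paper leaves unjustified, and it is not innocuous: closedness of $S$ under addition requires producing a \emph{single} averaging set that works for both summands, and the natural attempts run into the noncommutativity of convolution. Your chain delivers $S = H$ and hence the subspace property only a posteriori. The cost is that you invoke amenability twice (once as an invariant mean, once as a F\o{}lner condition) and you must keep the handedness straight in the averaging $\Psi(h) = m_x\big(\psi(l_x h)\big)$, which you do correctly: $l_x l_y = l_{xy}$ forces $m$ to be right-invariant, obtainable from a left-invariant mean by inversion.
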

\begin{proof}
A mean $\mu\in B^*(G)$ is left-invariant if and only if it vanishes on $H$.
Every left-invariant mean vanishes on $S$, so $S \subseteq H$.
Conversely, suppose $G$ is amenable-as-discrete and choose $(f - l_x f)\in H$.
By \cref{discrete Folner condition}, there exists a F\o{}lner net $\{F_\gamma\}$ for $G$.
We see $(f - l_x f)\in S$, since
\[\|A_{F_\gamma}(f - l_x f)\| \leq \|A_{(F_\gamma \Sdiff xF_\gamma)}(f)\| \to 0.\]
Since $S$ is a closed subspace containing $(f - l_x f)$, we conclude $H\subseteq S$.
\end{proof}

\begin{prop}[{\cite[Theorem 19]{Banach23}}]\label{Banachs theorem}
Let $\Tbb$ be the circle group, and $B(\Tbb)$ the set of bounded Borel functions on $\Tbb$.
There exists an invariant mean $\mu\in B^*(\Tbb)$, distinct from the Lebesgue integral.
This proves property (4) of \cref{Lebesgue axioms} is not redundant.
\end{prop}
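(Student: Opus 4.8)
The plan is to show that the Lebesgue integral $I$ is not the \emph{only} invariant mean on $B(\Tbb)$; any second invariant mean then disagrees with $I$ on some bounded Borel function, and since $I$ is the unique \emph{countably additive} invariant mean (Haar), that second mean is merely finitely additive, which is precisely the failure of redundancy asserted. Since $\Tbb$ is abelian it is amenable-as-discrete (\cref{abelian groups are amenable-as-discrete}), so it satisfies the discrete F\o{}lner condition (\cref{discrete Folner condition}) and \cref{H = S if amenable-as-discrete} applies to $G=\Tbb$ acting on $B(\Tbb)$: the closed subspace $H$ generated by $\set{f - l_x f}$ coincides with $S = \set{f : (\forall \epsilon>0)(\exists F\Subset\Tbb)\ \norm{A_F(f)}<\epsilon}$. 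In particular invariant means exist and every invariant mean annihilates $H=S$.

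First I would record the reduction. Because every invariant mean annihilates $H$ while $I(1)=1\neq 0$, the hyperplane $\ker I$ together with the constant $1$ spans $B(\Tbb)$; so to prove $I$ is not the only invariant mean it suffices to exhibit a single bounded Borel $f$ with $I(f)=0$ and $f\notin H=S$. Such an $f$ then yields a second mean exactly as in \cref{H construction of banach limit}: for real $f$, the relation $\nu(f)=\nu(A_F f)$ (valid for every invariant mean $\nu$ and every $F\Subset\Tbb$, since $f-A_F f\in H$) shows that the compact interval $\set{\nu(f):\nu\text{ invariant}}$ is contained in $[\,\sup_F\inf_y A_F(f)(y),\ \inf_F\sup_y A_F(f)(y)\,]$, and the hypothesis $f\notin S$ forces this interval to be nondegenerate. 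Indeed, were both of $\inf_F\sup_y A_F(f)$ and $\inf_F\sup_y A_F(-f)$ equal to zero, then composing the two averaging operators $A_{F_1}A_{F_2}=A_{F_1+F_2}$ witnessing smallness of $\sup A_{F_1}(f)$ and of $\sup A_{F_2}(-f)$ would produce a single $F$ with $\norm{A_F f}$ arbitrarily small, contradicting $f\notin S$. Choosing a value $c\neq 0=I(f)$ in this nondegenerate interval and extending the functional $h+\lambda 1+\alpha f\mapsto \lambda+\alpha c$ off $H+\Cbb 1+\Cbb f$ by Hahn--Banach produces an invariant mean with $\nu(f)=c\neq I(f)$.

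The heart of the matter, and the step I expect to be the main obstacle, is the construction of the witness $f$ --- equivalently a bounded Borel set $E$ whose invariant-mean ``measure'' can differ from $\text{Leb}(E)$ --- which is genuinely delicate and is the real content of Banach's theorem. The packing argument (disjoint translates of an interval fill $\Tbb$) shows that any invariant mean assigns each interval its length, hence agrees with $I$ on every finite union of intervals, so $E$ cannot be built from intervals: it must be a Borel set whose translational density is ill-defined. Writing $f=\Onebb_E-\text{Leb}(E)$, the averaging identity $\int_\Tbb A_F(\Onebb_E)(y)\dd y = \text{Leb}(E)$ gives $\sup_y A_F(\Onebb_E)(y)\geq \text{Leb}(E)$ for every $F$, so the task reduces to producing $E$ for which the excess $\sup_y A_F(\Onebb_E)(y)-\text{Leb}(E)$ stays above a fixed $\epsilon_0>0$ \emph{uniformly} over all finite $F$; that is, $E$ must cluster at every scale and thereby defeat equidistribution. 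I would build such an $E$ with scale-invariant clustering by pulling back a positive-density subset of each finite quotient $\tfrac1{n_k}\Zbb/\Zbb$ along a cofinal tower and passing to an ultrafilter limit over $k$, arranged so that at the resolution of any prescribed $F$ some translate of $E$ still captures a $(\text{Leb}(E)+\epsilon_0)$-fraction of $F$. Verifying that this density gap survives all finite $F$ simultaneously is the crux; the ergodic theorem, which forces a.e.\ convergence of single-rotation averages to the Lebesgue mean, explains why no softer single-scale choice of $E$ can succeed.
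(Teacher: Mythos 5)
Your overall architecture matches the paper's: reduce to exhibiting a bounded Borel $f$ (of the form $\Onebb_E - \lambda$) lying outside $H = S$, then produce a second invariant mean by Hahn--Banach as in \cref{H construction of banach limit}. That reduction and the extension step are essentially sound (modulo a small technicality in your ``nondegenerate interval'' argument, where composing $A_{F_1}$ and $A_{F_2}$ yields an average over a multiset rather than a set; the paper sidesteps this by directly prescribing $p(h+\lambda_1+\lambda_2\Onebb_E)=\lambda_1+\lambda_2$ on $H+\Cbb+\Cbb\Onebb_E$ and checking that this has norm one).

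The genuine gap is that you never construct the witness set $E$ --- and you say yourself that this is ``the crux'' and ``the main obstacle.'' That construction is the entire content of Banach's theorem, and the route you sketch (positive-density pullbacks from finite quotients $\tfrac{1}{n_k}\Zbb/\Zbb$ glued along an ultrafilter) is unverified and far harder than necessary; it is not clear it can be made to work, since a set assembled from cosets at scale $n_k$ tends to equidistribute under averaging over the corresponding finite subgroups. The missing idea is a Baire category argument: take $E\subset\Tbb$ of Lebesgue measure zero with \emph{meager complement} (a dense $G_\delta$ of measure zero). Then for any $F\Subset\Tbb$ the intersection $\bigcap_{x\in F}xE$ is comeager, hence contains a point $y$ with $A_F(\Onebb_E)(y)=1$, while $\bigcup_{x\in F}xE$ is null, hence omits a point $z$ with $A_F(\Onebb_E)(z)=0$. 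Therefore $\|A_F(\Onebb_E)-\lambda\|_\infty\geq\tfrac12|1-0|=\tfrac12$ for every $F$ and every $\lambda$, so $\Onebb_E-\lambda\notin S=H$, and the resulting mean satisfies $\mu(\Onebb_E)=1$ while $I(\Onebb_E)=0$. Note also that your appeal to the ergodic theorem as an obstruction is a red herring: since the relevant quantity is a supremum over $y$ rather than an almost-everywhere limit, a.e.\ convergence of rotation averages to $\int\Onebb_E$ is perfectly compatible with $\sup_y A_F(\Onebb_E)(y)=1$ for all $F$, which is exactly what the comeager null set delivers.
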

\begin{proof}
Define the spaces $S, H \subset B(\Tbb)$ as in \cref{H = S if amenable-as-discrete}.
The abelian group $\Tbb$ is amenable-as-discrete by \cref{abelian groups are amenable-as-discrete}, so $S = H$.
Clearly $1$ lies outside $S$.

Let $E\subset \Tbb$ be a set of Lebesgue measure zero with meager complement.
We will show that $\Onebb_E + \lambda$ lies outside $H$ for any $\lambda\in\Cbb$.
To this end, fix $F\Subset \Tbb$ and define
$g = A_F(\Onebb_E - \lambda) = A_F(\Onebb_E) - \lambda$.
There exists $y \in \bigcap_{x\in F} xE$ by the Baire category theorem,
and there exists $z\not\in \bigcup_{x\in F} x E$ because $E$ has measure zero.
We see $\| g\| \geq \tfrac12 |g(y) - g(z)| = \tfrac12$.
Hence $\Onebb_E - \lambda$ lies outside $H$.

Now, as in \cref{H construction of banach limit},
define a mean $p$ on $H + \Cbb + \Cbb \Onebb_E$ by
\[p(h + \lambda_1 + \lambda_2 \Onebb_{E}) = \lambda_1 + \lambda_2,
\text{ where } h\in H \text{ and } \lambda_1, \lambda_2 \in \Cbb.\]
By the Hahn-Banach theorem, extend $p$ to a norm-one functional $\mu\in B^*(\Tbb)$,
which is evidently a left-invariant mean.
The Lebesgue integral of $\Onebb_E$ is zero, whereas $\mu(\Onebb_E) = 1$.
\end{proof}

\begin{point}
Modern literature on locally compact amenable groups
is mainly concerned with invariant means on $\Linfty(G)$, rather than $B(G)$.
The argument of \cref{Banachs theorem} fails to apply to $\Linfty(\Tbb)$, since $\Onebb_E = 0$ almost everywhere.
However, a rather obvious modification of the proof suggests itself:
\end{point}

\begin{prop}\label{Banach Rudin Granirer}
Let $G$ be any infinite compact group that is amenable-as-discrete.
There exists a left-invariant mean $\mu\in\LinftyS(G)$ distinct from the Haar integral.
\end{prop}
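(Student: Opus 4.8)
The plan is to transplant the proof of \cref{Banachs theorem} into $\Linfty(G)$, replacing the null comeager set $E$ (which becomes the zero element of $\Linfty$) by an \emph{open dense} set $E\subsetneq G$ of Haar measure strictly less than $1$; equivalently, $G\setminus E$ is a closed nowhere-dense set of positive measure. Writing $m$ for the normalized Haar integral, I would define
\[ H = \overline{\mathrm{span}}\set{f - l_x f : f\in\Linfty(G),\ x\in G}, \qquad S = \set[\big]{f : (\forall \epsilon>0)(\exists F\Subset G)\ \norm{A_F f}_\infty < \epsilon}, \]
and first note that $S = H$ by the verbatim argument of \cref{H = S if amenable-as-discrete}: the $l_x$ are $\Linfty$-isometries, so $A_F$ is a contraction and $f - A_F f\in H$, giving $S\subseteq H$, while the reverse inclusion uses a discrete F\o{}lner net, which exists by \cref{discrete Folner condition} since $G$ is amenable-as-discrete. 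As there, $1\notin S$.

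Second, I would record the two features of $E$ that drive the argument. For every $F\Subset G$ the set $O_F = \bigcap_{x\in F} xE$ is a finite intersection of open dense sets, hence open, dense, and nonempty, hence of positive Haar measure; since $A_F \Onebb_E = 1$ on $O_F$ we get $\operatorname{ess\,sup} A_F\Onebb_E = 1$ for all $F$. On the other hand $\int_G A_F \Onebb_E \dd m = m(\Onebb_E) < 1$, so $\operatorname{ess\,inf} A_F\Onebb_E \le m(\Onebb_E) < 1$. Together these show $A_F\Onebb_E$ oscillates by a fixed amount, uniformly in $F$, whence $\inf_F \norm{A_F\Onebb_E - \lambda}_\infty > 0$ for every $\lambda\in\Cbb$; that is, $\Onebb_E - \lambda\notin S = H$, so $\Onebb_E\notin H + \Cbb 1$.

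Third, mirroring \cref{Banachs theorem}, I would define $p$ on $H + \Cbb 1 + \Cbb\Onebb_E$ by $p(h + \lambda_1 + \lambda_2 \Onebb_E) = \lambda_1 + \lambda_2$. Well-definedness is exactly $\Onebb_E\notin H+\Cbb1$ together with $1\notin H$. For $\norm p \le 1$: given $g = h + \lambda_1 + \lambda_2\Onebb_E$ and $\epsilon>0$, choose $F$ with $\norm{A_F h}_\infty < \epsilon$ (possible as $h\in H = S$); since $A_F\Onebb_E = 1$ on the positive-measure set $O_F$, we have $\norm{g}_\infty \ge \norm{A_F g}_\infty \ge \abs{\lambda_1+\lambda_2} - \epsilon$, and $\epsilon\to 0$ gives $\abs{p(g)}\le\norm g_\infty$. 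As $p(1)=1$, in fact $\norm p = 1$, so Hahn--Banach yields a norm-one extension $\mu\in\LinftyS(G)$. Being unital of norm one, $\mu$ is a mean; it vanishes on $H$, so it is left-invariant; and $\mu(\Onebb_E) = 1 \ne m(\Onebb_E)$, so $\mu$ is distinct from the Haar integral.

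The one genuinely nontrivial step is the \emph{existence of $E$} on an arbitrary infinite compact group: when $G$ is non-metrizable there is no countable dense set, so a closed nowhere-dense set of positive measure cannot be built directly. The plan is to reduce to the metrizable case, using that every infinite compact group admits an infinite metrizable (second-countable) quotient $\pi\colon G\to Q$ --- if some compact Lie quotient in the Peter--Weyl inverse limit $G = \varprojlim G_\alpha$ is infinite, use it; otherwise $G$ is profinite and a countable chain of open normal subgroups of unbounded index has an infinite second-countable profinite quotient. On the infinite metrizable $Q$ a fat open dense set $E_Q$ is standard (cover a countable dense set by open neighborhoods of total measure $<1$, using that Haar is non-atomic), and I would set $E = \pi^{-1}(E_Q)$. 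Then $E$ is open and dense, $\bigcap_{x\in F} xE = \pi^{-1}\big(\bigcap_{x\in F}\pi(x) E_Q\big)$ has positive measure, and $m(\Onebb_E) = m_Q(\Onebb_{E_Q}) < 1$ because the pushforward of Haar measure under $\pi$ is the Haar measure of $Q$. I expect this reduction to a metrizable quotient to be the crux of the general (non-$\sigma$-compact) case.
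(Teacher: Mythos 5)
Your proposal is correct and follows essentially the same route as the paper: the identity $S=H$ from \cref{H = S if amenable-as-discrete}, an open dense set $E$ of small measure with $\Onebb_E\notin H+\Cbb 1$ certified by the positive-measure sets $\bigcap_{x\in F}xE$, and a Hahn--Banach extension of $p(h+\lambda_1+\lambda_2\Onebb_E)=\lambda_1+\lambda_2$. The only divergence is in producing $E$: the paper invokes the Kakutani--Kodaira theorem (\cref{small open dense set}) to get a closed normal $N$ with $|N|=0$ and $G/N$ metrizable, whereas you reach an infinite metrizable quotient via Peter--Weyl and pull back --- an equivalent reduction, and your weaker requirement $m(E)<1$ with the oscillation estimate in place of the paper's explicit $|E|\le\tfrac13$ computation is a harmless cosmetic change.
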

\begin{proof}
Define the following subspaces of $\Linfty(G)$.
\begin{enumerate}[left=0pt .. \parindent]
	\item $H$, the closed subspace generated by $\{f - l_x f: f\in \Linfty(G),\ x\in G\}$
	\item $S = \{f\in \Linfty(G) : (\forall \epsilon > 0)\ (\exists F\Subset G)\ 
	\| A_F(f)\|_\infty < \epsilon \}$
\end{enumerate}
The same argument as \cref{H = S if amenable-as-discrete} shows $S = H$ when $G$ is amenable-as-discrete.

Let $\epsilon = \tfrac13$.
By \cref{small open dense set}, there exists an open dense set $E\subset G$ with $\Haar{E} \leq \epsilon$.
I claim $\Onebb_E + \lambda$ lies outside $H$ for each $\lambda\in \Cbb$.
If not, there exists $F\Subset \Tbb$ such that $\|A_F(\Onebb_E - \lambda)\|_\infty
= \|A_F(\Onebb_E) - \lambda\|_\infty < \epsilon$.
In this case,
$$\textstyle \big| \Haar{E} - \lambda\big|
= \big|\int \Onebb_E - \lambda\big|
= \big|\int A_F(\Onebb_E) - \lambda\big|
\leq \int |A_F(\Onebb_E) - \lambda| < \epsilon,$$
hence $\lambda < \epsilon + \Haar{E} \leq 2\epsilon$.
But $A_F(\Onebb_E) = 1$ on the set of positive measure $\bigcap_{x\in F} xE$.
Therefore $\|A_F(\Onebb_E) - \lambda\|_\infty > 1-2\epsilon = \epsilon$, a contradiction.

From here, the proof proceeds as in \cref{Banachs theorem}.
\end{proof}

\begin{point}
The preceding proposition remained unproved for almost 50 years after Banach proved \cref{Banachs theorem}.
Then, in 1972, independent proofs were given by Granirer \cite{GranirerDiscrete} and Rudin \cite{Rudin72}.
Granirer uses the assumption that $G$ is amenable-as-discrete in essentially the same way we do here,
to prove that $\Linfty(G) / H$ has dimension greater than one.\footnote{
	In fact, he shows that $\Linfty(G) / H$ is non-separable in norm.
}
Rudin considers the natural action of $G$ on $\Acal$, the Gelfand spectrum of $\Linfty(G)$,
which is non-measurable when $G$ is nondiscrete.\footnote{
	For $f\in\Linfty(G)$ and $h\in \Acal$, the function $x \mapsto h(l_x f)$ need not be measurable.
}

This will be discussed further in \cref{Chapter Open Problems}.
\end{point}
\chapter{Technical Background}
In the following, $G$ always denotes an infinite locally compact Hausdorff group.

\section*{Haar measure}
\begin{prop}[Riesz-Kakutani representation theorem, {\cite[11.37]{HR1}}]\label{Riesz Kakutani}
Let $X$ be a locally compact Hausdorff space and $C_c(X)$ the continuous compactly supported functions on $X$.
If $I\in C_c^*(X)$ is a positive linear functional, there exists a regular Borel measure $\mu$ on $X$ such that
\[\textstyle I(f) = \int_X f \dd\mu \text{ for each }f\in C_c(X).\]
This equation uniquely determines $\mu$ on the Borel sets.

Suppose $\mu$ is defined on the $\sigma$-algebra $\Mcal_\mu$.
Given any $A\in \Mcal_\mu$, there exists a Borel set $B$ with $\mu(A\Sdiff B) = 0$.
When we define $L_p(X,\mu)$, we mod out by equivalence on sets of measure zero.
Hence, for our purposes in this thesis, it suffices to assume $\Mcal_\mu$ is simply the Borel sets.
\end{prop}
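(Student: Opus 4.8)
The plan is to manufacture the measure directly from the functional, by the classical route. First I would define a set function on the open sets of $X$ by
\[\mu(V) = \sup\set{I(f) : f\in C_c(X),\ 0\leq f\leq 1,\ \supp f \subseteq V},\]
and extend it to all subsets by outer regularity, setting $\mu^*(A) = \inf\set{\mu(V) : A \subseteq V,\ V \text{ open}}$. The routine preliminaries are that $\mu^*$ is monotone, agrees with $\mu$ on open sets, and—using the positivity and linearity of $I$ together with a partition of unity that splits any $f\prec V_1\cup V_2$ as $f = h_1 f + h_2 f$ with $h_i f \prec V_i$—is countably subadditive, hence an outer measure.

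Next I would invoke Carathéodory's criterion: the $\sigma$-algebra $\Mcal_{\mu^*}$ of $\mu^*$-measurable sets is complete and $\mu^*$ restricts to a measure on it, so the task is to show every Borel set is measurable. It suffices to check that each compact $K$ is measurable, for which the key estimate is that $\mu^*(K) \leq I(f)$ whenever $\Onebb_K \leq f \leq \Onebb_V$ for open $V \supseteq K$ (a Urysohn function); this gives finiteness on compacta together with the Carathéodory splitting inequality. Outer regularity is built in, and inner regularity on open sets—then on sets of finite measure—follows by approximating $I(f)$ from below by compactly supported test functions.

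The heart of the argument, and the step I expect to give the most trouble, is the representation formula $I(f) = \int_X f\dd\mu$. By linearity, scaling, and subtracting a Urysohn bump it suffices to treat real $f$ with $0\leq f\leq 1$. Fixing $n$, I would slice the range by the open superlevel sets $V_j = \set{x : f(x) > j/n}$ and write $f = \tfrac1n\sum_{j=1}^n g_j$, where $g_j = \min(\max(nf-(j-1),0),1)$ satisfies $\Onebb_{V_j}\leq g_j$ and $\supp g_j \subseteq \cl(V_{j-1})$. Positivity of $I$ then gives $\mu(V_j)\leq I(g_j)$, while testing $g_j$ against open sets slightly larger than $\cl(V_{j-1})$ gives $I(g_j)\leq \mu(V_{j-1}) + o(1)$. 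Summing over $j$ and letting $n\to\infty$ squeezes $I(f)$ between the lower and upper Lebesgue sums of $f$ against $\mu$. The delicate point is exactly the mismatch between the closed support $\cl(V_{j-1})$ and the open set $V_{j-1}$, which must be bridged using outer regularity and the uniform continuity of $f$ on the compact set $\supp f$.

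Finally, uniqueness follows from regularity: any regular Borel measure $\nu$ representing $I$ satisfies $\nu(V) = \sup\set{\int f\dd\nu : f\prec V} = \mu(V)$ on open $V$ by inner regularity and Urysohn, and two regular measures agreeing on open sets agree on all Borel sets by outer regularity. The closing remark that $\Mcal_\mu$ may be replaced by the Borel $\sigma$-algebra is then immediate: given $A\in\Mcal_\mu$ of finite measure, outer regularity supplies open $V_k\supseteq A$ with $\mu(V_k\setminus A)\to 0$, so $B=\bigcap_k V_k$ is Borel with $\mu(A\Sdiff B)=0$, and the general case follows by $\sigma$-finiteness on the support.
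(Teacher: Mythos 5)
The paper does not actually prove this statement: it is imported from Hewitt--Ross \cite[11.37]{HR1} as a citation, so there is no internal argument to compare yours against. Your outline is the standard construction (define $\mu$ on open sets as a supremum of $I(f)$ over subordinate test functions, pass to the induced outer measure, establish measurability of the Borel sets, and squeeze $I(f)$ between Riemann sums of the layer-cake decomposition), and most of it is sound. The closure-versus-open mismatch you flag in the estimate $I(g_j)\leq \mu(V_{j-1})+o(1)$ is real but bridgeable: since $\cl(V_{j-1})\subseteq\{x: f(x)\geq (j-1)/n\}\subseteq V_{j-2}$, testing against open sets containing $\cl(V_{j-1})$ costs only one more index shift in the Riemann sum, an error of order $1/n$.

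There is, however, one concrete misstep. You claim that to show every Borel set is $\mu^*$-measurable ``it suffices to check that each compact $K$ is measurable.'' The Carath\'eodory-measurable sets form a $\sigma$-algebra, but the $\sigma$-algebra generated by the compact sets can be strictly smaller than the Borel $\sigma$-algebra when $X$ is not $\sigma$-compact --- for an uncountable discrete space it is the countable--cocountable algebra, while every subset is open and hence Borel. Since non-$\sigma$-compact groups are exactly the regime this thesis emphasizes, the reduction as stated does not deliver the conclusion; the routine step is instead to verify the Carath\'eodory splitting inequality for \emph{open} sets, which do generate the Borel sets, after which compact sets are measurable a fortiori. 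A similar caution applies to your closing sentence: an arbitrary $A\in\Mcal_\mu$ need not be $\sigma$-finite when $X$ is not $\sigma$-compact, so ``$\sigma$-finiteness on the support'' does not settle the general case of the final remark --- though the paper itself is equally informal there, explicitly restricting that claim to what is needed for the thesis.
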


\begin{prop}[Haar integral, {\cite[15.5]{HR1}}]
Let $C_c(G)$ be the continuous, compactly supported functions on $G$.
There is a nontrivial, positive, left-invariant functional $I$ on $C_c(G)$, which is unique up to scaling.
\end{prop}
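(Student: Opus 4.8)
The statement to prove is the existence and uniqueness (up to scaling) of a nontrivial, positive, left-invariant linear functional on $C_c(G)$. The plan is to follow the classical Weil–Cartan covering argument. First I would fix an auxiliary nonzero $g \in C_c(G)$ with $g \geq 0$, and for an arbitrary $f \in C_c^+(G)$ (nonnegative, compactly supported) define the \emph{covering number} $(f : g)$ as the infimum of $\sum_i c_i$ over all finite collections of positive scalars $c_i$ and points $s_i \in G$ such that $f \leq \sum_i c_i\, l_{s_i} g$. The compactness of $\supp f$ guarantees such finite covers exist, so $(f : g)$ is a well-defined nonnegative real number. The key elementary properties to record are: $(f:g)$ is left-invariant in $f$, subadditive and positively homogeneous in $f$, monotone, and satisfies the sandwich $(f:h) \leq (f:g)(g:h)$ as well as $(f:g) \geq \|f\|_\infty / \|g\|_\infty$, which keeps it bounded away from zero.

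\textbf{Constructing the functional via a normalization and a limit.} To turn the covering numbers into something additive, I would fix once and for all a reference function $f_0 \in C_c^+(G)$ and define the normalized ratio
\[
I_g(f) = \frac{(f : g)}{(f_0 : g)}.
\]
The sandwich inequality shows $I_g(f)$ lies in the fixed interval $[(f_0:f)^{-1},\,(f:f_0)]$ independent of $g$, so the net $\{I_g\}$, indexed by letting $\supp g$ shrink to the identity, lives in a compact product of intervals $\prod_{f} [(f_0:f)^{-1}, (f:f_0)]$. The crucial analytic lemma is \emph{approximate additivity}: for $f_1, f_2 \in C_c^+(G)$ and any $\epsilon > 0$, once $\supp g$ is small enough one has
\[
(f_1 : g) + (f_2 : g) \leq (f_1 + f_2 : g) + \epsilon,
\]
while subadditivity gives the reverse inequality exactly. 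I would then take $I$ to be any accumulation point of the net $\{I_g\}$ as $\supp g \to \{e\}$; by the approximate additivity this accumulation point is genuinely additive, and it inherits positivity, left-invariance, and nontriviality from the covering numbers. Extending $I$ from $C_c^+(G)$ to all of $C_c(G)$ by linearity (writing a general real function as a difference of positive parts, then complexifying) produces the desired functional.

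\textbf{The main obstacle and uniqueness.} The hardest step is the approximate additivity lemma: its proof requires the uniform continuity of $f_1 + f_2$ to produce, for a prescribed tolerance, a neighborhood of $e$ on which the two functions' relative sizes are controlled, and then a careful bookkeeping showing that any efficient cover of $f_1 + f_2$ by small translates of $g$ can be split into near-efficient covers of $f_1$ and $f_2$ separately. This is where the shrinking of $\supp g$ is essential, and it is the only place where more than formal manipulation of the covering numbers is needed. For uniqueness up to scaling I would argue separately: given two left-invariant positive functionals $I, J$, I would use Fubini-type manipulation of the double integral $\iint f(x)\, g(x^{-1}y)\, \ddd I(x)\, \ddd J(y)$ together with left-invariance to show that the ratio $I(f)/J(f)$ is independent of $f$, forcing $I = c\,J$ for a constant $c > 0$. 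I would invoke the Riesz–Kakutani theorem (\cref{Riesz Kakutani}) only at the end to reinterpret $I$ as integration against a regular Borel measure, which is the form in which the Haar measure is used throughout the rest of the thesis.
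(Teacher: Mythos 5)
The paper gives no proof of this proposition; it is quoted as background with a citation to \cite[15.5]{HR1}. Your outline is essentially the standard Weil--Cartan covering-number construction (with Tychonoff compactness for the limit and a Fubini-type double-integral argument for uniqueness) that the cited reference itself uses, and it is correct as a sketch.
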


The (left) Haar measure on $G$ is the measure constructed from the Haar integral via the Riesz representation theorem.
The Haar measure of $E\subset G$ is denoted $|E|$.
The Haar integral of $f: G \to \Cbb$ is denoted $\int_G f(x) \dd{x}$.
It is quite easy to prove that $|G|$ is finite if and only if $G$ is compact,
in which case we normalize Haar measure so that $|G| = 1$.

\begin{prop}[{\cite[15.11]{HR1}}]
The modular function $\Delta: G \to \Rbb^{\times}$ defined by $|Ex| = |E|\Delta(x)$ is a continuous homomorphism
from $G$ to the multiplicative group of positive real numbers.
\end{prop}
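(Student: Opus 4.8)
The plan is to construct $\Delta$ from the uniqueness of Haar measure and then upgrade well-definedness to continuity using the uniform continuity of compactly supported functions. First I would fix $x\in G$ and consider the right-translated functional $I_x$ on $C_c(G)$ defined by $I_x(f) = \int_G f(yx^{-1}) \dd{y}$, whose associated Riesz-Kakutani measure (\cref{Riesz Kakutani}) is precisely $E\mapsto \Haar{Ex}$, since $\Onebb_E(yx^{-1}) = 1$ exactly when $y\in Ex$. A change of variables shows $I_x$ is again left-invariant: substituting $y\mapsto ay$ and using left-invariance of the original integral gives $I_x(l_a f) = \int_G f(a^{-1}(ay)x^{-1})\dd{y} = I_x(f)$. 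Since $I_x$ is also positive and nontrivial, the uniqueness of the Haar integral up to scaling forces $I_x = \Delta(x)\,I$ for a unique scalar $\Delta(x)$; evaluating on a nonzero $f\geq 0$ makes both $I_x(f)$ and $I(f)$ strictly positive, so $\Delta(x) > 0$. Comparing the two measures on the Borel sets recovers the defining relation $\Haar{Ex} = \Delta(x)\Haar{E}$ and simultaneously shows it is well-defined, i.e.\ that a single scalar works for all $E$.

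The homomorphism property is then immediate from associativity: writing $E(xy) = (Ex)y$ and applying the defining relation twice yields $\Haar{E(xy)} = \Delta(y)\Haar{Ex} = \Delta(y)\Delta(x)\Haar{E}$, whence $\Delta(xy) = \Delta(x)\Delta(y)$. Thus $\Delta$ is a homomorphism from $G$ into $\Rbb^\times$ with image in the positive reals.

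The hard part will be continuity. Here I would fix a single $f\in C_c(G)$ with $\int_G f\dd{y} > 0$ and write $\Delta(x) = I_x(f)/I(f)$, reducing the problem to the continuity of $x\mapsto \int_G f(yx^{-1})\dd{y}$. To control the difference at $x$ and $x_0$, I would factor $yx^{-1} = (yx_0^{-1})(x_0 x^{-1})$ and put $w = x_0 x^{-1}$, so that $\sup_y |f(yx^{-1}) - f(yx_0^{-1})| = \sup_u |f(uw) - f(u)|$. Because every $f\in C_c(G)$ is right-uniformly continuous (being continuous with compact support), this supremum tends to $0$ as $w\to e$, that is, as $x\to x_0$. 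The one subtlety is converting this sup-norm bound into an $L_1$ bound: for $x$ ranging over a fixed compact neighborhood $N$ of $x_0$, the functions $y\mapsto f(yx^{-1})$ are all supported in the single compact set $K = \supp(f)\,N$, so the integral is dominated by $\Haar{K}\cdot \sup_u|f(uw)-f(u)| \to 0$. This uniform control of supports is the step I expect to require the most care, and it is exactly where local compactness is used.
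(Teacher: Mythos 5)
The paper states this proposition without proof, citing Hewitt--Ross [15.11], so there is no in-paper argument to compare against. Your proof is correct and is essentially the standard one found in the cited reference: right translation of the Haar functional is again a left-invariant positive functional, uniqueness gives the scalar $\Delta(x)$, associativity gives the homomorphism property, and continuity follows from the right uniform continuity of a fixed $f\in C_c(G)$ together with the uniform support bound $\supp(f)\,N$ for $x$ in a compact neighborhood $N$ of $x_0$. All the steps, including the identification of the Riesz--Kakutani measure of $I_x$ with $E\mapsto|Ex|$ and the conversion of the sup-norm estimate into an $L_1$ estimate, are sound.
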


\begin{prop}[{\cite[15.14]{HR1}}]\label{f to fdag}
For $f\in C_c(G)$, let $f^\dag(x) = f(x^{-1}) \Delta(x^{-1})$.
Then the Haar integral of $f^\dag$ equals the Haar integral of $f$.
\end{prop}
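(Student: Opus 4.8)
The plan is to recognize the map $J(f) := \int_G f^\dag(x)\dd{x}$ as a second left-invariant integral on $C_c(G)$, invoke the uniqueness clause of the Haar integral to conclude $J = cI$ for a constant $c > 0$, and then pin down $c = 1$ by exploiting that $f \mapsto f^\dag$ is an involution. First I would check that $J$ is a well-defined, nontrivial, positive linear functional: since inversion is a homeomorphism of $G$ and $\Delta$ is continuous, $f^\dag$ again lies in $C_c(G)$; positivity is immediate because $\Delta > 0$, and $J$ is nonzero on any nonzero $f \geq 0$. These points are routine.

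The substantive step is left-invariance of $J$. The only tool with real content here is the defining relation $\abs{Ex} = \abs{E}\Delta(x)$, which (by approximating with simple functions) upgrades to the right-translation scaling $\int_G h(xa)\dd{x} = \Delta(a^{-1})\int_G h(x)\dd{x}$ for $h \in C_c(G)$. Using $l_a f(y) = f(a^{-1}y)$ together with the identity $a(xa)^{-1} = x^{-1}$ to factor $\Delta(x^{-1}) = \Delta(a)\Delta((xa)^{-1})$, I would compute
\[ J(l_a f) = \int_G f(a^{-1}x^{-1})\Delta(x^{-1})\dd{x} = \Delta(a)\int_G f^\dag(xa)\dd{x} = \Delta(a)\,\Delta(a^{-1})\,J(f) = J(f). \]
Correctly tracking the modular factors through this change of variables is the main obstacle; once it is set up, the cancellation $\Delta(a)\Delta(a^{-1}) = 1$ does all the work.

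With left-invariance in hand, the uniqueness of the Haar integral forces $J = cI$ for some $c > 0$, that is, $\int_G f^\dag\dd{x} = c\int_G f\dd{x}$ for all $f \in C_c(G)$. To finish, I would observe that $f \mapsto f^\dag$ is an involution, since
\[ (f^\dag)^\dag(x) = f^\dag(x^{-1})\Delta(x^{-1}) = f(x)\Delta(x)\Delta(x^{-1}) = f(x). \]
Applying the scaling relation once to $f$ and once to $f^\dag$ then gives $\int_G f\dd{x} = \int_G (f^\dag)^\dag\dd{x} = c\int_G f^\dag\dd{x} = c^2\int_G f\dd{x}$, so $c^2 = 1$; since $c > 0$ we get $c = 1$, which is exactly the asserted equality $\int_G f^\dag = \int_G f$. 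The $c^2 = 1$ trick is the clean finishing touch that avoids having to evaluate either integral explicitly.
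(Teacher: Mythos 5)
Your proof is correct and complete. The paper does not prove this statement at all --- it is quoted directly from Hewitt--Ross \cite[15.14]{HR1} --- so there is no internal argument to compare against, but your route (check that $J(f)=\int_G f^\dag$ is a nontrivial positive left-invariant functional, invoke uniqueness to get $J=cI$, then use the involution $(f^\dag)^\dag=f$ to force $c^2=1$) is the standard one, and the modular factors in the key computation $\Delta(x^{-1})=\Delta(a)\Delta((xa)^{-1})$ together with $\int_G h(xa)\,\text{d}x=\Delta(a^{-1})\int_G h\,\text{d}x$ are tracked correctly. The $c^2=1$ finish is a clean alternative to the more common argument that evaluates $c$ on a symmetric bump function supported near $e$.
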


As usual, $\Lone(G)$ denotes the set of absolutely integrable functions on $G$, modulo equivalence on null sets.
It is easy to check that \cref{f to fdag} remains true when $C_c(G)$ is replaced by $\Lone(G)$.

\section*{\texorpdfstring
{The function spaces $\Linfty(G)$ and $\LUC(G)$}
{The function spaces Linfty(G) and LUC(G)}}


\begin{point}
We want to define $\Linfty(G)$ so that it is the dual space of $\Lone(G)$.
This involves a small subtlety when $G$ is larger than $\sigma$-compact.
For example, suppose $\phi: G \to \Cbb$ is not measurable, but only \textit{locally measurable}.
In other words, $\phi \Onebb_K$ is measurable for each compact $K\subset G$.
If $f\in\Lone(G)$, its support $\{x\in G : f(x) \neq 0\}$ is $\sigma$-compact, hence $\phi f$ is measurable
and $f \mapsto \int_G \phi(x) f(x) \dd{x}$ defines a linear functional on $\Lone(G)$.
We see $\Linfty(G)$ must be allowed to contain certain nonmeasurable functions.
\end{point}

\begin{point}\label{Defn Linfty}
$E\subset G$ is called \textit{locally null} if $|E \cap K| = 0$ for each compact $K\subset G$.
Define $\Linfty(G)$ to be the algebra of bounded locally measurable functions (with pointwise operations),
modulo equivalence on locally null sets.
For $\phi\in\Linfty(G)$, define $\|\phi\|_\infty$ to be the inf of all $c\geq 0$ such that
$\{x\in G : |\phi(x)| > c\}$ is locally null.
Thus defined, $\Linfty(G)$ is (isomorphic to) the dual space of $\Lone(G)$.
See \cite[Section 2.3]{Folland} for details.
\end{point}

\begin{point}
Let $C(G)$ be the Banach algebra of continuous bounded functions on $G$ with supnorm,
and consider the map $\iota: C(G) \to \Linfty(G)$ which sends $\phi$ to its equivalence class modulo locally null sets.
It's not hard to show $\iota$ is an isometry, so we may regard $C(G)$ as a subalgebra of $\Linfty(G)$.
In this context, we define the Banach algebra of left-uniformly continuous functions on $G$ as follows:
\[\LUC(G)
	= \big\{\phi \in \Linfty(G) : \lim_{x\to e}\|\phi - l_x \phi\|_\infty = 0 \big\}.\]
\end{point}

\begin{prop}\label{convolution smooths}
If $f\in \Lone(G)$ and $\phi\in \Linfty(G)$, then $f*\phi \in \LUC(G)$.
\end{prop}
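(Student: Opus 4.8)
The plan is to show that $f*\phi$ is left-uniformly continuous by estimating $\|l_x(f*\phi) - f*\phi\|_\infty$ and pushing the translation onto the $\Lone$ factor $f$, where we have access to strong continuity of translation. The key observation is the associativity relation $l_x(f*\phi) = (l_x f)*\phi$, which holds because convolution intertwines left translation on the first argument. Once this is established, the problem reduces to the continuity of the map $x \mapsto l_x f$ from $G$ into $\Lone(G)$.

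First I would record the pointwise formula $(f*\phi)(y) = \int_G f(t)\phi(t^{-1}y)\dd{t}$ and verify the intertwining identity $l_x(f*\phi) = (l_x f)*\phi$ by a direct change of variables. Then, for the convolution of an $\Lone$ function against an $\Linfty$ function, I would use the basic bound
\[\|g*\phi\|_\infty \leq \|g\|_1 \, \|\phi\|_\infty,\]
which is the standard Young-type estimate and follows immediately from the integral formula together with the definition of the essential sup in \cref{Defn Linfty}. Applying this with $g = l_x f - f$ gives
\[\|l_x(f*\phi) - f*\phi\|_\infty = \|(l_x f - f)*\phi\|_\infty \leq \|l_x f - f\|_1 \, \|\phi\|_\infty.\]

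The remaining step is to invoke the fact that translation is strongly continuous on $\Lone(G)$, i.e.\ $\lim_{x\to e}\|l_x f - f\|_1 = 0$ for every $f\in\Lone(G)$. This is a standard result: it holds trivially for $f\in C_c(G)$ by uniform continuity of compactly supported continuous functions (together with the fact that translates stay in a fixed compact set near $e$), and extends to all of $\Lone(G)$ by density of $C_c(G)$ in $\Lone(G)$ and the isometry of $l_x$ on $\Lone$ in the appropriate sense. Combining this with the displayed inequality yields $\lim_{x\to e}\|l_x(f*\phi) - f*\phi\|_\infty = 0$, which is exactly the defining condition for membership in $\LUC(G)$.

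The main obstacle, and the only point requiring genuine care, is the strong continuity of translation on $\Lone(G)$ in the setting where $G$ may be larger than $\sigma$-compact. The density argument itself is routine, but one must confirm that the $C_c(G)$ approximation and the uniform-continuity estimate behave correctly without any countability hypothesis on $G$; the saving grace is that each individual $f\in\Lone(G)$ has $\sigma$-compact support, so all the analysis localizes to a $\sigma$-compact (indeed essentially compact) portion of $G$ and no pathology from the local-measurability framework of \cref{Defn Linfty} intrudes. Everything else is a matter of assembling standard convolution estimates.
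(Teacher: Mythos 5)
Your proposal is correct and follows essentially the same route as the paper: reduce to the intertwining identity $l_x(f*\phi)=(l_xf)*\phi$, apply the Young-type bound $\|(l_xf-f)*\phi\|_\infty\leq\|l_xf-f\|_1\|\phi\|_\infty$, and conclude from strong continuity of translation on $\Lone(G)$. The only cosmetic difference is that the paper establishes that last fact by approximating with compactly supported simple functions and using $\lim_{x\to e}|K\Sdiff xK|=0$ from regularity of Haar measure, whereas you approximate by $C_c(G)$; both are standard and equivalent.
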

\begin{proof}
By the regularity of Haar measure,
$\lim_{x\to e} |K \Sdiff xK| = 0$ for any compact $K\subset G$.
Compactly supported simple functions are dense in $\Lone(G)$,
so it follows that $\lim_{x\to e} \|f - l_x f\|_1 = 0$.
Now apply the inequality
\[\|f*\phi - l_x(f*\phi)\|_\infty
= \|f*\phi - (l_x f)*\phi\|_\infty
\leq \|f - l_x f\|_1 \cdot \|\phi\|_\infty.
\qedhere\]
\end{proof}

Let $P_1(G) = \{f\in\Lone(G) : f\geq 0 \text{ and } \int_G f = 1\}$.

\begin{prop}\label{amenable-as-discrete implies amenable}
If $G$ is amenable-as-discrete, then it is amenable.
\end{prop}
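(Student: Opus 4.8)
The plan is to manufacture a left-invariant mean on $\Linfty(G)$ out of a left-invariant mean on $\linfty(G)$. So let $m$ be a left-invariant mean on $\linfty(G)$, which exists by hypothesis, and fix any $f\in P_1(G)$. For $\phi\in\Linfty(G)$ the convolution $f*\phi$ lies in $\LUC(G)$ by \cref{convolution smooths}; in particular $f*\phi$ is a genuine bounded continuous function, hence an element of $\linfty(G)$ on which $m$ may be evaluated. I therefore define
\[\mu(\phi) = m(f*\phi).\]
Since $f\geq 0$ with $\int_G f = 1$, we have $f*\phi\geq 0$ whenever $\phi\geq 0$, and $f*1 = 1$; thus $\mu$ is positive and unital, \ie\ a mean on $\Linfty(G)$.

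The heart of the argument is to show that $\mu(\phi)$ does not depend on the choice of $f\in P_1(G)$. The naive approach---writing $f*\phi = \int_G f(z)\, l_z\phi\dd z$ and pulling $m$ through the integral to obtain $\int_G f(z)\,m(l_z\phi)\dd z = m(\phi)\int_G f$---is exactly the step I do not expect to justify directly: a mean is only finitely additive, the map $z\mapsto l_z\phi$ is not norm-continuous into $\linfty(G)$, and Riemann sums of left translates need not converge to $f*\phi$ in any topology that $m$ respects. Instead I would argue by duality. The assignment $f\mapsto m(f*\phi)$ is a bounded linear functional on $\Lone(G)$, so it is represented by some $\psi_\phi\in\Linfty(G) = \Lone(G)^*$, meaning $m(f*\phi) = \int_G f\,\psi_\phi$ for all $f\in\Lone(G)$. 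Using the identity $(l_x f)*\phi = l_x(f*\phi)$ together with the left-invariance of $m$, one computes
\[\int_G (l_x f)\,\psi_\phi = m\big((l_x f)*\phi\big) = m\big(l_x(f*\phi)\big) = m(f*\phi) = \int_G f\,\psi_\phi\]
for every $x\in G$ and $f\in\Lone(G)$; a change of variables rewrites the left-hand side as $\int_G f\,(l_{x^{-1}}\psi_\phi)$, so $l_a\psi_\phi = \psi_\phi$ in $\Linfty(G)$ for all $a\in G$. An essentially left-invariant element of $\Linfty(G)$ is constant almost everywhere, so $\psi_\phi\equiv c_\phi$ and $\mu(\phi) = \int_G f\,\psi_\phi = c_\phi$ for every $f\in P_1(G)$.

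With independence of $f$ in hand, left-invariance of $\mu$ is immediate. Given $x\in G$, a change of variables shows $f*(l_x\phi) = f'*\phi$, where $f'(u) = \Delta(x^{-1}) f(ux^{-1})$ is again a member of $P_1(G)$ (it is nonnegative with $\int_G f' = 1$). Hence
\[\mu(l_x\phi) = m\big(f*(l_x\phi)\big) = m(f'*\phi) = c_\phi = m(f*\phi) = \mu(\phi),\]
the middle equality being precisely the independence just established. Therefore $\mu$ is a left-invariant mean on $\Linfty(G)$, and $G$ is amenable.

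I expect the only real obstacle to be the independence step, and the idea that unlocks it is to stop trying to evaluate $m$ on left translates pointwise and instead recognize $f\mapsto m(f*\phi)$ as a functional on $\Lone(G)$ whose representing function is forced to be translation-invariant, hence constant. Everything else---positivity, unitality, the two change-of-variables computations, and the identity $(l_xf)*\phi=l_x(f*\phi)$---is routine.
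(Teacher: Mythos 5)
Your proof is correct, but it takes a genuinely different --- and more careful --- route than the paper's. The paper also sets $\mu = m\circ F$ with $F(\phi) = f*\phi$, but it disposes of left-invariance in one line by asserting that $F$ is $G$-equivariant, $F(l_x\phi) = l_x F(\phi)$. That is precisely the identity you were right to distrust: with the convention $f*\phi = \int f(z)\, l_z\phi \dd{z}$ one has $l_x(f*\phi) = (l_x f)*\phi$ but $f*(l_x\phi) = (R_x f)*\phi$ (the paper itself records the latter in \cref{Tlim subset Lim}), and these differ for nonabelian $G$; the operator that honestly commutes with left translation is the right-average $\phi\mapsto \phi f$ of \cref{Definition_fT}, not left convolution. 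Your argument closes that gap rather than sidestepping it: realizing $f\mapsto m(f*\phi)$ as an element $\psi_\phi$ of $\Lone(G)^* = \Linfty(G)$, using $(l_xf)*\phi = l_x(f*\phi)$ and the left-invariance of $m$ on $\linfty(G)$ to show $\psi_\phi$ is essentially left-invariant, and invoking the standard (though worth one explicit sentence --- e.g.\ via $g*\psi_\phi = \psi_\phi$ for $g\in P_1(G)$ and \cref{convolution smooths}) fact that such an element is locally a.e.\ constant, you conclude $m(f*\phi)$ is independent of $f\in P_1(G)$; left-invariance then drops out of $f*(l_x\phi) = (R_xf)*\phi$ with $R_xf\in P_1(G)$. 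Beyond repairing the equivariance step, your route buys strictly more: since $f*(g*\phi) = (f*g)*\phi$ with $f*g\in P_1(G)$, the same independence gives $\mu(g*\phi)=\mu(\phi)$ for every $g\in P_1(G)$, i.e.\ $\mu\in\TLIM(G)$ rather than merely $\LIM(G)$. The paper's one-line argument is shorter once the correct averaging operator is substituted, but as written, yours is the one that actually works.
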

\begin{proof}
Suppose $\mu\in\linftyS(G)$ is a left-invariant mean.
Choose $f\in P_1(G)$ and define $F: \Linfty(G) \to \LUC(G)$ by $\phi \mapsto f*\phi$.
It is easy to verify the following properties:
\begin{enumerate}
	\item $F$ is unital, that is $F(1) = 1$.
	\item $F$ is positive, that is $F(\phi) \geq 0$ if $\phi \geq 0$.
	\item $F$ is $G$-equivariant, that is $F(l_x \phi) = l_x F(\phi)$.
\end{enumerate}
It follows that $\mu \circ F$ is a left-invariant mean on $\Linfty(G)$.
\end{proof}

\section*{\texorpdfstring
{The sets $\LIM(G)$ and $\TLIM(G)$}
{The sets LIM(G) and TLIM(G)}}
\begin{point}
Let $M(G)$ denote the set of means on $\Linfty(G)$, and $\LIM(G)$ the set of left-invariant means on $\Linfty(G)$.
Recall that
\[\LIM(G) = \big\{\mu\in M(G) : \big(\forall \phi\in\Linfty(G)\big)\ 
\big(\forall x\in G\big)\ \langle\mu,\, \phi - l_x\phi\rangle = 0 \big\}.\]
The set of \textit{topological left-invariant} means on $\Linfty(G)$ is
\[\TLIM(G) = \big\{\mu\in M(G) : \big(\forall \phi\in\Linfty(G)\big)\ 
\big(\forall f\in P_1(G)\big)\ \langle\mu,\, \phi - f*\phi\rangle = 0 \big\}.\]
For $f\in P_1(G)$, think of $f*\phi$ as the weighted average of left-translates $\int f(x)\, l_x \phi\dd{x}$.
\end{point}

\begin{prop}\label{Prop When G is compact}
When $G$ is compact, the the unique topological left-invariant mean on $G$ is the Haar integral $\mu$.
\end{prop}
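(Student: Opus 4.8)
The plan is to identify the Haar integral $\mu_0(\phi) = \int_G\phi$ as a topological left-invariant mean and then to show, by a single convolution identity, that it is the only one. Throughout I use that compactness normalizes Haar measure so that $|G| = 1$.

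First I would check $\mu_0 \in \TLIM(G)$. It is manifestly a mean: it is positive, and unital since $\int_G 1 = |G| = 1$. For topological left-invariance I would compute, for $f\in P_1(G)$, the integral $\int_G(f*\phi)(y)\dd y = \int_G\int_G f(x)\phi(x^{-1}y)\dd x\dd y$; applying Fubini and then the left-invariance of Haar measure to the inner integral in $y$ gives $\int_G(f*\phi) = \big(\int_G f\big)\big(\int_G\phi\big) = \int_G\phi$, so $\langle\mu_0,\,\phi - f*\phi\rangle = 0$ and $\mu_0$ is a topological left-invariant mean.

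The decisive observation for uniqueness is that compactness places the constant density $f = \Onebb_G$ inside $P_1(G)$, since $\int_G 1 = |G| = 1$. I would then show that $1*\phi$ is a constant function by a direct change of variables: for $z,y\in G$, the substitution $x\mapsto zx$ in $(1*\phi)(zy) = \int_G\phi(x^{-1}zy)\dd x$ collapses, via left-invariance alone, to $\int_G\phi(x^{-1}y)\dd x = (1*\phi)(y)$; taking $y = e$ shows $1*\phi$ is constant. (This is consistent with \cref{convolution smooths}, which already guarantees $1*\phi\in\LUC(G)$.) Now let $\nu\in\TLIM(G)$ be arbitrary. Topological left-invariance against $f = 1$ yields $\nu(\phi) = \nu(1*\phi)$, and since $1*\phi$ is the constant function equal to its own value, this reads $\nu(\phi) = (1*\phi)(e)\cdot\nu(1) = (1*\phi)(e)$, a quantity independent of $\nu$. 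Hence all topological left-invariant means agree, and as $\mu_0$ is one of them by the first step, every $\nu\in\TLIM(G)$ equals $\mu_0$.

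I expect no serious obstacle; the argument is short once one sees that compactness makes $1$ a probability density. The only point demanding a moment's care is the change-of-variables showing $1*\phi$ is constant, and I would emphasize that it uses nothing beyond left-invariance of Haar measure, so that the identification of the common value with $\int_G\phi$ is forced afterward by the membership $\mu_0\in\TLIM(G)$ rather than requiring a separate appeal to unimodularity.
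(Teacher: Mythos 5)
Your proof is correct, but it takes a genuinely different route from the paper's. The paper restricts an arbitrary $\nu\in\TLIM(G)$ to $C_c(G)=C(G)$, notes that this restriction is a nonzero positive left-invariant functional, invokes the Riesz--Kakutani representation theorem together with the uniqueness of Haar measure to conclude $\nu|_{C(G)}=\mu|_{C(G)}$, and then uses $f*\phi\in C(G)$ to transfer this agreement to all of $\Linfty(G)$. You instead exploit the single probability density $\Onebb_G\in P_1(G)$ (available precisely because $|G|=1$) and show by a left-translation change of variables that $\Onebb_G*\phi$ is constant, so that $\nu(\phi)=\nu(\Onebb_G*\phi)=(\Onebb_G*\phi)(e)$ is forced for every $\nu\in\TLIM(G)$; identifying this common value with $\int_G\phi$ then follows from your separate Fubini verification that the Haar integral itself lies in $\TLIM(G)$, neatly sidestepping any appeal to unimodularity. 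Your argument is more self-contained, using nothing beyond left-invariance of Haar measure, whereas the paper's is shorter in context because Riesz--Kakutani and the uniqueness of Haar measure are already standing background results (\cref{Riesz Kakutani}); the paper's version also makes visible the slightly stronger fact that $\nu$ and $\mu$ agree on all of $C(G)$ for structural reasons, while yours isolates the elementary mechanism (a single averaging kernel that flattens every $\phi$ to a constant) that makes compactness decisive. Both proofs are complete; the only step in yours deserving the care you already gave it is the constancy of $\Onebb_G*\phi$, and your computation of $(\Onebb_G*\phi)(zy)=(\Onebb_G*\phi)(y)$ via the substitution $x\mapsto zx$ is valid.
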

\begin{proof}
Pick $\nu\in\TLIM(G)$.
Since $G$ is compact, $1\in C_c(G)$ and \(\nu(1) = 1\).
Thus $\nu|_{C_c(G)}$ is nonzero, and it induces a nonzero left-invariant measure on $G$
via the Riesz-Kakutani representation theorem.
By the uniqueness of Haar measure, we see $\mu|_{C_c(G)} = \nu|_{C_c(G)}$.
Pick $\phi\in\Linfty$ and $f\in P_1(G)$.
Then $f*\phi\in C_c(G)$, so $\mu(\phi) = \mu(f*\phi) = \nu(f*\phi) = \nu(\phi)$.
\end{proof}

\begin{prop}\label{Tlim subset Lim}
$\TLIM(G) \subset \LIM(G)$.
\end{prop}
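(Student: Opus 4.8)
The plan is to unwind the two definitions against each other. Membership $\mu \in \TLIM(G)$ says $\langle\mu,\, f*\phi\rangle = \langle\mu,\,\phi\rangle$ for every $f\in P_1(G)$ and every $\phi\in\Linfty(G)$, while the goal $\mu\in\LIM(G)$ asks for $\langle\mu,\, l_x\phi\rangle = \langle\mu,\,\phi\rangle$ for every $x\in G$ and every $\phi$. So I fix $\mu\in\TLIM(G)$, a point $x\in G$, and $\phi\in\Linfty(G)$, and aim to extract the single equality $\langle\mu,\, l_x\phi\rangle = \langle\mu,\,\phi\rangle$ from convolution-invariance alone.

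The idea is to introduce an auxiliary $f\in P_1(G)$ and apply topological invariance twice. First, since $l_x\phi\in\Linfty(G)$, invariance gives $\langle\mu,\, l_x\phi\rangle = \langle\mu,\, f*(l_x\phi)\rangle$. The crux is then to recognize $f*(l_x\phi)$ as itself a convolution $g*\phi$ for a suitable $g\in P_1(G)$; granting this, a second application of invariance gives $\langle\mu,\, g*\phi\rangle = \langle\mu,\,\phi\rangle$, and the two equalities chain together to finish. The only genuine computation is the identity $f*(l_x\phi) = g*\phi$. I would verify it by expanding $(f*(l_x\phi))(y) = \int_G f(z)\,\phi\big((zx)^{-1}y\big)\dd z$ and substituting $u = zx$. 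Because this is a \emph{right} translation, the modular function enters, and the substitution delivers $g(u) = \Delta(x)^{-1} f(ux^{-1})$, that is $g = \Delta(x)^{-1}\, r_{x^{-1}} f$. The very same modular factor that the change of variables produces is what forces $\int_G g = 1$, so $g$ lands back in $P_1(G)$ and is a legitimate test function.

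The main obstacle is precisely this modular-function bookkeeping. There is a tempting shortcut, namely the cleaner identity $(l_x f)*\phi = l_x(f*\phi)$, which needs no modular factor because $l_x f\in P_1(G)$ directly; but this only shows $\mu$ is invariant on the \emph{range} of convolution (essentially on $\LUC(G)$), since it shifts $l_x$ onto the $f$-slot rather than removing it from the $\phi$-slot. To reach an arbitrary $\phi\in\Linfty(G)$ one is forced into the right-translation substitution above, and the single point requiring care is getting the direction of the $\Delta$-factor correct so that $g$ is genuinely a probability density in $P_1(G)$.
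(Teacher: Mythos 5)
Your argument is exactly the paper's: both apply topological invariance twice via the identity $f*(l_x\phi) = (R_x f)*\phi$, where your $g = \Delta(x)^{-1}\, r_{x^{-1}} f$ is precisely the paper's $R_x f(y) = f(yx^{-1})\Delta(x^{-1})$, and the modular bookkeeping you describe is carried out correctly. No further comment needed.
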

\begin{proof}
Choose any $f\in P_1(G)$, $\phi\in\Linfty(G)$, $x\in G$, $\mu\in\TLIM(G)$. We have
\[f*(l_x \phi) = (R_x f)*\phi\]
where $R_xf \in P_1(G)$ is defined by $R_xf(y) = f(yx^{-1}) \Delta(x^{-1})$.
Hence
\[\langle \mu,\, l_x \phi\rangle = 
\langle \mu,\, f*(l_x \phi)\rangle = 
\langle \mu,\, (R_x f)*\phi\rangle = 
\langle \mu,\, \phi\rangle.
\qedhere\]
\end{proof}

\begin{prop}
If $G$ is discrete, $\TLIM(G) = \LIM(G)$.
\end{prop}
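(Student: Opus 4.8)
The plan is to establish the reverse inclusion $\LIM(G)\subseteq\TLIM(G)$, since the inclusion $\TLIM(G)\subseteq\LIM(G)$ is already proved in \cref{Tlim subset Lim} and holds for every locally compact group. The entire point is that discreteness makes the convolution transparent: when $G$ is discrete, Haar measure is counting measure, so $\Lone(G)=\linfty^*$-style $\ell_1(G)$ and $\Linfty(G)=\linfty(G)$, and for $f\in P_1(G)$ and $\phi\in\Linfty(G)$ the convolution is literally the sum
\[f*\phi=\sum_{x\in G} f(x)\, l_x\phi,\]
which is precisely the ``weighted average of left-translates'' $\int f(x)\,l_x\phi\dd{x}$ mentioned after the definition of $\TLIM(G)$.

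First I would check that this series converges in the norm of $\Linfty(G)$. Because $f\geq 0$ and $\sum_{x\in G} f(x)=1$, the function $f$ is supported on a countable set and the tail of the series over any cofinite index set has $\Linfty$-norm at most $\|\phi\|_\infty$ times the mass of that tail, which tends to $0$. Hence $f*\phi$ is the norm-limit of the finite partial sums $\sum_{x\in F} f(x)\, l_x\phi$ as $F\Subset G$ increases. Then, given $\mu\in\LIM(G)$, I would use that $\mu$ is a bounded (hence norm-continuous) functional, pass it inside the sum by finite linearity, and apply left-invariance $\langle\mu,\, l_x\phi\rangle=\langle\mu,\,\phi\rangle$ term by term:
\[\langle\mu,\, f*\phi\rangle=\sum_{x\in G} f(x)\,\langle\mu,\, l_x\phi\rangle=\Big(\sum_{x\in G}f(x)\Big)\langle\mu,\,\phi\rangle=\langle\mu,\,\phi\rangle.\]
Since $f\in P_1(G)$ and $\phi\in\Linfty(G)$ were arbitrary, $\langle\mu,\,\phi-f*\phi\rangle=0$ always, so $\mu\in\TLIM(G)$, which completes the proof.

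The only genuine subtlety is the norm convergence of $\sum_{x\in G} f(x)\,l_x\phi$ in $\Linfty(G)$, and this is exactly what pins the statement to the discrete case. For a general locally compact group one cannot write $f*\phi=\int f(x)\,l_x\phi\dd{x}$ as a norm-convergent superposition of translates, since $x\mapsto l_x\phi$ need not be norm-continuous on $\Linfty(G)$; this failure is precisely why $\TLIM(G)$ can be a proper subset of $\LIM(G)$ when $G$ is nondiscrete, and why no such interchange argument is available there.
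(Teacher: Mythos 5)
Your proof is correct and is essentially the paper's own argument made explicit: the paper notes that left-translation is convolution by a point-mass and that convex combinations of point-masses are dense in $P_1(G)$, which is exactly your norm-convergent expansion $f*\phi=\sum_{x}f(x)\,l_x\phi$ followed by term-by-term application of $\mu$. No gaps; the only difference is that you spell out the approximation/limit step that the paper leaves implicit.
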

\begin{proof}
Left-translation is equivalent to convolution by a point-mass.
Convex combinations of point-masses are dense in $P_1(G)$, so $\LIM(G) \subset \TLIM(G)$.
\end{proof}

\begin{prop}
Suppose $\mu$ is a left-invariant mean on $\LUC(G)$.
Then $\mu$ is topological left-invariant on $\LUC(G)$.
\end{prop}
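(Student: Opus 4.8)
The plan is to exploit the elementary identity $f*\phi = \int_G f(x)\, l_x\phi \dd{x}$, which exhibits $f*\phi$ as a continuous weighted average of the left-translates $l_x\phi$. Since $\mu$ already annihilates each $\phi - l_x\phi$, one expects $\mu(f*\phi) = \mu(\phi)\int_G f = \mu(\phi)$ by pulling $\mu$ through the integral; the entire content of the proof is to make this interchange legitimate without invoking vector-valued integration.

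First I would record the one structural fact that makes everything work: for $\phi\in\LUC(G)$ the orbit map $x\mapsto l_x\phi$ is norm-continuous from $G$ into $\LUC(G)$. This is immediate from the definition of $\LUC(G)$, since each $l_y$ is a sup-norm isometry and $\|l_x\phi - l_y\phi\|_\infty = \|l_{y^{-1}x}\phi - \phi\|_\infty \to 0$ as $x\to y$, because $y^{-1}x\to e$. This is exactly the property that fails on all of $\Linfty(G)$, and it is what lets us approximate the continuous average $f*\phi$ by a discrete one.

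Next, fix $\phi\in\LUC(G)$, $f\in P_1(G)$, and $\epsilon>0$. Because $f\in\Lone(G)$, choose a compact $K\subset G$ with $\int_{G\setminus K} f < \epsilon$. On $K$ the orbit map is continuous, so its image is compact; covering that image by finitely many $\epsilon$-balls and pulling back yields a finite Borel partition $E_1,\dots,E_n$ of $K$ together with points $x_i$ such that $\|l_x\phi - l_{x_i}\phi\|_\infty < \epsilon$ for all $x\in E_i$. Setting $c_i = \int_{E_i} f \geq 0$, I would estimate $\|f*\phi - \sum_i c_i\, l_{x_i}\phi\|_\infty$ by evaluating $f*\phi$ as an ordinary function and splitting its defining integral over $G\setminus K$ and the $E_i$: the tail contributes at most $\|\phi\|_\infty\int_{G\setminus K}f < \epsilon\|\phi\|_\infty$, and each piece contributes at most $\epsilon c_i$, for a total bound of $\epsilon(\|\phi\|_\infty + 1)$. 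Crucially, this step uses only the scalar Haar integral, sidestepping any Bochner theory.

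Finally I would apply $\mu$. Since $\|\mu\|=1$, the approximation transfers to $|\mu(f*\phi) - \mu(\sum_i c_i\, l_{x_i}\phi)| \leq \epsilon(\|\phi\|_\infty+1)$. By linearity and left-invariance, $\mu(\sum_i c_i\, l_{x_i}\phi) = (\sum_i c_i)\,\mu(\phi)$, and $\sum_i c_i = \int_K f$ lies within $\epsilon$ of $1$, so this differs from $\mu(\phi)$ by at most $\epsilon\|\phi\|_\infty$. Combining the two bounds shows $|\mu(f*\phi) - \mu(\phi)|$ is at most a fixed multiple of $\epsilon$; letting $\epsilon\to 0$ gives $\mu(\phi - f*\phi)=0$, as required. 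I expect the main obstacle to be precisely the approximation step, and the key realization is that norm-continuity of the orbit map on $\LUC(G)$ makes a finite convex combination of translates sup-norm close to $f*\phi$ on any compact set, with the $\Lone$-tail of $f$ absorbed separately.
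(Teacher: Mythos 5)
Your proof is correct and follows essentially the same route as the paper's: both exploit norm-continuity of the orbit map $x\mapsto l_x\phi$ for $\phi\in\LUC(G)$ to partition a compact set carrying most of the mass of $f$, approximate $f*\phi$ in sup-norm by a finite convex combination of left-translates, and then apply left-invariance of $\mu$ to that finite combination. The only cosmetic difference is that the paper first replaces $f$ by a compactly supported function using density in $\Lone(G)$, whereas you keep $f$ and absorb the tail $\int_{G\setminus K}f<\epsilon$ directly into the estimate.
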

\begin{proof}
Choose $\phi\in\LUC(G)$, $f\in P_1(G)$, and $\epsilon > 0$.
Since compactly-supported functions are dense in $\Lone(G)$, we may suppose $f$ vanishes off the compact set $K$.
Let $K = K_1 \sqcup \ldots \sqcup K_n$ be a partition of $K$ into sets $K_j$
such that $\sup_{x,y\in K_j} \|l_x\phi - l_y \phi\|_\infty < \epsilon$,
and choose a point $k_j \in K_j$ for each $j$.
Define
\[\psi = \sum_{j=1}^n \alpha_j (l_{k_j} \phi),
\text{ where } \alpha_j = \int_{Kj} f(x) \dd{x}.\]
Notice that $\langle \mu,\ \phi - \psi\rangle = 0$, since $\mu$ is left-invariant.
On the other hand, we have
\[f*\phi - \psi
= \int_K f(x)\, (l_x\phi - \psi) \dd{x}
= \sum_{j=1}^n \int_{K_j} f(x)\, (l_x\phi- l_{k_j} \phi) \dd{x},\]
which implies $\|f*\phi - \psi\|_\infty < \epsilon$,
hence $|\langle \mu,\, f*\phi - \psi\rangle| = |\langle \mu,\, f*\phi - \phi\rangle| < \epsilon$.
Since $\epsilon$ was arbitrary, we conclude $\langle \mu,\, f*\phi - \phi\rangle = 0$.
\end{proof}

\begin{prop}\label{TlimG is LimLucG}
The restriction map $r: \TLIM(G) \to \LIM(\LUC(G))$ is a surjective isometry.
\end{prop}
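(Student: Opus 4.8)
The plan is to build both the isometry and the surjectivity out of the smoothing operator $F_f\colon\Linfty(G)\to\LUC(G)$ given by $F_f(\phi)=f*\phi$ for a fixed $f\in P_1(G)$; this really lands in $\LUC(G)$ by \cref{convolution smooths}. First I would dispatch well-definedness: if $\mu\in\TLIM(G)$ then $\mu\in\LIM(G)$ by \cref{Tlim subset Lim}, so the restriction $r(\mu)=\mu|_{\LUC(G)}$ is still positive, unital, and satisfies $\mu(\psi-l_x\psi)=0$ for $\psi\in\LUC(G)$; hence $r(\mu)\in\LIM(\LUC(G))$.

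For the isometry, the key observation is that topological invariance lets me replace an arbitrary $\phi\in\Linfty(G)$ by a function of $\LUC(G)$ of no larger norm. Fix $f\in P_1(G)$. For $\mu,\mu'\in\TLIM(G)$ and $\phi\in\Linfty(G)$, topological invariance gives $\mu(\phi)=\mu(f*\phi)$ and $\mu'(\phi)=\mu'(f*\phi)$, while $f*\phi\in\LUC(G)$ with $\|f*\phi\|_\infty\le\|f\|_1\|\phi\|_\infty=\|\phi\|_\infty$. Therefore $|(\mu-\mu')(\phi)|=|(\mu-\mu')(f*\phi)|\le\|r(\mu)-r(\mu')\|\,\|\phi\|_\infty$, and taking the supremum over $\|\phi\|_\infty\le1$ yields $\|\mu-\mu'\|\le\|r(\mu)-r(\mu')\|$. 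The reverse inequality is automatic because $\LUC(G)\subset\Linfty(G)$ isometrically, so $r$ preserves distances (and in particular is injective).

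For surjectivity, take $\nu\in\LIM(\LUC(G))$. By the preceding proposition $\nu$ is already topological left-invariant on $\LUC(G)$, i.e.\ $\nu(g*\psi)=\nu(\psi)$ for $\psi\in\LUC(G)$ and $g\in P_1(G)$. Fix $f\in P_1(G)$ and set $\mu=\nu\circ F_f$; this is clearly a mean. To see $\mu\in\TLIM(G)$ I must prove the \emph{independence lemma}: $\nu(h*\phi)$ does not depend on $h\in P_1(G)$. Writing $\Phi(h)=\nu(h*\phi)$, the estimate $|\nu((h-h')*\phi)|\le\|(h-h')*\phi\|_\infty\le\|h-h'\|_1\|\phi\|_\infty$ makes $\Phi$ Lipschitz for $\|\cdot\|_1$, while topological invariance on $\LUC(G)$ gives $\Phi(a*b)=\nu(a*(b*\phi))=\nu(b*\phi)=\Phi(b)$ for all $a,b\in P_1(G)$. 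Choosing a bounded two-sided approximate identity $\{e_i\}$ for $\Lone(G)$ and setting $b=e_i$ yields $\Phi(a*e_i)=\Phi(e_i)$; since $a*e_i\to a$ in $\Lone(G)$ and $\Phi$ is $\Lone$-continuous, the left side tends to $\Phi(a)$, so $\lim_i\Phi(e_i)=\Phi(a)$ for every $a$, forcing $\Phi$ constant. Given the lemma, $\mu(g*\phi)=\nu((f*g)*\phi)=\nu(f*\phi)=\mu(\phi)$ (using $f*g\in P_1(G)$), so $\mu\in\TLIM(G)$; and for $\psi\in\LUC(G)$ we get $\mu(\psi)=\nu(f*\psi)=\nu(\psi)$, i.e.\ $r(\mu)=\nu$.

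The main obstacle is exactly the independence lemma, and inside it the need for a right (hence two-sided) approximate identity: the relation $\Phi(a*b)=\Phi(b)$ only erases the \emph{left} factor, so a left approximate identity yields nothing, and I must invoke the standard fact that $\Lone(G)$ admits a bounded two-sided approximate identity in order to collapse the first argument. Everything else reduces to routine checks of positivity, unitality, associativity of convolution, and the elementary norm estimate $\|f*\phi\|_\infty\le\|\phi\|_\infty$.
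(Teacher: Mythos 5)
Your proof is correct and takes essentially the same route as the paper's: the isometry comes from $\mu(\phi)=\mu(f*\phi)$ with $f*\phi\in\LUC(G)$ and $\|f*\phi\|_\infty\le\|\phi\|_\infty$, and surjectivity from $\tilde m(\phi)=m(f*\phi)$. The paper's one-line argument silently omits the check that $\tilde m$ is topologically left-invariant on $\Linfty(G)$; your independence lemma, proved via the relation $\Phi(a*b)=\Phi(b)$ together with a two-sided bounded approximate identity in $P_1(G)$ and the $\Lone$-continuity of $\Phi$, is exactly the missing detail and is carried out correctly.
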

\begin{proof}
Choose $f\in P_1(G)$, $\mu\in\TLIM(G)$, and $m\in\LIM(\LUC(G))$.
To see $r$ is an isometry, notice $\mu(\phi) = \mu(f*\phi)$,
where $f*\phi\in\LUC(G)$ and $\|f*\phi\|_\infty \leq \|\phi\|_\infty$.
To see $r$ is surjective, define $\tilde{m}(\phi) = m(f*\phi)$ and notice $r \tilde{m} = m$.
\end{proof}

\begin{cor}\label{TLIMG is nonempty}
If $G$ is amenable, then $\TLIM(G)$ is nonempty.
\end{cor}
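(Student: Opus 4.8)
The plan is to deduce this immediately from \cref{TlimG is LimLucG}, which tells us that the restriction map $r\colon \TLIM(G) \to \LIM(\LUC(G))$ is a surjection. Since a surjection onto a nonempty set must have nonempty domain, it suffices to produce a single left-invariant mean on $\LUC(G)$, i.e.\ to show $\LIM(\LUC(G)) \neq \varnothing$.

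To produce one, I would start from the hypothesis of amenability: by definition $\Linfty(G)$ carries a left-invariant mean $\mu \in \LIM(G)$, and I would simply restrict $\mu$ to the subspace $\LUC(G) \subset \Linfty(G)$. The restriction is again unital and positive, hence a mean on $\LUC(G)$; the only point needing verification is that it remains left-invariant, and for that I first check that $\LUC(G)$ is itself stable under the translations $l_x$. Using $l_x l_y = l_{xy}$ together with the fact that each $l_x$ is a $\|\cdot\|_\infty$-isometry, one computes $\|l_y\phi - l_x(l_y\phi)\|_\infty = \|\phi - l_{y^{-1}xy}\phi\|_\infty$, which tends to $0$ as $x \to e$ (since $y^{-1}xy \to e$); thus $l_y\phi \in \LUC(G)$ whenever $\phi \in \LUC(G)$. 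Left-invariance of $\mu$ on $\Linfty(G)$ then gives $\langle \mu,\ \phi - l_y\phi\rangle = 0$ for all $\phi\in\LUC(G)$ and $y\in G$, so the restriction lies in $\LIM(\LUC(G))$.

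Combining the two steps, $\LIM(\LUC(G))$ is nonempty, and the surjectivity of $r$ forces $\TLIM(G)$ to be nonempty as well. The real content of the corollary lies in \cref{TlimG is LimLucG}, so once that is in hand there is no serious obstacle here; the only item requiring a moment's care is the stability of $\LUC(G)$ under left translation, which is precisely what legitimizes the move of restricting an invariant mean to this subspace.
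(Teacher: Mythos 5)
Your proposal is correct and is essentially the paper's own argument: the paper's one-line proof likewise restricts a left-invariant mean $\mu\in\LIM(G)$ to $\LUC(G)$ and invokes the surjectivity of $r$ from \cref{TlimG is LimLucG}. Your extra verification that $\LUC(G)$ is stable under left translation is accurate and fills in a detail the paper leaves implicit.
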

\begin{proof}
If $\mu\in\LIM(G)$, then $\mu|_{\LUC(G)}$ is in $\LIM(\LUC(G))$.
\end{proof}

\section*{F\o{}lner nets}

\begin{point}
Let $K\subset G$ and $\epsilon > 0$.
A function $f\in P_1(G)$ is said to be $(K,\epsilon)$-invariant if $\|l_x f - f\|_1 < \epsilon$ for each $x\in K$.
A net $\{f_\gamma\} \subset P_1(G)$ is said to \textit{converge to invariance in norm} if
$f_\gamma$ is eventually $(K,\epsilon)$-invariant for each $K\Subset G$ and $\epsilon > 0$.
Equivalently, $\| f_\gamma - l_x f_\gamma\|_1 \to 0$ for each $x\in G$.
\end{point}

\begin{prop}\label{limit of conv to invar is LIM}
If $P_1(G)$ contains a net $\{f_\gamma\}$ converging to invariance in norm,
all its accumulation points are left-invariant means, hence $G$ is amenable.
\end{prop}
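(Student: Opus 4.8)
The plan is to carry everything out in the $w^*$-topology on $\LinftyS(G)$, identifying each $f_\gamma$ with the functional $\phi \mapsto \int_G f_\gamma\,\phi$. First I would check that this identification lands in $\Means(G)$: since $f_\gamma \geq 0$ and $\int_G f_\gamma = 1$, the associated functional is positive and unital, hence a mean. As $\Means(G)$ is a $w^*$-closed subset of the unit ball of $\LinftyS(G)$, it is $w^*$-compact by Banach--Alaoglu; consequently the net $\{f_\gamma\}$ has at least one accumulation point, and every accumulation point again lies in $\Means(G)$. It therefore remains only to verify that an arbitrary accumulation point $\mu$ is left-invariant, i.e.\ $\langle \mu,\, \phi - l_x\phi\rangle = 0$ for every $\phi\in\Linfty(G)$ and $x\in G$.

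The crux is a change of variables that moves a translation off $\phi$ and onto $f_\gamma$. For $f\in P_1(G)$, $\phi\in\Linfty(G)$, and $x\in G$, the substitution $y = xu$ together with left-invariance of Haar measure gives
\[\langle f,\, l_x\phi\rangle = \int_G f(y)\,\phi(x^{-1}y)\dd{y} = \int_G f(xu)\,\phi(u)\dd{u} = \langle l_{x^{-1}}f,\, \phi\rangle,\]
since $f(xu) = (l_{x^{-1}}f)(u)$. Hence $\langle f,\, \phi - l_x\phi\rangle = \langle f - l_{x^{-1}}f,\, \phi\rangle$. Applying this to each $f_\gamma$ and estimating by duality,
\[|\langle f_\gamma,\, \phi - l_x\phi\rangle| \leq \|f_\gamma - l_{x^{-1}}f_\gamma\|_1 \cdot \|\phi\|_\infty,\]
and the right-hand side tends to $0$ because $\{f_\gamma\}$ converges to invariance in norm, the hypothesis being applied at the point $x^{-1}\in G$. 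Thus, for each fixed $\phi$ and $x$, the scalar net $\langle f_\gamma,\, \phi - l_x\phi\rangle$ converges to $0$.

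To conclude, I would transfer this scalar convergence to $\mu$ using that $\nu \mapsto \langle \nu,\, \phi - l_x\phi\rangle$ is $w^*$-continuous. If $\langle \mu,\, \phi - l_x\phi\rangle$ were some $\delta \neq 0$, then the $w^*$-neighborhood of $\mu$ on which this functional stays within $|\delta|/2$ of $\delta$ would miss an entire tail of $\{f_\gamma\}$ (the scalars eventually have modulus below $|\delta|/2$), contradicting that $\mu$ is an accumulation point. Hence $\langle \mu,\, \phi - l_x\phi\rangle = 0$ for all $\phi$ and $x$, so $\mu\in\LIM(G)$; since such $\mu$ exists, $G$ is amenable.

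I expect the only delicate points to be getting the direction of the translation right in the change of variables --- confirming that the translate $l_{x^{-1}}f_\gamma$ landing on $f_\gamma$ is still governed by the convergence-to-invariance hypothesis (which it is, as that hypothesis quantifies over all of $G$) --- and the care needed to pass from convergence of the scalar net to the value at $\mu$, rather than tacitly assuming the net of means itself converges. Both are routine once set up, so the argument should be brief.
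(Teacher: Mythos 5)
Your proof is correct and follows essentially the same route as the paper's: compactness of $M(G)$ gives accumulation points, the duality identity $\langle f_\gamma,\, \phi - l_x\phi\rangle = \langle f_\gamma - l_{x^{-1}}f_\gamma,\, \phi\rangle$ moves the translation onto $f_\gamma$ (the paper writes this as $L_x f_\gamma = l_{x^{-1}} f_\gamma$ and likewise invokes $(\{x^{-1}\}, \epsilon/\|\phi\|_\infty)$-invariance), and the final step is the standard accumulation-point argument. The only cosmetic difference is that you test against the single function $\phi - l_x\phi$ and argue by contradiction, while the paper uses a neighborhood defined by $\phi$ and $l_x\phi$ separately and a $3\epsilon$ triangle-inequality estimate.
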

\begin{proof}
Associate $P_1(G)$ with its image in $M(G)$ under the canonical embedding.
$M(G)$ is compact Hausdorff, hence $\{f_\gamma\}$ has an accumulation point $\mu\in M(G)$.
We must show $\mu$ is left-invariant.

Choose $x\in G$, $\phi\in\Linfty(G)$, and $\epsilon > 0$.
Define
\[U = \{\nu\in M(G) : |\langle \nu-\mu,\, \phi\rangle| < \epsilon \text{ and }
|\langle \nu - \mu,\, l_x \phi\rangle| < \epsilon\},\]
which is a $w^*$-neighborhood of $\mu$.
Let $\beta\in\Gamma$ be large enough that $f_\gamma$
is $(\{x^{-1}\}, \nicefrac{\epsilon}{\|\phi\|_\infty})$-invariant for each $\gamma \succ \beta$, hence
\[|\langle f_\gamma,\, \phi - l_x \phi\rangle|
= |\langle f_\gamma - L_x f_\gamma,\, \phi\rangle|
\leq \|f_\gamma - L_x f_\gamma\|_1\cdot \|\phi\|_\infty < \epsilon.\]
By definition of accumulation point, there exists $f_\gamma \in U$ with $\gamma \succ \beta$.
We have
\[|\langle \mu,\, \phi - l_x \phi \rangle|
\leq |\langle \mu - f_\gamma,\, \phi\rangle|
+ |\langle f_\gamma,\, \phi - l_x \phi\rangle|
+ |\langle f_\gamma - \mu,\, l_x\phi\rangle| < 3\epsilon.\]
Since $\epsilon$ was arbitrary, we conclude $\langle \mu,\, \phi - l_x\phi\rangle = 0$.
\end{proof}

\begin{point}
Conversely, Day \cite[Theorem 1]{Day_OG} showed that every amenable group $G$ has a net $\{f_\gamma\}\subset P_1(G)$
converging to invariance in norm.
Implicit in his proof is a stronger result:
Every $\mu\in\LIM(G)$ is the limit of such a net.
This is our \cref{Days theorem}.
\end{point}

\begin{lem}[{\cite[Theorems 3.4 and 3.10]{rudinBook}}]\label{Lemma Hahn Banach}
Let $X$ be a Banach space, and suppose $A, B\subset X^*$ are disjoint convex sets, compact in the $w^*$-topology.
There exists $T\in X$ that separates $A$ and $B$, in the sense
	$\inf_{a\in A}\Re\langle a, T\rangle > \sup_{b\in B}\Re\langle b, T\rangle$.
\end{lem}

\begin{lem}\label{P1(G) is dense in M(G)}
$P_1(G)$ is dense in $M(G)$.
\end{lem}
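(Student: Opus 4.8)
The plan is to argue by the Hahn--Banach separation theorem (\cref{Lemma Hahn Banach}), viewing $P_1(G)$ as a convex subset of the $w^*$-compact convex set $M(G)\subset\LinftyS(G)$. First I would note that $P_1(G)$ is convex, since a convex combination of nonnegative densities of integral one is again such a density, and that its $w^*$-closure $B$ is contained in $M(G)$, hence is itself convex and $w^*$-compact (as $M(G)$ is $w^*$-compact by Banach--Alaoglu). Suppose, for contradiction, that $B\neq M(G)$, and pick $\mu_0\in M(G)\setminus B$. Applying \cref{Lemma Hahn Banach} with $X=\Linfty(G)$, $A=\{\mu_0\}$, and the compact convex set $B$, I obtain some $T\in\Linfty(G)$ with
\[
\Re\langle \mu_0,\, T\rangle > \sup_{b\in B} \Re\langle b,\, T\rangle \geq \sup_{f\in P_1(G)} \Re\langle f,\, T\rangle .
\]

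The crux is then to identify both sides. Under the canonical embedding $\Lone(G)\to\LinftyS(G)$ the pairing $\langle f, T\rangle$ is just $\int_G f T$, so I would show
\[
\sup_{f\in P_1(G)} \Re\langle f,\, T\rangle = \operatorname{ess\,sup}_{x}\, \Re T(x),
\]
the supremum being essential with respect to locally null sets (as in \cref{Defn Linfty}). The inequality ``$\leq$'' is immediate from $f\geq 0$ and $\int_G f = 1$; for ``$\geq$'', given $\epsilon>0$ the set $\{x:\Re T(x) > \operatorname{ess\,sup}\Re T - \epsilon\}$ is not locally null, so it meets some compact set in a subset of positive finite Haar measure, and normalizing the indicator of that subset yields an $f\in P_1(G)$ with $\Re\langle f, T\rangle > \operatorname{ess\,sup}\Re T - \epsilon$. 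On the other side, because $\mu_0$ is positive and unital one has $\Re\langle\mu_0, T\rangle = \langle\mu_0,\,\Re T\rangle \leq \operatorname{ess\,sup}\Re T$: indeed $c-\Re T\geq 0$ in $\Linfty(G)$ whenever $c\geq\operatorname{ess\,sup}\Re T$, so positivity forces $\langle\mu_0,\,\Re T\rangle\leq c$. Combining these two facts contradicts the strict separating inequality, whence $B=M(G)$, which is the claim.

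The main obstacle I anticipate is the bookkeeping around the local-null equivalence used to define $\Linfty(G)$: the essential supremum must be taken modulo locally null sets rather than null sets, and the step producing an admissible $f\in P_1(G)$ requires extracting a set of positive finite measure from a set that is merely \emph{not} locally null. This is precisely where the regularity of Haar measure and the definition of ``locally null'' enter, but it presents no real difficulty once phrased carefully. Everything else reduces to the positivity and unitality of means together with the elementary estimate $\int_G f T \leq \operatorname{ess\,sup}\Re T$.
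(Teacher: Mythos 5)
Your proposal is correct and follows essentially the same route as the paper's proof: Hahn--Banach separation of $\mu_0$ from $\cl(P_1(G))$, reduction to the real part, and the observation that positivity and unitality force $\Re\langle\mu_0,T\rangle\leq\operatorname{ess\,sup}\Re T=\sup_{f\in P_1(G)}\Re\langle f,T\rangle$ (the paper phrases this by shifting to $\psi=\phi_1+\|\phi_1\|_\infty\geq 0$ and comparing with $\|\psi\|_\infty$, which is the same estimate). Your extra care about locally null sets when extracting an $f\in P_1(G)$ of positive finite support is a welcome detail the paper leaves implicit.
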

\begin{proof}
Suppose $\mu\in M(G) \setminus \cl(P_1(G))$.
By \cref{Lemma Hahn Banach}, there exists $\phi\in\Linfty(G)$ separating the sets $\{\mu\}$ and $\cl(P_1(G))$.
Suppose $\phi$ decomposes into real and imaginary parts as $\phi_1 + i \phi_2$.
Let $\psi = \phi_1 + \|\phi_1\|_\infty$, so that $\psi \geq 0$ satisfies
\[\textstyle \langle \mu,\, \psi\rangle > \sup_{f\in P_1(G)} \langle f,\, \psi\rangle.\]
This contradicts
\[\textstyle \langle \mu,\, \psi\rangle \leq \|\mu\| \cdot \|\psi\|_\infty
= \|\psi\|_\infty = \sup_{f\in P_1(G)} \langle f,\, \psi\rangle.
\qedhere\]
\end{proof}

\begin{lem}[{\cite[Theorem 3.12]{rudinBook}}]\label{norm and weak closures coincide}
Let $E\subset X$ be a convex subset of a Banach space.
Then the weak and norm closures of $E$ coincide.
\end{lem}

\begin{lem}[{\cite[17.13]{kelleyNamioka}}]\label{product of weak topologies}
Let $X, Y$ be Banach spaces. Then the weak topology on $X\times Y$ is the product of the weak topologies.
\end{lem}

\begin{point}
For $\mu\in M(G)$, define $l_x \mu$ by
\[\langle l_x\mu,\, \phi\rangle = \langle \mu,\, L_x \phi\rangle \text{ for }\phi\in\Linfty(G).\]
Notice that $\mu \mapsto l_x \mu$ is continuous in the $w^*$-topology.
Let $\tau: P_1(G) \to M(G)$ denote the canonical embedding.
It is easy to check that
\[\tau(l_x f) = l_x (\tau f) \text{ for }x\in G \text{ and } f\in P_1(G).\]
Usually we associate $P_1(G)$ with its image under $\tau$, but we maintain a distinction in the following proof.
\end{point}

\begin{prop}[Day]\label{Days theorem}
Suppose $G$ is amenable.
Let $U$ be a $w^*$-neighborhood of $\mu \in \LIM(G)$, $K\Subset G$, and $\epsilon > 0$.
There exists a $(K,\epsilon)$-invariant $f\in \tau^{-1}(U)$.
\end{prop}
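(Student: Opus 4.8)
The plan is to realize the two requirements on $f$—that its induced mean be $w^*$-close to $\mu$, and that the translates $l_xf-f$ be small in $\Lone$-norm—as a single statement about a convex set, and then to use Mazur's theorem to upgrade an accessible weak* density fact into the desired norm estimate. Write $K=\{x_1,\dots,x_n\}$ and, shrinking $U$ if necessary, assume $U$ is a convex $w^*$-neighborhood of $\mu$. Work in the Banach space $\Lone(G)^n$ (with the maximum of the coordinate norms) and consider the bounded linear map
\[
\Phi:\Lone(G)\to \Lone(G)^n,\qquad \Phi(f)=(l_{x_1}f-f,\dots,l_{x_n}f-f).
\]
Since $\tau$ and $\Phi$ are linear and $P_1(G)$, $U$ are convex, the image
\[
W=\Phi\big(\tau^{-1}(U)\cap P_1(G)\big)\subset \Lone(G)^n
\]
is convex. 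The whole statement reduces to the single claim that $0$ lies in the norm-closure of $W$: any $f\in\tau^{-1}(U)\cap P_1(G)$ with $\|\Phi(f)\|<\epsilon$ is precisely a $(K,\epsilon)$-invariant element of $\tau^{-1}(U)$.

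To prove $0\in\overline{W}^{\,\|\cdot\|}$, I would instead verify $0\in\overline{W}^{\,w}$ and invoke \cref{norm and weak closures coincide}, which identifies the two closures for the convex set $W$; here the weak topology on $\Lone(G)^n$ is the product of the coordinate weak topologies by \cref{product of weak topologies}. Suppose, for contradiction, that $0\notin\overline{W}^{\,\|\cdot\|}$. The Hahn–Banach separation theorem then yields a bounded functional on $\Lone(G)^n$—that is, a tuple $\Psi=(\psi_1,\dots,\psi_n)\in\Linfty(G)^n$—and a constant $c>0$ with $\Re\langle \Phi(f),\Psi\rangle\ge c$ for every $f\in\tau^{-1}(U)\cap P_1(G)$. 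A change of variables in the left-invariant Haar integral rewrites this pairing as $\Re\langle f,\Theta\rangle\ge c$, where $\Theta=\sum_{i}(L_{x_i}\psi_i-\psi_i)\in\Linfty(G)$.

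The contradiction comes from the left-invariance of $\mu$ together with the weak* density of $P_1(G)$. Because $\mu$ is left-invariant, $\langle\mu,\,L_{x_i}\psi_i-\psi_i\rangle=0$ for each $i$, so $\langle\mu,\Theta\rangle=0$; hence $V=U\cap\{\nu\in M(G):|\langle\nu,\Theta\rangle|<c\}$ is a $w^*$-neighborhood of $\mu$. By \cref{P1(G) is dense in M(G)} there is some $f\in P_1(G)$ with $\tau(f)\in V\subseteq U$, and for this $f$ we have both $f\in\tau^{-1}(U)\cap P_1(G)$ and $\Re\langle f,\Theta\rangle\le|\langle f,\Theta\rangle|<c$, contradicting the separating inequality. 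Therefore $0\in\overline{W}^{\,\|\cdot\|}$, which furnishes the required $f$.

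I expect the main obstacle to be exactly this passage from weak* information to norm invariance: density of $P_1(G)$ only produces functions whose induced means are $w^*$-close to $\mu$, and this says nothing directly about the $\Lone$-size of their translates. The device that overcomes it is to package invariance as membership of $0$ in the convex set $W$ and let \cref{norm and weak closures coincide} trade the reachable weak closure for the needed norm closure; the delicate point is getting the separating functional $\Psi$ to collapse, via the change of variables and the left-invariance of $\mu$, to a single element $\Theta$ that $\mu$ annihilates, since that is what makes the density step contradict the separation.
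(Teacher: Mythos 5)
Your proof is correct and follows essentially the same route as the paper's: both arguments package the translates over $K$ into the product space $\Lone(G)^n$, use the $w^*$-density of $P_1(G)$ in $M(G)$ (\cref{P1(G) is dense in M(G)}) together with convexity and Mazur's theorem (\cref{norm and weak closures coincide}, \cref{product of weak topologies}) to pass from weak to norm closure. The only cosmetic difference is that the paper exhibits a net in $W$ converging weakly to $0$ directly, whereas you rule out a separating functional by Hahn--Banach --- two dual formulations of the same weak-closure fact.
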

\begin{proof}
By \cref{P1(G) is dense in M(G)}, there exists a net $\{f_\alpha\}\subset P_1(G)$
such that $\tau f_\alpha \overset{w^*}{\to} \mu$.
For any $x\in G$, we have $(\tau f_\alpha - l_x \tau f_\alpha) \overset{w^*}{\to} 0$,
hence $(f_\alpha - l_x f_\alpha) \to 0$ weakly.
By \cref{product of weak topologies}, we see
$(f - l_x f)_{x\in K} \to 0$ weakly in $\prod_{x \in K} \Lone(G)$.
This shows $0$ is in the weak-closure of the convex set
$\big\{(f - l_x f)_{x\in K} : f\in \tau^{-1}(U)\big\}$,
which coincides with the norm-closure by \cref{norm and weak closures coincide}.
Hence there exists $f\in \tau^{-1}(U)$ with $\|f - l_x f\|_1 < \epsilon$ for each $x\in K$.
\end{proof}

\begin{point}
If $F\subset G$ has finite positive measure, define $\mu_F = \Onebb_F / |F|$.
A set $F\subset G$ is said to be $(K,\epsilon)$-invariant if $\mu_F$ is.
A net $\{F_\gamma\}$ of subsets of $G$ is called a \textit{F\o{}lner net}
if it is eventually $(K,\epsilon)$-invariant for each $K\Subset G$ and $\epsilon > 0$.
The following theorem shows that every amenable group $G$ admits a F\o{}lner net.
This generalizes the discrete result, \cref{discrete Folner condition}.
\end{point}

\begin{prop}[{\cite[Theorem 3.5]{NamiokaFolner}}]\label{exists pointwise folner net}
Suppose $G$ is amenable.
For each $K\Subset G$ and $\epsilon > 0$, there exists a $(K,\epsilon)$-invariant set $F \subset G$.
\end{prop}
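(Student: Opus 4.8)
The plan is to produce the desired set as a superlevel set of a function that is nearly invariant in the $L_1$-norm, using a layer-cake (coarea) decomposition to pass from invariance of a \emph{function} to invariance of a \emph{set}. Since $G$ is amenable, $\LIM(G)$ is nonempty, so applying \cref{Days theorem} with the trivial neighborhood $U = M(G)$ yields, for any prescribed $\delta > 0$, a $(K,\delta)$-invariant function $f \in P_1(G)$; that is, $\|l_x f - f\|_1 < \delta$ for each $x \in K$. I will fix $\delta = \epsilon / \card{K}$, so that $\sum_{x\in K} \|l_x f - f\|_1 < \epsilon$. The goal is then to locate a level of $f$ that inherits this small total defect.

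The key tool is the layer-cake formula. For $t > 0$ write $F_t = \{y\in G : f(y) > t\}$; each $F_t$ has finite measure by Chebyshev's inequality, and $\int_0^\infty |F_t| \dd{t} = \|f\|_1 = 1$. For nonnegative reals $a,b$ one has the elementary identity $|a - b| = \int_0^\infty |\Onebb_{(t,\infty)}(a) - \Onebb_{(t,\infty)}(b)| \dd{t}$. Applying this pointwise with $a = f(x^{-1}y)$ and $b = f(y)$, noting that $\Onebb_{F_t}(x^{-1}y) = \Onebb_{xF_t}(y)$, then integrating over $y$ and invoking Tonelli's theorem (the integrand is nonnegative) gives, for each $x \in K$,
\[\|l_x f - f\|_1 = \int_0^\infty |x F_t \Sdiff F_t| \dd{t}.\]

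Now I average over the levels. Summing the previous identity over $x \in K$ and subtracting $\epsilon \int_0^\infty |F_t| \dd{t} = \epsilon$ yields
\[\int_0^\infty \Big( \sum_{x\in K} |x F_t \Sdiff F_t| - \epsilon |F_t| \Big) \dd{t} = \sum_{x\in K} \|l_x f - f\|_1 - \epsilon < 0.\]
Hence the integrand is strictly negative on a set of $t$ of positive measure. Fix any such $t$: we must have $|F_t| > 0$ (otherwise the integrand vanishes there), and $\sum_{x\in K} |x F_t \Sdiff F_t| < \epsilon |F_t|$. Since every summand is nonnegative, each one satisfies $|x F_t \Sdiff F_t| < \epsilon |F_t|$. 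Setting $F = F_t$, which has finite positive measure, we get $\|l_x \mu_F - \mu_F\|_1 = |xF \Sdiff F| / |F| < \epsilon$ for all $x \in K$, i.e. $F$ is $(K,\epsilon)$-invariant.

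The argument has no genuinely hard analytic step; the content is entirely in the layer-cake identity that converts $L_1$-invariance into an integral of set-defects, together with the averaging observation. The only points requiring care are bookkeeping ones: keeping $f \in L_1$ so that all superlevel sets have finite measure, justifying the Tonelli interchange, and discarding the levels with $|F_t| = 0$ when selecting $t$. The main obstacle, if any, is simply ensuring that the extracted level is nondegenerate, which the strict inequality in the averaging step guarantees.
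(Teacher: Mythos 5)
Your proof is correct and is essentially the paper's own argument (Namioka's layer-cake averaging): obtain a $(K,\epsilon/\card{K})$-invariant $f\in P_1(G)$ from \cref{Days theorem}, convert $\|l_xf-f\|_1$ into an integral of symmetric-difference defects of superlevel sets, and average over $x\in K$ to extract a good level. The only difference is that you use the continuous coarea identity $\|l_xf-f\|_1=\int_0^\infty|xF_t\Sdiff F_t|\dd{t}$ directly, whereas the paper first replaces $f$ by a nearby simple function and runs the same computation as a finite sum; this is a cosmetic variation, and your handling of the degenerate levels $|F_t|=0$ is sound.
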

\begin{proof}
Let $\delta = \epsilon / \#(K)$.
By \cref{Days theorem}, there exists $f\in P_1(G)$ which is $(K, \nicefrac{\delta}{3})$-invariant.
Let $g\in P_1(G)$ be a simple function with $\|g - f\|_1 < \nicefrac{\delta}{3}$.
It follows that $g$ is $(K,\delta)$-invariant.
Express $g$ in ``layer cake'' form as
\[g = \sum_{i=1}^n c_i \mu_{A_i}
\text{ where }
c_i > 0 \text{ and }
A_1 \subset A_2 \subset \ldots \subset A_n.\]
At any given point, the functions $\{\Onebb_{A_i} - \Onebb_{x A_i}\}_{i=1}^n$ all have the same sign, hence
\[|g - l_x g|
= \sum_{i=1}^n c_i |\Onebb_{A_i} - \Onebb_{x A_i}| / |A_i|
= \sum_{i=1}^n c_i \Onebb_{(A_i \Sdiff x A_i)} / |A_i|\ \ \text{for each }x\in G.\]
In particular,
\[\tfrac{\epsilon}{\#(K)} = \delta
> \|g - l_x g\|_1 = \sum_{i=1}^n c_i |A_i \Sdiff x A_i| / |A_i|\ \ \text{for each } x \in K.\]
Summing over $x\in K$, we have
\[\epsilon > \sum_{n=1}^n c_i \sum_{x\in K} |A_i \Sdiff xA_i| / |A_i|.\]
Since $\sum_{i=1}^n c_i = 1$, there exists some $i$ with
\[\epsilon > \sum_{x\in K} |A_i \Sdiff xA_i| / |A_i|.\]
In particular, $A_i$ is $(K,\epsilon)$-invariant.
\end{proof}

\section*{Topological F\o{}lner nets}

\begin{defn}\label{weaker TI net defn}
A net $\{f_\gamma\} \subset P_1(G)$ is called a \textit{(left) TI-net}
it is eventually $(K,\epsilon)$-invariant for each compact $K\subset G$ and $\epsilon > 0$.
Equivalently, $\|f_\gamma - l_x f_\gamma\|_1 \to 0$ uniformly on compact sets.
Hulanicki \cite{Hulanicki1966}
showed that $G$ admits a TI-net net if it is amenable.\footnote{
	The existence of a TI-net on $G$ is known as ``Reiter's condition.''
	As far back as \cite[\pp 407]{Reiter52},
	Reiter showed how to construct a TI-net when $G$ is locally compact abelian,
	and he discussed the applications of TI-nets in various other papers.
}
Implicit in his proof is our \cref{every tlim is limit of top net}:
Every $\mu\in\TLIM(G)$ is the limit of a TI-net.

A net $\{F_\gamma\}$ of subsets of $G$ is called a \textit{topological F\o{}lner net}
if it is eventually $(K,\epsilon)$-invariant for each compact $K\subset G$ and $\epsilon > 0$.
\cref{exists topological folner net} shows every amenable $G$ admits a topological F\o{}lner net.
\end{defn}

\begin{prop}\label{limit of TI-net}
Every accumulation point of a TI-net is a topological left-invariant mean.
\end{prop}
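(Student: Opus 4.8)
The plan is to mirror the proof of \cref{limit of conv to invar is LIM}, replacing the translate $l_x\phi$ by the convolution $g*\phi$ and the norm-invariance $\|f_\gamma - l_x f_\gamma\|_1 \to 0$ by a suitable convolution-invariance. Let $\{f_\gamma\}$ be the TI-net, regarded via the canonical embedding as a net in the compact Hausdorff space $M(G)$, and let $\mu$ be one of its accumulation points. I must show $\langle\mu,\,\phi - g*\phi\rangle = 0$ for every $\phi\in\Linfty(G)$ and every $g\in P_1(G)$.

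First I would record the pre-adjoint of left convolution: for $h, g\in\Lone(G)$ and $\phi\in\Linfty(G)$, a Fubini computation followed by the inversion substitution $x\mapsto x^{-1}$ yields
\[\langle h,\, g*\phi\rangle = \langle g^\dag * h,\, \phi\rangle,\]
where $g^\dag(x) = g(x^{-1})\Delta(x^{-1})$. Taking $h = f_\gamma$ gives the key identity $\langle f_\gamma,\, \phi - g*\phi\rangle = \langle f_\gamma - g^\dag * f_\gamma,\, \phi\rangle$, whence
\[|\langle f_\gamma,\, \phi - g*\phi\rangle| \le \|f_\gamma - g^\dag * f_\gamma\|_1\,\|\phi\|_\infty.\]
Note that $g^\dag\in P_1(G)$ by \cref{f to fdag}, since $g^\dag\geq 0$ and $\int g^\dag = \int g = 1$.

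The heart of the argument is to show $\|f_\gamma - g^\dag * f_\gamma\|_1 \to 0$. Writing $g^\dag * f_\gamma = \int_G g^\dag(x)\, l_x f_\gamma \dd{x}$ and using $\int g^\dag = 1$, the triangle inequality gives
\[\|f_\gamma - g^\dag * f_\gamma\|_1 \le \int_G g^\dag(x)\,\|f_\gamma - l_x f_\gamma\|_1 \dd{x}.\]
Here the TI-net hypothesis must be handled with care: it supplies convergence to invariance \emph{only uniformly on compact sets}, whereas $g^\dag$ need not be compactly supported. I would split the integral at a compact $K$ chosen so that $\int_{G\setminus K} g^\dag < \epsilon$. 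On $K$ the quantity $\sup_{x\in K}\|f_\gamma - l_x f_\gamma\|_1$ tends to $0$, so that piece vanishes in the limit; off $K$ the crude bound $\|f_\gamma - l_x f_\gamma\|_1 \le 2$ controls the tail by $2\epsilon$. Since $\epsilon$ was arbitrary, $\|f_\gamma - g^\dag * f_\gamma\|_1 \to 0$. This tail estimate is the main obstacle, being the only place where the gap between ``uniform on compact sets'' and ``uniform over all of $P_1(G)$'' must be reconciled.

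Finally I would run the accumulation-point argument exactly as in \cref{limit of conv to invar is LIM}. Given $\epsilon>0$, the set
\[U = \{\nu\in M(G) : |\langle\nu-\mu,\,\phi\rangle| < \epsilon \text{ and } |\langle\nu-\mu,\,g*\phi\rangle| < \epsilon\}\]
is a $w^*$-neighborhood of $\mu$; choosing $\beta$ large enough that $\|f_\gamma - g^\dag*f_\gamma\|_1\,\|\phi\|_\infty < \epsilon$ for $\gamma\succ\beta$, the definition of accumulation point produces $f_\gamma\in U$ with $\gamma\succ\beta$. The triangle inequality then bounds $|\langle\mu,\,\phi - g*\phi\rangle|$ by $|\langle\mu - f_\gamma,\,\phi\rangle| + |\langle f_\gamma,\,\phi - g*\phi\rangle| + |\langle f_\gamma - \mu,\,g*\phi\rangle| < 3\epsilon$. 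Letting $\epsilon\to 0$ yields $\langle\mu,\,\phi - g*\phi\rangle = 0$, so $\mu\in\TLIM(G)$.
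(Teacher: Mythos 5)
Your proposal is correct and follows essentially the same route as the paper, whose entire proof is ``same as \cref{limit of conv to invar is LIM}, but with $p\in P_1(G)$ in place of $x\in G$.'' You have simply written out the details that this substitution requires --- the identity $\langle f_\gamma,\, g*\phi\rangle = \langle g^\dag * f_\gamma,\, \phi\rangle$ and the compact truncation of $g^\dag$ bridging ``uniformly invariant on compact sets'' to $\|f_\gamma - g^\dag * f_\gamma\|_1 \to 0$ --- the latter being exactly the density argument the paper itself uses later in \cref{Lemma lambda_gamma is left TI-net}.
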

\begin{proof}
Same as \cref{limit of conv to invar is LIM}, but with $p\in P_1(G)$ in place of $x\in G$.
\end{proof}

For $f\in \Lone(G)$, let $f^{\dag}(x) = f(x^{-1}) \Delta(x^{-1})$.
Notice that $f\mapsto f^{\dag}$ is an involution of $\Lone(G)$ that sends $P_1(G)$ to itself.
For $\mu\in M(G)$ and $p\in P_1(G)$ define $p* \mu$ by
\[\langle p*\mu,\, \phi\rangle = \langle \mu,\, p^{\dag}* \phi\rangle \text{ for }\phi\in\Linfty(G).\]
Notice that $\mu \mapsto p* \mu$ is continuous in the $w^*$-topology.
Let $\tau: P_1(G) \to M(G)$ denote the canonical embedding.
It is easy to check that
\[\tau(p* f) = p* (\tau f) \text{ for }f, p\in P_1(G).\]

\begin{prop}\label{lemma P1 invariant}
Suppose $G$ is amenable.
Let $U$ be a $w^*$-neighborhood of $\mu \in \TLIM(G)$, $K \Subset P_1(G)$, and $\epsilon > 0$.
There exists $f\in \tau^{-1}(U)$ with $\|f - p*f\|_1 < \epsilon$ for each $p \in K$.
\end{prop}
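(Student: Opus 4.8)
The plan is to transcribe the proof of \cref{Days theorem} almost verbatim, replacing each left-translate $l_x$ by a convolution $p*$ and each point $x\in G$ by a density $p\in P_1(G)$. The one genuinely new ingredient is the identity $p*\mu=\mu$, valid for every $\mu\in\TLIM(G)$ and $p\in P_1(G)$. To establish it, recall that $p*\mu$ is defined by $\langle p*\mu,\,\phi\rangle=\langle\mu,\,p^\dag*\phi\rangle$, and that $f\mapsto f^\dag$ sends $P_1(G)$ into itself. Since $\mu$ is topological left-invariant, $\langle\mu,\,q*\phi\rangle=\langle\mu,\,\phi\rangle$ for every $q\in P_1(G)$ and $\phi\in\Linfty(G)$; taking $q=p^\dag$ collapses the defining formula to $\langle p*\mu,\,\phi\rangle=\langle\mu,\,\phi\rangle$, whence $p*\mu=\mu$.

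Next I would invoke \cref{P1(G) is dense in M(G)} to produce a net $\{f_\alpha\}\subset P_1(G)$ with $\tau f_\alpha\overset{w^*}{\to}\mu$. Because $\nu\mapsto p*\nu$ is $w^*$-continuous (as already noted) and $p*\mu=\mu$, we get $p*\tau f_\alpha\overset{w^*}{\to}\mu$, and therefore $\tau f_\alpha-p*\tau f_\alpha\overset{w^*}{\to}0$ for each fixed $p\in K$. Using $\tau(p*f)=p*\tau f$, this says exactly that $f_\alpha-p*f_\alpha\to0$ weakly in $\Lone(G)$.

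Then I would bundle the finitely many $p\in K$ together. By \cref{product of weak topologies}, iterated over the finite index set $K$, the tuples $(f_\alpha-p*f_\alpha)_{p\in K}$ converge weakly to $0$ in the product Banach space $\prod_{p\in K}\Lone(G)$. Shrinking $U$ to a convex basic $w^*$-neighborhood (harmless, since a smaller neighborhood only strengthens the conclusion), the preimage $\tau^{-1}(U)$ is convex, and so is its image under the affine map $f\mapsto(f-p*f)_{p\in K}$. The previous step shows $0$ lies in the weak closure of this convex set, which by \cref{norm and weak closures coincide} coincides with its norm closure. Hence some $f\in\tau^{-1}(U)$ satisfies $\norm{f-p*f}_1<\epsilon$ simultaneously for all $p\in K$, as required.

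The step I expect to demand the most attention is the opening identity $p*\mu=\mu$: everything downstream is a formal copy of \cref{Days theorem}, but that copy only succeeds because topological left-invariance upgrades to honest fixed-point behavior under the convolution action of $P_1(G)$, and this hinges on the involution $\dag$ returning to $P_1(G)$ so that the defining formula for $p*\mu$ can be simplified.
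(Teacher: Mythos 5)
Your proposal is correct and is essentially the paper's own proof: the paper simply says ``exactly the same as \cref{Days theorem}, but with $p*f$ in place of $l_x f$,'' and your argument is precisely that transcription, with the identity $p*\mu=\mu$ for $\mu\in\TLIM(G)$ (via $p^\dag\in P_1(G)$) correctly supplied as the one point where topological left-invariance enters.
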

\begin{proof}
Exactly the same as \cref{Days theorem}, but with $p*f$ in place of $l_x f$.
\end{proof}

\begin{prop}\label{every tlim is limit of top net}
Suppose $G$ is amenable.
Let $U$ be a $w^*$-neighborhood of $\mu \in \TLIM(G)$, $K \subset G$ be compact, and $\epsilon > 0$.
There exists a $(K,\epsilon)$-invariant $h\in \tau^{-1}(U)$.
\end{prop}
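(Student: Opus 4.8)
The plan is to parallel \cref{exists pointwise folner net}: there, Day's finite and pointwise invariance (\cref{Days theorem}) was the engine; here the engine is the convolution-invariance furnished by \cref{lemma P1 invariant}, and the task is to upgrade approximate invariance under convolution by finitely many densities into approximate translation-invariance that is uniform over the compact set $K$. The bridge is a single smoothing convolution: fix any $\phi\in P_1(G)$ and seek $h$ of the form $h = \phi*f$.

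The key identity is $l_x(\phi*f) = (l_x\phi)*f$, so that $l_x h - h = (l_x\phi - \phi)*f$. First I would observe that $x\mapsto l_x\phi$ is norm-continuous from $G$ into $\Lone(G)$ (the fact underlying \cref{convolution smooths}), so $\{l_x\phi : x\in K\}$ is a compact, hence totally bounded, subset of $P_1(G)$. Choose $x_1,\dots,x_n\in K$ so that every $l_x\phi$ lies within $\epsilon/3$ of some $l_{x_i}\phi$, and set $K' = \{\phi, l_{x_1}\phi,\dots,l_{x_n}\phi\}\Subset P_1(G)$. Because $\mu\in\TLIM(G)$ we have $\phi*\mu = \mu$, and since $\nu\mapsto\phi*\nu$ is $w^*$-continuous, the set $U' = \{\nu\in M(G) : \phi*\nu\in U\}$ is a $w^*$-neighborhood of $\mu$. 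Now apply \cref{lemma P1 invariant} to $U'$, $K'$, and $\epsilon/3$ to obtain $f\in\tau^{-1}(U')$ with $\|f - q*f\|_1 < \epsilon/3$ for every $q\in K'$, and put $h = \phi*f$.

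Two things then need checking, both routine. That $h\in\tau^{-1}(U)$ follows from $\tau(\phi*f) = \phi*\tau f\in\phi*U'\subseteq U$. That $h$ is $(K,\epsilon)$-invariant follows by writing, for $x\in K$ and a nearby $x_i$, the decomposition $(l_x\phi-\phi)*f = (l_x\phi - l_{x_i}\phi)*f + \big((l_{x_i}\phi)*f - f\big) + \big(f - \phi*f\big)$ and bounding each term by $\epsilon/3$, the first via $\|(l_x\phi-l_{x_i}\phi)*f\|_1\le\|l_x\phi - l_{x_i}\phi\|_1$ and the other two directly from the lemma. I expect the only genuine point of care—the main obstacle—to be the passage from the finite set $K'$ back to all of $K$: it is precisely the compactness of $\{l_x\phi : x\in K\}$ in $\Lone(G)$ that turns finitely many convolution estimates into a single uniform translation estimate over $K$, and this is exactly where the distinction between a TI-net and a norm-F\o{}lner net (uniform versus pointwise invariance) is doing its work.
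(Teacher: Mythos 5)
Your proof is correct, and at the top level it follows the same strategy as the paper: fix a density $\phi\in P_1(G)$, obtain $f$ from \cref{lemma P1 invariant} that is nearly invariant under convolution by finitely many elements of $P_1(G)$, and set $h=\phi*f$, using $w^*$-continuity of $\nu\mapsto\phi*\nu$ together with $\phi*\mu=\mu$ to keep $\tau(h)$ inside $U$. Where you diverge is in how the finite set is chosen and how the uniform estimate over $K$ is closed out. The paper covers $K$ by translates $x_1E,\ldots,x_nE$ of a small neighborhood $E$ with $\|\mu_E*g-g\|_1$ small, asks for near-invariance under $\{g,\mu_{x_1E},\ldots,\mu_{x_nE}\}$, and then cites Hulanicki's ``chain of inequalities'' for the final bound. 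You instead use norm-continuity of $x\mapsto l_x\phi$ (the same fact behind \cref{convolution smooths}) to get total boundedness of $\{l_x\phi:x\in K\}$ in $\Lone(G)$, take $K'=\{\phi,l_{x_1}\phi,\ldots,l_{x_n}\phi\}$, and finish with the explicit telescoping
\[
l_xh-h=(l_x\phi-l_{x_i}\phi)*f+\bigl((l_{x_i}\phi)*f-f\bigr)+\bigl(f-\phi*f\bigr),
\]
each term bounded by $\epsilon/3$. Your version is self-contained where the paper defers to an external reference, and the identity $l_x(\phi*f)=(l_x\phi)*f$ makes transparent exactly why convolution-invariance upgrades to uniform translation-invariance; the paper's version, by working with the sets $x_iE$ directly, stays closer to Hulanicki's original formulation. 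Both are valid, and the constants ($\epsilon/3$ versus $\epsilon/5$) are immaterial.
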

\begin{proof}
Choose $g\in P_1(G)$ and let $\delta = \epsilon / 5$.
Let $V$ be a small enough $w^*$-neighborhood of $\mu$ that
$f*V = \{f*\nu : \nu\in V\} \subset U.$
Let $E\subset G$ be a small enough neighborhood of $e$ that
$\| \mu_E * f - f\|_1 < \delta.$
By compactness, we have $K \subset \{x_1, \ldots, x_n\}E$ for some $\{x_1, \ldots, x_n\} \Subset G$.
By \cref{lemma P1 invariant}, there exists $f\in \tau^{-1}(V)$
with $\|f - p*f\|_1 < \delta$ for each $p \in \{g, \mu_{x_1 E}, \ldots, \mu_{x_n E}\}$.
Now the chain of inequalities given in \cite[Proposition 4.1]{Hulanicki1966}
shows $h = g*f$ is $(K,\epsilon)$-invariant.
We have $\tau(h)\in f*V \subset U$, hence $h\in \tau^{-1}(U)$.
\end{proof}

\begin{prop}\label{almost topological folner}
Let $G$ be amenable, $K\subset G$ compact, and $\epsilon > 0$.
There exists $F\subset G$ and $K_0 \subset K$ such that
$F$ is $(K_0,\, \epsilon)$-invariant and $|K\setminus K_0| < \epsilon$.
\end{prop}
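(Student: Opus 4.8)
The plan is to derive this weaker, ``almost topological F\o{}lner'' statement from the existence of a topologically almost-invariant \emph{function} together with the layer-cake trick already used in \cref{exists pointwise folner net}. The one genuinely new ingredient is that we cannot hope for a single set to be $(K,\epsilon)$-invariant across \emph{all} of a compact $K$ at once --- that is the harder \cref{exists topological folner net} --- so the finite averaging over $K$ used in \cref{exists pointwise folner net} must be replaced by integration against Haar measure, after which a Markov inequality discards the bad translations as a set of small measure.

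First I would fix a small threshold $\delta' > 0$, to be pinned down in terms of $\epsilon$ and $|K|$ at the end. Since $G$ is amenable, $\TLIM(G) \neq \varnothing$ by \cref{TLIMG is nonempty}, so \cref{every tlim is limit of top net} (applied with $U = M(G)$, whose preimage is all of $P_1(G)$) provides $f \in P_1(G)$ with $\|f - l_x f\|_1$ as small as we like, uniformly over $x \in K$. Approximating $f$ in $\Lone$ by a nonnegative simple function and renormalizing into $P_1(G)$, exactly as in \cref{exists pointwise folner net}, I obtain a simple $g \in P_1(G)$ that is $(K,\delta')$-invariant; write it in layer-cake form $g = \sum_{i=1}^n c_i \mu_{A_i}$ with $c_i > 0$, $\sum_i c_i = 1$, and $A_1 \subset \cdots \subset A_n$, each $A_i$ of finite positive measure.

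The sign-alignment computation of \cref{exists pointwise folner net} then gives, for every $x \in G$,
\[\|g - l_x g\|_1 = \sum_{i=1}^n c_i\,\phi_i(x), \qquad \phi_i(x) = \frac{|A_i \Sdiff x A_i|}{|A_i|},\]
and each $\phi_i$ is continuous because left translation is continuous in $\Lone$-norm (as in the proof of \cref{convolution smooths}). Integrating the bound $\sum_i c_i \phi_i(x) < \delta'$ over $x \in K$ against Haar measure and using $\sum_i c_i = 1$ produces an index $i$ with $\int_K \phi_i(x)\dd{x} < \delta'|K|$; set $F = A_i$. By Markov's inequality the compact set $K_1 = \{x \in K : \phi_i(x) \geq \epsilon\}$ satisfies $|K_1| < \delta'|K|/\epsilon$, so with $K_0 = K \setminus K_1$ we get $|K \setminus K_0| < \delta'|K|/\epsilon$, while $F$ is $(K_0,\epsilon)$-invariant directly from the definition of $K_0$.

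It remains only to choose $\delta'$ small enough that $\delta'|K|/\epsilon < \epsilon$, i.e.\ $\delta' < \epsilon^2/|K|$, which forces $|K\setminus K_0| < \epsilon$; the degenerate case $|K| = 0$ is handled at once by taking $K_0 = \varnothing$ and any $F$ of finite positive measure. The only non-routine step is the passage from summation to integration over $K$, together with the Markov inequality that converts the \emph{average} smallness of $\phi_i$ into a \emph{large-measure} set $K_0$ of good translations; everything else is the standard layer-cake bookkeeping inherited from \cref{exists pointwise folner net}.
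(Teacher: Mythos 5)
Your proposal is correct and follows essentially the same route as the paper's own proof: obtain a $(K,\epsilon^2/|K|)$-invariant simple function via \cref{every tlim is limit of top net}, decompose it in layer-cake form, integrate the invariance bound over $K$, select a good level set by pigeonhole on $\sum_i c_i = 1$, and discard the bad translations via Markov's inequality. The only differences are cosmetic (naming the threshold $\delta'$ before pinning it to $\epsilon^2/|K|$, and spelling out the simple-function approximation that the paper delegates to \cref{exists pointwise folner net}).
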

\begin{proof}
Assume $0 < |K|$, otherwise there is nothing to prove.
By \cref{every tlim is limit of top net}, there exists $f\in P_1(G)$ which is $(K,\, \epsilon^2/|K|)$-invariant.
As in \cref{exists pointwise folner net}, take $f$ to be a simple function, expressed in layer-cake form as
\[f = \sum_{i=1}^n c_i \mu_{A_i} \text{ where }
c_i > 0 \text{ and }
A_1 \subset \ldots \subset A_n.\]
As in the same proposition, we have
\[\epsilon^2 / |K| > \|f - l_x f\|_1 = \sum_{i=1}^n c_i |A_i \Sdiff xA_i| / |A_i| \text{ for each } x\in K.\]
Integrating over $x\in K$, we have
\[\epsilon^2 > \sum_{i=1}^n c_i \int_K |A_i \Sdiff xA_i| / |A_i|\dd{x}.\]
Since $\sum_{i=1}^n c_i = 1$, there is at least one $i$ with
\[\epsilon^2 > \int_K |A_i \Sdiff xA_i| / |A_i| \dd{x}.\]
If $N = \{x\in K : |A_i \Sdiff xA_i| / |A_i| \geq \epsilon\}$, clearly $|N| < \epsilon$.
Let $F = A_i$ and $K_0 = K \setminus N$.
\end{proof}

\begin{prop}\label{exists topological folner net}
Let $G$ be amenable, $K\subset G$ compact with $|K| > 0$, and $0 < \epsilon \leq |K|/2$.
There exists $F\subset G$ which is $(K,\epsilon)$-invariant.
\end{prop}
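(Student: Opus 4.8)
The goal is to strengthen Proposition~\ref{almost topological folner} by removing the exceptional ``bad'' portion $N = K \setminus K_0$ of the compact set $K$. The plan is to iterate the previous proposition to peel off successively smaller bad pieces, and then glue together the resulting Følner sets into a single set that is invariant under \emph{all} of $K$. First I would apply \cref{almost topological folner} repeatedly: choose a rapidly decreasing sequence of tolerances $\epsilon_1 > \epsilon_2 > \cdots$ (say $\epsilon_j = \epsilon / 2^{j}$ or similar), obtaining at each stage a set $F_j$ and a subset $K_j \subseteq K$ with $F_j$ being $(K_j, \epsilon_j)$-invariant and $|K \setminus K_j| < \epsilon_j$. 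The idea is that the union $\bigcup_j K_j$ covers $K$ up to a null set, so collectively the $F_j$ handle almost every point of $K$.

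The key step is assembling a single set that is invariant under the full compact set $K$ rather than merely a large subset. Here I would consider forming a product-type or weighted-average construction: since $P_1(G)$ is closed under convolution and the operators $l_x$ interact well with convolution (as exploited in \cref{convolution smooths} and \cref{every tlim is limit of top net}), convolving the normalized characteristic functions $\mu_{F_j}$ together should produce a function whose translation-invariance defect is controlled by the \emph{sum} or \emph{product} of the individual defects. Concretely, if $h = \mu_{F_1} * \cdots * \mu_{F_m}$, then for $x \in K$ one can bound $\|h - l_x h\|_1$ by telescoping, using $\|l_x(a*b) - a*b\|_1 \leq \|l_x a - a\|_1 \|b\|_1$-type estimates; a point $x$ lying in $K_j$ for at least one $j$ contributes a small term, and since the $K_j$ cover $K$ up to measure zero, almost every $x \in K$ is handled. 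I would then pass from this function $h \in P_1(G)$ back to a genuine \emph{set} via the layer-cake decomposition exactly as in \cref{exists pointwise folner net} and \cref{almost topological folner}, extracting some level set $A_i$ that inherits the $(K,\epsilon)$-invariance by an averaging/pigeonhole argument over $K$.

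The main obstacle I anticipate is the transition from ``invariant under a subset of full measure'' to ``invariant under all of $K$.'' The measure-zero exceptional set $N$ is harmless when integrating over $K$ (the hypothesis $\epsilon \leq |K|/2$ ensures the bad set never swallows more than half of $K$, so the averaging argument still locates a good level set), but one must verify that the convolution construction does not degrade the $L_1$-bounds in a way that reintroduces a non-negligible defect. The constraint $\epsilon \leq |K|/2$ is exactly what I expect to use to guarantee that, after the averaging step over $x \in K$, the resulting level set $A_i$ satisfies $\int_K |A_i \Sdiff x A_i|/|A_i| \dd{x} < \epsilon$ with room to spare, so that the final $(K,\epsilon)$-invariance holds in the strong pointwise sense rather than merely on average. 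The bookkeeping in bounding the iterated convolution's invariance defect uniformly over the (possibly uncountable) compact set $K$ is where the argument requires the most care.
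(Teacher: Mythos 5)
Your plan has a genuine gap, and it occurs precisely at the step you flag as the main obstacle. The convolution idea does not combine invariance from different factors: since $l_x(f*g) = (l_xf)*g$, the only available estimate for $h = \mu_{F_1}*\cdots*\mu_{F_m}$ is $\|h - l_xh\|_1 \leq \|\mu_{F_1} - l_x\mu_{F_1}\|_1$, i.e.\ the left-invariance defect of $h$ at $x$ is controlled by the \emph{leftmost} factor alone. A point $x$ lying in $K_j$ for some $j\geq 2$ but not in $K_1$ gains nothing from the convolution, so the sets $K_j$ cannot be ``merged'' this way. Moreover, even if the defects could be combined, your closing step is an averaging/pigeonhole argument over $x\in K$, and such an argument can only ever produce a set that is invariant for $x$ outside a small-measure exceptional subset of $K$ --- which is exactly the conclusion of \cref{almost topological folner} and strictly weaker than the pointwise statement to be proved. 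The hypothesis $\epsilon\leq|K|/2$ plays no role in rescuing an averaging bound here.

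The paper closes the gap by a group-theoretic trick rather than by iteration or convolution. It applies \cref{almost topological folner} once, to the enlarged compact set $A = K\cup K^{-1}K$, obtaining $F$ that is $(A_0,\nicefrac{\epsilon}{2})$-invariant for some $A_0\subset A$ with $|A\setminus A_0|<\epsilon$. The hypothesis $2\epsilon\leq|K|$ is used to show $K\subset A_0A_0^{-1}$: for $x\in K$ one has $K\subset xA$, hence $2\epsilon\leq|xA\cap A|<|xA_0\cap A_0|+2\epsilon$, so $|xA_0\cap A_0|>0$ and $x\in A_0A_0^{-1}$. Finally, the triangle inequality $|xy^{-1}F\Sdiff F|\leq|F\Sdiff yF|+|xF\Sdiff F|$ upgrades $(A_0,\nicefrac{\epsilon}{2})$-invariance to $(A_0A_0^{-1},\epsilon)$-invariance, which covers every point of $K$. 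If you want to repair your argument, you should replace the iteration-and-convolution step with some mechanism of this kind that converts ``invariance off a small-measure exceptional set'' into ``invariance everywhere'' by exploiting products of good translates.
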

\begin{proof}

Let $A = K \cup K^{-1}K$, so that $K \subset xA$ for each $x\in K$.
If $A_0\subset A$ is any subset with $|A\setminus A_0| < \epsilon$, I claim $K \subset A_0A_0^{-1}$.
Indeed, for each $x\in K$,
\[2\epsilon \leq |K| \leq |x A \cap A|
\leq |xA_0 \cap A_0| + |x(A\setminus A_0)| + |A\setminus A_0|
< |xA_0 \cap A_0| + 2\epsilon.\]
Thus $|xA_0\cap A_0| > 0$, which shows $x \in A_0 A_0^{-1}$.

By \cref{almost topological folner}, there exists $F\subset G$ which is $(A_0,\, \nicefrac\epsilon2)$-invariant,
where $A_0 \subset A$ with $|A\setminus A_0| < \epsilon$.
If $x,y \in A_0$, then
\[|xy^{-1} F \Sdiff F |
\leq |y^{-1}F \Sdiff F | + |F \Sdiff x^{-1}F|
= |F \Sdiff yF| + |xF \Sdiff F| < \epsilon|F|.\]
Thus $F$ is $(A_0 A_0^{-1}, \epsilon)$-invariant.
\end{proof}


\begin{defn}
$F\subset G$ is called \textit{$(K,\epsilon)$-absorbent} if $|KF \Sdiff F| / |F| < \epsilon$.
$\{F_\gamma\}$ is called a \textit{topological absorbent F\o{}lner net} if if is eventually
$(K,\epsilon)$-absorbent for each nonempty compact $K\subset G$ and $\epsilon > 0$.

The following lemma shows there is no loss of generality in quantifying over pairs $(K,\epsilon)$ where $e\in K$.
This being the case, $|KF\Sdiff F| = |KF\setminus F|$,
hence the condition of being $(K,\epsilon)$-absorbent gets stronger as $K$ gets larger.
\end{defn}

\begin{lem}
Let $J\subset G$ be a nonempty set and $K = J\cup\{e\}$.
If $F\subset G$ is $(K,\epsilon)$-absorbent, then it is $(J,2\epsilon)$-absorbent.
\end{lem}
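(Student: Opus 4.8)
The plan is to exploit the hypothesis $e \in K$ to turn the symmetric difference $KF \Sdiff F$ into a one-sided difference, read off the ``forward'' bound for free, and then control the reverse difference $F \setminus JF$ using left-invariance of Haar measure.

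First I would observe that since $e \in K$ we have $F = eF \subseteq KF$, so $KF \Sdiff F = KF \setminus F$ and the hypothesis that $F$ is $(K,\epsilon)$-absorbent reads $|KF \setminus F| < \epsilon |F|$. Writing $K = J \cup \{e\}$ gives $KF = JF \cup F$, whence $KF \setminus F = JF \setminus F$. Thus the hypothesis already delivers the estimate $|JF \setminus F| < \epsilon |F|$ on the forward part of $JF \Sdiff F$, with no further work.

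The remaining task, which is the only real content, is to bound $|F \setminus JF|$. Here I would use that $J$ is nonempty: choosing any $j \in J$, left-invariance of Haar measure gives $|JF| \geq |jF| = |F|$ (note also that $|JF| = |JF\cap F| + |JF\setminus F| \leq |F| + \epsilon|F| < \infty$, so all quantities below are finite). Rearranging the identity $|JF| - |F| = |JF \setminus F| - |F \setminus JF|$ then yields $|F \setminus JF| \leq |JF \setminus F| < \epsilon|F|$. Adding the two one-sided bounds gives $|JF \Sdiff F| = |JF\setminus F| + |F\setminus JF| < 2\epsilon|F|$, that is, $F$ is $(J,2\epsilon)$-absorbent.

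The one step requiring care is the inequality $|JF| \geq |F|$: it is precisely here that both the nonemptiness of $J$ and the \emph{left}-invariance of Haar measure (rather than right-invariance) enter, and it is what forces the factor of $2$ in general. Everything else is routine bookkeeping with the symmetric difference.
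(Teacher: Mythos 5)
Your argument is correct and is essentially the paper's own proof: both reduce the hypothesis to $|JF\setminus F| \le |KF\setminus F| < \epsilon|F|$ using $e\in K$, and both control $|F\setminus JF|$ by the inequality $|F|\le|JF|$ (nonemptiness of $J$ plus left-invariance), which forces $|F\setminus JF|\le|JF\setminus F|$ and hence the factor of $2$. The only cosmetic difference is that you phrase the key step as the identity $|JF|-|F| = |JF\setminus F|-|F\setminus JF|$ while the paper runs the same cancellation inside a single chain of (in)equalities; your explicit remark that $|JF|<\infty$ is a small point of care the paper leaves implicit.
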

\begin{proof}
$
|JF \Sdiff F|
\ \ \ =\ \ \ |JF \setminus F| + |F \setminus JF|
\ \ \ = \ \ \ |JF \setminus F| + |F| - |F \cap JF|
\\\leq |JF \setminus F| + |JF| - |F \cap JF|
\ \ \ =\ \ \ 2|JF \setminus F|
\ \ \ \leq\ \ \ 2|KF \setminus F|
\ \ \ <\ \ \ 2\epsilon|F|.$
\end{proof}

As the reader might expect, every amenable group $G$ admits a topological absorbent F\o{}lner net.
However, in \cref{tia paper}, we need a more specific fact than the \textit{existence} of such a net:
Given $(K,\epsilon)$, any sufficiently $(K',\epsilon')$-invariant set $F'\subset G$
can be \textit{approximated in measure} by a $(K,\epsilon)$-absorbent set $F$.

\begin{prop}[{\cite[Proposition 3.1.2]{EmersonGreenleaf67}}]\label{Emerson Greenleaf Lemma}
Choose $K\subset G$ which is compact, has nonempty interior, and contains $e$.
Let $K' = K^2 K^{-1}$.
Then there exist $\epsilon > 0$ and $M > 0$ satisfying the following:
For any $0 < \epsilon' < \epsilon$, and any $F'\subset G$ which is $(K',\epsilon')$-invariant,
there exists a subset $F \subset F'$ such that
\begin{enumerate}
\item $F$ is $\big(K,\, M\sqrt{\epsilon'}\big)$-absorbent, and
\item $|F' \setminus F| < \sqrt{\epsilon'}|F|$.
\end{enumerate}
In particular, when $\epsilon'$ is small enough, $F$ is $(K,\epsilon)$-absorbent.
\end{prop}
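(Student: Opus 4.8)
The plan is to realize $F$ as a superlevel set, intersected with $F'$, of the \emph{$K$-density} of $F'$. Spelling out the hypotheses, $(K',\epsilon')$-invariance of $F'$ reads $\Haar{yF'\Sdiff F'} < \epsilon'\Haar{F'}$ for every $y\in K'$, and a set $F$ is $(K,\delta)$-absorbent exactly when $\Haar{KF\setminus F} < \delta\Haar{F}$ (recall $e\in K$). Define
\[\phi(x) = \frac{\Haar{\set{k\in K : kx\in F'}}}{\Haar{K}} \in [0,1],\]
the fraction of the left translate $Kx$ lying in $F'$; this is a smoothing of $\Onebb_{F'}$, obtained by averaging its left translates over $k\in K$. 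For a threshold $s\in(0,1)$ I take as candidate
\[F_s = \set{x\in F' : \phi(x)\geq s} \subseteq F',\]
so that containment in $F'$ is automatic and the entire problem reduces to choosing $s$.

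First I would establish the one clean estimate, that $\phi$ is $\Lone$-close to $\Onebb_{F'}$. Integrating $1-\phi$ over $F'$ and applying Fubini rewrites $\int_{F'}(1-\phi)$ as the average over $k\in K$ of the single-translate defects $\Haar{F'\setminus k^{-1}F'}$; since $e\in K$ we have $K\cup K^{-1}\cup KK^{-1} \subseteq K^2K^{-1} = K'$, so every translating element lies in $K'$ and each defect is below $\epsilon'\Haar{F'}$ (the modular function is bounded on the compact set $K$, its distortion being absorbed into constants). This gives $\int_{F'}(1-\phi) \leq C\epsilon'\Haar{F'}$. Now I would fix the threshold at $s = 1-\sqrt{\epsilon'}$: by Markov's inequality
\[\Haar{F'\setminus F_s} = \Haar{\set{x\in F' : 1-\phi(x) > \sqrt{\epsilon'}}} \leq \frac{C\epsilon'\Haar{F'}}{\sqrt{\epsilon'}} = C\sqrt{\epsilon'}\,\Haar{F'},\]
which already yields requirement (2) and shows $\Haar{F_s}$ is comparable to $\Haar{F'}$, so that bounds relative to $\Haar{F'}$ convert into bounds relative to $\Haar{F_s}$. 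The square root in the statement is forced precisely here, by the balance between the boundary width $1-s$ and the Markov loss $\epsilon'/(1-s)$, optimized at $1-s=\sqrt{\epsilon'}$.

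The hard part is requirement (1), the absorbent bound $\Haar{KF_s\setminus F_s} \leq M\sqrt{\epsilon'}\,\Haar{F_s}$. A point $z\in KF_s\setminus F_s$ is one where the density $\phi(z)$ itself lies below the threshold, yet $\phi(k^{-1}z)\geq s$ for some $k\in K$; that is, the $K$-maximal function $\widetilde\phi(z) = \sup_{k\in K}\phi(k^{-1}z)$ crosses the level $s$ although $\phi(z)$ does not. The obstacle is exactly that this is a \emph{supremum} over the compact set $K$: $KF_s$ is a union of uncountably many translates of $F_s$, so the single-translate estimates that controlled $\norm{\phi - \Onebb_{F'}}_1$ do \emph{not} bound $\Haar{KF_s\setminus F_s}$, and a crude union bound by $\Haar{KK^{-1}F'}$ is far too weak. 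Controlling this $K$-maximal oscillation in measure is the real content of the proposition: it is where the enlargement to $K' = K^2K^{-1}$ is genuinely used (so that the translating elements appearing when one compares $\phi$ at $z$ and at $k^{-1}z$ all remain inside $K'$), and where the hypothesis that $K$ has nonempty interior enters, through a Vitali-type covering argument in the spirit of Emerson and Greenleaf. Granting such a bound at the threshold $s = 1-\sqrt{\epsilon'}$, one obtains $\Haar{KF_s\setminus F_s}\leq M\sqrt{\epsilon'}\,\Haar{F_s}$, and setting $F = F_s$ completes the proof.
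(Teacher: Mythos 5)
The thesis does not reprove this proposition --- it is quoted directly from Emerson--Greenleaf --- so your argument has to stand entirely on its own, and as written it does not: conclusion (1), the absorbency estimate, which is the whole content of the result, is never established. Your Fubini computation is correct (and needs no modular-function correction, since only left translates and left Haar measure appear), and the Markov selection does give (2) up to the harmless replacement of $\sqrt{\epsilon'}\,|F|$ by $\tfrac{\sqrt{\epsilon'}}{1-\sqrt{\epsilon'}}\,|F|$, which a slightly smaller threshold repairs. But your final paragraph only restates (1) in the language of the maximal function $\widetilde\phi$ and then grants it: bounding the measure of the set where $\widetilde\phi$ crosses the level $s$ while $\phi$ does not is literally the assertion $|KF_s\setminus F_s|\leq M\sqrt{\epsilon'}\,|F_s|$, so no reduction has taken place.

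Moreover, there is reason to doubt that the high-threshold superlevel set is even the right candidate. The density $\phi$ does not transform usefully under left multiplication by $k\in K$: $\phi(kx)$ averages $\Onebb_{F'}$ over $Kkx$ while $\phi(x)$ averages it over $Kx$, and $Kk$ and $K$ need not overlap at all, so $\phi(x)\geq 1-\sqrt{\epsilon'}$ gives no pointwise control of $\phi$ on $Kx$. What Fubini does yield is that the defects $|kF_s\setminus F'|$ are small \emph{on average} over $k\in K$, which says nothing about the union $|KF_s\setminus F'|$. A candidate for which absorbency is structurally automatic is $C=\bigcap_{k\in K}k^{-1}F'$: then $KC\subseteq F'$, hence $KC\setminus C\subseteq F'\setminus C$, and both (1) and (2) collapse to bounding the single quantity $|F'\setminus C|=\bigl|\bigcup_{k\in K}(F'\setminus k^{-1}F')\bigr|$. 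That, however, is an uncountable union of sets each of measure $<\epsilon'|F'|$, and controlling it is exactly where the nonempty interior of $K$ (via a finite covering of $K$ by translates of a small neighborhood) and the enlargement to $K'=K^2K^{-1}$ must be used. That argument --- the actual content of Emerson and Greenleaf's proposition --- is the step you have named but not supplied, so the proof is incomplete.
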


\section*{Ordinals, cardinals, and cardinal invariants}
An ordinal is a set $\alpha$ such that $\bigcup \alpha \subset \alpha$ and $(\alpha, \in)$ is well-ordered.

\begin{prop}[{\cite[Theorem 6.3.1]{JechSetTheory} }]
Given a well ordered set $(S, <)$, there is an ordinal $\alpha$
such that $(S, <)$ is order-isomorphic to $(\alpha, \in)$.
\end{prop}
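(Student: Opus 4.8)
The plan is to construct the isomorphism explicitly by transfinite recursion, realizing $\alpha$ as the \emph{collapse} of $(S,<)$. Writing $S_s = \{t \in S : t < s\}$ for the initial segment below $s$, I would define a function $f$ on $S$ by the recursion
\[ f(s) = \{f(t) : t < s\}, \]
which is legitimate since the value at $s$ depends only on the restriction of $f$ to the well-ordered set $S_s$; the transfinite recursion theorem on the well-order $(S,<)$ guarantees a unique such $f$.

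The heart of the argument is a transfinite induction establishing, simultaneously, that each $f(s)$ is an ordinal and that $f$ is strictly order-preserving. Order preservation in one direction is immediate from the definition: if $t < s$ then $f(t) \in f(s)$. For the inductive step I would assume every $f(t)$ with $t < s$ is an ordinal and verify the two defining clauses for $f(s) = \{f(t) : t < s\}$. Transitivity ($\bigcup f(s) \subset f(s)$) follows because any member of some $f(t)$ is itself of the form $f(u)$ with $u < t < s$. That $(f(s), \in)$ is well-ordered by membership then reduces to the well-order of $S$: linearity comes from trichotomy in $S$ together with transitivity of the $f(t)$, while any nonempty $A \subset f(s)$ has an $\in$-least element, namely $f(t_0)$ where $t_0$ is the $<$-least $t$ with $f(t) \in A$. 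Injectivity of $f$ drops out along the way, since $f(s) \in f(s)$ is impossible for an ordinal.

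With $f$ in hand I would set $\alpha = \{f(s) : s \in S\}$, its range, and check that $\alpha$ is itself an ordinal: it is a transitive set of ordinals, because any member of a member $f(s)$ is again some $f(t)$, and a transitive set of ordinals inherits the well-ordering of $\in$ by exactly the argument just used for a single $f(s)$. Since $f : S \to \alpha$ is by construction a surjection, and is an order-preserving injection by the previous paragraph, it is the desired order-isomorphism.

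The step I expect to require the most care is the legitimacy and bookkeeping of the recursion and induction rather than any single verification: one must invoke transfinite recursion to \emph{define} $f$ before any of its properties can be discussed, and then carry the clauses ``$f(s)$ is an ordinal,'' ``$f$ is order-preserving,'' and ``$f$ is injective'' together through a single induction, since each is used in establishing the others. Every individual verification (transitivity, linearity, existence of $\in$-least elements) is routine once this scaffolding is in place.
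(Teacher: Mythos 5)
The paper offers no proof of its own here --- it simply cites Jech --- and your argument is the standard one found in that reference: define the Mostowski-style collapse $f(s)=\{f(t):t<s\}$ by transfinite recursion, verify by induction that each value and the range are ordinals, and read off the isomorphism. Your proof is correct, including the key small points (injectivity via the impossibility of $f(s)\in f(s)$ for an ordinal, and the need to carry the clauses through a single induction), so there is nothing to add.
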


Given ordinals $\alpha$ and $\beta$, we will write $\alpha < \beta$ to mean $\alpha \in \beta$.
This allows us to write ``$\alpha \leq \beta$,'' as opposed to ``$\alpha \in \beta$ or $\alpha = \beta$.''

\begin{prop}[Transfinite Induction, {\cite[Theorem 6.4.1]{JechSetTheory} }]
Let $P(\alpha)$ be a property of ordinals.
Assume that for all ordinals $\beta$, if $P(\gamma)$ holds for all $\gamma < \beta$, then $P(\beta)$ holds.
Then we have $P(\alpha)$ for all ordinals $\alpha$.
\end{prop}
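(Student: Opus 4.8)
The plan is to argue by contradiction, the one genuine subtlety being that the well-ordering property as stated applies to \emph{sets}, whereas the ordinals form a proper class. So I cannot simply speak of ``the least ordinal for which $P$ fails.'' The key idea is to localize the argument to a single set of ordinals on which the well-ordering hypothesis is legitimately available. Concretely, suppose toward a contradiction that $P(\alpha)$ fails for some ordinal $\alpha$, and restrict attention to the ordinals $\beta \leq \alpha$, which form the set $\alpha^+ = \alpha \cup \{\alpha\}$.

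First I would check that $\alpha^+$ is itself an ordinal, so that it is well-ordered by $\in$: one computes $\bigcup \alpha^+ = (\bigcup \alpha) \cup \alpha = \alpha \subset \alpha^+$ using $\bigcup\alpha\subset\alpha$, and adjoining $\alpha$ as a top element to the well-ordered $(\alpha, \in)$ again yields a well-ordering by $\in$. Next I would form the subset $X = \{\beta \in \alpha^+ : P(\beta) \text{ fails}\}$, which is nonempty because $\alpha \in X$ by assumption. By the well-ordering of $\alpha^+$, the set $X$ has a least element $\beta_0$. I then observe that $P(\gamma)$ holds for every $\gamma < \beta_0$: indeed $\gamma < \beta_0 \leq \alpha$ forces $\gamma \in \alpha^+$ by transitivity of $\alpha^+$, and minimality of $\beta_0$ in $X$ gives $\gamma \notin X$, i.e.\ $P(\gamma)$ holds. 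Finally I apply the hypothesis of the theorem to $\beta = \beta_0$: since $P(\gamma)$ holds for all $\gamma < \beta_0$, we conclude $P(\beta_0)$, contradicting $\beta_0 \in X$. Hence no such $\alpha$ exists.

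The only real obstacle is the foundational point flagged at the outset — resisting the temptation to well-order the entire class of ordinals, and instead recognizing that passing to the set $\alpha^+$ is precisely what makes the well-ordering hypothesis apply. Everything else is routine bookkeeping with the stated definition of ordinal and the convention that ``$<$'' means ``$\in$''; in particular, the verification that $\alpha^+$ is a well-ordered ordinal and the transitivity step used to place $\gamma$ in $\alpha^+$ follow directly from the definition of an ordinal as a transitive set well-ordered by membership.
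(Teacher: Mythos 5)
Your proof is correct and is essentially the standard argument (the paper itself supplies no proof, only a citation to Jech, and what you have written is the same localize-to-$\alpha^{+}$-and-take-a-least-counterexample argument found there). The one subtlety you flag — that the well-ordering is applied to the set $\alpha\cup\{\alpha\}$ rather than to the proper class of all ordinals — is handled properly, and your verification that $\alpha^{+}$ is an ordinal under the paper's definition is sound.
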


It follows that every ordinal is the well-ordered chain of all previous ordinals.
For example, if we define $0 = \varnothing$, then the first infinite ordinal is $\Nbb$.
Any set can be well-ordered with the axiom of choice, see \cite[Theorem 8.1.13]{JechSetTheory},
hence there is an ordinal of every cardinality.
Therefore, for any set $S$, we may define the cardinal number $\card{S}$
to be the first ordinal $\alpha$ such that there exists a bijection from $\alpha$ to $S$.

If $\alpha$ and $\beta$ are ordinals, we write $\alpha\cdot\beta$ for $\#(\alpha \times \beta)$,
and $\alpha^\beta$ for $\#(\alpha^\beta) = \#(\{\text{all maps from }\alpha \text{ to } \beta\})$.

\begin{defn}\label{Defn Cardinal Invariants}
The following are called \textit{cardinal invariants} of a topological group $G$,
since they are preserved by homeomorphisms.
\begin{enumerate}
	\item $\mu = \mu(G)$ is the smallest cardinality of a basis of open sets at $e\in G$.
	\item $\kappa = \kappa(G)$ is the smallest cardinality of a covering of $G$ by compact subsets.
\end{enumerate}
\end{defn}

\begin{prop}[{\cite[Theorem 8.3]{HR1}}]
A Hausdorff group $G$ is metrizable if and only if $\mu(G) \leq \Nbb$.
\end{prop}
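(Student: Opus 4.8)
The forward implication is routine: if $d$ is a metric inducing the topology of $G$, the balls $\{x : d(x,e) < \tfrac1n\}$ for $n \geq 1$ form a countable neighbourhood basis at $e$, so $\mu(G) \leq \Nbb$. The substance is the converse --- the Birkhoff--Kakutani construction --- so suppose $e$ admits a countable neighbourhood basis. The plan is to manufacture a left-invariant metric from a well-chosen decreasing chain of symmetric neighbourhoods. First I would use continuity of $(x,y)\mapsto x^{-1}y$ at $(e,e)$ to build recursively symmetric open neighbourhoods $G = V_0 \supseteq V_1 \supseteq V_2 \supseteq \cdots$ of $e$ with $V_{n+1}^3 \subseteq V_n$ and $\{V_n\}$ still a neighbourhood basis at $e$: given $V_n$, pick a symmetric $W$ with $W^3 \subseteq V_n$, then shrink $W$ inside the $n$-th basic neighbourhood. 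Next I define a gauge $g\colon G \to [0,1]$ by $g(x) = 2^{-n}$ when $n$ is the largest index with $x \in V_n$, and $g(x) = 0$ when $x \in \bigcap_n V_n$; symmetry of the $V_n$ gives $g(x^{-1}) = g(x)$, and $g(x) \leq 2^{-n} \iff x \in V_n$.

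The hard part --- the one genuinely delicate step --- is the chaining inequality
\[ g(x_1 x_2 \cdots x_k) \leq 2 \sum_{i=1}^k g(x_i), \]
which I would prove by induction on $k$. The case $\sum_i g(x_i) \geq \tfrac12$ is trivial, as the right-hand side is then $\geq 1 \geq g$. Otherwise one splits the product at the index where the partial sums first exceed half the total, applies the inductive hypothesis to the two halves, and collapses the three resulting factors using $V_{n+1}^3 \subseteq V_n$; the factor $2$ and the cubing are exactly calibrated to make this bookkeeping close.

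Granting the chaining inequality, I set
\[ d(x,y) = \inf\Big\{ \sum_{i=0}^{m} g(z_i^{-1} z_{i+1}) : x = z_0, z_1, \ldots, z_{m+1} = y \Big\}, \]
the infimum over finite chains. The trivial chain gives $d(x,y) \leq g(x^{-1}y)$, while applying the chaining inequality to the increments of an arbitrary chain gives $g(x^{-1}y) \leq 2\,d(x,y)$; thus $\tfrac12 g(x^{-1}y) \leq d(x,y) \leq g(x^{-1}y)$. Symmetry and the triangle inequality for $d$ are immediate from the definition, and left-invariance holds because replacing each $z_i$ by $wz_i$ leaves every increment $z_i^{-1}z_{i+1}$ unchanged.

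Finally I would check that $d$ metrizes $G$. Since $G$ is Hausdorff and $\{V_n\}$ is a basis at $e$, one has $\bigcap_n V_n = \{e\}$, whence $d(x,y) = 0 \iff g(x^{-1}y) = 0 \iff x = y$, so $d$ is a genuine metric. Using $g(x) < 2^{-n} \iff x \in V_{n+1}$ together with the two-sided estimate yields $V_{n+1} \subseteq \{x : d(x,e) < 2^{-n}\} \subseteq V_n$, so the $d$-balls at $e$ and the sets $V_n$ define the same neighbourhood filter at $e$; left-invariance of both $d$ and the group topology then propagates the agreement to every point, so $d$ induces the original topology. The only real obstacle is the chaining lemma --- everything after it is bookkeeping.
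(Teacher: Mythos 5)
Your argument is correct and is the standard Birkhoff--Kakutani construction (nested symmetric neighbourhoods with $V_{n+1}^3\subseteq V_n$, the gauge $g$, the chaining inequality with constant $2$, and the chain-infimum left-invariant metric), which is precisely the proof in the cited source \cite[Theorem 8.3]{HR1}; the thesis itself states the result by citation and gives no independent proof. No gaps: the two-sided estimate $\tfrac12 g(x^{-1}y)\leq d(x,y)\leq g(x^{-1}y)$ and the sandwich $V_{n+1}\subseteq\{x: d(x,e)<2^{-n}\}\subseteq V_n$ do all the work exactly as you describe.
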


\begin{prop}[Kakutani-Kodaira, {\cite[Theorem 8.7]{HR1}}]
If $\kappa(G) \leq \Nbb$, there is a closed normal subgroup $N \normal G$
such that $|N| = 0$ and $G/N$ is metrizable.
\end{prop}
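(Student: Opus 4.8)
The plan is to reduce metrizability to first countability and then realize $N$ as the intersection of a carefully chosen shrinking sequence of neighborhoods of $e$. Since $\kappa(G)\le\Nbb$, write $G=\bigcup_{n=1}^\infty K_n$ with each $K_n$ compact and $K_n\subseteq K_{n+1}$. By the previous proposition a Hausdorff topological group is metrizable precisely when it is first countable (that is, $\mu\le\Nbb$); as $N$ will be closed, $G/N$ is automatically Hausdorff, so it suffices to produce a countable neighborhood basis at the identity coset. The whole strategy is therefore to build open symmetric neighborhoods $U_n\ni e$ with compact closure so that, writing $\pi\colon G\to G/N$ for the quotient map, the images $\pi(U_n)$ form such a basis, while simultaneously forcing normality and measure zero.

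Concretely, I would construct the $U_n$ by induction so as to guarantee three things: (i) $\cl(U_{n+1})\cdot\cl(U_{n+1})\subseteq U_n$, which makes the closures nest and makes $N$ closed under multiplication; (ii) $xU_{n+1}x^{-1}\subseteq U_n$ for every $x\in K_n$, which will force $N$ to be normal; and (iii) $|\cl(U_n)|<1/n$, which will force $|N|=0$. Given $U_n$, local compactness supplies a symmetric neighborhood of $e$ with compact closure; shrinking it yields (i) by continuity of multiplication, and yields (iii) because in a nondiscrete group $e$ has neighborhoods of arbitrarily small Haar measure. Condition (ii) is the delicate one: the conjugation map $(x,u)\mapsto xux^{-1}$ is continuous and carries $K_n\times\{e\}$ into the open set $U_n$, so by the tube lemma (here $K_n$ compact is essential) there is a neighborhood $V\ni e$ with $xVx^{-1}\subseteq U_n$ for all $x\in K_n$; intersecting the candidate for $U_{n+1}$ with $V$ and with $V^{-1}$ secures (ii) while keeping it symmetric.

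Finally set $N=\bigcap_n U_n=\bigcap_n\cl(U_n)$ (these agree since $\cl(U_{n+1})\subseteq U_n$). Then $N$ is a closed symmetric subgroup, for if $a,b\in N$ then $ab\in\cl(U_{n+1})^2\subseteq U_n$ for every $n$ by (i); it is normal because any $x\in G$ lies in $K_n$ for all large $n$, so $xNx^{-1}\subseteq\bigcap_{m\ge n}U_m=N$ by (ii), and applying this to $x^{-1}$ gives equality. For first countability, given open $W\ni\pi(e)$ the decreasing compact sets $\cl(U_n)$ have intersection $N\subseteq\pi^{-1}(W)$, so $\cl(U_n)\subseteq\pi^{-1}(W)$ for some $n$ by compactness, whence $\pi(U_n)\subseteq W$; thus $\{\pi(U_n)\}$ is a countable basis at $\pi(e)$ and $G/N$ is metrizable. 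Measure zero is immediate from (iii), since $|N|\le|\cl(U_n)|<1/n$ for all $n$. I expect the main obstacle to be threading condition (ii) through the induction in tandem with the nesting (i) and the measure bound (iii): the tube-lemma argument over the compact conjugating set $K_n$ is exactly where $\sigma$-compactness is genuinely used, and it is the step most easily gotten wrong.
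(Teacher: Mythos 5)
The paper offers no proof of this proposition---it is quoted from Hewitt--Ross---so there is no in-text argument to compare against; your proof is essentially the standard one from the cited source, and it is correct. The inductive construction is sound: each of the three conditions can be met separately by shrinking a symmetric, relatively compact, open neighborhood of $e$ (continuity of multiplication for (i), the compactness/tube-lemma argument over $K_n$ for (ii), and small-measure neighborhoods of $e$ for (iii)), and all three are preserved under further shrinking, so they can be met simultaneously. The identification $N=\bigcap_n U_n=\bigcap_n\cl(U_n)$, the normality argument via ``$x\in K_n$ for all large $n$,'' and the finite-intersection-property argument showing that $\{\pi(U_n)\}$ is a neighborhood basis at the identity coset are all correct; metrizability then follows from the Birkhoff--Kakutani criterion stated immediately before this proposition.

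One caveat worth recording: step (iii) genuinely requires $G$ to be nondiscrete, as you acknowledge in passing. This is a defect of the statement as transcribed rather than of your argument: for an infinite discrete group such as $\Zbb$ (which is $\sigma$-compact), no nonempty subset has Haar measure zero, so no such $N$ can exist. The theorem in Hewitt--Ross actually produces a \emph{compact} normal subgroup $N$ contained in any prescribed sequence of neighborhoods of $e$; the conclusion $|N|=0$ is the paper's adaptation, valid precisely when $G$ is nondiscrete (a compact subgroup of positive Haar measure is necessarily open). Your proof therefore establishes the proposition in every case in which it is true, and in the only case in which the subsequent corollary on small dense open sets is applied.
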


\begin{cor}\label{small open dense set}
If $\kappa(G) \leq \Nbb$ and $\epsilon > 0$, there is a dense open set $E\subset G$ with $|E| \leq \epsilon$.
\end{cor}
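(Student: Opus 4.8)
The plan is to reduce the problem to a metrizable quotient by the Kakutani--Kodaira theorem and to build the set there. Since $\kappa(G)\le\Nbb$, that theorem supplies a \emph{compact} normal subgroup $N\normal G$ with $G/N$ metrizable (the version in \cite[8.7]{HR1} produces a compact $N$, which is what forces $|N|=0$). Write $q:G\to G/N$ for the quotient homomorphism, which is continuous, open, and surjective. Because $G=\bigcup_n K_n$ for countably many compact $K_n$, the quotient $G/N=\bigcup_n q(K_n)$ is $\sigma$-compact, and being also metrizable it is separable. As $|N|=0$, the subgroup $N$ is not open, so $G/N$ is non-discrete and its Haar measure has no atoms.

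First I would transport Haar measure through $q$. Normalizing the Haar measure of the compact group $N$ to total mass $1$, define $\nu(V)=|q^{-1}(V)|$ for Borel $V\subseteq G/N$. Since $N$ is compact the map $q$ is proper, so the preimage of every compact set is compact, and $\nu$ is a left-invariant Radon measure on $G/N$, finite on compacta. By the uniqueness of Haar measure, $\nu$ is a scalar multiple of the Haar measure of $G/N$; after rescaling we may assume $\nu$ \emph{is} that Haar measure, so that $|q^{-1}(V)|=\nu(V)$ for every Borel $V$.

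Next I would construct a dense open set of small measure downstairs and pull it back. Choose a countable dense set $\{y_k\}\subset G/N$. Because the Haar measure $\nu$ on the non-discrete group $G/N$ has no atoms and is outer regular, for each $k$ there is an open $W_k\ni y_k$ with $\nu(W_k)<\epsilon\,2^{-k}$. Then $V=\bigcup_k W_k$ is open, contains $\{y_k\}$ hence is dense, and $\nu(V)\le\sum_k\nu(W_k)\le\epsilon$. Finally set $E=q^{-1}(V)$: it is open by continuity of $q$; it satisfies $|E|=\nu(V)\le\epsilon$ by the previous paragraph; and it is dense because for any nonempty open $U\subseteq G$ the image $q(U)$ is nonempty and open, hence meets the dense set $V$, yielding a point of $U\cap q^{-1}(V)$.

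The main obstacle is the measure identification $|q^{-1}(V)|=\nu(V)$, which rests on being able to take $N$ compact so that $q$ is proper and the pushforward of Haar measure is again a genuine (finite-on-compacta) Haar measure on $G/N$. Once that is in hand the remaining ingredients---separability and non-discreteness of $G/N$, non-atomicity of its Haar measure, and the openness/density bookkeeping for $q$---are routine.
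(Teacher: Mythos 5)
Your proof is correct, and the opening move is the same as the paper's: invoke Kakutani--Kodaira to get a compact normal $N$ with $|N|=0$ and $G/N$ metrizable, and use $\sigma$-compactness plus metrizability to get separability of the quotient. Where you diverge is in the second half. The paper never pushes Haar measure down to $G/N$: it lifts the countable dense subset of $G/N$ to a countable union of cosets $DN\subset G$, observes that this set is dense in $G$ and null (each coset $dN$ has measure $|N|=0$ by left invariance), and then applies outer regularity \emph{once, upstairs in $G$} to envelope $DN$ in an open set of measure at most $\epsilon$, which is automatically dense because it contains $DN$. You instead build the small dense open set downstairs and pull it back, which obliges you to establish the Weil-type identification $|q^{-1}(V)|=\nu(V)$ with Haar measure on $G/N$, plus non-discreteness and non-atomicity of that quotient measure. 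Both routes work; yours is self-contained but carries noticeably more machinery than the problem requires, since the only measure-theoretic fact actually needed is that a countable union of null cosets is null and can be covered by a small open set.

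One small point to correct: your parenthetical claim that compactness of $N$ ``is what forces $|N|=0$'' is not right --- compact subgroups can have positive measure (any compact open subgroup does). What gives $|N|=0$ in Kakutani--Kodaira is that $N$ can be chosen inside a prescribed sequence of neighborhoods of $e$ whose measures tend to zero, which is possible exactly when $G$ is nondiscrete; the proposition as cited already asserts $|N|=0$, so your argument is unaffected, but the stated reason is wrong.
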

\begin{proof}
Let $N \normal G$ be a closed normal subgroup such that $|N| = 0$ and $G/N$ is metrizable.
Since $\kappa(G/N) \leq \Nbb$, we see $G/N$ contains a countable dense subset $D$.
Therefore $DN$ is dense in $G$ and has measure zero.
By the regularity of Haar measure, $DN \subset E$ for some open $E\subset G$ with $|E| \leq \epsilon$.
\end{proof}


In the remainder of this chapter,
let $\{U_\alpha\}_{\alpha < \mu}$ be a neighborhood basis of distinct symmetric sets at the origin.
Since $G$ is locally compact, we furthermore suppose that each $U_\alpha$ is \textit{regular open},
in other words $U_\alpha$ is the interior of its closure.\footnote{
	If $\{K_\alpha\}$ is a neighborhood basis of closed sets,
	then $\{K_\alpha^{\circ}\}$ is a neighborhood basis of regular open sets.
}
If $U,V$ are distinct regular open sets,
notice $U\Sdiff V$ has nonempty interior

\begin{lem}\label{dense subset of haar sets}
Suppose $\mu$ is infinite and $K\subset G$ is nonempty compact.
There exists a family $D$ of regular open sets such that
\\(1) $\#(D) = \mu$, and
\\(2) Given a measurable set $A \subset K$ and $\epsilon > 0$, there is $B \in D$ with $|A\Sdiff B| < \epsilon$.
\end{lem}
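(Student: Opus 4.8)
The plan is to realize $D$ as the family of regularizations of finite unions of translates of very small regular open sets, and to approximate a given $A\subset K$ by trapping it between a compact subset and a relatively compact open superset. The first step is to replace the given basis by an auxiliary one whose members have measure-null boundaries, since this is exactly what makes the regularization step harmless. For each $\alpha<\mu$, use Urysohn's lemma to pick a continuous $g_\alpha\colon G\to[0,1]$ with $g_\alpha(e)=1$ and compact support contained in $U_\alpha$. The level sets $\{g_\alpha=t\}$, $t\in(0,1)$, are disjoint and contained in a set of finite measure, so only countably many carry positive measure; choose $t_\alpha\in(0,1)$ with $|\{g_\alpha=t_\alpha\}|=0$ and set $V_\alpha=\mathrm{int}(\cl(\{g_\alpha>t_\alpha\}))$. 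Each $V_\alpha$ is regular open, satisfies $e\in V_\alpha\subset U_\alpha$, and has $\partial V_\alpha\subset\{g_\alpha=t_\alpha\}$, so $|\partial V_\alpha|=0$. Thus $\{V_\alpha\}_{\alpha<\mu}$ is again a neighborhood basis of regular open sets, now with null boundaries.

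Next I would build a single center set of size $\mu$ that is fine at every scale at once. For each $\gamma<\mu$, compactness of $K$ gives a finite $X_\gamma$ with $K\subset\bigcup_{x\in X_\gamma}xV_\gamma$; put $X=\{e\}\cup\bigcup_{\gamma<\mu}X_\gamma$, of cardinality at most $\mu$. Let $\mathcal B=\{xV_\gamma:x\in X,\ \gamma<\mu\}$, and let $D$ consist of all regularizations $\mathrm{int}(\cl(W))$ of finite unions $W$ of members of $\mathcal B$. Every element of $D$ is regular open, and a routine count gives $\card{D}=\mu$: there are at most $\mu$ finite subsets of the $\mu$-sized pool $\mathcal B$, while the $\mu$ distinct basic sets $V_\gamma$ (each equal to its own regularization, and lying in $\mathcal B$ since $e\in X$) already sit inside $D$.

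For the approximation, given measurable $A\subset K$ and $\epsilon>0$, inner regularity yields a compact $C\subset A$ with $|A\setminus C|<\epsilon/2$, and outer regularity a relatively compact open $V\supset A$ with $|V\setminus A|<\epsilon/2$. For each $z\in C\subset V$, the set $z^{-1}V$ is a neighborhood of $e$, so some $\gamma$ has $V_\gamma^{-1}V_\gamma\subset z^{-1}V$; choosing $x\in X_\gamma$ with $z\in xV_\gamma$ gives $xV_\gamma\subset zV_\gamma^{-1}V_\gamma\subset V$. These open sets cover $C$, so a finite subcover produces $W=\bigcup_{j=1}^n x_jV_{\gamma_j}$ with $C\subset W\subset V$. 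Put $B=\mathrm{int}(\cl(W))\in D$. Since $W\subset B$ and $\partial W\subset\bigcup_j x_j\,\partial V_{\gamma_j}$ is null by left-invariance of Haar measure, we get $|B\Sdiff W|=0$, whence $|A\Sdiff B|=|A\Sdiff W|\le|A\setminus C|+|V\setminus A|<\epsilon$, as required.

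The main obstacle is the clash between "regular open" and measure: a finite union of regular open sets is almost never regular open, so $B$ must be taken to be the regularization of $W$, and the whole argument collapses unless $|B\Sdiff W|=0$. This is precisely why I pass to the auxiliary null-boundary basis at the outset; with the regular-open-but-possibly-fat-boundary sets fixed earlier in the chapter, regularizing $W$ could enlarge its measure uncontrollably. The secondary delicate point is arranging a single center set $X$ of cardinality $\mu$ that is simultaneously fine at every scale $V_\gamma$, so that $\mathcal B$ generates a base at each point of $K$ while keeping $\card{D}=\mu$.
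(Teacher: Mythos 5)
Your proof is correct and follows essentially the same strategy as the paper's: fix, for each scale $\gamma<\mu$, a finite set of centers so that translates of the $\gamma$-th basic neighborhood cover $K$, let $D$ consist of finite unions of such translates, and approximate a measurable $A\subset K$ by sandwiching it between a compact $C$ and an open $V$ with $|V\setminus C|<\epsilon$ and covering $C$ by finitely many translates contained in $V$. The one genuine difference is your treatment of the regular-open requirement: the paper simply takes finite unions of the sets $U_\alpha d$ and leaves it at that, even though a finite union of regular open sets need not be regular open, whereas you pass to an auxiliary basis with null boundaries (via Urysohn level sets) and then regularize each finite union, checking that this changes the measure by zero. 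This extra step actually repairs a small imprecision in the paper's own proof; it is harmless but also not strictly needed downstream, since the two places the lemma is invoked use only the cardinality bound and the density property, not regular-openness. (A trivial shared loose end: both arguments give $\#(D)\le\mu$ and should pad $D$ with, say, the translates $xV_\gamma$ themselves if exact equality $\#(D)=\mu$ is insisted upon.)
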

\begin{proof}
For $\alpha < \mu$, choose $D_\alpha \Subset K$ with $K \subset U_\alpha D_\alpha$.
Let $D_{00} = \bigcup_{\alpha < \mu} D_\alpha$.
Obviously $D_{00}\subset K$ is dense, and $\#(D_{00}) \leq \mu$.
Define
\[D_0 = \{U_\alpha d : \alpha < \mu,\ d\in D_{00}\}
\ \ \text{and}\ \  D = \{\text{finite unions in }D_0\}.\]
Of course $\#(D) = \#(D_0)$, and $\#(D_0) = \#(D_{00}) \cdot \mu = \mu$.

Since Haar measure is regular, there exist a compact set $C$ and an open set $V$
with $C \subset A \subset V$ and $|V \setminus C| < \epsilon$.
About each $x\in C$, choose a neighborhood of the form $U_{\alpha_x} d_x \subset V$.
Let $V_0 \in D$ be a finite union of these neighborhoods covering $C$.
Since $C \subset V_0 \subset V$, we have $|A \Sdiff V_0| < \epsilon$.
\end{proof}

Let $B = B(G) \subset \Linfty(G)$ be the Boolean algebra of characteristic functions.

\begin{prop}\label{Cardinality of B in general}
If $G$ is nondiscrete locally compact, then $\#(B) = \mu^{\kappa\cdot \Nbb}$.
\\(When $G$ is discrete, of course $\#(B) = 2^{\#(G)} = 2^{\kappa}$.)
\end{prop}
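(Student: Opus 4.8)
The plan is to reduce to the $\sigma$-compact case, compute $\#(B)=\mu^{\Nbb}$ there, and account for one such factor per coset of a $\sigma$-compact open subgroup; the discrete case is the immediate count $2^{\#(G)}$, so I assume $G$ nondiscrete throughout. First I would fix a compact symmetric neighborhood $V$ of $e$ and set $H=\bigcup_{n}V^{n}$, an open $\sigma$-compact subgroup, and write $G=\bigsqcup_{t\in T}tH$ with $\#(T)=\#(G/H)$. The crucial point is that any compact set meets only finitely many of the open disjoint cosets $tH$, so a locally measurable $A$ is locally null if and only if each $A\cap tH$ is null; since each $tH$ is $\sigma$-compact, this is genuine nullity rather than local nullity. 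Hence restriction gives a bijection $B\cong\prod_{t\in T}B(tH)$, and because a positive-measure discrepancy inside a single $\sigma$-compact coset is never locally null, distinct families yield distinct elements of $B$, so $\#(B)=\prod_{t\in T}\#(B(tH))$. As $H$ is open we have $\mu(H)=\mu$, and left translation carries $H$ homeomorphically onto $tH$ preserving Haar-null sets, so every factor equals $\#(B(H))$.

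Next I would show $\#(B(H))=\mu^{\Nbb}$ for $H$ nondiscrete $\sigma$-compact, writing $H=\bigcup_{n}K_{n}$ with $K_{n}$ compact and increasing. For the upper bound, \cref{dense subset of haar sets} (applicable since $\mu$ is infinite, $G$ being nondiscrete) supplies for each $K_{n}$ a family $D_{n}$ of regular open sets with $\#(D_{n})=\mu$ that is dense in measure; choosing for each measurable $A\subseteq K_{n}$ a sequence in $D_{n}$ converging to it in measure exhibits an injection $B(K_{n})\hookrightarrow D_{n}^{\Nbb}$, so $\#(B(K_{n}))\le\mu^{\Nbb}$, and $A\mapsto(A\cap K_{n})_{n}$ injects $B(H)$ into $\prod_{n}B(K_{n})$, giving $\#(B(H))\le(\mu^{\Nbb})^{\Nbb}=\mu^{\Nbb}$. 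For the lower bound I would choose pairwise disjoint nonempty open sets $V_{n}\subseteq H$, available since $H$ is nondiscrete Hausdorff. Inside each $V_{n}$ the basic sets $U_{\alpha}$ with $U_{\alpha}\subseteq V_{n}$ still form a neighborhood basis at $e$, hence number at least $\mu$; being distinct regular open sets, their pairwise symmetric differences have nonempty interior and thus positive measure, so they are pairwise distinct modulo null. Encoding $(\alpha_{n})_{n}\in\mu^{\Nbb}$ as $\bigcup_{n}U_{\alpha_{n}}$ with $U_{\alpha_{n}}\subseteq V_{n}$ is then injective modulo locally null sets, since the disjoint $V_{n}$ let one recover each coordinate; hence $\#(B(H))\ge\mu^{\Nbb}$.

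Finally I would assemble the pieces as $\#(B)=\#(B(H))^{\#(G/H)}=(\mu^{\Nbb})^{\#(G/H)}=\mu^{\Nbb\cdot\#(G/H)}$, and verify by a short case check that $\Nbb\cdot\#(G/H)=\kappa\cdot\Nbb$: when $G$ is compact or noncompact $\sigma$-compact the exponent is $\Nbb$ and $\kappa\cdot\Nbb=\Nbb$, while when $\#(G/H)$ is infinite the fact that each compact meets only finitely many cosets forces $\kappa=\#(G/H)$. This yields $\#(B)=\mu^{\kappa\cdot\Nbb}$. I expect the main obstacle to be the lower bound, namely arranging countably many disjoint regions that genuinely carry $\mu$ distinct-modulo-null sets together with the cardinal-arithmetic bookkeeping that collapses the exponent to $\kappa\cdot\Nbb$ in every regime; by contrast, the potential pitfall that a $\kappa$-fold union of null sets need not be null is exactly what the reduction to a $\sigma$-compact open subgroup sidesteps.
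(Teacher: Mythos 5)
Your argument is correct in outline but takes a genuinely different route from the paper's. You localize: decompose $G$ into cosets of an open $\sigma$-compact subgroup $H$, show $\#(B(H)) = \mu^{\Nbb}$, and recover the global count as $\#(B(H))^{\#(G/H)}$ together with the cardinal identity $\Nbb\cdot\#(G/H) = \kappa\cdot\Nbb$. The paper works globally: its upper bound is essentially your injection (approximating each compact piece by sequences from the $\mu$-sized dense families of \cref{dense subset of haar sets}, organized as $B \into \prod_{\gamma<\kappa} D_\gamma^{\Nbb}$ over a covering by $\kappa$ compacta), but its lower bound plants translated copies of the neighborhood basis in $\kappa$ disjoint translates of a single compact set, which only gives $\mu^{\kappa} \leq \#(B)$; to absorb the extra $\Nbb$ in the exponent it then invokes the Comfort--Hager theorem that $\#(B)^{\Nbb} = \#(B)$ for complete Boolean algebras. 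Your version buys independence from that citation, because you build the $\Nbb$ into the lower bound directly by using countably many disjoint regions inside each coset; the price is the bookkeeping identifying $\Nbb\cdot\#(G/H)$ with $\kappa\cdot\Nbb$, which you carry out correctly (each compact set meets only finitely many of the disjoint open cosets, and each coset is $\sigma$-compact, so $\kappa = \#(G/H)$ whenever the latter is infinite).

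One local slip needs repair in your lower bound: the condition $U_\alpha \subseteq V_n$ is vacuous for all but at most one $n$, since every $U_\alpha$ is a neighborhood of $e$ and the $V_n$ are pairwise disjoint. Pick $x_n \in V_n$ and use the translates $U_\alpha x_n$ with $U_\alpha \subseteq V_n x_n^{-1}$; the subfamily $\{U_\alpha : U_\alpha \subseteq V_n x_n^{-1}\}$ is still a neighborhood basis at $e$, hence has cardinality $\mu$, and since distinct regular open sets have symmetric difference with nonempty interior, right translation preserves distinctness modulo null. With this change the encoding $(\alpha_n)_n \mapsto \bigcup_n U_{\alpha_n} x_n$ is injective exactly as you describe; this is in fact the same translation device the paper uses in its own lower bound.
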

\begin{proof}
Let $\{K_\gamma\}_{\gamma < \kappa}$ be a family of compact subsets that cover $G$.
For each $\gamma$, apply \cref{dense subset of haar sets} to $K_\gamma$ to obtain $D_\gamma$.
For each $\Onebb_E\in B$ and $\gamma < \kappa$, define $E^{\gamma} = E \cap K_\gamma$,
and choose a sequence $\{E^\gamma_n\}_{n\in\Nbb} \subset D_\gamma$ with $|E^\gamma\Sdiff E^\gamma_n| \to 0$.
Evidently
\[\Onebb_E \mapsto \big( (\gamma,n) \mapsto E^{\gamma}_n \big)\]
is an injection of $B$ into $(D_\gamma)^{\kappa \times \Nbb}$,
which shows
$\#(B) \leq \#(\mu)^{\kappa \cdot \Nbb}.$

To get the opposite inequality, suppose all the elements of our neighborhood basis $\{U_\alpha\}_{\alpha < \mu}$
are contained in a single compact set $K$.
Let $\{Kx_{\gamma}\}_{\gamma < \kappa}$ be disjoint translates of $K$.
Evidently the map from $\mu^{\kappa}$ to $B$ defined by
\[f\mapsto \bigcup_{\gamma < \kappa} U_{f(\gamma)} x_{\gamma}\]
is an injection, which shows $\mu^{\kappa \cdot \Nbb} \leq \#(B)^{\Nbb}$.
Since $B$ is a complete Boolean algebra, \cite{comfortHager} yields $\#(B) = \#(B)^{\Nbb}$.
\end{proof}

Now let $\Acal$ denote the spectrum of $\Linfty(G)$.
Since each $h\in\Acal$ is determined by its restriction to the set $B$ of characteristic functions,
the map
\[h \mapsto \{\Onebb_E \in B : h(\Onebb_E) = 1\}\]
is a bijection of $\Acal$ with the set of ultrafilters on $B$.

\begin{prop}\!\!\footnote{
	Thanks to Alex Chirvasitu for showing me the papers \cite{comfortHager} and \cite{balcarFranek}.
	When $G$ is compact, he also pointed out that $\#(\Bcal) = \#(RO)$,
	where $RO$ is the Boolean algebra of regular open sets in $G$.
}\ \ 
If $G$ is locally compact, then $\#(\Acal) = 2^{\#(B)}$.
\end{prop}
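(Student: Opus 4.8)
The plan is to use the identification, noted immediately before the statement, of $\Acal$ with the set of ultrafilters on the Boolean algebra $B$. It then suffices to show that $B$ carries exactly $2^{\card{B}}$ ultrafilters, and this splits into a routine upper bound and a more substantial lower bound.

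For the upper bound, every ultrafilter on $B$ is in particular a subset of $B$, so the collection of ultrafilters is a subset of $\Pscr(B)$. Hence $\card{\Acal} \leq \card{\Pscr(B)} = 2^{\card{B}}$, with no further work.

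For the lower bound I would produce $2^{\card{B}}$ distinct ultrafilters from a large independent family. Recall that $\{b_i\}_{i\in I}\subset B$ is \emph{independent} if $\bigwedge_{i\in F} b_i \wedge \bigwedge_{j\in F'}(1 - b_j) \neq 0$ for all disjoint finite $F, F'\subset I$. The essential input is that $B$ is an infinite complete Boolean algebra — infinite because $G$ is infinite, and complete as already used in the proof of the preceding proposition — so the Balcar--Franek theorem \cite{balcarFranek} supplies an independent family $\{b_i\}_{i\in I}$ with $\card{I} = \card{B}$. Given such a family, to each $\sigma\in\{0,1\}^I$ I attach the collection $\{b_i : \sigma(i) = 1\}\cup\{1 - b_i : \sigma(i) = 0\}$; independence says every finite subcollection has nonzero meet, so this collection has the finite intersection property and generates a proper filter, which extends to an ultrafilter $p_\sigma$ by the ultrafilter lemma. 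The map $\sigma\mapsto p_\sigma$ is injective, since whenever $\sigma(i)\neq\tau(i)$ exactly one of $p_\sigma,p_\tau$ contains $b_i$. This yields $2^{\card{I}} = 2^{\card{B}}$ distinct ultrafilters, so $\card{\Acal}\geq 2^{\card{B}}$, and combined with the upper bound we get equality.

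The main obstacle is the lower bound, and specifically the existence of an independent family of the full cardinality $\card{B}$; everything surrounding it is bookkeeping. This is exactly where completeness of $B$ is indispensable — for a general infinite Boolean algebra the Stone space can be far smaller than $2^{\card{B}}$ — and it is precisely the content of the Balcar--Franek theorem, which I would invoke as a black box rather than reprove.
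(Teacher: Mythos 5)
Your proposal is correct and follows essentially the same route as the paper: the paper's entire proof is a citation of \cite[Corollary 1]{balcarFranek}, which states that a complete infinite Boolean algebra $B$ carries exactly $2^{\#(B)}$ ultrafilters, and that corollary is itself obtained from the independent-family theorem exactly as you describe. You have merely unpacked the derivation of the cited corollary (trivial upper bound plus an independent family of size $\#(B)$), which is fine but not a different argument.
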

\begin{proof}
By \cite[Corollary 1]{balcarFranek}, the number of ultrafilters on the complete Boolean algebra $B$ is $2^{\#(B)}$.
\end{proof}

\chapter{\texorpdfstring
{Counting topological invariant means on $\Linfty(G)$ and $VN(G)$ with ultrafilters}
{Counting topological invariant means with ultrafilters}
}\label{Chapter Enumerating TIM}
The following chapter is based on my paper \cite{HopfenspergerCounting}.

\medskip

\noindent\textbf{\large Abstract}

\medskip\noindent
In 1970, Chou showed there are $\card{\Nbb^*} = 2^{2^{\Nbb}}$ topological invariant means on $L_\infty(G)$
for any noncompact, $\sigma$-compact amenable group.
Over the following 25 years, the sizes of the sets of topological invariant means on $L_\infty(G)$ and $VN(G)$
were determined for any locally compact group.
Each paper on a new case reached the same conclusion -- ``the cardinality is as large as possible'' --
but a unified proof never emerged.
In this paper, I show $L_1(G)$ and $\AG$ always contain orthogonal nets converging to invariance.
An orthogonal net indexed by $\Gamma$ has $\card{\Gamma^*}$ accumulation points,
where $\card{\Gamma^*}$ is determined by ultrafilter theory.

Among a smattering of other results, I prove Paterson's conjecture
that left and right topological invariant means on $L_\infty(G)$ coincide
if and only if $G$ has precompact conjugacy classes.
Finally, I discuss some open problems arising from the study
of the sizes of sets of invariant means on groups and semigroups.

\section*{History}
\begin{point}\label{First point}
F\o{}lner's condition for amenable discrete groups says, for all finite $K\subset G$ and $\epsilon > 0$,
there exists a finite $(K,\epsilon)$-invariant set $F_{(K,\epsilon)}\subset G$.
The set $\Gamma$ of all ordered pairs $\gamma = (K, \epsilon)$ is a directed set, ordered by increasing $K$ and decreasing $\epsilon$.
Define $m_\gamma\in\ell_\infty^*(G)$ by \[m_\gamma(f) = \frac{1}{\card{F_\gamma}} \sum_{x\in F_\gamma} f(x).\]
Then the net $\{m_\gamma\}_{\gamma\in\Gamma}$ converges to invariance, and any accumulation point is a left-invariant mean on $\ell_\infty(G)$.

The analogue of F\o{}lner's condition for locally compact amenable groups says,
for all compact $K\subset G$ and $\epsilon > 0$, there exists compact $F\subset G$ which is $(K,\epsilon)$-invariant.
The analogous definition of $m_\gamma\in\LinftyS(G)$ is
\[m_\gamma(f) = \frac{1}{\Haar{F_\gamma}} \int_{F_\gamma} f(x) \dd{x}.\]
If $m$ is any accumulation point of $\{m_\gamma\}_{\gamma\in\Gamma}$,
it is not only left-invariant but topological left-invariant.
That is, $m(\phi) = m(f*\phi)$ for any $\phi\in\Linfty(G)$ and $f\in \Lone(G)$ with $\|f\|_1 = \int_G f = 1$.
\end{point}

\begin{point}
Let $\TLIM(G)$ denote the topological left-invariant means on $\Linfty(G)$, and $\TIM(G)$ the (two-sided) topological invariant means.
In \cite{patersonFCBar}, Paterson conjectured that $\TLIM(G) = \TIM(G)$ if and only if $G$ has precompact conjugacy classes, and proved it assuming $G$ is compactly generated.
The short and insightful paper \cite{milnes} proved Paterson's conjecture assuming $G$ is $\sigma$-compact, and gave me the ideas to prove it in full generality.
\end{point}

\begin{point}\label{Intro_Number_of_Tims}
Let $\kappa = \kappa(G)$ be the first ordinal such that
there is a family $\Kcal$ of compact subsets of $G$ with $\card{\Kcal} = \kappa$ and $G = \bigcup \Kcal$.
It's not hard to prove $\card{\TIM(G)} \leq \card{\TLIM(G)} \leq 2^{2^\kappa}$.
Of course when $\kappa = 1$, the unique topological invariant mean is Haar measure.
But when $\kappa \geq \Nbb$, $\card{\TIM(G)}$ actually equals $2^{2^\kappa}$.
Here is an abbreviated history of this surprising result, which took almost 20 years to establish:

When $\kappa=\Nbb$, Chou \cite{Chou70} defined $\pi: \Linfty(G)\to\ell_\infty(\Nbb)$ by
\[\pi(f)(n) = \frac{1}{\Haar{U_n}}\int_{U_n} f,\]
where $\{U_n\}$ is a F\o{}lner sequence of mutually disjoint sets.
Thus $\pi^* : \ell_\infty^*(\Nbb) \to \LinftyS(G)$ is an embedding.
Let $c_0 = \{x\in \ell_\infty(\Nbb) : \lim_n x_n = 0\}$ and \linebreak
$\Fcal = \{m\in\ell_\infty^*(\Nbb) : m(1)=1,\, m \geq 0,\, m|_{c_0} \equiv 0\},$
so that $\pi^*[\Fcal]\subset\TLIM(G)$.
Regarding the nonprincipal ultrafilters on $\Nbb$ as elements of $\Fcal$,
we see $\card{\TLIM(G)} \geq \card{\beta\Nbb - \Nbb} = 2^{2^{\Nbb}}$.

When $G$ is discrete and $\kappa \geq \Nbb$, \cite{ChouExactCard} proved $\card{\TIM(G)} \geq 2^{2^\kappa}$.
When $G$ is nondiscrete and $\kappa \geq \Nbb$, \cite{lau-paterson} proved $\card{\TLIM(G)} \geq 2^{2^\kappa}$.
These papers take a different approach, constructing a large number of disjoint translation-invariant subsets of the LUC-compactification $G^{LUC}$.
Each such subset supports at least one invariant mean by Day's fixed point theorem.

The full result $\card{\TIM(G)} \geq 2^{2^\kappa}$ was finally proved by Yang in \cite{Yang}.
Yang realized that the trick to generalizing Chou's embedding argument
is to replace $\Nbb$ by the indexing set of a F\o{}lner net.
\end{point}

\begin{point}
When $G$ is abelian, let $\Ghat$ denote its dual group.
If $G$ is discrete abelian, it is well-known that $\kappa(\Ghat) = 1$, hence Haar measure is the unique topological invariant mean on $L_\infty(\Ghat)$.
More generally, let $\mu = \mu(G)$ be the first ordinal
such that $G$ has a neighborhood basis $\Ucal$ at the origin with $\card{\Ucal} = \mu$.
In these terms, $\kappa(\Ghat) = \mu(G)$ by \cite[24.48]{HR1}.
Thus when $G$ is nondiscrete abelian, $\kappa(\Ghat) \geq \Nbb$ and $\card{\TIM(\Ghat)} = 2^{2^\mu}$.

When $G$ is non-abelian, the group von Neumann algebra $\VN$ is the natural analogue of $L_\infty(\Ghat)$.
If $\TIM(\Ghat)$ denotes the set of topological invariant means on $\VN$, then the analogous results hold:
When $\mu = 1$, $\TIM(\Ghat)$ is the singleton comprising the point-measure $\delta_e$, as proved in \cite{renaud}.
When $\mu \geq \Nbb$, $\card{\TIM(\Ghat)} = 2^{2^\mu}$.
This is proved in \cite{Chou82} when $\mu = \Nbb$, using an embedding $\pi^*: \ell_\infty^*(\Nbb) \to \VN^*$,
and in \cite{Hu95} when $\mu > \Nbb$,
using a family $\{\pi_\gamma^*: \ell_\infty^*(\mu) \to \VN^*\}_{\gamma<\mu}$ of embeddings!
\end{point}
\section*{Projections and Means}

\begin{point}\label{Definition of Support}
A positive unital functional on a von Neumann algebra $X$ is called a state.
A state $u$ is called normal if $\langle u, \sup_\alpha P_\alpha\rangle = \sup_\alpha \langle u, P_\alpha\rangle$
for any family $\{P_\alpha\}\subset X$ of (orthogonal) projections.
Let $\Pcal_1$ denote the set of normal states on $X$.
Sakai \cite{SakaiPredual} famously proved that the linear span of $\Pcal_1$ forms the predual of $X$,
that is, $\mathrm{span}(\Pcal_1)^* = X$.
For each $u\in \Pcal_1$, we can define a projection $S(u)$ called the support of $u$,
which is the inf of all projections $P$ such that $\langle P, u\rangle = \langle I, u\rangle = 1$.
Now $u,v\in \Pcal_1$ are called orthogonal if $S(u) S(v) = 0$.
\end{point}

\begin{lem}\label{Lemma Orthogonal Implies Injection}
Suppose $\{u_\gamma\}_{\gamma\in\Gamma} \subset \Pcal_1$ are mutually orthogonal.
In other words, $S(u_\gamma) S(u_\beta) = 0$ when $\gamma\neq\beta$.
Then the map $\beta\Gamma \to X^*$ given by $p\mapsto \plim_\gamma u_\gamma$ is one-to-one.
\end{lem}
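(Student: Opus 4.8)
The plan is to adapt the injectivity argument used earlier for the $2^{\cfrak}$ Banach limits, with the mutually orthogonal support projections $S(u_\gamma)$ playing the role that characteristic functions of disjoint sets played there. First I would observe that each $u_\gamma$ is a state, hence sits in the $w^*$-compact unit ball of $X^*$; thus $\plim_\gamma u_\gamma$ is a well-defined element of $X^*$ for every ultrafilter, and $w^*$-evaluation commutes with $\plim$, so that $\langle \plim_\gamma u_\gamma,\, P\rangle = \plim_\gamma \langle u_\gamma,\, P\rangle$ for each projection $P\in X$.

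Now suppose $p$ and $q$ are distinct ultrafilters on $\Gamma$, and choose $E\subset\Gamma$ with $E\in p$ and $E^c\in q$. The separating projection will be $P = \sup_{\gamma\in E} S(u_\gamma)$, which exists because the projections of a von Neumann algebra form a complete lattice. I would then evaluate $\langle u_\gamma,\, P\rangle$ in two cases. For $\gamma\in E$ we have $S(u_\gamma)\leq P$, so positivity gives $1 = \langle u_\gamma,\, S(u_\gamma)\rangle \leq \langle u_\gamma,\, P\rangle \leq \langle u_\gamma,\, I\rangle = 1$, hence $\langle u_\gamma,\, P\rangle = 1$. For $\gamma\in E^c$, orthogonality gives $S(u_\gamma)S(u_\beta) = 0$ for every $\beta\in E$, so $S(u_\gamma)$ is orthogonal to $P$; thus $P\leq I - S(u_\gamma)$ and positivity forces $0\leq \langle u_\gamma,\, P\rangle \leq \langle u_\gamma,\, I - S(u_\gamma)\rangle = 0$.

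Having shown $\langle u_\gamma,\, P\rangle$ equals $1$ for $\gamma\in E$ and $0$ for $\gamma\in E^c$, the conclusion is immediate. Writing $\plim$ for the limit along $p$ and $\qlim$ for the limit along $q$: because $E\in p$ we get $\langle \plim_\gamma u_\gamma,\, P\rangle = \plim_\gamma \langle u_\gamma,\, P\rangle = 1$, whereas because $E^c\in q$ we get $\langle \qlim_\gamma u_\gamma,\, P\rangle = \qlim_\gamma \langle u_\gamma,\, P\rangle = 0$. The two images therefore disagree at $P$, so the map $p \mapsto \plim_\gamma u_\gamma$ is one-to-one.

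The only genuinely nontrivial step is the claim that $S(u_\gamma)$ is orthogonal to $P = \sup_{\gamma'\in E} S(u_{\gamma'})$ whenever $S(u_\gamma)$ is orthogonal to each $S(u_{\gamma'})$; this is the lattice fact that the range of a supremum of projections is the closed linear span of the individual ranges, so a vector orthogonal to all of them is orthogonal to the supremum. Everything else is bookkeeping with positivity and the defining property $\langle u,\, S(u)\rangle = 1$ of the support.
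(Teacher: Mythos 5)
Your proposal is correct and follows essentially the same route as the paper: both pick $E\in p$ with $E^c\in q$, form the projection $P=\sup_{\gamma\in E}S(u_\gamma)$, and separate the two limits by showing $\langle u_\gamma, P\rangle$ is $1$ on $E$ and $0$ on $E^c$. You merely spell out the positivity and lattice-supremum details that the paper leaves implicit.
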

\begin{proof}
Suppose $p,q$ are distinct ultrafilters, say $E\in p$ and $E^C \in q$.
Let $P = \sup_{\gamma\in E} S(u_\gamma)$.
Clearly $\langle \plim_\gamma u_\gamma, P\rangle = 1$.
For any $\beta\in E^C$, $P \leq 1 - S(u_\beta)$, hence $\langle u_\beta, P\rangle = 0$, hence $\langle\qlim_\gamma u_\gamma, P\rangle = 0$.
\end{proof}

\begin{point}\label{Linfty is operators}
For example, $\Linfty(G)$ is a von Neumann algebra of multipliers on $\Ltwo(G)$, and its predual is $\Lone(G)$.
Let $ P_1(G)$ denote the normal states on $\Linfty(G)$.
By Sakai's result, the linear span of $ P_1(G)$ is $\Lone(G)$, 
so we conclude
\[P_1(G) = \{f\in \Lone(G) : f\geq 0, \|f\|_1 = 1\}.\]
Given $f\in P_1(G)$, let $\supp(f) = \{x\in G: f(x)\neq 0\}$.
Clearly $S(f)$ is the indicator function $\Onebb_{\supp(f)}$.
From this, we conclude $f,g \in  P_1(G)$ are orthogonal when $\supp(f)\cap \supp(g) = \varnothing$.
\end{point}

\begin{point}
Let $X$ be any von Neumann algebra.
Endow $X^*$ with the $w^*$-topology,
and let $\Mcal\subset X^*$ be the set of all states.
In the context of amenability, $\Mcal$ is traditionally called the set of means on $X$.
Notice $\|m\|=1$ for each $m\in\Mcal$, since $T \leq \|T\| I$ for any self-adjoint $T$.
Notice $\Pcal_1$ is convex and $\Mcal$ is compact convex.
\end{point}


\begin{lem}\label{Lemma P is Dense in M}
$\Pcal_1$ is dense in $\Mcal$.
\end{lem}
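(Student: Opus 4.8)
The plan is to run the same Hahn--Banach separation argument used in \cref{P1(G) is dense in M(G)}, now adapted to the noncommutative setting. First I would record that $\Mcal$ is $w^*$-compact and convex, so the $w^*$-closure $\cl(\Pcal_1)$ is a $w^*$-compact convex subset of $\Mcal$. Arguing by contradiction, suppose some $\mu\in\Mcal$ lies outside $\cl(\Pcal_1)$. Since the $w^*$-continuous linear functionals on $X^*$ are exactly the elements of $X$, \cref{Lemma Hahn Banach} (applied with the Banach space taken to be $X$) supplies $T\in X$ with $\Re\langle\mu,\, T\rangle > \sup_{f\in\Pcal_1}\Re\langle f,\, T\rangle$, where the supremum over $\cl(\Pcal_1)$ agrees with the one over $\Pcal_1$ by $w^*$-continuity.

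Next I would reduce $T$ to a positive operator. Writing $T_1 = \tfrac12(T + T^*)$ for the self-adjoint part, every state is real-valued on self-adjoint elements, so $\Re\langle m,\, T\rangle = \langle m,\, T_1\rangle$ for each $m\in\Mcal$; hence $\langle\mu,\, T_1\rangle > \sup_{f\in\Pcal_1}\langle f,\, T_1\rangle$. Setting $S = T_1 + \|T_1\| I \geq 0$ and using that every state is unital, the strict inequality is preserved:
\[\langle\mu,\, S\rangle > \sup_{f\in\Pcal_1}\langle f,\, S\rangle.\]
On the other hand $\langle\mu,\, S\rangle \leq \|\mu\|\,\|S\| = \|S\|$, so it suffices to prove $\sup_{f\in\Pcal_1}\langle f,\, S\rangle = \|S\|$ in order to reach a contradiction.

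The crux is this last equality, which is where normality of the states enters. Realizing $X\subset B(\mathcal H)$ concretely with $I$ acting as the identity of $\mathcal H$, every unit vector $\xi\in\mathcal H$ determines a vector state $\omega_\xi(A) = \langle A\xi,\, \xi\rangle$, which is a normal state and hence lies in $\Pcal_1$. For the positive operator $S$ one has $\|S\| = \sup_{\|\xi\| = 1}\langle S\xi,\, \xi\rangle$, so choosing unit vectors in the range of a spectral projection of $S$ supported near the top of its spectrum produces normal states with $\langle\omega_\xi,\, S\rangle$ arbitrarily close to $\|S\|$. This forces $\sup_{f\in\Pcal_1}\langle f,\, S\rangle = \|S\| \geq \langle\mu,\, S\rangle$, contradicting the displayed strict inequality. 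The one point demanding care --- and the only genuine departure from the commutative proof of \cref{P1(G) is dense in M(G)} --- is precisely this identification: that vector states are normal (so they belong to $\Pcal_1$) and that for the positive $S$ the numerical radius exhausts the operator norm. Everything else is formally identical.
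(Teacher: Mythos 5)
Your proposal is correct and follows essentially the same route as the paper: Hahn--Banach separation of $\{\mu\}$ from $\cl(\Pcal_1)$, reduction to the self-adjoint part $T_1$, translation to a positive $S = T_1 + \|T_1\| I$, and the contradiction $\langle \mu, S\rangle > \sup_{f\in\Pcal_1}\langle f, S\rangle = \|S\| \geq \langle \mu, S\rangle$. The only difference is that you spell out, via vector states and the spectral theorem, why $\sup_{f\in\Pcal_1}\langle f, S\rangle = \|S\|$ for positive $S$ --- a step the paper asserts without justification --- and your justification is sound.
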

\begin{proof}
Suppose to the contrary that a mean $m$ lies outside $\cl(\Pcal_1)$.
Applying \cref{Lemma Hahn Banach} to the sets $\{m\}$ and $\cl(\Pcal_1)$,
obtain $T\in X$ such that
$\Re\langle m, T\rangle > \sup_{u\in\Pcal_1} \Re \langle u, T\rangle$.
Decompose $T$ into its self-adjoint parts as $T_1 + i T_2$,
so that $\langle m, T_1\rangle > \sup_{u\in\Pcal_1} \langle u, T_1\rangle$.
Letting $S = T_1 + \|T_1\| I \geq 0$, we have $\langle m, S\rangle > \sup_{u\in\Pcal_1} \langle u, S\rangle = \|S\|$,
contradicting $\|m\| = 1$.
\end{proof}
\section*{TI-nets in \texorpdfstring{$P_1(G)$}{P1(G)}}\label{TLIM(G)}
\begin{point}
Recall that $\Haar{E}$ denotes the left Haar measure of a measurable set $E\subset G$.
If $0 < |E| < \infty$, we define $\mu_E = \Onebb_E / |E| \in P_1(G)$.
Recall that the modular function $\Delta: G\to \Rbb^\times$ is a continuous homomorphism
defined by $|Ut| = |U| \Delta(t)$.
The map $f\mapsto f^\dag$ defined by $f^\dag(x) = f(x^{-1}) \Delta(x^{-1})$
is an involution of $\Lone(G)$ that sends $ P_1(G)$ to itself.
Left and right translation are defined by $l_t f(x) = f(t^{-1} x)$
and $r_t f(x) = f(xt)$, so that $l_{xy} = l_x l_y$ and $r_{xy} = r_x r_y$.
Additionally, we define $R_t f(x) = f(xt^{-1}) \Delta(t^{-1})$,
so that $R_t \mu_U = \mu_{Ut}$.
Notice $R_{xy} = R_y R_x$.
\end{point}

\begin{point}\label{Definition_fT}
For $T\in\Linfty(G)$ and $f,g\in\Lone(G)$, define $fT$ by $\langle fT, g\rangle = \langle T, f*g\rangle$,
and $T f$ by $\langle T f, g\rangle = \langle T, g*f\rangle$.
Equivalently, $fT(t) = f^\dag * T(t) =  \langle R_t f, T\rangle$, and $Tf(t) = \langle l_t f, T\rangle$.
A mean $m$ is said to be topological left invariant if $\langle m, fT - T\rangle=0$ for all $f\in P_1(G)$ and $T\in\Linfty(G)$.
Likewise, $m$ is topological right invariant if $\langle m,Tf - T\rangle = 0$, and topological two-sided invariant if it is both of the above.
The sets of topological left/right/two-sided invariant means on $\Linfty(G)$ are denoted $\TLIM(G)/ \TRIM(G)/ \TIM(G)$.
\end{point}


\begin{point}
A net $\{f_\gamma\}\subset P_1(G)$ is called a left TI-net
if $\|g*f_\gamma - f_\gamma\|_1 \to 0$ for all $g\in P_1(G)$.\footnote{
	The arguments of \cref{almost topological folner} and \cref{exists topological folner net}
	show this definition of left TI-net implies \cref{weaker TI net defn}.
	The converse is \cref{Lemma lambda_gamma is left TI-net}.
}
The analogous definitions for right/two-sided TI-nets are obvious.
\end{point}

\begin{point}
Suppose $\Kcal$ is a collection of compact sets covering $G$
with $\card{\Kcal} = \kappa$.
Let $\Kcal'$ be the set of all finite unions in $\Kcal$.
Pick any compact $U\subset G$ with nonempty interior.
Then $\Kcal'' = \{UK : K\in \Kcal'\}$ is a collection of compact sets satisfying:
(1) $\card{\Kcal''} = \kappa$.
(2) $\Kcal''$ is closed under finite unions.
(3) $\bigcup_{K\in\Kcal''}K^\circ = G$.
Hence there is no loss of generality in supposing $\Kcal$ itself satisfies (1)-(3).
It follows that every compact $C\subset G$ is contained in some $K\in\Kcal$.

Henceforth, let \(\Kcal\) satisfying (1)-(3) be fixed.
\end{point}

\begin{point}\label{Definition_Folner_Net}
Suppose $G$ is noncompact amenable.
Let $\Gamma = \{(K, n) : K\in\Kcal,\ n\in\Nbb\}$, which is a directed set with the following partial order:
$[(K,m) \preceq (J,n)] \iff [K\subseteq J$ and $m\leq n]$.
By condition (2) above, notice that each tail of $\Gamma$ has cardinality
$\kappa \cdot \Nbb = \kappa = \card{\Gamma}$, as required by \cref{Lemma Cardinality of Lambda*}
to guarantee $|\Gamma^*| = 2^{2^{\kappa}}$.

\cref{exists topological folner net} says that for each $\gamma = (K,n)$,
we can pick a compact $(K, \nicefrac{1}{n})$-left-invariant set $F_\gamma$.
In other words, letting $\lambda_\gamma = \mu_{F_\gamma}$, we have
	$\| l_x \lambda_\gamma - \lambda_\gamma\|_1 < \nicefrac1n$ for each $x\in K$.

Henceforth, let the nets $\{F_\gamma\}$ and $\{\lambda_\gamma\}$ be fixed.
\end{point}

\begin{lem}\label{Lemma lambda_gamma is left TI-net}
$\{\lambda_\gamma\}$ is a left TI-net.
\end{lem}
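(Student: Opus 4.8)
The plan is to prove directly that $\|g*\lambda_\gamma - \lambda_\gamma\|_1 \to 0$ for every $g\in P_1(G)$, exploiting the fact that convolution by $g$ is nothing but a $g$-weighted average of left-translates. First I would record the basic identity: since $g*\lambda_\gamma(y) = \int_G g(x)\,\lambda_\gamma(x^{-1}y)\dd{x} = \int_G g(x)\,l_x\lambda_\gamma(y)\dd{x}$ and $\int_G g = 1$, subtracting $\lambda_\gamma = \int_G g(x)\,\lambda_\gamma\dd{x}$ gives
\[
g*\lambda_\gamma - \lambda_\gamma = \int_G g(x)\,(l_x\lambda_\gamma - \lambda_\gamma)\dd{x}.
\]
Applying the integral form of the triangle inequality (Fubini--Tonelli, with $g\geq 0$) then yields
\[
\|g*\lambda_\gamma - \lambda_\gamma\|_1 \leq \int_G g(x)\,\|l_x\lambda_\gamma - \lambda_\gamma\|_1 \dd{x}.
\]

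Next, fix $\epsilon > 0$. Because $g\in\Lone(G)$, inner regularity of Haar measure provides a compact set $C\subset G$ with $\int_{G\setminus C} g < \epsilon$, and since $\Kcal$ covers $G$ and is closed under finite unions, $C\subseteq K_0$ for some $K_0\in\Kcal$; fix also $n_0$ with $\nicefrac1{n_0} < \epsilon$. I would then show the bound is eventually small by splitting the integral. For any $\gamma = (K,n) \succeq (K_0, n_0)$, so that $K\supseteq K_0$ and $n\geq n_0$, break $\int_G$ into $\int_K + \int_{G\setminus K}$. Over $K$, the defining property of $F_\gamma$ from \cref{Definition_Folner_Net} gives $\|l_x\lambda_\gamma - \lambda_\gamma\|_1 < \nicefrac1n$ for each $x\in K$, so this piece is at most $\tfrac1n\int_K g \leq \tfrac1n < \epsilon$. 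Over $G\setminus K\subseteq G\setminus K_0$, the crude estimate $\|l_x\lambda_\gamma - \lambda_\gamma\|_1 \leq \|l_x\lambda_\gamma\|_1 + \|\lambda_\gamma\|_1 = 2$ gives a contribution of at most $2\int_{G\setminus K_0} g < 2\epsilon$. Hence $\|g*\lambda_\gamma - \lambda_\gamma\|_1 < 3\epsilon$ for all $\gamma\succeq(K_0,n_0)$, which is exactly $\|g*\lambda_\gamma - \lambda_\gamma\|_1 \to 0$.

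The main (indeed only) obstacle is that the invariance estimate $\|l_x\lambda_\gamma - \lambda_\gamma\|_1 < \nicefrac1n$ controls translates solely by points of the compact set $K$, whereas the average defining $g*\lambda_\gamma$ ranges over all of $G$. The mass of $g$ sitting outside $K$ must therefore be handled separately, and this is precisely where integrability of $g$ enters, in tandem with the cofinality of $\Kcal$ (properties (2) and (3) fixed before \cref{Definition_Folner_Net}): enlarging $K$ lets us absorb all but $\epsilon$ of the mass of $g$ into the region where the invariance estimate applies, leaving a tail that is uniformly negligible.
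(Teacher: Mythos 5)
Your proof is correct and follows essentially the same route as the paper's: both rest on the inequality $\|g*\lambda_\gamma - \lambda_\gamma\|_1 \leq \int_G g(x)\,\|l_x\lambda_\gamma - \lambda_\gamma\|_1\dd{x}$ together with the cofinality of $\Kcal$ among compact sets. The only difference is that the paper disposes of the mass of $g$ outside a compact set by first replacing $g$ with a compactly supported approximant (density in $P_1(G)$ plus the contraction $\|g*\lambda_\gamma - g'*\lambda_\gamma\|_1 \leq \|g-g'\|_1$), whereas you handle that tail explicitly with the crude bound $\|l_x\lambda_\gamma - \lambda_\gamma\|_1 \leq 2$ — a cosmetic variation.
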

\begin{proof}
Pick $f\in P_1(G)$.
Since compactly supported functions are dense in $ P_1(G)$, we may suppose $f$ has compact support $C$.
Pick $K\in\Kcal$ containing $C$.
Notice \[\|f*\lambda_\gamma - \lambda_\gamma\|_1
	\leq \int f(t)\, \|l_t \lambda_\gamma - \lambda_\gamma\|_1 \dd{t}
	\leq \sup_{t\in C} \|l_t \lambda_\gamma - \lambda_\gamma\|_1.\]
In particular, $\|f*\lambda_\gamma - \lambda_\gamma\|_1 < 1/n$ whenever $\gamma \succeq (K, n)$.
\end{proof}

\begin{cor}
Let $\rho_\gamma = \lambda_\gamma^\dag$. Then $\{\rho_\gamma\}$ is a right TI-net.
\end{cor}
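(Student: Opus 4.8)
The statement should follow almost formally from \cref{Lemma lambda_gamma is left TI-net} together with the fact that $f\mapsto f^\dag$ is an \emph{isometric, anti-multiplicative} involution of $\Lone(G)$ that preserves $P_1(G)$. The idea is that $\dag$ converts the left-translation structure into the right-translation structure, so applying it to the left TI-net $\{\lambda_\gamma\}$ should produce a right TI-net. Concretely, I plan to reduce each right-invariance estimate for $\rho_\gamma$ to a left-invariance estimate for $\lambda_\gamma$ that has already been proved.

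\textbf{Key properties of the involution.} I would first record the three facts about $\dag$ that the argument needs: (i) $\|f^\dag\|_1 = \|f\|_1$ for every $f\in\Lone(G)$, which follows from \cref{f to fdag} applied to $|f|$ and the substitution $x\mapsto x^{-1}$; (ii) $f^{\dag\dag}=f$ and $\dag$ maps $P_1(G)$ onto itself, both already noted in the text preceding \cref{limit of TI-net}; and (iii) the anti-multiplicativity $(f*g)^\dag = g^\dag * f^\dag$ for $f,g\in\Lone(G)$. Only (iii) is substantive, and it is the one step I expect to require actual work: it is the standard computation showing $\dag$ is the $*$-operation of the convolution algebra $\Lone(G)$, and carrying it out cleanly means bookkeeping the modular function $\Delta$ through the change of variables in the convolution integral. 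I would either cite this as a known property of the group algebra or verify it in one display.

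\textbf{The reduction.} With (iii) in hand the proof is immediate. Fix $g\in P_1(G)$. Using $\rho_\gamma=\lambda_\gamma^\dag$, $f^{\dag\dag}=f$, and anti-multiplicativity,
\[
\big(\rho_\gamma * g - \rho_\gamma\big)^\dag
= (\lambda_\gamma^\dag * g)^\dag - (\lambda_\gamma^\dag)^\dag
= g^\dag * \lambda_\gamma - \lambda_\gamma .
\]
Since $\dag$ is an isometry, $\|\rho_\gamma * g - \rho_\gamma\|_1 = \|g^\dag * \lambda_\gamma - \lambda_\gamma\|_1$. Now $g^\dag\in P_1(G)$ by (ii), so \cref{Lemma lambda_gamma is left TI-net} gives $\|g^\dag * \lambda_\gamma - \lambda_\gamma\|_1 \to 0$, whence $\|\rho_\gamma * g - \rho_\gamma\|_1\to 0$. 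As $g\in P_1(G)$ was arbitrary, $\{\rho_\gamma\}$ is a right TI-net.

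\textbf{Main obstacle.} There is no real obstacle beyond establishing (iii); everything else is a one-line substitution. The only point to watch is matching conventions, namely that $\dag$ reverses the order of convolution (so a left TI-net is sent to a \emph{right} TI-net, not a left one) and that the right TI-net condition is read as $\|f_\gamma * g - f_\gamma\|_1\to 0$, consistent with the definition declared ``obvious'' above.
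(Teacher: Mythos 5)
Your proof is correct and is essentially the paper's own argument: the paper's one-line proof is precisely the identity $\|\rho_\gamma * f - \rho_\gamma\|_1 = \|f^\dag * \lambda_\gamma - \lambda_\gamma\|_1 \to 0$, which you obtain by spelling out the isometry and anti-multiplicativity of $\dag$. No further comment is needed.
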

\begin{proof}
For any $f\in  P_1(G)$, $\|\rho_\gamma *f - \rho_\gamma\|_1 = \|f^\dag*\lambda_\gamma - \lambda_\gamma\|_1\to 0$.
\end{proof}

\begin{lem}\label{Lemma_Product_of_TI_Nets}
$\{\lambda_\gamma *\rho_\gamma\}$ is a two-sided TI-net.
\end{lem}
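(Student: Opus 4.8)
The plan is to reduce two-sided invariance of $\{\lambda_\gamma * \rho_\gamma\}$ to the one-sided invariance already available for its two factors, using only associativity of convolution and Young's inequality $\|a * b\|_1 \leq \|a\|_1\,\|b\|_1$ on $\Lone(G)$. The key observation is that perturbing one factor of the product while leaving the other fixed pulls the unwanted difference out of the product, where the untouched factor — having unit $L^1$ norm, as it lies in $P_1(G)$ — contributes nothing to the estimate.

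First I would check left invariance. Fix $g \in P_1(G)$. By associativity,
\[
g * (\lambda_\gamma * \rho_\gamma) - \lambda_\gamma * \rho_\gamma
= (g * \lambda_\gamma - \lambda_\gamma) * \rho_\gamma,
\]
so that Young's inequality together with $\|\rho_\gamma\|_1 = 1$ gives
\[
\|g * (\lambda_\gamma * \rho_\gamma) - \lambda_\gamma * \rho_\gamma\|_1
\leq \|g * \lambda_\gamma - \lambda_\gamma\|_1 \cdot \|\rho_\gamma\|_1
= \|g * \lambda_\gamma - \lambda_\gamma\|_1.
\]
The right-hand side tends to $0$ because $\{\lambda_\gamma\}$ is a left TI-net by \cref{Lemma lambda_gamma is left TI-net}. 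Hence $\|g * (\lambda_\gamma * \rho_\gamma) - \lambda_\gamma * \rho_\gamma\|_1 \to 0$ for every $g \in P_1(G)$.

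Right invariance is perfectly symmetric: for $g \in P_1(G)$, associativity gives $(\lambda_\gamma * \rho_\gamma) * g - \lambda_\gamma * \rho_\gamma = \lambda_\gamma * (\rho_\gamma * g - \rho_\gamma)$, whose $L^1$ norm is at most $\|\lambda_\gamma\|_1 \cdot \|\rho_\gamma * g - \rho_\gamma\|_1 = \|\rho_\gamma * g - \rho_\gamma\|_1$, which tends to $0$ since $\{\rho_\gamma\}$ is a right TI-net by the preceding corollary. (Alternatively, the right case follows from the left by applying the involution $f \mapsto f^\dag$, but the direct computation is just as short.) There is essentially no obstacle here; the only point that must be respected is that Young's inequality be applied with the \emph{fixed} factor carried as the unit-norm multiplier, and this is automatic because both $\lambda_\gamma$ and $\rho_\gamma$ lie in $P_1(G)$. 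Combining the two displays shows $\{\lambda_\gamma * \rho_\gamma\}$ is simultaneously a left and a right TI-net, hence a two-sided TI-net.
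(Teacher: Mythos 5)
Your proposal is correct and follows essentially the same route as the paper: associativity pulls the difference onto one factor, and Young's inequality with the unit-norm untouched factor gives the estimate. The paper only adds the (routine) preliminary observation that $\lambda_\gamma * \rho_\gamma \geq 0$ and $\|\lambda_\gamma * \rho_\gamma\|_1 = 1$, so the product net actually lies in $P_1(G)$; you may want to state that explicitly, but otherwise nothing is missing.
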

\begin{proof}
First of all, $\lambda_\gamma* \rho_\gamma \geq 0$ and $\|\lambda_\gamma* \rho_\gamma\|_1 = 1$,
so $\{\lambda_\gamma *\rho_\gamma\} \subset  P_1(G)$.
For each $h\in P_1(G)$,
$\|h*\lambda_\gamma* \rho_\gamma - \lambda_\gamma* \rho_\gamma\|_1
\leq \|h*\lambda_\gamma - \lambda_\gamma\|_1 \cdot \|\rho_\gamma\|_1 \to 0$
and $\|\lambda_\gamma* \rho_\gamma* h - \lambda_\gamma* \rho_\gamma\|_1
\leq \|\lambda_\gamma\| \cdot \|\rho_\gamma *h - \rho_\gamma\|_1\to 0$.
\end{proof}

The next lemma generalizes \cite[Theorem 3.2]{Chou70}.
Intuitively it says, ``we can prove facts about the entire set $\TLIM(G)$, simply by proving them about the right-translates of a single left TI-net.''
After that, we have generalizations to $\TRIM(G)$ and $\TIM(G)$.

\begin{lem}\label{Lemma_cl_conv_Xp}
For any $p\in\Gamma^*$,
let $X_p = \{\plim_\gamma [R_{t_\gamma} \lambda_\gamma] : \{t_\gamma\}\in G^{\Gamma}\}.$
Then $\cl(\conv(X_p)) = \TLIM(G)$.
In particular, if $X_{\Gamma^*}$ is the set of all limit points of all right-translates of $\{\lambda_\gamma\}$, then $\cl(\conv(X_{\Gamma^*})) = \TLIM(G)$.
\end{lem}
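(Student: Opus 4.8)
The goal is to show $\cl(\conv(X_p)) = \TLIM(G)$ for a fixed $p\in\Gamma^*$, where $X_p$ consists of all $p$-limits of right-translates $R_{t_\gamma}\lambda_\gamma$. There are two inclusions to verify. The easy direction is $\cl(\conv(X_p)) \subseteq \TLIM(G)$: since $\{\lambda_\gamma\}$ is a left TI-net (\cref{Lemma lambda_gamma is left TI-net}), I first want to check that each right-translated net $\{R_{t_\gamma}\lambda_\gamma\}$ remains a left TI-net, so that by \cref{limit of TI-net} each $\plim_\gamma R_{t_\gamma}\lambda_\gamma$ is a topological left-invariant mean. The key computation is that left-invariance is preserved under right-translation: one has $\|g * R_{t_\gamma}\lambda_\gamma - R_{t_\gamma}\lambda_\gamma\|_1 = \|(g*\lambda_\gamma - \lambda_\gamma)R_{t_\gamma}\cdot(\text{mod factor})\|$, and because right-translation is an isometry on $\Lone(G)$ up to the modular function (which cancels in $P_1(G)$), this tends to $0$. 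Thus $X_p \subseteq \TLIM(G)$, and since $\TLIM(G)$ is convex and $w^*$-closed (it is cut out by the linear conditions $\langle m, fT - T\rangle = 0$), the closed convex hull stays inside.

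\textbf{The hard direction.} The substance is the reverse inclusion $\TLIM(G) \subseteq \cl(\conv(X_p))$. Suppose some $\mu\in\TLIM(G)$ lies outside the $w^*$-closed convex set $C := \cl(\conv(X_p))$. I would separate $\mu$ from $C$ using \cref{Lemma Hahn Banach}: there exists $T\in\Linfty(G)$ (which I may take self-adjoint, i.e.\ real-valued, after the standard real/imaginary decomposition) with
\[
\langle \mu, T\rangle > \sup_{\nu\in C}\langle\nu, T\rangle \geq \sup_{\{t_\gamma\}}\, \plim_\gamma \langle R_{t_\gamma}\lambda_\gamma, T\rangle.
\]
The plan is to derive a contradiction by showing the right-hand supremum is at least $\langle\mu,T\rangle$. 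The mechanism, generalizing \cite[Theorem 3.2]{Chou70}, is to choose the translates $t_\gamma$ so as to make $\langle R_{t_\gamma}\lambda_\gamma, T\rangle$ nearly as large as possible. Note $\langle R_t\lambda_\gamma, T\rangle = \langle\lambda_{F_\gamma t}, T\rangle = \frac{1}{|F_\gamma|}\int_{F_\gamma t} T$, so I am averaging $T$ over translates of the Følner set $F_\gamma$. The crucial point is that since $\mu$ is topological left-invariant, $\langle\mu,T\rangle = \langle\mu, \lambda_\gamma^\dag * T\rangle$, and $\lambda_\gamma^\dag * T(t) = \langle R_t\lambda_\gamma, T\rangle$ is exactly the function whose values are the translate-averages. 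Because $\mu$ is a mean, $\langle\mu,T\rangle$ cannot exceed the essential supremum of this function; hence for each $\gamma$ there are points $t$ making $\langle R_t\lambda_\gamma,T\rangle$ arbitrarily close to (or above) $\langle\mu,T\rangle$ on a non-negligible set.

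\textbf{The main obstacle.} The delicate step is selecting the net $\{t_\gamma\}$ coherently so that the $p$-limit is controlled, rather than just individual values. For each $\gamma$ I can pick $t_\gamma$ with $\langle R_{t_\gamma}\lambda_\gamma, T\rangle \geq \langle\mu,T\rangle - \epsilon_\gamma$ where $\epsilon_\gamma\to 0$ along the net (using that the function $\lambda_\gamma^\dag * T$ has essential supremum $\geq\langle\mu,T\rangle$). Then along any ultrafilter $p\in\Gamma^*$, refining over the tails that force $\epsilon_\gamma$ small, I get $\plim_\gamma\langle R_{t_\gamma}\lambda_\gamma,T\rangle \geq \langle\mu,T\rangle$, contradicting the strict separation. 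The subtlety is verifying that the choice of $t_\gamma$ can be made while the resulting point $\plim_\gamma R_{t_\gamma}\lambda_\gamma$ genuinely lands in $X_p$ and that the $p$-limit inequality survives; this is where the uniformity of the tail cardinalities from \cref{Definition_Folner_Net} and the definition of $\Gamma^*$ as containing every tail $T_\alpha$ do the work, guaranteeing the ultrafilter respects the eventual smallness of $\epsilon_\gamma$. Once the separating functional is contradicted, $\TLIM(G)\subseteq C$ follows, completing the reverse inclusion. The final sentence of the statement, about $X_{\Gamma^*}$, is then immediate since $X_{\Gamma^*} \supseteq X_p$ forces $\cl(\conv(X_{\Gamma^*})) \supseteq \cl(\conv(X_p)) = \TLIM(G)$, while the easy direction gives the reverse containment.
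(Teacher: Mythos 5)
Your proposal is correct and follows essentially the same route as the paper: separate a hypothetical $\mu\in\TLIM(G)\setminus\cl(\conv(X_p))$ by a real $T$ via \cref{Lemma Hahn Banach}, use topological left-invariance to write $\langle\mu,T\rangle=\langle\mu,\lambda_\gamma^\dag*T\rangle\leq\sup_t\langle R_t\lambda_\gamma,T\rangle$, choose $t_\gamma$ within $\nicefrac1n$ of that supremum for $\gamma=(K,n)$, and let $p\in\Gamma^*$ carry the inequality to the limit for a contradiction. The only difference is that you spell out the easy inclusion $X_p\subseteq\TLIM(G)$ (right translation commutes with left convolution and is an $\Lone$-isometry), which the paper leaves implicit.
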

\begin{proof}
Suppose $m \in\TLIM(G)$ lies outside the closed convex hull of $X_p$.
As in the proof of \cref{Lemma P is Dense in M},
	there exists $T\in L_\infty(G,\Rbb)$ with $\langle m,T\rangle > \sup_{\nu\in X_p}\langle\nu,T\rangle$.
For each $\gamma$,
\[\textstyle \langle m,T\rangle
	= \langle m, \lambda_\gamma T\rangle
	\leq \|\lambda_\gamma T\|_\infty
	= \sup_t [\lambda_\gamma T(t)]
	= \sup_t \langle R_t \lambda_\gamma,\, T \rangle.\]
In particular, if $\gamma = (K,n)$, choose $t_\gamma$ so that
$\langle m, T\rangle < \langle R_{t_\gamma} \lambda_\gamma,\, T\rangle + 1/n$.
Now \[\textstyle \langle m,T\rangle \leq \langle\plim_\gamma [R_{t_\gamma}\lambda_\gamma],\, T\rangle,
\ \ \text{ a contradiction}.\qedhere\]
\end{proof}

\begin{lem}
For any $p\in\Gamma^*$,
let $X_p = \{\plim_\gamma [l_{t_\gamma} \rho_\gamma] : \{t_\gamma\}\in G^{\Gamma}\}.$
Then $\cl(\conv(X_p)) = \TRIM(G)$.
\end{lem}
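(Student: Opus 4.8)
The plan is to run the proof of \cref{Lemma_cl_conv_Xp} verbatim, exchanging the roles of left and right everywhere: $\{\lambda_\gamma\}$ is replaced by the right TI-net $\{\rho_\gamma\}$, the right-translates $R_{t_\gamma}$ by the left-translates $l_{t_\gamma}$, topological left-invariance by topological right-invariance, and the identity $\lambda_\gamma T(t) = \langle R_t\lambda_\gamma,\,T\rangle$ by its mirror $T\rho_\gamma(t) = \langle l_t\rho_\gamma,\,T\rangle$ from \cref{Definition_fT}. I would prove the two inclusions separately, as $\TRIM(G)$ is a closed convex set.

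For $X_p \subseteq \TRIM(G)$, I would first observe that each family $\{l_{t_\gamma}\rho_\gamma\}_\gamma$ is again a right TI-net. This is because left-translation commutes with right-convolution, $l_{t}(\rho*g) = (l_t\rho)*g$, and is an $\Lone$-isometry, so $\|l_{t_\gamma}\rho_\gamma * g - l_{t_\gamma}\rho_\gamma\|_1 = \|\rho_\gamma*g - \rho_\gamma\|_1 \to 0$ for every $g\in P_1(G)$. Hence $\plim_\gamma[l_{t_\gamma}\rho_\gamma]$, being an accumulation point of a right TI-net, is a topological right-invariant mean by the right-handed analogue of \cref{limit of TI-net}. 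Since $\TRIM(G)$ is closed and convex, this gives $\cl(\conv(X_p)) \subseteq \TRIM(G)$.

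For the reverse inclusion $\TRIM(G) \subseteq \cl(\conv(X_p))$, suppose some $m\in\TRIM(G)$ lies outside $\cl(\conv(X_p))$. Exactly as in the separation step of \cref{Lemma P is Dense in M}, I would obtain a real $T\in \Linfty(G,\Rbb)$ with $\langle m,T\rangle > \sup_{\nu\in X_p}\langle \nu, T\rangle$. Using that $\rho_\gamma\in P_1(G)$ and $m$ is topological right-invariant,
\[\langle m, T\rangle = \langle m, T\rho_\gamma\rangle \leq \sup_t T\rho_\gamma(t) = \sup_t \langle l_t\rho_\gamma,\,T\rangle.\]
For $\gamma = (K,n)$ I would then choose $t_\gamma$ with $\langle m,T\rangle < \langle l_{t_\gamma}\rho_\gamma,\,T\rangle + 1/n$, so that passing to the $p$-limit yields $\langle m,T\rangle \leq \langle \plim_\gamma[l_{t_\gamma}\rho_\gamma],\,T\rangle$, contradicting the choice of $T$.

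I do not expect any genuine obstacle here: the argument is a line-by-line transcription of \cref{Lemma_cl_conv_Xp}. The only thing requiring care is bookkeeping of the handedness conventions — making sure the commutation relation used is $l_t(\rho*g)=(l_t\rho)*g$ (not the left-convolution one), and that the functional identity invoked is $T\rho_\gamma(t)=\langle l_t\rho_\gamma,T\rangle$ rather than $\rho_\gamma T(t)=\langle R_t\rho_\gamma,T\rangle$. Once these are pinned down, the positivity estimate $\langle m,S\rangle\leq \sup_t S(t)$ for real $S$ and the choice of $\{t_\gamma\}$ go through unchanged.
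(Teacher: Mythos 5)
Your proposal is correct and follows essentially the same route as the paper, which likewise proves this lemma by repeating the argument of \cref{Lemma_cl_conv_Xp} with the single substitution $\langle m,T\rangle = \langle m, T\rho_\gamma\rangle \leq \sup_t [T\rho_\gamma(t)] = \sup_t \langle l_t \rho_\gamma,\, T\rangle$. Your explicit verification that $\{l_{t_\gamma}\rho_\gamma\}$ is again a right TI-net (via $l_t(\rho*g)=(l_t\rho)*g$ and the isometry of $l_t$) is a detail the paper leaves implicit, and it is handled correctly.
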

\begin{proof}
Essentially the same as the proof of \cref{Lemma_cl_conv_Xp}, modified as follows:
For each $\gamma$,
\[\textstyle\langle m,T\rangle
	= \langle m, T\rho_\gamma\rangle
	\leq \|T \rho_\gamma\|_\infty
	= \sup_t [T\rho_\gamma(t)]
	= \sup_t \langle l_t \rho_\gamma,\ T \rangle.\qedhere\]
\end{proof}

\begin{lem}
For any $p\in\Gamma^*$, 
let $X_p = \{\plim_\gamma [\lambda_\gamma *(l_{t_\gamma} \rho_\gamma)]: \{t_\gamma\}\in G^{\Gamma}\}$.
Then $\cl(\conv(X_p)) = \TIM(G)$.
\end{lem}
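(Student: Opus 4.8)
The plan is to follow the template of \cref{Lemma_cl_conv_Xp} essentially verbatim, replacing the single use of left-invariance by a \emph{chained} use of both invariances, and to split the equality into its two inclusions.

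For the inclusion $\cl(\conv(X_p)) \subseteq \TIM(G)$, I would first check that $\{\lambda_\gamma * (l_{t_\gamma}\rho_\gamma)\}$ is itself a two-sided TI-net for any choice $\{t_\gamma\}\in G^\Gamma$, exactly as in \cref{Lemma_Product_of_TI_Nets}. Left convolution is associative past the right factor and $l_{t_\gamma}$ is an $L_1$-isometry, so $\|h*[\lambda_\gamma*(l_{t_\gamma}\rho_\gamma)] - \lambda_\gamma*(l_{t_\gamma}\rho_\gamma)\|_1 \leq \|h*\lambda_\gamma - \lambda_\gamma\|_1 \to 0$; and since $l_{t_\gamma}$ commutes with right convolution, $\|[\lambda_\gamma*(l_{t_\gamma}\rho_\gamma)]*h - \lambda_\gamma*(l_{t_\gamma}\rho_\gamma)\|_1 \leq \|\rho_\gamma*h - \rho_\gamma\|_1 \to 0$. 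By \cref{limit of TI-net} and its right-handed analogue, every $\plim_\gamma[\lambda_\gamma*(l_{t_\gamma}\rho_\gamma)]$ lies in $\TLIM(G)\cap\TRIM(G) = \TIM(G)$; as $\TIM(G)$ is convex and $w^*$-closed, the inclusion follows.

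For the reverse inclusion $\TIM(G)\subseteq \cl(\conv(X_p))$, I would argue by contradiction as in \cref{Lemma P is Dense in M}: assume $m\in\TIM(G)$ lies outside $\cl(\conv(X_p))$ and separate it by some $T\in L_\infty(G,\Rbb)$ with $\langle m,T\rangle > \sup_{\nu\in X_p}\langle\nu,T\rangle$. The crux is a single calculation that consumes both invariances. Topological left-invariance gives $\langle m,T\rangle = \langle m,\lambda_\gamma T\rangle$, and topological right-invariance then gives $\langle m,\lambda_\gamma T\rangle = \langle m,(\lambda_\gamma T)\rho_\gamma\rangle$. Unwinding the definitions of \cref{Definition_fT}, the (continuous) function $(\lambda_\gamma T)\rho_\gamma$ evaluates to
\[(\lambda_\gamma T)\rho_\gamma(t) = \langle l_t\rho_\gamma,\ \lambda_\gamma T\rangle = \langle \lambda_\gamma*(l_t\rho_\gamma),\ T\rangle,\]
which is exactly the pairing defining $X_p$. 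Positivity of $m$ then yields $\langle m,T\rangle \leq \sup_t\langle\lambda_\gamma*(l_t\rho_\gamma),T\rangle$, so for each $\gamma=(K,n)$ one selects $t_\gamma$ with $\langle m,T\rangle < \langle\lambda_\gamma*(l_{t_\gamma}\rho_\gamma),T\rangle + 1/n$; passing to the $p$-limit gives $\langle m,T\rangle\leq\langle\plim_\gamma[\lambda_\gamma*(l_{t_\gamma}\rho_\gamma)],T\rangle$, contradicting the separation.

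The main obstacle — in fact the only nonroutine point — is the displayed identity $(\lambda_\gamma T)\rho_\gamma(t) = \langle\lambda_\gamma*(l_t\rho_\gamma),T\rangle$, and in particular noticing that the invariances must be applied \emph{left first, then right}: doing it in the opposite order produces $(R_t\lambda_\gamma)*\rho_\gamma$ instead, which is not the net appearing in the statement. Everything else is a direct transcription of the previous two lemmas.
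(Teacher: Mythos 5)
Your proposal is correct and follows the paper's own argument: the paper likewise reduces to the computation $\langle m,T\rangle=\langle m,\lambda_\gamma T\rho_\gamma\rangle\leq\sup_t[\lambda_\gamma T\rho_\gamma(t)]=\sup_t\langle\lambda_\gamma*(l_t\rho_\gamma),T\rangle$ and otherwise repeats the separation argument of \cref{Lemma_cl_conv_Xp}, with the easy inclusion handled exactly by your TI-net estimates. One small remark: the "opposite order" is not actually a trap, since a change of variables shows $(R_t\lambda_\gamma)*\rho_\gamma=\lambda_\gamma*(l_t\rho_\gamma)$ as elements of $\Lone(G)$, so both evaluations yield the same set $X_p$.
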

\begin{proof}
Essentially the same as the proof of \cref{Lemma_cl_conv_Xp}, modified as follows:
For each $\gamma$,
	\[\textstyle\langle m,T\rangle
	= \langle m, \lambda_\gamma T \rho_\gamma\rangle
	\leq \|\lambda_\gamma T \rho_\gamma\|_\infty
	=\sup_t[\lambda_\gamma T \rho_\gamma(t)]
	=\sup_t \langle \lambda_\gamma * (l_t \rho_\gamma),\ T\rangle.\qedhere\]
\end{proof}

\begin{lem} \label{Lemma Disjoint Folner}
There exists $\{t_\gamma\}\in G^{\Gamma}$ such that $\{F_\gamma t_\gamma F_\gamma^{-1}\}$ are mutually disjoint.
Since $\supp(\lambda_\gamma *(l_t \rho_\gamma)) \subset F_\gamma t F_\gamma^{-1}$, it follows from \cref{Linfty is operators} and \cref{Lemma_Product_of_TI_Nets} that $\{\lambda_\gamma *(l_t \rho_\gamma)\}$ is an orthogonal TI-net.
\end{lem}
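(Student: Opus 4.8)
The plan is to choose the points $t_\gamma$ one at a time by transfinite recursion, with the only real input being the defining minimality of $\kappa$: since $\kappa = \kappa(G)$ is the least cardinality of a compact cover of $G$, no family of strictly fewer than $\kappa$ compact sets can cover $G$. Because $G$ is noncompact we have $\kappa \geq \Nbb$, so $\kappa$ is an infinite cardinal and $\card{\Gamma} = \kappa$. First I would fix a well-ordering $\Gamma = \{\gamma_\xi : \xi < \kappa\}$ of the index set; note that $\card{\xi} < \kappa$ for every $\xi < \kappa$, precisely because the least such ordinal $\kappa$ is a cardinal.

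The key computation is to identify which translates overlap. A point lies in both $F_\gamma t F_\gamma^{-1}$ and $F_\delta s F_\delta^{-1}$ exactly when $u t w^{-1} = u' s w'^{-1}$ for some $u,w\in F_\gamma$ and $u',w'\in F_\delta$; solving for $t$ gives the equivalence
\[
F_\gamma t F_\gamma^{-1} \cap F_\delta s F_\delta^{-1} \neq \varnothing
\iff t \in F_\gamma^{-1} F_\delta\, s\, F_\delta^{-1} F_\gamma .
\]
Since each $F_\gamma$ is compact and $s$ is a single point, the right-hand set is a product of compact sets, hence compact.

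Now I would run the recursion. Suppose $t_{\gamma_\eta}$ has been chosen for every $\eta < \xi$ so that $\{F_{\gamma_\eta} t_{\gamma_\eta} F_{\gamma_\eta}^{-1}\}_{\eta < \xi}$ are mutually disjoint. By the displayed equivalence, the values of $t$ for which $F_{\gamma_\xi} t F_{\gamma_\xi}^{-1}$ meets $F_{\gamma_\eta} t_{\gamma_\eta} F_{\gamma_\eta}^{-1}$ form the compact set $B_\eta = F_{\gamma_\xi}^{-1} F_{\gamma_\eta}\, t_{\gamma_\eta}\, F_{\gamma_\eta}^{-1} F_{\gamma_\xi}$. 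The forbidden region $\bigcup_{\eta < \xi} B_\eta$ is a union of $\card{\xi} < \kappa$ compact sets, so it cannot exhaust $G$; I choose $t_{\gamma_\xi}$ in its complement. Transfinite induction then produces $\{t_\gamma\}\in G^{\Gamma}$ with $\{F_\gamma t_\gamma F_\gamma^{-1}\}$ mutually disjoint. The orthogonality claim then follows from the support containment $\supp(\lambda_\gamma *(l_{t}\rho_\gamma)) \subset F_\gamma t F_\gamma^{-1}$ noted in the statement, together with \cref{Linfty is operators} (disjoint supports give orthogonal states) and \cref{Lemma_Product_of_TI_Nets} (the net is a two-sided TI-net).

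The crux is entirely the cardinality bookkeeping in the inductive step: one must be sure that at stage $\xi$ the forbidden region is a union of strictly fewer than $\kappa$ compact sets, which is where the minimality of $\kappa$ and the fact that $\kappa$ is a cardinal are both used. Everything else — the overlap computation and the compactness of the $B_\eta$ — is routine.
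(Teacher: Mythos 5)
Your proposal is correct and is essentially the paper's own argument: the paper also chooses the $t_\gamma$ by transfinite recursion along a well-ordering of $\Gamma$ induced by a bijection with $\kappa$, and at each stage observes that failure would make $\{F_\alpha^{-1} F_\gamma t_\gamma F_\gamma^{-1} F_\alpha\}_{\gamma < \alpha}$ a compact cover of $G$ of cardinality less than $\kappa$, contradicting the minimality of $\kappa$. Your version merely makes the overlap computation and the cardinality bookkeeping explicit, which the paper leaves implicit.
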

\begin{proof}
Since $|\Gamma| = \kappa$, let $(\Gamma, <)$ denote the well-ordering of $\Gamma$ induced by some bijection with $\kappa$.
Let $t_0 = e$.
As induction hypothesis, suppose $\{F_\gamma t_\gamma F_\gamma^{-1}\}_{\gamma < \alpha}$ are disjoint, where $\alpha\in\Gamma$.
If it is impossible to find $t_\alpha$ such that $\{F_\gamma t_\gamma F_\gamma^{-1}\}_{\gamma \leq \alpha}$ are disjoint, then
$\{F_\alpha^{-1} F_\gamma t_\gamma F_\gamma^{-1} F_\alpha\}_{\gamma < \alpha}$ is a collection of compact sets of cardinality less than $\kappa$ covering $G$, a contradiction.
\end{proof}

\begin{lem}\label{L-UpperBoundSizeOfTlim}
Let $K\subset G$ be any compact set with nonempty interior, and $\{K t_\gamma\}_{\gamma < \kappa}$ be a covering of $G$ by translates of $K$.
For each $n$, let $\lambda_n\in P_1(G)$ be $(K^{-1}, \nicefrac1n)$-left-invariant, and let $\rho_n = \lambda_n^\dag$.
Finally, let $X = \left\{ R_{t_\gamma}\rho_n : n\in\Nbb,\ \gamma < \kappa\right\}$. 
Then $\TLIM(G) \subset \cl(\conv(X))$, and $\#(\cl(\conv(X))) \leq 2^{2^\kappa}$.
\end{lem}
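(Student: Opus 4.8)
The statement has two halves, and I would establish them by quite different means. For the inclusion $\TLIM(G)\subset\cl(\conv(X))$, I would argue by contradiction in the style of \cref{Lemma_cl_conv_Xp}. Note first that $X\subset P_1(G)\subset M(G)$, so $\cl(\conv(X))$ is a $w^*$-compact convex set. If some $m\in\TLIM(G)$ lay outside it, then \cref{Lemma Hahn Banach} (applied as in \cref{Lemma P is Dense in M}, so that we may take a positive $T$) would furnish $T\in\Linfty(G)$, $T\ge 0$, with $\langle m,T\rangle>\sup_{\nu\in X}\langle\nu,T\rangle$. The plan is to contradict this by proving the reverse inequality. Since each $\rho_n\in P_1(G)$ and $m$ is topological left-invariant, $\langle m,T\rangle=\langle m,\rho_n T\rangle$, and because $m$ is a positive unital functional and $\rho_n T\ge 0$,
\[\langle m,T\rangle=\langle m,\rho_n T\rangle\le\|\rho_n T\|_\infty=\sup_{t\in G}\langle R_t\rho_n,T\rangle,\]
exactly as in the earlier lemma.

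The one genuinely new ingredient is that $X$ records the right-translates $R_{t_\gamma}\rho_n$ only along the prescribed family $\{t_\gamma\}_{\gamma<\kappa}$, so I must replace the supremum over all of $G$ by the supremum over $\{t_\gamma\}$. This is where the covering $G=\bigcup_{\gamma<\kappa}Kt_\gamma$ and the invariance of $\lambda_n$ enter. Given $t\in G$, write $t=kt_\gamma$ with $k\in K$, so that $R_t\rho_n=R_{t_\gamma}R_k\rho_n$. Using $R_k f=(L_k f^\dag)^\dag$ and $L_k=l_{k^{-1}}$, a one-line computation gives $R_k\rho_n=(l_{k^{-1}}\lambda_n)^\dag$, whence, since $\dag$ and $R_{t_\gamma}$ are $\Lone$-isometries,
\[\|R_t\rho_n-R_{t_\gamma}\rho_n\|_1=\|R_k\rho_n-\rho_n\|_1=\|l_{k^{-1}}\lambda_n-\lambda_n\|_1<\tfrac1n,\]
because $k^{-1}\in K^{-1}$ and $\lambda_n$ is $(K^{-1},\nicefrac1n)$-left-invariant. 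Hence $\langle R_t\rho_n,T\rangle\le\sup_{\nu\in X}\langle\nu,T\rangle+\|T\|_\infty/n$ for every $t$, so $\langle m,T\rangle\le\sup_{\nu\in X}\langle\nu,T\rangle+\|T\|_\infty/n$; letting $n\to\infty$ gives the contradiction.

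For the cardinality bound I would avoid estimating $\conv(X)$ directly, since the real-coefficient simplices would push its size up to $\cfrak$ and spoil the estimate when $\kappa<\cfrak$. Instead, let $Q$ be the set of \emph{rational} convex combinations of elements of $X$. Since $\#(X)\le\kappa\cdot\Nbb=\kappa$ and each finite tuple from $X$ carries only countably many rational coefficient vectors, $\#(Q)\le\kappa$. Approximating real coefficients by rationals shows $\conv(X)\subset\cl(Q)$, hence $\cl(\conv(X))=\cl(Q)$. Finally, $\cl(Q)$ is the image of $\beta Q$ under the map $p\mapsto\plim_q q$ of \cref{defn of tilde f}, so $\#(\cl(Q))\le\#(\beta Q)=2^{2^{\#(Q)}}\le 2^{2^\kappa}$.

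The substantive obstacle is the first half: descending from $\sup_{t\in G}$ to the supremum over the prescribed translates $t_\gamma$ while retaining uniform control of the error. The identity $R_k\rho_n=(l_{k^{-1}}\lambda_n)^\dag$, which converts the approximate right-invariance of $\rho_n$ back into the stipulated left-invariance of $\lambda_n$, is the crux; the covering then supplies, for each $t$, an admissible $\gamma$ with $t\in Kt_\gamma$. Everything else is bookkeeping together with the standard Stone--\v{C}ech closure estimate $\#(\cl(Q))\le 2^{2^{\#(Q)}}$.
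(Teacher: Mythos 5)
Your proposal is correct and follows essentially the same route as the paper: Hahn--Banach separation, the chain $\langle m,T\rangle=\langle m,\rho_n T\rangle\le\|\rho_n T\|_\infty=\sup_t\langle R_t\rho_n,T\rangle$, reduction from an arbitrary $t$ to a prescribed $t_\gamma$ via the covering $G=\bigcup K t_\gamma$ together with the $(K^{-1},\nicefrac1n)$-left-invariance of $\lambda_n$, and rational convex combinations plus the $2^{2^{\#(Q)}}$ closure bound for the cardinality estimate. Your explicit computation $R_k\rho_n=(l_{k^{-1}}\lambda_n)^\dag$ is exactly the step the paper compresses into ``by definition of $\rho_n$.''
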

\begin{proof}
Suppose $m\in \TLIM(G)$ lies outside the closed convex hull of $X$.
As in \cref{Lemma P is Dense in M}, there exists $T\in L_\infty(G,\Rbb)$ and $\epsilon > 0$
	with $\langle m, T\rangle > 2\epsilon +  \langle R_{t_\gamma} \rho_n, T\rangle$ for all $n\in\Nbb$ and all $\gamma < \kappa$.
Let $n$ be large enough that $\|T\|_\infty / n < \epsilon$.
Let $s$ be chosen so that $\sup_t \langle R_t \rho_n, T\rangle < \epsilon + \langle R_{s} \rho_n, T\rangle$. Say $s\in K t_{\alpha}$.
By definition of $\rho_n$,
	$\langle R_{s} \rho_n, T\rangle <
  \langle R_{t_\alpha} \rho_n, T\rangle + \|T\|_\infty/n$.
Now \[\textstyle \langle m, T\rangle
	= \langle m, \rho_n T\rangle
	\leq \|\rho_n T\|_\infty
	= \sup_t \langle R_t \rho_n, T\rangle
	< 2\epsilon + \langle R_{t_\alpha} \rho_n, T\rangle,\] contradicting the choice of $m$.
This proves $\TLIM(G) \subset \cl(\conv(X))$.

Let $\conv_{\Qbb}(X)$ denote the set of all finite convex combinations in $X$ with rational coefficients.
Evidently $\card{\conv_{\Qbb}(X)} = \card{X} = \kappa$, and $\cl(\conv(X)) = \cl(\conv_{\Qbb}(X))$.
Since $\cl(\conv(X))$ is a regular Hausdorff space with dense subset $\conv_{\Qbb}(X)$, it has cardinality at most $2^{2^{\card{\conv_{\Qbb}(X)} }} = 2^{2^\kappa}$.
\end{proof}

\begin{thm}\label{TIM cardinality result}
Let $G$ be a locally compact amenable group, with $1 < \kappa =$
the smallest cardinality of a covering of $G$ by compact subsets.
Then $|\TIM(G)| = |\TLIM(G)| = 2^{2^\kappa}$.
\end{thm}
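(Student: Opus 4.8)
The plan is to sandwich both $\abs{\TIM(G)}$ and $\abs{\TLIM(G)}$ between $2^{2^\kappa}$ from above and below, using the orthogonal two-sided TI-net for the lower bound and the covering estimate for the upper bound. First I would record that the hypothesis $1 < \kappa$ forces $G$ to be noncompact: a group covered by finitely many compact sets is itself compact, and a compact group has $\kappa = 1$. Consequently $\kappa \geq \Nbb$, so all the machinery of \cref{Definition_Folner_Net} applies. In particular $\Gamma = \{(K,n)\}$ is a directed set of cardinality $\kappa$ each of whose tails has cardinality $\kappa$, so \cref{Lemma Cardinality of Lambda*} gives $\abs{\Gamma^*} = 2^{2^\kappa}$.

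For the upper bound, I would note the trivial inclusion $\TIM(G) \subseteq \TLIM(G)$, since every topological two-sided invariant mean is in particular topological left-invariant. Then \cref{L-UpperBoundSizeOfTlim}, applied to any compact $K$ with nonempty interior together with a covering of $G$ by $\kappa$ translates of $K$, shows $\TLIM(G) \subseteq \cl(\conv(X))$ where $\abs{\cl(\conv(X))} \leq 2^{2^\kappa}$. Hence $\abs{\TIM(G)} \leq \abs{\TLIM(G)} \leq 2^{2^\kappa}$.

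For the lower bound, I would invoke \cref{Lemma Disjoint Folner} to obtain $\{t_\gamma\} \in G^{\Gamma}$ for which the net $u_\gamma = \lambda_\gamma * (l_{t_\gamma}\rho_\gamma)$ is an orthogonal two-sided TI-net: the supports $F_\gamma t_\gamma F_\gamma^{-1}$ are mutually disjoint, so by \cref{Linfty is operators} the $u_\gamma$ are mutually orthogonal normal states, while \cref{Lemma_Product_of_TI_Nets} guarantees two-sided invariance. By the two-sided analogue of \cref{limit of TI-net}, every accumulation point of $\{u_\gamma\}$ lies in $\TIM(G)$; in particular, for each $p\in\Gamma^*$ the point $\plim_\gamma u_\gamma$ is such an accumulation point and hence belongs to $\TIM(G)$. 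By \cref{Lemma Orthogonal Implies Injection}, the map $p \mapsto \plim_\gamma u_\gamma$ is injective on $\beta\Gamma$, so its restriction gives an injection $\Gamma^* \into \TIM(G)$, and therefore $\abs{\TIM(G)} \geq \abs{\Gamma^*} = 2^{2^\kappa}$.

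Combining the two bounds yields $2^{2^\kappa} \leq \abs{\TIM(G)} \leq \abs{\TLIM(G)} \leq 2^{2^\kappa}$, whence all three quantities equal $2^{2^\kappa}$. The substantive work has already been isolated into the preceding lemmas, so the only thing requiring care is to combine the injectivity of $p \mapsto \plim_\gamma u_\gamma$ (which holds on all of $\beta\Gamma$) with the fact that precisely the ultrafilters in $\Gamma^* \subseteq \beta\Gamma$ produce genuine accumulation points of the net, and hence members of $\TIM(G)$. It is exactly the orthogonality built into \cref{Lemma Disjoint Folner} that lets a single net realize the full cardinality $\abs{\Gamma^*}$ of invariant means, and this is the conceptual heart of the argument.
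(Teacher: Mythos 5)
Your proposal is correct and follows essentially the same route as the paper's own proof: the lower bound via the orthogonal two-sided TI-net of \cref{Lemma Disjoint Folner} combined with \cref{Lemma Orthogonal Implies Injection} and \cref{Lemma Cardinality of Lambda*}, and the upper bound via $\TIM(G)\subseteq\TLIM(G)$ and \cref{L-UpperBoundSizeOfTlim}. The extra care you take in verifying $\kappa\geq\Nbb$ and in noting that ultrafilters in $\Gamma^*$ yield genuine accumulation points lying in $\TIM(G)$ is a welcome but minor elaboration of what the paper leaves implicit.
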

\begin{proof}
By \cref{Lemma Disjoint Folner}, there exists an orthogonal TI-net $\{\phi_\gamma\}$.
By \cref{Lemma Orthogonal Implies Injection},
the map $p\mapsto \plim_\gamma \phi_\gamma$ is one-to-one from $\Gamma^*$ to $\TIM(G)$,
so $\card{\TIM(G)} \geq \card{\Gamma^*}$.
By \cref{Lemma Cardinality of Lambda*}, $\card{\Gamma^*} = 2^{2^\kappa}$.
Of course, $\TIM(G) \subset \TLIM(G)$, so $2^{2^\kappa} \leq \card{\TIM(G)} \leq \card{\TLIM(G)}$.
\cref{L-UpperBoundSizeOfTlim} yields the opposite inequality.
\end{proof}
\section*{A Proof of Paterson's Conjecture} \label{S-PatersonsConjecture}
Let $t^g$ be shorthand for $g t g^{-1}$.
We write $G\in[\FC]^-$ to signify that each conjugacy class $t^G = \{t^g : g\in G\}$ is precompact.
When $G\in [FC]^-$ is furthermore discrete, each conjugacy class must be finite.
In this case, we write $G\in \FC$.

\begin{lem} \label{T-FCisAmenable}
If $G\in \FC$, then $G$ is amenable.
\end{lem}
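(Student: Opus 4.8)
The plan is to verify the hypothesis of \cref{not exp therefore amenable}: that every finitely generated subgroup of $G$ has subexponential growth. Once this is established, \cref{not exp therefore amenable} shows $G$ satisfies the discrete F\o{}lner condition, which in turn yields a left-invariant mean on $\linfty(G)$ by the earlier proposition that the discrete F\o{}lner condition implies amenability-as-discrete. Since $G$ is discrete, $\linfty(G) = \Linfty(G)$, so amenability-as-discrete coincides with amenability, completing the proof.

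First I would observe that the class $\FC$ is closed under passing to subgroups: if $H < G$ and $t \in H$, then $t^H \subseteq t^G$, so the conjugacy class of $t$ in $H$ is finite. Thus it suffices to show that every finitely generated group in $\FC$ has subexponential growth. So fix a finitely generated $H = \langle g_1, \ldots, g_n\rangle$ with $H\in\FC$. For each generator, the index $[H : C_H(g_i)]$ equals $\card{g_i^H}$, which is finite by hypothesis. An element of $H$ is central precisely when it commutes with every $g_i$, so $Z(H) = \bigcap_{i=1}^n C_H(g_i)$ is a finite intersection of finite-index subgroups and therefore has finite index in $H$.

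The remaining step is to deduce subexponential growth from this center-by-finite structure. Since $Z(H)$ has finite index in the finitely generated group $H$, it is itself finitely generated, and being the center it is abelian; hence $Z(H)$ is a finitely generated abelian group and has polynomial growth, as in \cref{abelian groups are amenable-as-discrete}. Because growth type is a commensurability invariant---a finite-index subgroup has the same growth type as the ambient group---$H$ likewise has polynomial, and in particular subexponential, growth. This furnishes the hypothesis of \cref{not exp therefore amenable} and finishes the argument.

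The main obstacle is the center-by-finite step (essentially B.\,H.\ Neumann's observation), namely recognizing $Z(H)$ as the intersection of the finitely many finite-index centralizers $C_H(g_i)$ and using that a finite conjugacy class forces a finite-index centralizer; the passage from center-by-finite to subexponential growth is then routine, since finite extensions of finitely generated abelian groups grow polynomially.
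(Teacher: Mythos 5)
Your proof is correct, and its first half is exactly the paper's argument: both proofs rest on B.~H.~Neumann's observation that in a finitely generated FC-group $H = \langle g_1,\dots,g_n\rangle$ each centralizer $C_H(g_i)$ has finite index (equal to $\card{g_i^H}$), so the center $Z(H) = \bigcap_i C_H(g_i)$ has finite index. Where you diverge is in the concluding step. The paper stops at ``$H$ is abelian-by-finite, hence amenable,'' which silently invokes the closure of amenability under group extensions (and, for the initial reduction, under directed unions); neither closure property is proved in the thesis. You instead route through growth: $Z(H)$ is finitely generated (finite-index subgroups of finitely generated groups are finitely generated) and abelian, hence of polynomial growth as in \cref{abelian groups are amenable-as-discrete}; commensurability invariance of growth type then gives $H$ subexponential growth, and \cref{not exp therefore amenable} plus the discrete F\o{}lner condition finishes the job. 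Your route has the advantage of landing on a criterion the paper actually proves (\cref{not exp therefore amenable}), but it trades one piece of imported machinery for another: Schreier's lemma and the commensurability invariance of growth are standard but are likewise not established in the paper, and are roughly comparable in weight to the extension-closure fact the author uses. Both arguments are sound; yours is slightly longer but arguably more self-contained relative to what the thesis develops explicitly.
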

\begin{proof}
It suffices to show that each finitely generated subgroup is amenable.
Suppose $K\subset G$ is finite, and let $\langle K \rangle$ denote the subgroup generated by $K$.
For any $x\in \langle K\rangle$, let $C(x) = \{y \in \langle K\rangle : x^y = x\}$.
Evidently $C(x)$ is a subgroup, whose right cosets correspond to the (finitely many) distinct values of $x^y$.
Therefore $[\langle K\rangle : C(x)]$ is finite.
Let $Z$ denote the center of $\langle K\rangle$.
Clearly $Z = \bigcap_{x \in K} C(x)$.
Thus $[\langle K\rangle: Z] \leq \prod_{x\in K} [\langle K\rangle : C(x)]$ is finite.
Since $\langle K\rangle$ is finite-by-abelian, it is amenable.
\end{proof}

\begin{thm} \label{T-Structure}
$G\in[\FC]^-$ if and only if $G$ is a compact extension of $\Rbb^n \times D$, for some $D\in\FC$ and $n\in\Nbb$.
\end{thm}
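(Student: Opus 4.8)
The plan is to prove the two implications separately; the reverse is routine, and the forward direction carries all the weight. Throughout, ``$G$ is a compact extension of $H$'' means there is a compact normal subgroup $K\normal G$ with $G/K\cong H$.

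\textbf{Reverse direction.} Suppose $G$ is a compact extension of $\Rbb^n\times D$, with quotient map $q\colon G\to G/K\cong\Rbb^n\times D$ and $K$ compact. Since $K$ is compact, $q$ is proper, so preimages of precompact sets are precompact. For $t\in G$ write $q(t)=(v,d)$; because $\Rbb^n$ is central and $d^D$ is finite (as $D\in\FC$), the class $q(t)^{G/K}=\{v\}\times d^D$ is finite. As $q(t^G)\subseteq q(t)^{G/K}$, we get $t^G\subseteq q^{-1}(\{v\}\times d^D)$, a finite union of translates of $K$, which is compact. Hence every conjugacy class of $G$ is precompact and $G\in[\FC]^-$.

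\textbf{Forward direction: reduction to a Lie group with connected part $\Rbb^n$.} Assume $G\in[\FC]^-$. I would first establish (or invoke from the structure theory) that such a $G$ possesses a largest compact normal subgroup $K$, and pass to $L=G/K$, which is again $[\FC]^-$ and now has no nontrivial compact normal subgroup. Applying Gleason--Yamabe, $L$ has a compact normal subgroup with Lie quotient; triviality of compact normal subgroups forces that kernel to be trivial, so $L$ is a Lie group. On the identity component $L_0$, precompactness of conjugacy classes makes every $\mathrm{Ad}$-orbit bounded, so $\cl(\mathrm{Ad}(L_0))$ is compact; an invariant inner product then shows the Lie algebra $\mathfrak g$ is reductive, $\mathfrak g=\mathfrak z(\mathfrak g)\oplus[\mathfrak g,\mathfrak g]$ with $[\mathfrak g,\mathfrak g]$ of compact type, whence $L_0=\Rbb^n\times C$ with $C$ compact connected. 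Since $C$ is characteristic in $L_0\normal L$, it is a compact normal subgroup of $L$, so $C=1$ and $L_0=\Rbb^n$.

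\textbf{Forward direction: centrality, the component group, and splitting (the crux).} The conjugation action of $L$ on $L_0=\Rbb^n$ gives a homomorphism into $\mathrm{Aut}(\Rbb^n)=GL_n(\Rbb)$; if some element acted by $A\neq I$, then conjugating a lift of it by $w\in\Rbb^n$ would shift its $\Rbb^n$-component through the unbounded subspace $\mathrm{im}(A^{-1}-I)$, contradicting $[\FC]^-$. Thus $\Rbb^n$ is central. Since $L$ is Lie, $D=L/L_0$ is discrete, and being a quotient of an $[\FC]^-$ group it is $[\FC]^-$, i.e.\ $D\in\FC$. It remains to split the central extension $1\to\Rbb^n\to L\to D\to 1$ as a direct product, and I expect this to be the main obstacle: such extensions need not split for a general discrete $D$ (for instance $D=\Zbb^2$ produces a rational Heisenberg group). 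The point is that $[\FC]^-$ annihilates exactly the bad cocycles: the commutator on the abelianization of $D$ induces an alternating map into $\Rbb^n$ whose image is a subgroup of $\Rbb^n$, and precompactness of the relevant conjugacy classes forces that image to be bounded, hence trivial; combined with the unique divisibility of $\Rbb^n$ (which kills $H^2(D,\Rbb^n)$ on the finite conjugacy data furnished by $D\in\FC$), this yields a continuous section and the isomorphism $G/K\cong\Rbb^n\times D$. The careful bookkeeping that turns this boundedness-plus-divisibility heuristic into an actual vanishing of the extension class is where the real work lies.
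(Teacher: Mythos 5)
First, a point of reference: the paper does not prove this theorem at all --- its ``proof'' is a citation of \cite[Theorem 2.2]{liukkonen} --- so you are attempting work the thesis deliberately outsources. Your reverse direction is correct. The forward direction, however, fails at its opening move. You want a compact normal subgroup $K\normal G$ such that $L=G/K$ has no nontrivial compact normal subgroup, but no such $K$ need exist. Take $G=\bigoplus_{n\in\Nbb}\Zbb/2\Zbb$ with the discrete topology: this is an abelian (hence $\FC$, hence $[\FC]^-$) group whose compact normal subgroups are exactly its finite subgroups; every finite subgroup is properly contained in a larger one, the union of a chain of finite subgroups is infinite and therefore non-compact, so Zorn's lemma gives nothing and $G/F$ retains arbitrarily large finite normal subgroups for every finite $F$. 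The theorem itself is trivially true for this $G$ (take $K=\{e\}$, $n=0$, $D=G$), but your reduction is false, and not peripherally so: the discrete factor $D$ in the target decomposition is allowed to be exactly this kind of group, so ``mod out all the compactness first'' cannot be the first step. Liukkonen's argument runs in the opposite direction, assembling the compact kernel from compactly generated pieces.

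Two further load-bearing steps are also broken or missing. Gleason--Yamabe supplies arbitrarily small compact normal subgroups of some \emph{open, almost connected} subgroup of $L$, not of $L$ itself; since your $L/L_0$ is an arbitrary discrete $\FC$ group, $L$ is nowhere near almost connected, and a compact subgroup normal in an open subgroup need not be normal in $L$. Promoting it to one would essentially require knowing that $C^G$ is precompact for compact $C$, which is \cref{Lemma_CG_is_precompact} --- deduced in this thesis \emph{from} \cref{T-Structure}, and explicitly flagged as the step Milnes could not carry out without the structure theorem --- so that route is circular here. Finally, you concede that splitting the central extension $1\to\Rbb^n\to L\to D\to 1$ is ``where the real work lies''; since $H^2(\Zbb^2,\Rbb)\neq 0$, the extension class does not vanish for formal reasons, and the boundedness-plus-divisibility heuristic you sketch is not yet an argument. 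As it stands the forward direction is an outline whose three essential steps are, respectively, false, misapplied, and absent.
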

\begin{proof}
This is \cite[Theorem 2.2]{liukkonen}.
\end{proof}

\begin{cor}
If $G\in [\FC]^-$, then $G$ is amenable.
\end{cor}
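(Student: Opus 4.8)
The plan is to read amenability straight off the structure theorem \cref{T-Structure}, which exhibits any $G\in[\FC]^-$ as a compact extension of $\Rbb^n\times D$ for some $D\in\FC$ and $n\in\Nbb$; that is, there is a compact normal subgroup $K\triangleleft G$ with a short exact sequence $1\to K\to G\to \Rbb^n\times D\to 1$. The strategy is to check that each constituent group is amenable and then assemble them using the standard stability of amenability under extensions.

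First I would dispose of the base group $\Rbb^n\times D$. The factor $\Rbb^n$ is abelian, hence amenable-as-discrete by \cref{abelian groups are amenable-as-discrete} and therefore amenable by \cref{amenable-as-discrete implies amenable}; the factor $D$ is amenable directly by \cref{T-FCisAmenable}. Regarding the product as the extension $1\to D\to \Rbb^n\times D\to \Rbb^n\to 1$ (or its transpose), I would conclude that $\Rbb^n\times D$ is amenable from the closure of amenability under extensions. I would then handle the outer compact extension the same way: the kernel $K$ is amenable because every compact group is amenable, and the quotient $\Rbb^n\times D$ has just been shown amenable, so a final application of the extension property gives the amenability of $G$.

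The only ingredient not recorded earlier in the text is precisely this closure of amenability under an extension of a closed normal subgroup by its quotient, and that is the step I expect to carry the weight of the argument; I would invoke it as a standard fact of locally compact amenability theory rather than reprove it. Everything else is bookkeeping with \cref{T-Structure}, \cref{T-FCisAmenable}, and the already-established amenability of compact and abelian groups, so once the extension principle is granted the corollary follows immediately.
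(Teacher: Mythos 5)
Your proof is correct and follows essentially the same route as the paper's: invoke \cref{T-Structure} to write $G$ as a compact extension of $\Rbb^n\times D$, observe that $\Rbb^n\times D$ is amenable (via \cref{T-FCisAmenable} and the amenability of abelian groups), and finish with the standard fact that amenability is preserved under extensions by compact (or more generally amenable) groups. The paper is merely terser, leaving the extension-closure step and the amenability of the product implicit, exactly as you flag.
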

\begin{proof}
In light of \cref{T-FCisAmenable}, any group of the form $\Rbb^n \times D$ is amenable.
Compact groups are amenable, so compact extensions of amenable groups are amenable.
Hence the result follows from \cref{T-Structure}
\end{proof}

\begin{cor} \label{C-FCisUnimodular}
If $G\in [\FC]^-$, then $G$ is unimodular.
\end{cor}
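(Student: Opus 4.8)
The plan is to deduce unimodularity from the structure theorem \cref{T-Structure}, by reducing to the two unimodular building blocks $\Rbb^n$ and discrete groups and then climbing back up through a compact extension. Recall that $G$ is unimodular precisely when its modular function satisfies $\Delta \equiv 1$, and that $\Delta: G \to \Rbb^\times$ is a \emph{continuous homomorphism} into the abelian group of positive reals.

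First I would record three standard facts about the building blocks: every abelian group (in particular $\Rbb^n$) is unimodular; every discrete group is unimodular, since counting measure is bi-invariant; and finite products of unimodular groups are unimodular. Applying these to the group $H = \Rbb^n \times D$ furnished by \cref{T-Structure}, where $D \in \FC$, gives $\Delta_H \equiv 1$.

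By \cref{T-Structure} there is a compact normal subgroup $K \normal G$ with $G/K \cong H$; write $q: G \to G/K$ for the quotient map. The crux is to show $\Delta_G = \Delta_{G/K} \circ q$. Since $\Delta_G$ is continuous and $K$ is compact, the image $\Delta_G(K)$ is a compact subgroup of $\Rbb^\times$; as the only such subgroup is trivial, we get $K \subseteq \ker \Delta_G$, so $\Delta_G$ descends to a continuous homomorphism $\tilde\Delta: G/K \to \Rbb^\times$ with $\Delta_G = \tilde\Delta \circ q$. To identify $\tilde\Delta$ with $\Delta_{G/K}$, I would disintegrate left Haar measure on $G$ over $G/K$ against the normalized Haar measure of $K$ via Weil's formula; the only possible discrepancy between the right-translation moduli of $G$ and of $G/K$ arises from the conjugation action of $G$ on $K$, and this action preserves Haar measure because any automorphism of a compact group fixes its unique Haar probability measure. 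Hence $\tilde\Delta = \Delta_{G/K}$, and since $\Delta_H \equiv 1$ we conclude $\Delta_G \equiv 1$, i.e.\ $G$ is unimodular.

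The main obstacle is the middle identification $\Delta_G = \Delta_{G/K} \circ q$; the rest is bookkeeping about unimodular building blocks. I expect to handle it either by invoking the standard formula relating the modular function of $G$ to those of a closed normal subgroup and its quotient (in which the compactness of $K$ trivializes the correction term coming from the conjugation action), or by citing this quotient relation directly from Hewitt--Ross.
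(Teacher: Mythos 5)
Your proposal is correct and follows essentially the same route as the paper: invoke \cref{T-Structure}, observe that $\Rbb^n \times D$ is unimodular, and conclude via the fact that compact extensions of unimodular groups are unimodular. The only difference is that the paper cites this last fact as standard, while you supply its proof (the descent of $\Delta_G$ through the compact kernel and the identification $\Delta_G = \Delta_{G/K}\circ q$), which is carried out correctly.
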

\begin{proof}
Groups of the form $\Rbb^n \times D$ are unimodular, since $D$ is discrete,
and compact extensions of unimodular groups are unimodular.
Hence the result follows from \cref{T-Structure}
\end{proof}

\begin{lem}\label{Lemma_CG_is_precompact}
If $G\in[\FC]^-$, and $C\subset G$ is compact, then $C^G = \{ c^g : c\in C, g\in G\}$ is precompact.

In \cite{milnes}, Milnes observes that this lemma would imply \cref{Theorem If FC Bar then...}.
He is unable to prove it, apparently because he is unaware of \cref{T-Structure}.
\end{lem}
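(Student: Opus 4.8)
The plan is to deduce the lemma from the structure theorem \cref{T-Structure}, which is precisely the ingredient Milnes lacked. By \cref{T-Structure} there is a compact normal subgroup $N\normal G$ with $G/N \cong \Rbb^n \times D$ for some $n\in\Nbb$ and $D\in\FC$. Let $\pi\colon G \to G/N$ be the quotient homomorphism. Because $\pi$ is a homomorphism, conjugation commutes with it, so $\pi(C^G) = \pi(C)^{G/N}$; the strategy is first to bound this image using the direct-product structure of $G/N$, and then to pull the bound back through the compact kernel $N$.

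First I would analyze the image. The set $\pi(C)$ is compact in $\Rbb^n \times D$; let $C_1$ be its (compact) projection to $\Rbb^n$ and $F$ its projection to $D$. Since $D$ is discrete, the compact set $F$ is finite. In the direct product $\Rbb^n\times D$ conjugation acts componentwise and trivially on the abelian factor $\Rbb^n$, so the conjugacy class of $(v,d)$ is $\{v\}\times d^D$. As $D\in\FC$, each $d^D$ is finite, whence
\[
\pi(C)^{G/N} \subseteq C_1 \times \bigcup_{d\in F} d^D,
\]
a product of a compact set with a finite set. Therefore $\pi(C^G) = \pi(C)^{G/N}$ is precompact; write $P$ for its compact closure in $G/N$.

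Finally I would lift back to $G$ using that $\pi$ is proper, which is where the compactness of $N$ enters. Since $G$ is locally compact and $\pi$ is open, I would cover the compact set $P$ by finitely many images $\pi(V_1),\dots,\pi(V_k)$ of relatively compact open sets $V_i\subset G$; then $\pi^{-1}(P) \subseteq \bigcup_i V_i N \subseteq \bigcup_i \cl(V_i)\,N$, a finite union of compact sets, so $\pi^{-1}(P)$ is compact. Since $C^G \subseteq \pi^{-1}(P)$, the closure $\cl(C^G)$ is a closed subset of a compact set, hence compact, and $C^G$ is precompact.

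The main obstacle is this last lifting step: the precompactness of individual conjugacy classes does not by itself survive the union over the compact set $C$, and it is the properness of $\pi$—available only because the kernel $N$ is compact—that legitimizes reducing the problem to $G/N$. Everything upstream is routine bookkeeping about the $\Rbb^n \times D$ structure.
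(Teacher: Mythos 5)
Your proof is correct and follows essentially the same route as the paper's: both pass to $G/N \cong \Rbb^n\times D$ via \cref{T-Structure}, bound $\pi(C)^{G/N}$ inside a compact-times-finite set using $F^D = \bigcup_{d\in F} d^D$, and pull back through the compact kernel. Your only addition is spelling out the properness of $\pi$ in the lifting step, which the paper leaves as a one-line remark.
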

\begin{proof}
By \cref{T-Structure}, let $G/K = \Rbb^n \times D$, where $K\normal G$ is a compact normal subgroup.
Let $\pi: G\to G/K$ denote the canonical epimorphism.
Pick $C\subset G$ compact.
Notice $C^G$ is precompact if $\pi(C^G)$ is, because the kernel of $\pi$ is compact.
Since $\pi(C)$ is compact, $\pi(C) \subset B \times F$ for some box $B\subset\Rbb^n$ and finite $F\subset D$.
Now $\pi(C^G) = \pi(C)^{\pi(G)} \subset (B\times F)^{\Rbb^n \times D} = B\times F^D$.
Evidently $F^D$ is finite, hence $B\times F^D$ is compact, proving $\pi(C^G)$ is precompact.
\end{proof}
\begin{point}\label{Symmetric_Folner_Net}
As in \cref{Definition_Folner_Net}, let $\{F_\gamma\}_{\gamma\in\Gamma}$ be a F\o{}lner net for $G$,
and $\lambda_\gamma = \mu_{F_\gamma}$ the corresponding left TI-net.
Assuming $G$ is unimodular, by \cite[Theorem 4.4]{Chou70} we can choose $F_\gamma$ to be symmetric.\footnote{
	When $G$ is \textit{not} unimodular, it's rather trivial that $F_\gamma$ can be chosen symmetric,
	see \cite[Theorem 2]{Emerson1974}.
	In this case, however, symmetry is not enough to ensure $F_\gamma$ is a two-sided F\o{}lner net;
	it's not even possible for a set of finite measure to be $(\{x\}, \nicefrac12)$-right-invariant
	when $\Delta(x) < \nicefrac12$!
}
Hence $\lambda_\gamma(x) = \lambda_\gamma(x^{-1}) = \lambda_\gamma^\dag(x)$,
and $\{\lambda_\gamma\}$ is a two-sided TI-net.
\end{point}

\begin{thm}[Following Milnes]\label{Theorem If FC Bar then...}
If $G\in[\FC]^-$, then $\TLIM(G) \subset \TIM(G)$.
\end{thm}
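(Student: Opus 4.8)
The plan is to prove the stronger inclusion $\TLIM(G) \subseteq \TRIM(G)$; since $\TIM(G) = \TLIM(G)\cap\TRIM(G)$, this gives $\TLIM(G) = \TLIM(G)\cap\TRIM(G) = \TIM(G)$ at once. Throughout I use that $G$ is unimodular (\cref{C-FCisUnimodular}) and that, by \cref{Symmetric_Folner_Net}, the sets $F_\gamma$ may be taken symmetric, so that $\{\lambda_\gamma\}$ is a two-sided TI-net. In particular (choosing the $K\in\Kcal$ symmetric and invoking the equivalence noted after \cref{weaker TI net defn}), $\{\lambda_\gamma\}$ is uniformly right-invariant on compacta: for every compact $C'\subset G$, $\;\sup_{u\in C'}|F_\gamma u \Sdiff F_\gamma|/|F_\gamma| \to 0$.

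The heart of the argument is the claim that \emph{for every $\{t_\gamma\}\in G^\Gamma$ the right-translated net $\{R_{t_\gamma}\lambda_\gamma\}$ is again a right TI-net.} To prove it, fix $g\in P_1(G)$ and, by density, assume $g$ is supported on a compact set $C$. Writing $F_\gamma t = t\,(t^{-1}F_\gamma t)$ gives $R_t\lambda_\gamma = \mu_{F_\gamma t} = l_t\,\mu_A$ with $A = t^{-1}F_\gamma t$, and since convolution is left-translation equivariant and left translation is isometric, $\|R_t\lambda_\gamma * g - R_t\lambda_\gamma\|_1 = \|\mu_A * g - \mu_A\|_1$. Using $\mu_A * g = \int g(s)\,\mu_{As}\dd s$ and the triangle inequality, this is at most $\int g(s)\,|As\Sdiff A|/|A|\dd s$, and the change of variables $A = t^{-1}F_\gamma t$ yields $|As\Sdiff A| = |F_\gamma\,(tst^{-1})\Sdiff F_\gamma|$.

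This is exactly where the hypothesis enters. By \cref{Lemma_CG_is_precompact}, the conjugate set $C^G$ is precompact; put $C' = \cl(C^G)$. Then $tst^{-1}\in C'$ for \emph{every} $t\in G$ and $s\in C$, whence $\|R_t\lambda_\gamma * g - R_t\lambda_\gamma\|_1 \le \sup_{u\in C'}|F_\gamma u\Sdiff F_\gamma|/|F_\gamma|$, a bound independent of $t$ that tends to $0$ by the uniform right-invariance above. This proves the claim. The key gain is that conjugation converts the right-displacement of $R_t\lambda_\gamma$ into a right-Følner displacement of $F_\gamma$ by the conjugate $tst^{-1}$, which the $[\FC]^-$ hypothesis confines to the single compact set $C'$; this uniformity in $t$ is the main obstacle, and it rests entirely on $C^G$ being precompact.

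Finally I assemble the pieces. Fix any $p\in\Gamma^*$ and take any $\nu = \plim_\gamma R_{t_\gamma}\lambda_\gamma \in X_p$. By the claim $\{R_{t_\gamma}\lambda_\gamma\}$ is a right TI-net, so the right-hand analogue of \cref{limit of TI-net} shows its accumulation point $\nu$ lies in $\TRIM(G)$; hence $X_p\subseteq\TRIM(G)$. Since $\TRIM(G)$ is $w^*$-closed and convex, $\cl(\conv(X_p))\subseteq\TRIM(G)$, and \cref{Lemma_cl_conv_Xp} identifies the left-hand side with $\TLIM(G)$. Therefore $\TLIM(G)\subseteq\TRIM(G)$, which completes the proof.
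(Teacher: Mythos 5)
Your proposal is correct and follows essentially the same route as the paper's proof: reduce via \cref{Lemma_cl_conv_Xp} to showing every right-translate net $\{R_{t_\gamma}\lambda_\gamma\}$ of the symmetric F\o{}lner net is a right TI-net, and obtain the needed uniformity in $t_\gamma$ by conjugating the displacement into the precompact set $C^G$ of \cref{Lemma_CG_is_precompact}. The only cosmetic difference is that you estimate $\|\mu_A * g - \mu_A\|_1$ directly for $A = t^{-1}F_\gamma t$, whereas the paper first computes $\|r_x R_{t_\gamma}\lambda_\gamma - R_{t_\gamma}\lambda_\gamma\|_1 = \|l_{x_\gamma}\lambda_\gamma - \lambda_\gamma\|_1$ and then passes to convolutions as in \cref{Lemma lambda_gamma is left TI-net}; these are the same computation.
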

\begin{proof}
By \cref{C-FCisUnimodular}, $G$ is unimodular.
As above, take $\{\lambda_\gamma\} = \{\lambda_\gamma^\dag\}$ to be a TI-net.
Recall \cref{Lemma_cl_conv_Xp}, which says $\cl(\conv(X_p)) = \TLIM(G)$.
So it suffices to prove $X_p \subset \TIM(G)$.
To this end, we will show $\{R_{t_\gamma} \lambda_\gamma\}$ is a right TI-net for any $\{t_\gamma\}\in G^{\Gamma}$.
Let $x_\gamma$ be shorthand for $t_\gamma x t_\gamma^{-1}$.
Now \[\Haar{r_x R_{t_\gamma} \lambda_\gamma - R_{t_\gamma} \lambda_\gamma }
	= \tfrac{\Haar{F_\gamma t_\gamma x^{-1} \Sdiff F_\gamma t_\gamma}}{\Haar{F_\gamma t_\gamma}}
	= \tfrac{\Haar{F_\gamma x_\gamma^{-1} \Sdiff F_\gamma}}{\Haar{F_\gamma}}
	= \| r_{x_\gamma} \lambda_\gamma - \lambda_\gamma \|_1
	= \| l_{x_\gamma} \lambda_\gamma - \lambda_\gamma \|_1.\]
For any compact $C\subset G$, notice $\{x_\gamma : x\in C, \gamma\in\Gamma\} \subset C^G$,
which is precompact by \cref{Lemma_CG_is_precompact}.
Hence \[\sup_{x\in C} \|r_x R_{t_\gamma} \lambda_\gamma - R_{t_\gamma} \lambda_\gamma \|_1
	\leq \sup_{y\in C^G} \|l_y \lambda_\gamma - \lambda_\gamma\|\to 0.\]
By the same argument as \cref{Lemma lambda_gamma is left TI-net},
$\|(R_{t_\gamma} \lambda_\gamma)* f- R_{t_\gamma} \lambda_\gamma\|_1 \to 0$ for any $f\in P_1(G)$.
\end{proof}

\begin{lem}
If $x^G$ is not precompact, then there is a $\sigma$-compact open subgroup $H\leq G$ such that $x^H$ is not precompact.
\end{lem}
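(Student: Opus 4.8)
The plan is to reduce the statement to a purely combinatorial extraction and then build $H$ by hand. Recall first that in a locally compact Hausdorff space a set is precompact (has compact closure) if and only if it is contained in some compact set. So it suffices to produce countably many conjugators $g_n\in G$ for which the set $Y = \{g_n x g_n^{-1} : n\in\Nbb\}$ is \emph{not} precompact. Granting this, fix a compact symmetric neighborhood $U$ of $e$ with $x\in U$, and let $H = \langle\, U \cup \{g_n : n\in\Nbb\}\,\rangle$. Then $H$ is open, since it contains the neighborhood $U$ of $e$ and a subgroup with nonempty interior is open; and $H$ is $\sigma$-compact, since $H = \bigcup_k S^k$ where $S = U\cup U^{-1}\cup\{g_n^{\pm 1} : n\}$ is a countable union of compact sets. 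Because $x\in U\subset H$ and each $g_n\in H$, we have $Y\subset x^H$, so $x^H$ is non-precompact, as required.

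To produce the conjugators, I would fix a $\sigma$-compact open subgroup $P\leq G$ (for instance $\bigcup_k V^k$ for a compact symmetric neighborhood $V$ of $e$). The left cosets $\{x_i P\}_{i\in I}$ partition $G$ into pairwise disjoint clopen sets, and I use the following standard description: a subset of $G$ is precompact if and only if it meets only finitely many cosets and has precompact intersection with each of them (any compact set is covered by finitely many members of a disjoint open cover). Since $x^G$ is non-precompact, at least one of two cases occurs. In \textbf{Case (a)}, $x^G$ meets infinitely many cosets; then I choose conjugates $y_n = g_n x g_n^{-1}$ lying in infinitely many distinct cosets, and $\{y_n\}$ is non-precompact because any compact set meets only finitely many cosets. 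In \textbf{Case (b)}, $x^G$ meets only finitely many cosets, so some intersection $x^G \cap x_i P$ is non-precompact.

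In Case (b) I exploit that $x_i P$ is clopen and homeomorphic to the $\sigma$-compact space $P$: fix an exhaustion $x_i P = \bigcup_k K_k$ with $K_k \subset \mathrm{int}(K_{k+1})$. Since $x^G\cap x_iP$ escapes each $K_k$, pick $y_k = g_k x g_k^{-1} \in (x^G\cap x_iP)\setminus K_k$. Every point of $G$ lying in $x_iP$ belongs to some $\mathrm{int}(K_m)$, which contains only the finitely many $y_k$ with $k<m$, while every point outside $x_iP$ has the complementary clopen neighborhood meeting no $y_k$; hence $\{y_k\}$ has no accumulation point and is an infinite closed discrete, thus non-precompact, subset of $x^G$. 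Either way I obtain the desired $g_n$, completing the proof.

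The main obstacle is exactly that $G$ need not be $\sigma$-compact, so one cannot exhaust $G$ (or $x^G$) by countably many compacta and extract an escaping sequence directly — indeed such extraction can fail in a general locally compact space. The device that resolves this is the coset dichotomy against a fixed $\sigma$-compact open subgroup $P$: it either detects non-precompactness from $x^G$ spreading across infinitely many clopen cosets (Case (a)), or else localizes all of the non-precompactness into a single $\sigma$-compact coset where the naive exhaustion argument does apply (Case (b)).
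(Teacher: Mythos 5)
Your proof is correct, but it takes a genuinely different route from the paper's. The paper extracts the required countable non-precompact family of conjugates in one stroke by a greedy separation argument: fix a compact neighborhood $U$ of $e$ and inductively choose $t_{n+1}$ so that $x^{t_{n+1}}U$ is disjoint from $\{x^{t_0},\dots,x^{t_n}\}U$, which must be possible since otherwise $x^G$ would lie in the compact set $\{x^{t_0},\dots,x^{t_n}\}UU^{-1}$; the resulting sequence is uniformly separated, hence has no accumulation point and escapes every compact set. You instead run a dichotomy against the clopen coset decomposition of a fixed $\sigma$-compact open subgroup $P$: either $x^G$ spreads over infinitely many cosets, or its failure of precompactness is localized inside a single $\sigma$-compact coset where the naive exhaustion-and-escape argument applies. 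Both arguments are sound; the paper's is shorter and avoids the case split, while yours makes explicit exactly where $\sigma$-compactness is needed and why it cannot be assumed globally. One point where your write-up is arguably cleaner than the paper's: you generate $H$ from the conjugators $g_n$ together with a compact symmetric neighborhood containing $x$, so the inclusion $\{x^{g_n}\}\subset x^H$ is immediate; the paper generates $H$ from the conjugates $x^{t_n}$ and the neighborhood $U$, and it is then less transparent that the witnesses $x^{t_n}$ actually belong to $x^H$ (one wants the conjugating elements, not just the conjugates, inside $H$). Your formulation sidesteps that issue.
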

\begin{proof}
Let $U$ be any compact neighborhood of $t_0 = e$.
Inductively pick $t_{n+1}$ such that $x^{t_{n+1}}U \cap \{x^{t_0}, \hdots, x^{t_n}\}U = \varnothing$.
This is possible, otherwise $x^G$ is contained in the compact set $\{x^{t_0}, \hdots, x^{t_n}\} U U^{-1}$, a contradiction.
Now the sequence $\{x^{t_0}, x^{t_1}, \hdots\}$ does not accumulate anywhere, so it escapes any compact set.
Take $H$ to be the subgroup generated by $\{x^{t_0}, x^{t_1}, \hdots\} U$.
\end{proof}

\begin{thm}\label{Theorem Paterson Converse}
If $G$ has an element $x$ such that $x^G$ is not precompact, then $\TLIM(G) \neq \TIM(G)$.
\end{thm}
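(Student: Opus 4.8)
The plan is to show that $\TLIM(G)$ properly contains $\TIM(G)$ by constructing a mean which is topological left-invariant but fails to be topological right-invariant. The strategy mirrors \cref{Theorem If FC Bar then...} but runs in reverse: there we used precompactness of $C^G$ to force every right-translate of the F\o{}lner net to remain a right TI-net; here the failure of precompactness should let us build a left TI-net whose right-translates ``escape to infinity'' in incompatible directions, yielding two distinct limit means that a single element $\mu\in\TIM(G)$ could not separate.

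First I would invoke the preceding lemma to reduce to a $\sigma$-compact open subgroup. Since $x^G$ is not precompact, there is a $\sigma$-compact open subgroup $H\leq G$ such that $x^H$ is not precompact. Working inside $H$ is advantageous because $H$ is $\sigma$-compact (so $\kappa(H)=\Nbb$ and the machinery of F\o{}lner \emph{sequences} applies), and because an open subgroup has positive Haar measure, so means on $H$ can be transported to $G$. Concretely I expect to extract from the non-precompactness a sequence of conjugating elements $\{g_n\}\subset H$ with $x^{g_n} = g_n x g_n^{-1}$ leaving every compact set, \ie\ the points $x^{g_n}$ do not accumulate.

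Next I would use the sequence $\{x^{g_n}\}$ to detect a difference between left- and right-invariance. Take a left TI-net (or, in the $\sigma$-compact case, a left TI-sequence) $\{\lambda_\gamma\}$ as in \cref{Definition_Folner_Net}. The key computation is the one appearing in \cref{Theorem If FC Bar then...}:
\[\|r_x R_{t_\gamma}\lambda_\gamma - R_{t_\gamma}\lambda_\gamma\|_1 = \|l_{x_\gamma}\lambda_\gamma - \lambda_\gamma\|_1, \qquad x_\gamma = t_\gamma x t_\gamma^{-1}.\]
In the $[\FC]^-$ case the conjugates $x_\gamma$ stayed in a precompact set, forcing this to vanish; here, by choosing the translation parameters $t_\gamma$ along the escaping sequence $g_n$, I can instead arrange that $x_\gamma$ runs through the non-accumulating points $x^{g_n}$, so that $\|l_{x_\gamma}\lambda_\gamma - \lambda_\gamma\|_1$ stays bounded away from $0$. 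This produces a right-translate $\{R_{t_\gamma}\lambda_\gamma\}$ that is \emph{not} a right TI-net, and whose limit mean $\mu=\plim_\gamma R_{t_\gamma}\lambda_\gamma$ lies in $\TLIM(G)$ (by \cref{Lemma_cl_conv_Xp}) but satisfies $\langle\mu, r_x T - T\rangle\neq 0$ for a suitable $T\in\Linfty(G)$. I would manufacture such a $T$ as (roughly) the indicator of a disjointly-supported union $\bigcup_n \Onebb_{F_{\gamma_n} x^{g_n}}$ or similar, engineered so that right-translation by $x$ shifts mass between the disjoint blocks.

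The main obstacle I anticipate is twofold. First, one must guarantee the supports $F_{\gamma_n} t_{\gamma_n}$ are genuinely spread out enough (mutually disjoint, or disjoint after right-translation by $x$) to support a test function $T$ witnessing the defect — this is exactly the disjointness bookkeeping of \cref{Lemma Disjoint Folner}, now coupled to the escaping conjugates rather than chosen freely, so the two constraints must be made compatible. Second, one must verify that the constructed $\mu$ remains topological left-invariant even though its defining net is obtained by right-translation; this is precisely what \cref{Lemma_cl_conv_Xp} guarantees, since \emph{every} right-translate of a left TI-net still limits into $\TLIM(G)$, so the asymmetry between left and right invariance is the crux and is already anticipated by that lemma. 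Carefully separating the direction along which the net escapes (right) from the direction in which invariance is preserved (left) is where the argument's content lies.
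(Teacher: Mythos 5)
Your high-level strategy is exactly the paper's: pass to a $\sigma$-compact open subgroup $H$ with $x^H$ not precompact, take right-translates $R_{t_\gamma}\lambda_\gamma$ of a F\o{}lner net with the translating elements chosen so the conjugates $t_\gamma x t_\gamma^{-1}$ escape to infinity, and exhibit a characteristic function on which the resulting limit mean fails right-invariance. But the proposal stops exactly where the proof begins. Knowing that $\|r_x R_{t_\gamma}\lambda_\gamma - R_{t_\gamma}\lambda_\gamma\|_1$ stays bounded away from $0$ only tells you the net is not a right TI-net; it does not prevent the $w^*$-limit from being right-invariant anyway. To rule that out you need a single set $C=\bigcup_\gamma F_\gamma t_\gamma$ with $C\cap Cx=\varnothing$ \emph{globally}, not merely $F_\gamma t_\gamma x\cap F_\gamma t_\gamma=\varnothing$ for each $\gamma$ separately, and your candidate test set $\bigcup_n F_{\gamma_n}x^{g_n}$ is not the right object (the translate must sit on the right of $F_\gamma$, and the blocks for different indices must avoid one another and one another's $x$-translates). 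The paper achieves this by an inductive construction of a countable escaping sequence $\{c_n\}\subset H$ subject to two conditions simultaneously: $c_n$ avoids $\bigcup_{m<n}H_n^{-1}H_mc_m\{x,x^{-1}\}$, and $c_nxc_n^{-1}\notin H_n^{-1}H_n$, where $\{H_n\}$ exhausts $H$. These conditions are precisely what make $A=\bigcup_n H_nc_n$ and $B=\bigcup_n H_nc_nx$ disjoint; you name this as ``bookkeeping to be made compatible'' but compatibility is the whole theorem, and it is not obvious a priori that both constraints can be met at once (it follows because $x^{H\setminus K}$ is still non-precompact for compact $K$).

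There is also a second missed difficulty: $G$ need not be $\sigma$-compact, so the F\o{}lner sets $F_\gamma$ of $G$ need not lie in $H$ at all, and there may be uncountably many indices $\gamma$ but only countably many $c_n$. The paper resolves this with a transversal $T$ for $G/H$ and the continuous projection $\pi:TH\to H$, $\pi(th)=h$: since $\pi(F_\gamma)$ is compact in $H$ it lies in some $H_{n(\gamma)}$, one sets $t_\gamma=c_{n(\gamma)}$ (so the translating element depends only on the ``level'' of $F_\gamma$, not on $\gamma$ itself), and then $F_\gamma t_\gamma\subset\pi^{-1}(A)=TA$ while $F_\gamma t_\gamma x\subset TB$, giving the global disjointness $C\cap D=\varnothing$. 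Without this device your plan of choosing $t_\gamma$ ``along the escaping sequence $g_n$'' does not produce a well-defined disjoint pair of sets when $\Gamma$ is uncountable. So: right skeleton, but the two constructions that constitute the proof are absent.
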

\begin{proof}
Let $H \leq G$ be any $\sigma$-compact open subgroup such that $x^H$ is not precompact.
Pick a sequence of compact sets $\{H_n\}$ such that
$H_n \subset H_{n+1}^\circ$ for each $n$,
and $\bigcup_n H_n = H$.
It follows that any compact $K\subset H$ is contained in some $H_n$.

Inductively construct a sequence $\{c_n\}\subset H$ satisfying the following:
\\(1) $c_n \in H\setminus K_n$, where $K_n = \bigcup_{m < n} H_n^{-1} H_m c_m \{x, x^{-1}\}$.
\\(2) $(c_n x c_n^{-1}) \not\in H_n^{-1} H_n$.
\\Since $K_n$ is compact, $x^{H \setminus K_n}$ is not precompact, and in particular escapes $H_n^{-1} H_n$.
Thus it is possible to satisfy (1) and (2) simultaneously.

Now $A = \bigcup_n H_n c_n$ and $B = \bigcup_n H_n c_n x$ are easily seen to be disjoint:
If they are not disjoint, then $H_n c_n x \cap H_m c_m \neq \varnothing$ for some $n,m$.
If $n > m$, then $c_n \in H_n^{-1} H_m c_m x^{-1}$, violating (1).
If $n < m$, then $c_m \in H_m^{-1} H_n c_n x$, violating (1).
If $n=m$, then $c_n x c_n^{-1} \in H_n^{-1} H_n$, violating (2).

Let $T$ be a transversal for $G/H$.
Notice $TA \cap TB = \varnothing$, since $A,B$ are disjoint subsets of $H$.
Define $\pi: TH \to H$ by $\pi(th) = h$, which is continuous since $H$ is open.
Let $\{F_\gamma\}$ be a F\o{}lner net for $G$.
Each $F_\gamma$ is compact, hence $\pi(F_\gamma) \subset H_{n(\gamma)}$ for some $n(\gamma)$.
Let $c_\gamma = c_{n(\gamma)}$.
Now $\pi(F_\gamma c_\gamma) = \pi(F_\gamma)c_\gamma \subset A$, hence $F_\gamma c_\gamma \subset \pi^{-1}(A) = TA$.
Likewise $F_\gamma c_\gamma x \subset TB$.
Since this holds for each $\gamma$, $C = \bigcup_\gamma F_\gamma t_\gamma \subset TA$
and $D = \bigcup_\gamma F_\gamma t_\gamma x \subset TB$.
Thus $C \cap D = \varnothing$.
Let $m = \plim_\gamma[R_{t_\gamma} \mu_{F_\gamma}]$ for some $p\in\Gamma^*$.
Now $m(\Onebb_C) = 1$, but $m(r_x \Onebb_C) = m(\Onebb_D) = 0$, hence $m\in \TLIM(G) \setminus \TIM(G)$.
\end{proof}
\section*{Background on Fourier Algebra}
The algebras $\AG$ and $\VN$ are studied in the influential paper \cite{Eymard}.
Chapter 2 of \cite{Lau-Kaniuth} gives a modern treatment of the same material in English.

\begin{point}
Let $\lambda$ denote the left-regular representation.
By definition, $\VN = \lambda[G]'' = \lambda[\Lone(G)]''$.
$ P_1(\Ghat)$ denotes the set of normal states on $\VN$,
and the linear span of $ P_1(\Ghat)$ is denoted by $\AG$.
It turns out that $ P_1(\Ghat)$ consists of all vector states of the form
$\omega_f(T) = \langle Tf,f\rangle$ where $\|f\|_2 = 1$.
By the polarization identity, $\AG$ consists of all functionals of the form
$\omega_{f,g}(T) = \langle Tf,g\rangle$, with $f,g\in\Ltwo(G)$.
\end{point}

\begin{point}
$\AG$ can also be characterized as the set of all functions
\[u_{f,g}(x) = \langle \lambda(x) f, g\rangle = \overline{g} * \check{f}(x),\ \ \text{where } f,g\in\Ltwo(G).\]
In this way, $\AG \subset C_0(G)$.
The duality with $\VN$ is given by $\langle T, u_{f,g}\rangle = \langle Tf,g\rangle$.
In particular, $\langle \lambda(h), u\rangle = \int h(x) u(x)\dd{x}$ for all $h\in\Lone(G)$.

$\AG$ is a commutative Banach algebra with pointwise operations
and norm $\|u\| = \sup_{\|T\| = 1} |\langle T, u\rangle|$.
Equivalently, $\|u\|$ is the sup of $\left|\int h(x) u(x)\dd{x}\right|$ over all $h\in\Lone(G)$
such that $\|\lambda(h)\|\leq 1$.
Since $\|\lambda(h)\|\leq \|h\|_1$, we conclude $\|u\| \geq \|u\|_\infty$.
\end{point}

\begin{point}
If $u\in\AG$ is positive as a functional on $\VN$, we write $u\geq 0$.
It's easy to see $u\geq 0$
iff $u = u_{f,f}$ for some $f\in\Ltwo(G)$
iff $\|u\|_\infty = u(e) = \langle I, u\rangle = \|u\|$.
In particular, $ P_1(\Ghat) = \{u\in\AG : \|u\| = u(e) = 1\}$ is closed under multiplication.
Suppose $\|f\|_2 = \|g\|_2 = 1$. Then
\begin{align*}
\|u_{f,f} - u_{g,g}\|
	&= \sup_{\|T\|=1} |\langle Tf, f\rangle - \langle Tg, g\rangle|
\\&	\leq \sup_{\|T\|=1} |\langle T(f-g), f\rangle| + |\langle Tg, (f-g)\rangle|
\\&	\leq 2\|f-g\|_2.
\end{align*}
\end{point}

\section*{TI-nets in \texorpdfstring{$P_1(\widehat{G})$}{P1(G\^{})}}\label{VN(G)}
\begin{point}
For $T\in\VN$ and $u,v\in\AG$, define $uT$ by $\langle uT, v\rangle = \langle T, uv\rangle$.
A mean $m$ is said to be topological invariant if
$\langle m, uT\rangle  = \langle m, T\rangle$ for all $u\in P_1(\Ghat)$ and $T\in\VN$.
$\TIM(\Ghat)$ denotes the set of topological invariant means on $\VN$.
There is no distinction between left/right topological invariant means, since multiplication in $ P_1(\Ghat)$ is commutative.
\end{point}

\begin{point}
A net $\{u_\gamma\}\subset P_1(\Ghat)$ is called a weak TI-net if
$\lim_\gamma \langle vu_\gamma - u_\gamma, T\rangle = 0$ for all $v\in P_1(\Ghat)$ and $T\in\VN$.
By \cref{Lemma P is Dense in M}, topological invariant means are precisely the limit points of weak TI-nets.
$\{u_\gamma\}$ is simply called a TI-net if $\lim_\gamma \|vu_\gamma - u_\gamma\| = 0$ for all $v\in P_1(\Ghat)$.
\end{point}

\begin{lem}\label{Lemma Small Supp Means TI-net}
Pick any net $\{u_\gamma\}\subset  P_1(\Ghat)$.
Suppose that $\supp(u_\gamma)$ is eventually small enough to fit in any $V\in \Cscr(G)$.
Then $\{u_\gamma\}$ is a TI-net.
\end{lem}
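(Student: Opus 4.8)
The plan is to fix a single $v\in P_1(\Ghat)$ and show $\|v u_\gamma - u_\gamma\|\to 0$; since $v$ is arbitrary this gives the TI-net property. Write $v=u_{g,g}$ and $u_\gamma=u_{f_\gamma,f_\gamma}$ with $\|g\|_2=\|f_\gamma\|_2=1$. The pointwise product $v u_\gamma$ is the diagonal coefficient of $\lambda\otimes\lambda$ at the vector $f_\gamma\otimes g$, so the first step is to pass through Fell's absorption principle: the unitary $W$ on $\Ltwo(G\times G)$ given by $(W\xi)(x,y)=\xi(x,xy)$ satisfies $W(\lambda(s)\otimes\lambda(s))W^{*}=\lambda(s)\otimes 1$. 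Setting $\Phi_\gamma=W(f_\gamma\otimes g)$, i.e. $\Phi_\gamma(x,y)=f_\gamma(x)g(xy)$, and $\Psi_\gamma=f_\gamma\otimes g$, i.e. $\Psi_\gamma(x,y)=f_\gamma(x)g(y)$, both unit vectors in $\Ltwo(G\times G)$, I get $\langle(\lambda(s)\otimes1)\Phi_\gamma,\Phi_\gamma\rangle=v(s)u_\gamma(s)$ and $\langle(\lambda(s)\otimes1)\Psi_\gamma,\Psi_\gamma\rangle=u_\gamma(s)$. Since these coefficient functionals are normal and agree with $vu_\gamma$ and $u_\gamma$ on the $w^{*}$-generating set $\lambda[G]$, they equal $vu_\gamma$ and $u_\gamma$ in $\AG$.

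Next comes the estimate. The same computation that gives $\|u_{f,f}-u_{g,g}\|\le 2\|f-g\|_2$ in the excerpt, applied now to coefficients of $\lambda\otimes1$, yields $\|vu_\gamma-u_\gamma\|_{\AG}\le(\|\Phi_\gamma\|_2+\|\Psi_\gamma\|_2)\,\|\Phi_\gamma-\Psi_\gamma\|_2=2\|\Phi_\gamma-\Psi_\gamma\|_2$. Since $\Phi_\gamma-\Psi_\gamma$ factors as $(\Phi_\gamma-\Psi_\gamma)(x,y)=f_\gamma(x)\big(L_xg-g\big)(y)$ and each $L_x$ is unitary on $\Ltwo(G)$, Fubini gives $\|\Phi_\gamma-\Psi_\gamma\|_2^{2}=\int_G|f_\gamma(x)|^{2}\,\|L_xg-g\|_2^{2}\dd x$. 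By strong continuity of left translation, $\|L_xg-g\|_2\to 0$ as $x\to e$, while $\|L_xg-g\|_2\le 2$ everywhere; hence this integral tends to $0$ as soon as the probability density $|f_\gamma|^{2}$ concentrates at $e$. That would finish the proof.

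The main obstacle is precisely this concentration of $|f_\gamma|^{2}$ at $e$. The hypothesis controls $\supp(u_\gamma)$, not $\supp(f_\gamma)$, and the two are linked only by $\{u_\gamma\neq 0\}\subset(\supp f_\gamma)(\supp f_\gamma)^{-1}$, which runs the wrong way. The key point to establish is that the shrinking of $\supp(u_\gamma)$ forces a localized representing vector: a continuous positive-definite function supported in a small symmetric neighborhood $W$ of $e$ admits a factorization $u_\gamma=u_{f_\gamma,f_\gamma}$ with $\supp f_\gamma$ of diameter shrinking with $W$. Because right translations lie in $\lambda[G]'$ and therefore leave $u_{f_\gamma,f_\gamma}$ unchanged, I am then free to recenter such an $f_\gamma$ so that $|f_\gamma|^{2}$ sits in an arbitrarily small neighborhood of $e$, at which point the integral above tends to $0$ and the lemma follows. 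Proving this localization of the representing vector—the analogue of Fejér–Riesz spectral factorization with support control—is the step that requires real work, whereas the absorption argument and the norm estimate are routine.
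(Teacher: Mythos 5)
Your reduction to the concentration of the representing vectors is where the argument breaks down, and the missing step is not the ``real but routine work'' you suggest --- it is a genuinely hard (and in general false, in its natural quantitative form) factorization problem. You need: every $u\in P_1(\Ghat)$ with $\supp(u)$ contained in a small $W\in\Cscr(G)$ admits a representation $u = u_{f,f}$ with $\supp(f)$ of correspondingly small diameter. This is a Boas--Kac/Fej\'er--Riesz type statement, and while it holds on $\Rbb$ and $\Zbb$, the sharp version already fails on $\Rbb^2$: there are positive-definite functions supported in a ball that are not of the form $f*\tilde f$ with $f$ supported in the half-radius ball, and no general substitute with a uniform constant is available for arbitrary locally compact groups. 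Note also that the freedom you invoke to ``recenter'' $f$ is governed by the full commutant $\rho(G)''$, not just right translations, and nothing in the hypothesis lets you choose the net $\{f_\gamma\}$ --- only $\supp(u_\gamma)$ is controlled, and the containment $\supp(u_{f,f})\subset(\supp f)(\supp f)^{-1}$ runs in the wrong direction, exactly as you observe. So the proposal as written does not prove the lemma; the Fell-absorption computation and the estimate $\|vu_\gamma-u_\gamma\|\leq 2\|\Phi_\gamma-\Psi_\gamma\|_2$ are fine, but they prove a different statement (that $\{u_{f_\gamma,f_\gamma}\}$ is a TI-net when $|f_\gamma|^2$ concentrates at $e$), which is how the paper \emph{constructs} a TI-net, not how it proves the lemma.

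The paper's proof (following Renaud) avoids representing vectors entirely and works with the Banach-algebra structure of $\AG$. Given $u$ with compact support $C$, regularity of $\AG$ yields $v\in\AG$ with $v\equiv 1$ on $C$, and since $(u-v)(e)=0$, local synthesis at the point $e$ yields $w\in\AG$ with $\|(u-v)-w\|<\epsilon$ and $w\equiv 0$ on a neighborhood $W$ of $e$. Once $\supp(u_\gamma)\subset W\cap C$ one has $vu_\gamma=u_\gamma$ and $wu_\gamma=0$, so $\|uu_\gamma-u_\gamma\|=\|(u-v)u_\gamma\|\leq\|u-v-w\|\,\|u_\gamma\|<\epsilon$, using only that $\|u_\gamma\|=1$ and that multiplication is contractive. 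This uses the hypothesis on $\supp(u_\gamma)$ directly and never needs any support control on a square root of $u_\gamma$. If you want to keep your approach, you would have to either prove the localization claim for the specific groups at hand or replace it with the ideal-theoretic argument above.
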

\begin{proof}
This is \cite[Proposition 3]{renaud}.
I'll repeat the proof for convenience.

Pick $u\in P_1(\Ghat)$.
Since compactly supported functions are dense in $\AG$, it suffices to consider the case when $u$ has compact support $C$.
$\AG$ is regular by \cite[Proposition 2.3.2]{Lau-Kaniuth}, so we can pick $v\in\AG$ with $v\equiv 1$ on $C$.
Since $u-v(e) = 0$, by \cite[Lemma 2.3.7]{Lau-Kaniuth} we can find some $w\in\AG$ with $\|(u-v) - w\| < \epsilon$ and $w\equiv 0$ on some neighborhood $W$ of $e$.
Suppose $\gamma$ is large enough that $\supp(u_\gamma) \subset W\cap C$.
Then $v u_\gamma = u_\gamma$, and $wu_\gamma = 0$,
hence \[\|u u_\gamma - u_\gamma\|
	= \|(u-v) u_\gamma\|
	\leq \|(u-v-w)u_\gamma\| + \|w u_\gamma\|
	\leq \|(u-v-w)\| < \epsilon.\qedhere\]
\end{proof}

\begin{point}
Let $\Cscr(G)$ denote the compact, symmetric neighborhoods of $e$.
As a consequence of \cref{Lemma Small Supp Means TI-net}, we see that $ P_1(\Ghat)$ has TI-nets for any $G$,
in contrast to $ P_1(G)$ which has TI-nets only when $G$ is amenable:
If $\{U_\gamma\}\subset \Cscr(G)$ is a neighborhood basis at $e$, directed by inclusion,
let $u_\gamma = (\Onebb_{U_\gamma} * \Onebb_{U_\gamma}) / \Haar{U_\gamma}$.
Since $\supp(u_\gamma) \subset U_\gamma^2$, and $\{U_\gamma^2\}$ is a neighborhood basis at $e$,
$\{u_\gamma\}$ is a TI-net.

\cref{Lemma Small Supp Means TI-net} has a sort of converse, given by the following lemma.
\end{point}

\begin{lem}\label{Lemma Every TIM(Ghat) is limit of TI-net}
Every $m\in \TIM(\Ghat)$ is the limit of a net $\{u_\gamma\}$
such that $\supp(u_\gamma)$ is eventually small enough to fit in any $V\in\Cscr(G)$.
\end{lem}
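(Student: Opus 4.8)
The plan is to approximate $m$ by multiplying a weakly convergent net from $P_1(\Ghat)$ by fixed small-support elements, exploiting the fact that such multiplication preserves topological invariance in the limit. First I would invoke \cref{Lemma P is Dense in M}: since $P_1(\Ghat)$ is dense in $\Mcal$, there is a net $\{v_\alpha\}\subset P_1(\Ghat)$ converging to $m$ in the $w^*$-topology, so that $\langle T,\, v_\alpha\rangle \to \langle m,\, T\rangle$ for every $T\in\VN$. Next, for each $V\in\Cscr(G)$ I would fix a single element $u_V\in P_1(\Ghat)$ with $\supp(u_V)\subset V$; such elements exist by the construction following \cref{Lemma Small Supp Means TI-net} (choose a symmetric neighborhood $W$ of $e$ with $W^2\subset V$ and set $u_V=(\Onebb_W*\Onebb_W)/\Haar{W}$).

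The key observation is that, for each \emph{fixed} $V$, the net $\{u_V v_\alpha\}_\alpha$ still converges $w^*$ to $m$ while remaining in the small-support set. Indeed, $P_1(\Ghat)$ is closed under multiplication, so $u_V v_\alpha\in P_1(\Ghat)$, and since multiplication in $\AG$ is pointwise we have $\supp(u_V v_\alpha)\subset\supp(u_V)\subset V$. For any $T\in\VN$,
\[
\langle u_V v_\alpha,\, T\rangle = \langle u_V T,\, v_\alpha\rangle \to \langle m,\, u_V T\rangle = \langle m,\, T\rangle ,
\]
where the first equality is the definition of $u_V T$, the convergence is $v_\alpha\to m$ tested against $u_V T\in\VN$, and the final equality is precisely the topological invariance of $m$. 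Hence $m$ lies in the $w^*$-closure of $\{u\in P_1(\Ghat):\supp(u)\subset V\}$ for every $V\in\Cscr(G)$.

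From here I would assemble the desired net by a routine product-index argument. Let $\mathcal N$ be a basis of $w^*$-neighborhoods of $m$, and direct $\Gamma=\Cscr(G)\times\mathcal N$ by reverse inclusion in each coordinate. For each $(V,U)\in\Gamma$, the previous paragraph lets me pick $\alpha$ with $u_V v_\alpha\in U$; set $u_{(V,U)}=u_V v_\alpha$. Given any target neighborhood $U_0$, every $(V,U)\succeq(V_0,U_0)$ satisfies $u_{(V,U)}\in U\subseteq U_0$, so $u_{(V,U)}\to m$; and given any $V_0\in\Cscr(G)$, every $(V,U)\succeq(V_0,U_0)$ satisfies $\supp(u_{(V,U)})\subset V\subseteq V_0$, so the supports are eventually small. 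This $\{u_{(V,U)}\}$ is the required net.

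I expect the only genuine content to be the displayed identity together with topological invariance — the insight that shrinking the support by multiplying with $u_V$ costs nothing in the limit. The remaining points (closure of $P_1(\Ghat)$ under multiplication, the behaviour of supports under pointwise products, and the product-net bookkeeping) are routine.
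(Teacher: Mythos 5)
Your proposal is correct and rests on exactly the same key identity as the paper's proof: multiplying a $w^*$-approximant $v$ by a fixed small-support $u$ keeps the support inside $V$ while $\langle uv, T\rangle = \langle v, uT\rangle \to \langle m, uT\rangle = \langle m, T\rangle$ by topological invariance. The paper packages this as a one-line contradiction argument rather than assembling the net explicitly over a product index, but the mathematical content is the same.
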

\begin{proof}
Suppose to the contrary that there exist $V\in\Cscr(G)$, $T\in\VN$, and $\epsilon > 0$
such that $|\langle m-u, T\rangle| > \epsilon$ for all $u\in P_1(\Ghat)$ with $\supp(u) \subset V$.
Fix any $u\in P_1(\Ghat)$ with $\supp(u) \subset V$.
By \cref{Lemma P is Dense in M}, pick $v\in P_1(\Ghat)$ with $|\langle m - v, T\rangle | < \epsilon$,
and $|\langle m - v, uT\rangle| < \epsilon$.
Now $vu\in \Pcal(\Ghat)$ with $\supp(vu)\subset V$.
But $|\langle m - vu, T\rangle|
	=|\langle m, T\rangle - \langle vu, T\rangle|
	= |\langle m, uT\rangle - \langle v, uT\rangle|
	= |\langle m - v, uT\rangle| < \epsilon$, a contradiction.
\end{proof}

\begin{cor}
Every $m\in\TIM(\Ghat)$ is the limit of a TI-net.
\end{cor}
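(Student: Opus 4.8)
The plan is simply to chain together the two immediately preceding lemmas, so the corollary is a one-line consequence. By \cref{Lemma Every TIM(Ghat) is limit of TI-net}, any $m\in\TIM(\Ghat)$ is already the limit of some net $\{u_\gamma\}\subset P_1(\Ghat)$ whose supports shrink, in the sense that $\supp(u_\gamma)$ eventually fits inside any prescribed $V\in\Cscr(G)$. That lemma carries the real content; it is established by a separation argument (via \cref{Lemma P is Dense in M}) in which an approximating element $v\in P_1(\Ghat)$ is replaced by the product $vu$, using the invariance $\langle m, uT\rangle = \langle m, T\rangle$ to see that $vu$ still approximates $m$ against the offending $T\in\VN$ while having the desired small support.

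Having such a support-shrinking net in hand, I would invoke \cref{Lemma Small Supp Means TI-net}, which asserts precisely that any net in $P_1(\Ghat)$ with eventually-small supports is automatically a TI-net. Combining the two, $\{u_\gamma\}$ is a TI-net converging to $m$, which is exactly the claim.

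I do not anticipate any obstacle: all the genuine work has already been done in producing the support-shrinking net, which is the converse direction to \cref{Lemma Small Supp Means TI-net}. The only point to confirm is that both lemmas quantify over the same notion of being ``eventually small enough to fit in any $V\in\Cscr(G)$,'' which they do by design, so the two statements compose without friction.
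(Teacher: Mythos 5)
Your proposal is correct and matches the paper's intent exactly: the corollary is stated without a proof body precisely because it is the immediate composition of \cref{Lemma Every TIM(Ghat) is limit of TI-net} (which produces a support-shrinking net converging to $m$) with \cref{Lemma Small Supp Means TI-net} (which certifies any such net is a TI-net). No further comment is needed.
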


\begin{cor}\label{Corollary Upper Bound on TIM Ghat}
$\card{\TIM(\Ghat)} \leq 2^{2^\mu}$.
\end{cor}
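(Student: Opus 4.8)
The plan is to mirror the upper-bound argument of \cref{L-UpperBoundSizeOfTlim}: exhibit a set of density character at most $\mu$ whose closed convex hull contains $\TIM(\Ghat)$, and then invoke the fact (used there) that a regular Hausdorff space with a dense subset of cardinality $\mu$ has at most $2^{2^\mu}$ points. Fix a compact symmetric neighborhood $W$ of $e$, and write $A_W = \{v \in \AG : \supp v \subset W\}$ and $\Pcal_W = \{u \in P_1(\Ghat) : \supp u \subset W\} \subset A_W$.

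First I would establish the containment $\TIM(\Ghat) \subset \cl(\conv(\Pcal_W))$. One route quotes \cref{Lemma Every TIM(Ghat) is limit of TI-net} directly, since it places every $m\in\TIM(\Ghat)$ in the $w^*$-closure of states whose support eventually lies in $W$. A self-contained route, paralleling \cref{Lemma_cl_conv_Xp}, fixes a single $u\in\Pcal_W$ and argues by separation: if $m\in\TIM(\Ghat)$ lay outside $\cl(\conv\{u\omega : \omega\in P_1(\Ghat)\})$, then \cref{Lemma Hahn Banach} would produce a self-adjoint $T\in\VN$ with $\langle m,T\rangle > \sup_\omega \langle T, u\omega\rangle$. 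Topological invariance gives $\langle m,T\rangle = \langle m,uT\rangle$; since $u$ is positive-definite the module map $T\mapsto uT$ is positive and unital, hence preserves self-adjointness, so $\langle m,uT\rangle \le \sup_{\omega\in P_1(\Ghat)}\langle \omega,uT\rangle = \sup_\omega\langle T,u\omega\rangle$, using that the normal states are $w^*$-dense (\cref{Lemma P is Dense in M}) and that for a self-adjoint operator the supremum of the pairing over all states is the top of its spectrum. As each product $u\omega$ lies in $\Pcal_W$, this contradicts the separation. Either way $\TIM(\Ghat)\subset\cl(\conv(A_W\cap P_1(\Ghat)))$.

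The remaining, and main, step is the cardinality estimate: $A_W$ has norm-density character at most $\mu$. Granting this, a norm-dense $\mu$-sized subset of $A_W$ is $w^*$-dense there, so its rational convex combinations form a $w^*$-dense subset of $\cl(\conv(A_W\cap P_1(\Ghat)))$ of cardinality $\le\mu$; this closed convex hull is a regular Hausdorff space, hence has at most $2^{2^\mu}$ points, and $\TIM(\Ghat)$ inherits the bound. To bound the density of $A_W$ I would use that $W$, being a compact neighborhood, has topological weight $\le\mu$ (cover $W$ by finitely many translates of each member of a $\mu$-sized neighborhood basis at $e$ and refine), so $\Ltwo(W')$ has density $\le\mu$ for any compact $W'$. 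The vector functionals $u_{f,g}$ with $f,g$ drawn from a $\mu$-sized dense subset of the unit ball of $\{h\in\Ltwo(G):\supp h\subset W'\}$ then span, after norm closure and using $\|u_{f,g}-u_{f',g'}\|\le\|f-f'\|_2\|g\|_2+\|f'\|_2\|g-g'\|_2$, a subspace of density $\le\mu$.

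The hard part is showing these \emph{localized} vector functionals actually norm-approximate every element of $A_W$: a general $v\in A_W$ is a norm-convergent sum of terms $u_{f_n,g_n}$ whose $\Ltwo$-vectors need not be supported near $W$, and one must relocalize without losing control of the $\AG$-norm. I expect to handle this with the regularity of $\AG$ --- multiplying by a fixed cutoff $\chi\in\AG$ that is $1$ on $W$ and compactly supported, and arranging that multiplication by $\chi$ carries the $u_{f,g}$ to functionals whose underlying $\Ltwo$-data can be taken supported in a fixed compact set. Verifying that this relocalization preserves density $\le\mu$ is where the real work lies; everything else is the routine separation-and-counting scheme already used for $\TLIM$ in \cref{L-UpperBoundSizeOfTlim}.
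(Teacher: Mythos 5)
Your containment step is fine --- quoting \cref{Lemma Every TIM(Ghat) is limit of TI-net} already places $\TIM(\Ghat)$ in the $w^*$-closure of $X = \{u\in P_1(\Ghat) : \supp(u)\subset V\}$, with no convex hull needed --- and your final counting step (a regular Hausdorff space with a dense subset of cardinality $\mu$ has at most $2^{2^\mu}$ points) is exactly the one used in \cref{L-UpperBoundSizeOfTlim}. The gap is in the middle. You set out to bound the norm-density character of the whole slice $A_W=\{v\in\AG : \supp v\subset W\}$, and the step you yourself flag as ``where the real work lies'' --- relocalizing a general $v=\sum_n u_{f_n,g_n}$ whose $\Ltwo$-vectors need not be supported near $W$, without losing control of the $\AG$-norm --- is never carried out. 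As written, the argument does not close; controlling the $\AG$-norm of a cutoff of an arbitrary element of $\AG$ is not a routine verification, and the whole cardinality bound rests on it.

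The paper's proof avoids this problem entirely by never leaving the set of states. Each $u\in X$ is treated as a single vector state $u_{f,f}$ with $f$ a unit vector supported in $V$, so one only needs a dense subset $B'$ of cardinality $\mu$ of $B=\{f\in\Ltwo(G) : \|f\|_2=1,\ \supp(f)\subset V\}$ (which follows from \cref{dense subset of haar sets}) together with the Lipschitz estimate $\|u_{f,f}-u_{g,g}\|\leq 2\|f-g\|_2$ recorded in the background section; then $D=\{u_{f,f} : f\in B'\}$ has cardinality $\mu$ and $X\subset\cl(D)$. In other words, the positivity structure --- a state on $\VN$ is governed by one $\Ltwo$-vector rather than a norm-convergent series of them --- is precisely what makes the cardinality bound cheap, and your detour through all of $A_W$ discards it. If you restrict the approximation step to $P_1(\Ghat)\cap A_W$ and use the quadratic parametrization $f\mapsto u_{f,f}$, your outline collapses to the paper's proof; otherwise you owe a genuine argument for the relocalization step.
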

\begin{proof}
Pick any $V\in\Cscr(G)$.
By the previous lemma, $\TIM(\Ghat)$ is in the closure of $X = \{u\in  P_1(\Ghat) : \supp(u) \subset V\}$.
Therefore, it suffices to find a set $D$ with $\#(D) = \mu$ and $X \subset \cl(D)$.

Let $B = \{f\in\Ltwo(G) : \|f\|_2 = 1,\ \supp(f) \subset V\}$.
Using \cref{dense subset of haar sets}, it is easy to show $B$ has a dense subset $B'$ of cardinality $\mu$.
Now $D = \{u_{f,f} : f\in B'\}$ is the desired set.
\end{proof}

\begin{point}\label{rank one support}
Let $u_{f,f}$ be a normal state on $\VN$, and $P$ the projection onto the linear span of $f$.
Since $P$ has rank one, it is clearly the inf of all projections $Q$
such that $1 = \langle Q f, f\rangle = \langle Q, u_{f,f}\rangle$.
In the terminology of \cref{Definition of Support}, $P$ is the support of $u_{f,f}$.
Thus a family $\{u_{f_\alpha, f_\alpha}\} \subset \AG$ is orthogonal
if and only if $\{f_\alpha\} \subset \Ltwo(G)$ is orthogonal.

Apparently neither Chou nor Hu made this observation.
If it seems obvious, it is because we took care to express normal states as $u_{f,f}$.
For example, in order to construct an orthogonal TI-sequence, \cite[\pp 210-213]{Chou82} takes three pages to describe a generalized Gram-Schmidt procedure for normal states on a von Neumann algebra.
This procedure does not generalize beyond sequences, and \cite{Hu95} is unable to construct an orthogonal net when $\mu > \Nbb$.
\end{point}

\begin{lem}\label{Lemma_Orthogonal_TI_Sequence}
When $\mu = \Nbb$, $ P_1(\Ghat)$ contains an orthogonal TI-sequence.
\end{lem}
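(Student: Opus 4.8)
The plan is to construct an orthonormal sequence $\{f_n\}\subset\Ltwo(G)$ whose supports collapse to $e$, and to set $u_n = u_{f_n,f_n}$. First observe that $\mu = \Nbb$ forces $G$ to be nondiscrete: if $\{e\}$ were open it would furnish a one-element neighborhood basis, giving $\mu = 1$. Consequently Haar measure is non-atomic, so for every neighborhood $U$ of $e$ (which has positive measure) the space $\Ltwo(U) = \{f\in\Ltwo(G) : \supp(f)\subset U\}$ is infinite-dimensional. Fix a decreasing neighborhood basis $\{U_n\}\subset\Cscr(G)$ at $e$, which exists since $\mu=\Nbb$.

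Next I would build $\{f_n\}$ by induction. Suppose orthonormal $f_1,\dots,f_{n-1}$ have been chosen with $\supp(f_m)\subset U_m$. Since $U_n\subset U_m$ for $m<n$, the restrictions $f_1|_{U_n},\dots,f_{n-1}|_{U_n}$ span a subspace of $\Ltwo(U_n)$ of dimension at most $n-1$; as $\Ltwo(U_n)$ is infinite-dimensional, its orthogonal complement contains a unit vector $f_n$. Then $\supp(f_n)\subset U_n$, and for $m<n$ we have $\langle f_n,f_m\rangle = \langle f_n, f_m|_{U_n}\rangle = 0$, so the family stays orthonormal.

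Finally, put $u_n = u_{f_n,f_n}$; since $\|f_n\|_2 = 1$, each $u_n\in P_1(\Ghat)$. By \cref{rank one support}, orthogonality of $\{f_n\}$ is equivalent to orthogonality of $\{u_n\}$. Because $u_{f_n,f_n}(x) = \langle\lambda(x)f_n,f_n\rangle$ vanishes whenever $x^{-1}\supp(f_n)$ is disjoint from $\supp(f_n)$, we get $\supp(u_n)\subset U_nU_n^{-1} = U_n^2$. Continuity of multiplication makes $\{U_n^2\}$ another neighborhood basis at $e$, so for each $V\in\Cscr(G)$ we have $\supp(u_n)\subset V$ for all large $n$. \cref{Lemma Small Supp Means TI-net} then certifies that $\{u_n\}$ is a TI-sequence.

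I expect the only subtlety to be reconciling the competing requirements on $\{f_n\}$: mutual orthogonality against supports shrinking to $e$. These are compatible precisely because nondiscreteness renders $\Ltwo$ of each arbitrarily small neighborhood infinite-dimensional, so at every stage there is abundant room to meet the finitely many orthogonality constraints.
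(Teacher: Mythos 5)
Your proof is correct and follows essentially the same route as the paper: build an orthonormal sequence $\{f_n\}\subset\Ltwo(G)$ with $\supp(f_n)\subset U_n$ shrinking to $e$, then invoke \cref{rank one support} for orthogonality of the states and \cref{Lemma Small Supp Means TI-net} for the TI property. The only difference is how orthogonality of $\{f_n\}$ is arranged: the paper simply takes $f_n = \Onebb_{V_n}/\Haar{V_n}^{1/2}$ with $V_n = U_n\setminus U_{n+1}$ chosen of positive measure, so the supports are disjoint and orthogonality is automatic, whereas you select $f_n$ from the orthogonal complement of $\mathrm{span}\{f_1,\dots,f_{n-1}\}$ inside the infinite-dimensional space $\Ltwo(U_n)$; both mechanisms rest on the same fact (nondiscreteness makes Haar measure non-atomic), and both are sound.
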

\begin{proof}
Let $\{U_n\} \subset \Cscr(G)$ be a descending neighborhood basis at $e$ such that $|U_n \setminus U_{n+1}| > 0$.
Let $V_n = U_n \setminus U_{n+1}$ and $f_n = \Onebb_{V_n} / \Haar{V_n}^{\nicefrac12}$.
In light of \cref{Lemma Small Supp Means TI-net} and \cref{rank one support},
$\{u_{f_n, f_n}\}$ is the desired sequence.
\end{proof}

\begin{lem}\label{Lemma_Minimal_Cardinality_Subbasis}
Let $\Ucal \subset \Cscr(G)$. If $\bigcap \Ucal = \{e\}$, then $\Ucal$ is a neighborhood sub-basis at $e$,
hence $\card{\Ucal} \geq \mu$.
\end{lem}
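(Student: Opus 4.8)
The plan is to prove directly that $\Ucal$ is a neighborhood sub-basis at $e$—that is, that the finite intersections of members of $\Ucal$ form a neighborhood basis at $e$—and then to read off the cardinality bound. The whole argument rests on the fact that the members of $\Cscr(G)$ are compact, hence closed in the Hausdorff group $G$, so that their complements are open.

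First I would fix an arbitrary neighborhood $W$ of $e$ together with a single $U_0 \in \Ucal$. The set $C = U_0 \setminus W$ is compact, being a closed subset of the compact set $U_0$, and it omits $e$ because $e \in W$. Since $\bigcap \Ucal = \{e\}$, every $x \in C$ satisfies $x \neq e$, so there is some $U_x \in \Ucal$ with $x \notin U_x$, i.e. $x \in G \setminus U_x$. Thus $\{G \setminus U : U \in \Ucal\}$ is an open cover of $C$; extracting a finite subcover $C \subset \bigcup_{i=1}^n (G \setminus U_i)$ yields $C \cap \bigcap_{i=1}^n U_i = \varnothing$. Intersecting with $U_0$ and recalling $C = U_0 \setminus W$, I obtain
\[ U_0 \cap U_1 \cap \cdots \cap U_n \subset U_0 \setminus C = U_0 \cap W \subset W, \]
a finite intersection of members of $\Ucal$ contained in $W$. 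As $W$ was arbitrary, $\Ucal$ is a neighborhood sub-basis at $e$.

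For the cardinality bound, I would note that the family of all finite intersections of members of $\Ucal$ is now a neighborhood basis at $e$. When $\Ucal$ is infinite, its set of finite subsets has cardinality $\card{\Ucal}$, so this basis has cardinality at most $\card{\Ucal}$, whence $\mu \leq \card{\Ucal}$ by the definition of $\mu(G)$ in \cref{Defn Cardinal Invariants}. When $\Ucal$ is finite, the finite intersections are finitely many and their common refinement is $\bigcap \Ucal = \{e\}$ itself, which is then a neighborhood of $e$; hence $G$ is discrete, $\mu = 1$, and the inequality $\card{\Ucal} \geq \mu$ holds trivially.

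The argument has no serious obstacle; the only point requiring care is the one highlighted above—that one must intersect the finite subfamily against a fixed $U_0 \in \Ucal$ in order to remain inside a compact set. The covering by complements controls only the points of $C$ and says nothing about the behaviour of $\bigcap_i U_i$ away from $U_0$, so retaining $U_0$ in the intersection is exactly what confines the finite intersection to $U_0 \cap W \subset W$.
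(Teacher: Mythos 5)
Your proof is correct and is essentially the paper's argument: the paper applies the finite intersection property to the empty intersection $\bigcap_{U\in\Ucal}\bigl[U\cap V^{C}\bigr]$ of compact sets, which is precisely the dual formulation of your open-cover extraction over the compact set $U_0\setminus W$. Your treatment of the cardinality bound (including the finite/discrete case) is a slightly more careful spelling-out of the paper's ``hence $\card{\Ucal}\geq\mu$,'' but the substance is the same.
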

\begin{proof}
Suppose $\bigcap \Ucal = \{e\}$, and let $V$ be any open neighborhood of $e$.
Since $\varnothing = \{e\}\cap V^C = \bigcap_{U\in \Ucal} \big[U\cap V^C\big]$ is an intersection of compact sets, it follows that some finite sub-intersection is also empty.
In other words, $U_1\cap \hdots \cap U_n \subset V$ for some $U_1, \hdots, U_n \in \Ucal$.
\end{proof}

\begin{lem}\label{Lemma KK}
Let $\Ucal \subset \Cscr(G)$.
Suppose each $U \in \Ucal$ has a ``successor'' $V\in \Ucal$, such that $V^2 \subset U$.
Then $H = \bigcap \Ucal$ is a compact subgroup of $G$.
\end{lem}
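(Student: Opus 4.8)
The plan is to check, one at a time, the defining properties of a compact subgroup: that $H$ is nonempty and compact, that $H^{-1}=H$, and that $H$ is closed under multiplication. The first two follow directly from the fact that $\Ucal\subset\Cscr(G)$, while the ``successor'' hypothesis is precisely what is needed for the third.

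First I would note that every $U\in\Ucal$ is a neighborhood of $e$, so $e\in U$ for all $U$ and hence $e\in H$; in particular $H\neq\varnothing$. Since $G$ is Hausdorff, each compact $U$ is closed, so $H$ is an intersection of closed sets and is therefore closed; being a closed subset of any fixed compact $U_0\in\Ucal$, it is compact. Closure under inversion is immediate from symmetry: each $U\in\Cscr(G)$ satisfies $U^{-1}=U$, so $H^{-1}=\bigcap_{U\in\Ucal}U^{-1}=\bigcap_{U\in\Ucal}U=H$.

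The one substantive step is closure under multiplication, and this is where the successor condition does all the work. Given $a,b\in H$ and an arbitrary $U\in\Ucal$, I would pass to its successor $V\in\Ucal$ with $V^2\subset U$. Since $V\in\Ucal$, we have $H\subset V$, so both $a$ and $b$ lie in $V$, whence $ab\in V^2\subset U$. As $U\in\Ucal$ was arbitrary, $ab\in\bigcap\Ucal=H$. I do not expect any genuine obstacle here: the lemma merely packages the Kakutani--Kodaira style observation that an intersection of a suitably nested family of symmetric compact neighborhoods is a group. The only point requiring a little care is to invoke the successor property for the product $ab$, rather than to assume that any individual $V$ is itself closed under multiplication.
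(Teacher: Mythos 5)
Your proof is correct and follows essentially the same route as the paper's: compactness from intersecting compact sets in a Hausdorff space, closure under inversion from symmetry of each $U$, and closure under multiplication by passing to the successor $V$ of an arbitrary $U$ and using $H\subset V$ so that $ab\in V^2\subset U$. The only addition is your explicit remark that $e\in H$, which the paper leaves implicit.
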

\begin{proof}
$H$ is compact, because it is the intersection of compact sets in a Hausdorff space.
Suppose $x,y \in H$. Suppose $U, V \in \Ucal$ with $V^2 \subset U$.
Since $x,y \in H\subset V$, $xy \in V^2 \subset U$.
Since $U$ was arbitrary, we conclude $xy\in H$.
Likewise, $x^{-1}\in H$ because each $U$ is symmetric.
\end{proof}

\begin{point}
Suppose $H$ is a compact subgroup of $G$, with normalized Haar measure $\nu$.
Let $L^2(H \backslash G)$ be the set of functions in $L^2(G)$ that are constant on right-cosets of $H$.
Define $Pf(x) = \int f(hx) \dd\nu(h)$.
Now it is routine to check that $P$ is the orthogonal projection onto $L^2(H \backslash G)$.
The only interesting detail, needed to prove $P = P^*$, is that $\int f(hx) \dd\nu(h) = \int f(h^{-1}x) \dd\nu(h)$ because $H$ is unimodular.
\end{point}

\begin{point}\label{KK Small U and H}
Suppose $\{H_\gamma\} = \{H_\gamma\}_{\gamma < \mu}$ is a descending chain of compact subgroups.
Let $P_\gamma$ denote the orthogonal projection onto $L^2(H_\gamma \backslash G)$.
Then $\{P_\gamma\}$ is an ascending chain of projections, and $\{P_{\gamma + 1} - P_\gamma\}$ is a chain of mutually orthogonal projections.
Suppose we construct functions $\{f_\gamma\} \subset \Ltwo(G)$ with $f_\gamma \in L^2(H_{\gamma + 1} \backslash G) - L^2(H_\gamma \backslash G)$, so that $(P_{\gamma+1} - P_\gamma) f_\gamma \neq 0$.
Letting $g_\gamma = (P_{\gamma+1} - P_\gamma) f_\gamma$,
we see $\{u_\gamma\}= \bigl\{u_{g_\gamma, g_\gamma} / \|g_\gamma\|_2^2\bigr\}$ is a chain of mutually orthogonal functions in $ P_1(\Ghat)$, since $S(u_\gamma) \leq P_{\gamma+1} - P_\gamma$.

Consider the condition $f_\gamma \in L^2(H_{\gamma+1} \backslash G) - L^2(H_\gamma \backslash G)$.
How can we achieve this?
Let $\nu_\gamma$ denote the Haar measure of $H_\gamma$.
Pick $U_\gamma \in \Cscr(G)$ small enough that $\nu_\gamma(U_\gamma^4 \cap H_\gamma) < 1$.
Construct $H_{\gamma + 1}$ small enough that $H_{\gamma + 1} \subset U_\gamma$, and let $f_\gamma = \Onebb_{H_{\gamma + 1} U_\gamma}$. Obviously $f_\gamma \in L^2(H_{\gamma+1} \backslash G)$.
On the other hand,
$P_\gamma f_\gamma(x) = \nu_\gamma(\{h\in H_\alpha : hx \in H_{\gamma + 1} U_\gamma\}) < 1 = f_\gamma(x)$
for any $x\in H_{\gamma + 1} U_\gamma$.
Hence $P_\gamma f_\gamma \neq f_\gamma$ and $f_\gamma\not\in L^2(H_\gamma \backslash G)$.

We need to add one more detail to our construction, to make the chain of orthogonal functions $\{u_\gamma\}$ into a TI-net.
Namely, suppose $\{V_\gamma\}\subset\Cscr(G)$ is a neighborhood basis at $e$, and $\{U_\gamma, H_\gamma\}$ satisfy $H_\gamma U_\gamma^2 \subset V_\gamma$.
Then $\supp(g_\gamma) \subset H_\gamma H_{\gamma+1} U_\gamma \subset V_\gamma$,
hence $\supp(u_\gamma) \subset V_\gamma^2$.
Let $\Gamma$ denote the set of ordinals less than $\mu$, ordered by $\alpha\prec\gamma \iff V_\gamma \subset V_\alpha$.
By \cref{Lemma Small Supp Means TI-net}, $\{u_\gamma\}_{\gamma\in\Gamma}$ is a TI-net.
Notice that each tail of $\Gamma$ has cardinality $|\Gamma| = \mu$, as required by \cref{Lemma Cardinality of Lambda*}.
\end{point}

\begin{lem}\label{Lemma_Orthogonal_TI_Net}
If $\mu > \Nbb$, $ P_1(\Ghat)$ contains an orthogonal TI-net of cardinality $\mu$.
\end{lem}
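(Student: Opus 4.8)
The analytic content has already been assembled in \cref{KK Small U and H}: once I am handed a descending chain $\{H_\gamma\}_{\gamma<\mu}$ of \emph{nontrivial} compact subgroups together with the interleaved neighborhoods $U_\gamma$, the functions $g_\gamma=(P_{\gamma+1}-P_\gamma)\Onebb_{H_{\gamma+1}U_\gamma}$ furnish mutually orthogonal normal states $u_\gamma$ whose supports shrink to $e$, so that \cref{Lemma Small Supp Means TI-net} makes $\{u_\gamma\}$ a TI-net and \cref{rank one support} records that orthogonality of the $g_\gamma$ is orthogonality of the states. Thus the only thing left to do is verify that the recursion sketched in \cref{KK Small U and H} can in fact be run all the way out to $\mu$ without collapsing. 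I first fix a neighborhood basis $\{V_\gamma\}_{\gamma<\mu}\subset\Cscr(G)$, and I will build the chain so that $H_\gamma\subset V_\gamma$ for every $\gamma$; this is exactly what allows me to pick $U_\gamma$ with $H_\gamma U_\gamma^2\subset V_\gamma$, hence $\supp(u_\gamma)\subset V_\gamma^2$.

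\textbf{The recursion.} I construct $H_\gamma$ by transfinite recursion, maintaining the invariant that $H_\gamma=\bigcap c(\gamma)$ for a subfamily $c(\gamma)\subset\Cscr(G)$ of cardinality $<\mu$. To begin, apply \cref{Lemma KK} inside $V_0$: choose $W_0\subset V_0$ in $\Cscr(G)$ and successors with $W_{n+1}^2\subset W_n$, and set $H_0=\bigcap_n W_n$. At a successor stage I first choose $U_\gamma\in\Cscr(G)$ as prescribed in \cref{KK Small U and H} (small enough that $\nu_\gamma(U_\gamma^4\cap H_\gamma)<1$ and $H_\gamma U_\gamma^2\subset V_\gamma$, both possible since $H_\gamma$ is a nontrivial compact subgroup inside $V_\gamma$), then run the same $W_n$-construction inside $U_\gamma\cap V_{\gamma+1}$ and intersect with $H_\gamma$, obtaining a compact subgroup $H_{\gamma+1}\subset U_\gamma\cap V_{\gamma+1}$. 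At a limit $\lambda$ I form $\bigcap_{\gamma<\lambda}H_\gamma$ and then intersect it with a further compact subgroup built inside $V_\lambda$, so that $H_\lambda\subset V_\lambda$. In each case \cref{Lemma KK} makes $H_\gamma$ a genuine compact subgroup, the chain stays descending, and $c(\gamma)$ grows by only countably many sets per stage, so $|c(\gamma)|\le\max(|\gamma|,\Nbb)<\mu$ throughout.

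\textbf{Why the recursion never dies (the crux).} The one danger is that some $H_\gamma$ collapses to $\{e\}$ — most acutely at a limit ordinal of uncountable cofinality, where uncountably many subgroups are intersected. This is precisely where $\mu>\Nbb$ enters. By the contrapositive of \cref{Lemma_Minimal_Cardinality_Subbasis}, any subfamily of $\Cscr(G)$ of cardinality strictly less than $\mu$ has intersection strictly larger than $\{e\}$, for otherwise it would be a neighborhood sub-basis smaller than the character $\mu$. Since $H_\gamma=\bigcap c(\gamma)$ with $|c(\gamma)|<\mu$, every $H_\gamma$ is nontrivial; this simultaneously supplies the point bounded away from $e$ that is needed for $\nu_\gamma(U_\gamma^4\cap H_\gamma)<1$ and, decisively, keeps the chain alive through all $\gamma<\mu$. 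The point I want to stress is that this replaces any appeal to the structure theory of locally compact groups with purely cardinal-arithmetic bookkeeping.

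\textbf{Conclusion.} Feeding $\{H_\gamma\}_{\gamma<\mu}$ into \cref{KK Small U and H} yields the desired orthogonal family $\{u_\gamma\}_{\gamma<\mu}\subset P_1(\Ghat)$ of cardinality $\mu$; reindexing by reverse inclusion of the $V_\gamma$ gives a directed index set each of whose tails has cardinality $\mu$, which is what \cref{Lemma Cardinality of Lambda*} will want downstream. I expect the limit-stage nontriviality discussed above to be the only real obstacle: once the $|c(\gamma)|<\mu$ invariant is fixed, the successor steps (choosing $U_\gamma$ and the small subgroup $H_{\gamma+1}\subset U_\gamma$) and the support estimates are routine verifications already carried out in \cref{KK Small U and H}.
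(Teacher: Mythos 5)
Your proposal is correct and follows essentially the same route as the paper: a transfinite recursion building the chain $\{H_\gamma, U_\gamma\}$ of \cref{KK Small U and H}, with the decisive nontriviality of each $H_\gamma$ secured by the cardinality bookkeeping ($H_\gamma$ is an intersection of fewer than $\mu$ members of $\Cscr(G)$) fed into \cref{Lemma_Minimal_Cardinality_Subbasis}. The only cosmetic difference is that you treat successor and limit stages separately, whereas the paper's single formula $H_\beta = \bigl(\bigcap_n W_n\bigr)\cap\bigl(\bigcap_{\gamma<\beta}H_\gamma\bigr)$ handles both at once.
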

\begin{proof}
Fix $\{V_\gamma\}_{\gamma < \mu} \subset\Cscr(G)$, a well-ordered neighborhood basis at $e$.
Of course this well-ordering has no topological meaning, but it's necessary for transfinite induction.
The purpose of our induction is to select $\{U_\gamma, H_\gamma\}_{\gamma < \mu}$
as in \cref{KK Small U and H}, from which the lemma clearly follows.

For $0 \leq \beta < \mu$, suppose we have picked $\{U_\gamma, H_\gamma\}_{\gamma < \beta}$
such that (1)-(4) hold for all $\gamma < \beta$:
(1) $H_\gamma$ is the intersection of $(\gamma+1)\cdot \Nbb$ elements of $\Cscr(G)$,
and is a subgroup of each previous $H_\alpha$.
(2) If $\gamma=\alpha + 1$ is a successor ordinal, then $H_\gamma \subset U_\alpha$.
(3) $H_\gamma U_\gamma^2 \subset V_\gamma$.
(4) $\nu_\gamma(H_\gamma \cap U_\gamma^4) < 1$.

Let $\{W_n\}_{n\in\Nbb}\subset \Cscr(G)$ be any chain with $W_0 \subset V_\beta$
and $W_{n+1}^2 \subset W_n$ for all $n$.
If $\beta = \alpha + 1$, we may suppose $W_0 \subset U_\alpha$.
Let $H_\beta = \big(\bigcap_{n\in\Nbb}W_n\big) \cap \big(\bigcap_{\gamma < \beta} H_\gamma\big)$.
Notice $H_\beta W_2^2 \subset W_0 \subset V_\beta$.
Since $H_\beta$ is the intersection of $(\beta+1) \cdot \Nbb < \mu$ elements of $\Cscr(G)$,
it follows from \cref{Lemma_Minimal_Cardinality_Subbasis} that $H_\beta \neq \{e\}$.
In particular, it is possible to pick $U\in \Cscr(G)$ with $\nu_\beta (H_\beta \cap U^4) < 1$.
Let $U_\beta = U \cap W_2$.
Now $U_\beta, H_\beta$ satisfy (1)-(4).
\end{proof}

\begin{thm}
Let $G$ be any locally compact group, and $1 < \mu$ the smallest cardinality of a neighborhood basis about $e\in G$.
Then $\card{\TIM(\Ghat)} = 2^{2^\mu}$.
\end{thm}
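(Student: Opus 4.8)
The plan is to read off the upper bound from results already proved and to supply a matching lower bound by counting ultrafilter limits of an orthogonal TI-net. The upper bound $\card{\TIM(\Ghat)} \leq 2^{2^\mu}$ is precisely \cref{Corollary Upper Bound on TIM Ghat}, so no work remains there. Everything hinges on producing enough distinct topological invariant means, and the mechanism for that is the pairing of orthogonality (to guarantee injectivity) with the ultrafilter cardinality computation \cref{Lemma Cardinality of Lambda*}.

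First I would dispose of the hypothesis $1 < \mu$: a topological group with a finite neighborhood basis at $e$ is discrete, so $1 < \mu$ forces $\mu \geq \Nbb$, and I split into the cases $\mu = \Nbb$ and $\mu > \Nbb$. When $\mu = \Nbb$, take the orthogonal TI-sequence $\{u_n\}_{n\in\Nbb}$ of \cref{Lemma_Orthogonal_TI_Sequence}; its index set $\Nbb$ has every tail of cardinality $\Nbb = \mu$. When $\mu > \Nbb$, take the orthogonal TI-net $\{u_\gamma\}_{\gamma\in\Gamma}$ of \cref{Lemma_Orthogonal_TI_Net}, indexed by the directed set $\Gamma$ described in \cref{KK Small U and H}, each of whose tails again has cardinality $\mu$. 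In either case write $\{u_\gamma\}_{\gamma\in\Gamma}$ for the resulting orthogonal TI-net, with $\card{\Gamma} = \mu$ and all tails of cardinality $\mu$.

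Next I assemble the injection. Since $\{u_\gamma\}$ is a (norm) TI-net it is in particular a weak TI-net, so by \cref{Lemma P is Dense in M} every accumulation point lies in $\TIM(\Ghat)$; in particular $\plim_\gamma u_\gamma \in \TIM(\Ghat)$ for each $p\in\Gamma^*$. Because the $u_\gamma$ are mutually orthogonal, \cref{Lemma Orthogonal Implies Injection} shows $p \mapsto \plim_\gamma u_\gamma$ is one-to-one on $\beta\Gamma$, hence on $\Gamma^* \subset \beta\Gamma$. This gives an injection $\Gamma^* \into \TIM(\Ghat)$, so $\card{\TIM(\Ghat)} \geq \card{\Gamma^*}$. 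The cardinality hypotheses on $\Gamma$ are exactly those demanded by \cref{Lemma Cardinality of Lambda*}, which yields $\card{\Gamma^*} = 2^{2^\mu}$. Combining this lower bound with the upper bound from \cref{Corollary Upper Bound on TIM Ghat} completes the proof.

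The genuine difficulty has already been absorbed into \cref{Lemma_Orthogonal_TI_Net}, namely the transfinite construction of a descending chain of compact subgroups whose associated coset projections produce orthogonal normal states of small support. Granting that lemma, the theorem is a routine assembly, and the only points worth verifying are that the index set really does have all tails of full cardinality $\mu$ (so \cref{Lemma Cardinality of Lambda*} applies verbatim) and that orthogonality of the $u_\gamma$ is what powers the injectivity in \cref{Lemma Orthogonal Implies Injection}.
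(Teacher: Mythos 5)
Your proposal is correct and follows the paper's own proof essentially verbatim: upper bound from \cref{Corollary Upper Bound on TIM Ghat}, lower bound by feeding the orthogonal TI-net of \cref{Lemma_Orthogonal_TI_Sequence} or \cref{Lemma_Orthogonal_TI_Net} into \cref{Lemma Orthogonal Implies Injection} and \cref{Lemma Cardinality of Lambda*}. The extra care you take about the tails of $\Gamma$ having cardinality $\mu$ is exactly the point the paper records in \cref{KK Small U and H}.
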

\begin{proof}
Let $\{u_\gamma\}_{\gamma\in\Gamma}$ be the orthogonal TI-net of \cref{Lemma_Orthogonal_TI_Sequence}
or \cref{Lemma_Orthogonal_TI_Net}, depending on $\mu$.
By \cref{Lemma Orthogonal Implies Injection}, $p\mapsto \plim_\gamma u_\gamma$
is an injection of $\Gamma^*$ into $\TIM(\Ghat)$.
Thus $\card{\TIM(\Ghat)} \geq \card{\Gamma^*} = 2^{2^\mu}$ by \cref{Lemma Cardinality of Lambda*}.
The opposite inequality is \cref{Corollary Upper Bound on TIM Ghat}.
\end{proof}
\chapter{When is an invariant mean the limit of a F\o{}lner net?}\label{tia paper}
The following chapter is based on my paper \cite{HopfenspergerTLIM0}.

\medskip

\noindent\textbf{\large Abstract}

\medskip\noindent
Let $G$ be a locally compact amenable group, $\TLIM(G)$
	the topological left-invariant means on $G$,
	and $\TLIM_0(G)$ the limit points of F\o{}lner-nets.
I show that $\TLIM_0(G) = \TLIM(G)$ unless $G$ is $\sigma$-compact non-unimodular,
	in which case $\TLIM_0(G) \neq \TLIM(G)$.
This improves a 1970 result of Chou and a 2009 result of Hindman and Strauss.
I consider the analogous problem for the non-topological left-invariant means,
	and give a short construction of a net converging to invariance ``weakly but not strongly,''
	simplifying the proof of a 2001 result of Rosenblatt and Willis.

\section*{History}

In this chapter, \(G\) is always a locally compact group.
The left Haar measure of \(E\subset G\) is denoted \(\Haar{E}\).
The set of means on \(\Linfty(G)\) is
$$\Means(G) = \{\mu\in\LinftyS(G) : \|\mu\| = 1,\ \mu\geq 0\},$$
	which is endowed with the \(w^*\)-topology to make it compact.
Regarding \(\Lone(G)\) as a subset of \(\LinftyS(G)\),
define \(P_1(G) = \set*{f\in\Lone(G) : \|f\|_1 = 1,\ f\geq 0}\).

\begin{prop}[{\cref{P1(G) is dense in M(G)}}]\label{Prop M1 Dense in M}
\(P_1(G)\) is dense in \(\Means(G)\).
\end{prop}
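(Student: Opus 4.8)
The plan is to argue by contradiction using the Hahn--Banach separation theorem in the form of \cref{Lemma Hahn Banach}. Suppose some mean \(\mu\in\Means(G)\) lies outside \(\cl(P_1(G))\). Since \(P_1(G)\) is convex, so is its closure; and since \(P_1(G)\subset\Means(G)\), which is \(w^*\)-compact, the set \(\cl(P_1(G))\) is a \(w^*\)-compact convex subset of \(\LinftyS(G)\). The singleton \(\{\mu\}\) is likewise \(w^*\)-compact and convex and is disjoint from \(\cl(P_1(G))\). Applying \cref{Lemma Hahn Banach} with \(X=\Linfty(G)\), I would obtain \(\phi\in\Linfty(G)\) that strictly separates the two sets.

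Next I would reduce to a nonnegative separating function. Writing \(\phi=\phi_1+i\phi_2\) with \(\phi_1,\phi_2\) real and passing to real parts, the separation becomes \(\langle\mu,\,\phi_1\rangle>\sup_{f\in P_1(G)}\langle f,\,\phi_1\rangle\). Setting \(\psi=\phi_1+\|\phi_1\|_\infty\) makes \(\psi\geq 0\); and because every element of \(\{\mu\}\cup P_1(G)\) is unital, adding the constant \(\|\phi_1\|_\infty\) shifts both sides by the same amount, so the strict inequality is preserved:
\[\langle\mu,\,\psi\rangle>\sup_{f\in P_1(G)}\langle f,\,\psi\rangle.\]

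The contradiction then comes from evaluating both sides. On the left, positivity together with \(\|\mu\|=1\) gives \(\langle\mu,\,\psi\rangle\leq\|\mu\|\cdot\|\psi\|_\infty=\|\psi\|_\infty\). On the right, I claim \(\sup_{f\in P_1(G)}\langle f,\,\psi\rangle=\|\psi\|_\infty\): for any \(\epsilon>0\) the super-level set \(\{x\in G:\psi(x)>\|\psi\|_\infty-\epsilon\}\) is not locally null, so it contains a subset \(E\) of finite positive measure, and \(f=\Onebb_E/|E|\in P_1(G)\) satisfies \(\langle f,\,\psi\rangle>\|\psi\|_\infty-\epsilon\). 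Combining the two estimates yields \(\|\psi\|_\infty\geq\langle\mu,\,\psi\rangle>\|\psi\|_\infty\), the desired contradiction.

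The only step requiring real care is the identity \(\sup_{f\in P_1(G)}\langle f,\,\psi\rangle=\|\psi\|_\infty\), which is where the concrete description of \(P_1(G)\) as probability densities enters. The subtlety is that the super-level set may have infinite or merely locally positive measure, so one must first extract a genuinely normalizable piece \(E\) of finite positive measure on which to concentrate \(f\); this is exactly what \(\|\psi\|_\infty\) being an essential (locally-null) supremum guarantees. Everything else in the argument is formal.
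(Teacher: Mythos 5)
Your proof is correct and follows essentially the same route as the paper's: a contradiction via the Hahn--Banach separation lemma (\cref{Lemma Hahn Banach}), reduction to a real nonnegative separating function $\psi$, and the observation that $\langle\mu,\psi\rangle\leq\|\psi\|_\infty=\sup_{f\in P_1(G)}\langle f,\psi\rangle$. The only difference is that you spell out why the supremum over $P_1(G)$ attains $\|\psi\|_\infty$ (extracting a set of finite positive measure from the super-level set), a detail the paper leaves implicit.
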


Define left-translation of functions by \(l_x\phi(y) = \phi(x^{-1}y)\).
The set of left-invariant means on \(\Linfty\) is
\begin{center}
\(\LIM(G) =
\set*{\mu\in\Means(G) : \paren*{\forall x\in G}\ \paren*{\forall \phi\in\Linfty}\ 
	\mu(\phi) = \mu(l_x \phi)}\).
\end{center}
\(G\) is said to be amenable if \(\LIM(G)\) is nonempty.
\cref{Prop M1 Dense in M} shows that every left-invariant mean \(\mu\) is the limit
	of a net \(\net*{f_\alpha}\) in \(P_1(G)\).
Such a net is said to converge weakly to invariance, because
the net \(\net*{f_\alpha - l_x f_\alpha}\) converges weakly to $0$ for all \(x\in G\).

A mean \(f\in P_1(G)\) is said to be \((K,\epsilon)\)-invariant if
\(\norm{f - l_x f}_1 < \epsilon\) for each \(x\in K\).
A net in \(P_1(G)\) is said to converge strongly to invariance if it is eventually
\((K,\epsilon)\)-invariant for each finite \(K\subset G\) and \(\epsilon > 0\).

\begin{prop}[{\cref{Days theorem}}]
If $G$ is amenable, \(P_1(G)\) contains a net converging strongly to invariance.
\end{prop}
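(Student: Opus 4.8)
The plan is to observe that essentially all the content of this proposition is already carried by \cref{Days theorem}; what remains is merely to assemble the functions it produces into a single net indexed by the directed set of pairs $(K,\epsilon)$. Since $G$ is amenable, $\LIM(G)$ is nonempty, so I may fix some $\mu\in\LIM(G)$ to feed into \cref{Days theorem}.

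First I would introduce the directed set $\Gamma$ of all pairs $\gamma = (K,\epsilon)$ with $K\Subset G$ and $\epsilon > 0$, ordered by
\[(K,\delta) \preceq (J,\epsilon) \iff K\subseteq J \text{ and } \delta\geq\epsilon.\]
This is genuinely directed: given $(K_1,\epsilon_1)$ and $(K_2,\epsilon_2)$, the pair $(K_1\cup K_2,\ \min(\epsilon_1,\epsilon_2))$ is a common upper bound. Next, for each $\gamma = (K,\epsilon)\in\Gamma$, I would apply \cref{Days theorem} with the trivial $w^*$-neighborhood $U = \Means(G)$ (the whole space, which is a neighborhood of $\mu$), for which $\tau^{-1}(U) = P_1(G)$. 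This yields a $(K,\epsilon)$-invariant function $f_\gamma\in P_1(G)$, and I let $\{f_\gamma\}_{\gamma\in\Gamma}$ be the resulting net.

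Finally, I would verify that $\{f_\gamma\}$ converges strongly to invariance. Given any $K_0\Subset G$ and $\epsilon_0 > 0$, set $\gamma_0 = (K_0,\epsilon_0)\in\Gamma$. For every $\gamma = (K,\epsilon)\succeq\gamma_0$ we have $K_0\subseteq K$ and $\epsilon\leq\epsilon_0$, so $\norm{f_\gamma - l_x f_\gamma}_1 < \epsilon\leq\epsilon_0$ for each $x\in K_0$; that is, $f_\gamma$ is $(K_0,\epsilon_0)$-invariant. Thus the net is eventually $(K_0,\epsilon_0)$-invariant for each $K_0\Subset G$ and $\epsilon_0>0$, which is exactly the definition of converging strongly to invariance.

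At this level there is essentially no obstacle: \cref{Days theorem}, whose proof rests on Hahn–Banach separation together with the coincidence of the weak and norm closures of a convex set (\cref{norm and weak closures coincide}), already performs all of the analytic work. The only points requiring care are the bookkeeping ones I flagged above, namely that $\Gamma$ is a legitimate directed set and that being $(K,\epsilon)$-invariant for a larger $K$ and smaller $\epsilon$ implies invariance for the smaller prescription $(K_0,\epsilon_0)$.
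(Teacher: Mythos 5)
Your proposal is correct and matches the paper, which proves this proposition simply by citing \cref{Days theorem}; you have merely written out the routine bookkeeping (indexing by the directed set of pairs $(K,\epsilon)$ and taking $U$ to be the whole space) that the paper leaves implicit. No issues.
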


Let \(\Cp\) be the compact subsets of \(G\) with positive measure.
For \(A\in\Cp\) define \(\mu\sub{A} = \Onebb\sub{A} / \Haar{A} \in P_1(G)\).
Notice \(\norm{\mu\sub{A} - l_x \mu\sub{A}}_1 = \Haar{A \Sdiff xA} / \Haar{A}\).
If \(\mu\sub{A}\) is \((K,\epsilon)\)-invariant, we may also say
	\(A\) is \((K,\epsilon)\)-invariant.
Let $\Cp(K,\epsilon)$ be the set of all $(K,\epsilon)$-invariant sets in $\Cp$.
Let \(\Means_{\Cp}(G) = \set*{\mu\sub{A} : A\in \Cp}\).

\begin{prop}[{\cref{exists pointwise folner net}}]
If $G$ is amenable, \(\Means_{\Cp}(G)\) contains a net \(\net{\mu\sub{A_\alpha}}\) converging strongly to invariance.
In this case, both \(\net{\mu\sub{A_\alpha}}\) and \(\net*{A_\alpha}\) are called F\o{}lner nets.
\end{prop}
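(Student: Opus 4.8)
The plan is to assemble the individual invariant sets produced by \cref{exists pointwise folner net} into a single net indexed by an appropriate directed set, exactly as in the discrete case treated earlier. Take $\Gamma$ to be the collection of all pairs $\gamma = (K,\epsilon)$ with $K\Subset G$ and $\epsilon > 0$, ordered by declaring $(K,\delta)\preceq(J,\epsilon)$ precisely when $K\subseteq J$ and $\delta\geq\epsilon$. This is directed, since for $(K_1,\epsilon_1)$ and $(K_2,\epsilon_2)$ the pair $(K_1\cup K_2,\,\min(\epsilon_1,\epsilon_2))$ dominates both.

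First I would, for each $\gamma = (K,\epsilon)$, invoke \cref{exists pointwise folner net} to obtain a $(K,\nicefrac\epsilon2)$-invariant set $F_\gamma$ of finite positive measure. Because $\Cp$ consists of \emph{compact} sets, the one genuine point is to replace $F_\gamma$ by a compact subset without destroying invariance. By the inner regularity of Haar measure, choose a compact $A_\gamma\subseteq F_\gamma$ with $\Haar{F_\gamma\setminus A_\gamma}$ as small as I wish; then $\Haar{A_\gamma}>0$, so $A_\gamma\in\Cp$. Using the inclusion $A_\gamma\Sdiff xA_\gamma \subseteq (F_\gamma\Sdiff xF_\gamma)\cup(F_\gamma\setminus A_\gamma)\cup x(F_\gamma\setminus A_\gamma)$ and the left-invariance of Haar measure, one gets, for each $x\in K$,
\[\Haar{A_\gamma \Sdiff xA_\gamma}
\leq \Haar{F_\gamma \Sdiff xF_\gamma} + 2\Haar{F_\gamma\setminus A_\gamma},\]
which together with $\Haar{A_\gamma}\geq \Haar{F_\gamma} - \Haar{F_\gamma\setminus A_\gamma}$ makes $\Haar{A_\gamma\Sdiff xA_\gamma}/\Haar{A_\gamma} < \epsilon$ once $\Haar{F_\gamma\setminus A_\gamma}$ is small enough. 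Since $K$ is finite, a single choice of approximation works simultaneously for all $x\in K$, so $A_\gamma$ is $(K,\epsilon)$-invariant.

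Finally I would check that $\net{\mu\sub{A_\gamma}}$ converges strongly to invariance. Fix $K_0\Subset G$ and $\epsilon_0>0$, and set $\beta = (K_0,\epsilon_0)\in\Gamma$. For any $\gamma = (K,\epsilon)\succeq\beta$ we have $K_0\subseteq K$ and $\epsilon\leq\epsilon_0$, so the $(K,\epsilon)$-invariance of $A_\gamma$ gives $\norm{\mu\sub{A_\gamma} - l_x\mu\sub{A_\gamma}}_1 = \Haar{A_\gamma\Sdiff xA_\gamma}/\Haar{A_\gamma} < \epsilon \leq \epsilon_0$ for every $x\in K_0$. Thus the net is eventually $(K_0,\epsilon_0)$-invariant, which is exactly strong convergence to invariance. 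The bulk of the content is imported from \cref{exists pointwise folner net}; the only obstacle beyond the directed-set bookkeeping is the passage from a measurable invariant set to a compact one, and the hard part is simply confirming that the regularity approximation perturbs the invariance ratios by a controllable amount, which the displayed estimate handles.
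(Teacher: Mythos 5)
Your proof is correct and follows essentially the route the paper intends: the paper presents this proposition as a direct citation of \cref{exists pointwise folner net}, and the directed-set assembly over pairs $(K,\epsilon)$ is the same bookkeeping used for the discrete case earlier in the thesis. Your additional step of passing to a compact subset by inner regularity is a worthwhile refinement rather than a deviation, since the cited proposition only produces a set of finite positive measure while $\Cp$ requires compactness, and your perturbation estimate for the invariance ratio correctly controls the damage.
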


\begin{que}
Is it possible to construct a net converging weakly but not strongly to invariance?

The main result of \cite{WeakNotStrong} is that every \(\mu\in\LIM(G)\)
is the limit of some net in \(P_1(G)\) \textit{not} converging strongly to invariance.
Theorems~\cref{Theorem weak but not strong discrete}
and \cref{Theorem weak but not strong nondiscrete}
give a shorter proof of a slightly stronger result:
Every non-atomic \(\mu\in\Means(G)\) is the limit of some net in \(\Means_{\Cp}(G)\)
not converging strongly to invariance.
\end{que}

\begin{que}\label{Question LIM0 = LIM}
Let \(\LIM_0(G)\) be the limit points of F\o{}lner nets.
Does \(\LIM_0(G) = \LIM(G)\)?

Hindman and Strauss asked this question for amenable semigroups in \cite{Hindman2009},
and answered it affirmatively for the semigroup \((\Nbb, +)\).
The following theorems extend their affirmative result to various classes of groups:
\cref{Theorem Unimodular Case} when \(G\) is discrete,
\cref{Theorem LIM when G is large} when \(G\) is larger than \(\sigma\)-compact, and
\cref{Theorem LIM0} when \(G\) is nondiscrete but amenable-as-discrete.
The question remains open when \(G\) is \(\sigma\)-compact but not amenable-as-discrete.
\end{que}

For \(f\in P_1(G)\), regard \(f*\phi\) as the average
	\(\int_G f(x)\: l_x \phi \dd{x}\) of left-translates of \(\phi\).
The set of topological left-invariant means on \(\Linfty\) is
\begin{center}\(\TLIM(G) = \set*{\mu\in\Means(G) : \bigl(\forall f\in P_1(G)\bigr)\ 
\bigl(\forall \phi\in\Linfty\bigr)\ \mu(\phi) = \mu(f*\phi)}\).\end{center}

\begin{prop}
When $G$ is discrete, \(\LIM(G) = \TLIM(G)\).
\end{prop}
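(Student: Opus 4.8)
The plan is to exploit the special feature of the discrete case, namely that the point masses $\delta_x$ belong to $P_1(G)$ and that convolution by $\delta_x$ is exactly left-translation. The inclusion $\TLIM(G) \subset \LIM(G)$ is already available in full generality from \cref{Tlim subset Lim}, so the only real content is the reverse inclusion $\LIM(G) \subset \TLIM(G)$.

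First I would record that for discrete $G$ the Haar measure is counting measure, so $\Lone(G) = \ell_1(G)$ and each $\delta_x$ lies in $P_1(G)$, since $\delta_x \geq 0$ and $\|\delta_x\|_1 = 1$. A one-line computation gives $\delta_x * \phi = l_x \phi$ for every $\phi \in \Linfty(G)$. Consequently, if $\mu \in \LIM(G)$ and $f = \sum_{i=1}^n c_i\, \delta_{x_i}$ is any finitely supported convex combination of point masses (so $c_i \geq 0$ and $\sum_i c_i = 1$), then by linearity
\[
\mu(f*\phi) = \sum_{i=1}^n c_i\, \mu(l_{x_i}\phi) = \sum_{i=1}^n c_i\, \mu(\phi) = \mu(\phi).
\]

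Next I would upgrade this identity from finitely supported $f$ to an arbitrary $f \in P_1(G)$. The finitely supported members of $P_1(G)$ are precisely the finite convex combinations of point masses, and they are $\ell_1$-dense in $P_1(G)$. Since Young's inequality gives $\|f*\phi - g*\phi\|_\infty \leq \|f - g\|_1\,\|\phi\|_\infty$ and $\|\mu\| = 1$, the functional $f \mapsto \mu(f*\phi)$ is $\|\cdot\|_1$-continuous on $P_1(G)$. Choosing finitely supported convex combinations converging in $\ell_1$ to $f$ and passing to the limit then yields $\mu(f*\phi) = \mu(\phi)$ for all $f \in P_1(G)$, that is, $\mu \in \TLIM(G)$.

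I do not expect any serious obstacle here; the only point requiring care is the final limit passage, where one must confirm that topological left-invariance is a $\|\cdot\|_1$-continuous condition in $f$ so that density of the point-mass combinations genuinely suffices. With both inclusions established, the equality $\LIM(G) = \TLIM(G)$ follows.
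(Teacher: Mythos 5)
Your proposal is correct and follows the same route as the paper's own (two-line) proof: convolution by a point mass is left-translation, and convex combinations of point masses are $\ell_1$-dense in $P_1(G)$. You have merely spelled out the continuity estimate $\|f*\phi - g*\phi\|_\infty \leq \|f-g\|_1\,\|\phi\|_\infty$ that justifies the limit passage, which the paper leaves implicit.
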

\begin{proof}
When $G$ is discrete, every $f\in P_1(G)$ is a sum of point-masses.
Convolution by a point-mass is equivalent to left-translation.
\end{proof}

A net in \(P_1(G)\) is said to converge strongly to topological invariance if it is eventually
	\((K,\epsilon)\)-invariant for each compact \(K\subset G\) and \(\epsilon > 0\).

\begin{prop}[{\cref{limit of TI-net}}]
	If \(\net*{f_\alpha}\) is a net in \(P_1(G)\) converging strongly to topological invariance
	and \(f_\alpha \to \mu\), then \(\mu\in\TLIM(G)\).
\end{prop}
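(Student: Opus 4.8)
The plan is to transcribe the proof of \cref{limit of conv to invar is LIM} (equivalently, of \cref{limit of TI-net}) with the left translation $l_x$ replaced by convolution $f*(\cdot)$. Fix $f\in P_1(G)$, a function $\phi\in\Linfty$ with $\|\phi\|_\infty > 0$, and $\epsilon > 0$; the goal is to show $\langle\mu,\,\phi - f*\phi\rangle = 0$. The algebraic engine is the convolution adjunction $\langle f_\alpha,\, f*\phi\rangle = \langle f^\dag * f_\alpha,\, \phi\rangle$, which holds because $\langle p*\nu,\,\psi\rangle = \langle\nu,\, p^\dag*\psi\rangle$ together with the fact that $f\mapsto f^\dag$ is an involution fixing $P_1(G)$ (so $(f^\dag)^\dag = f$). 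Consequently $\langle f_\alpha,\, \phi - f*\phi\rangle = \langle f_\alpha - f^\dag*f_\alpha,\, \phi\rangle$, and writing $f^\dag * f_\alpha = \int f^\dag(x)\, l_x f_\alpha \dd{x}$ yields the estimate
\[|\langle f_\alpha,\, \phi - f*\phi\rangle| \leq \|\phi\|_\infty \int f^\dag(x)\, \|f_\alpha - l_x f_\alpha\|_1 \dd{x}.\]

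The first task is to make the right-hand side small for large $\alpha$, and here lies the one genuinely delicate point: strong convergence to topological invariance only controls $\|f_\alpha - l_x f_\alpha\|_1$ \emph{uniformly on compacta}, whereas $f^\dag$ need not have compact support. I would resolve this by reducing to compactly supported $f$, exactly as in \cref{Lemma lambda_gamma is left TI-net}. Since $\|f*\phi - f'*\phi\|_\infty \leq \|f - f'\|_1\|\phi\|_\infty$ and $\|\mu\| = 1$, the map $f\mapsto\langle\mu,\, f*\phi\rangle$ is $\Lone$-continuous, so it suffices to treat $f$ with compact support $C$, in which case $f^\dag$ is supported in the compact set $C^{-1}$. (Alternatively, one splits the integral over a compact set carrying all but $\epsilon$ of the mass of $f^\dag$ and bounds the tail using $\|f_\alpha - l_x f_\alpha\|_1 \leq 2$.) With such an $f$ fixed, the hypothesis furnishes $\alpha_0$ with $\|f_\alpha - l_x f_\alpha\|_1 < \epsilon/\|\phi\|_\infty$ for all $x\in C^{-1}$ and all $\alpha\succ\alpha_0$, whence $|\langle f_\alpha,\, \phi - f*\phi\rangle| < \epsilon$.

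Finally I would run the accumulation-point argument verbatim from \cref{limit of conv to invar is LIM}. Put $U = \{\nu\in\Means(G) : |\langle\nu-\mu,\,\phi\rangle| < \epsilon \text{ and } |\langle\nu-\mu,\, f*\phi\rangle| < \epsilon\}$, a $w^*$-neighbourhood of $\mu$. Because $f_\alpha\to\mu$, the net is eventually in $U$, so there is some $\alpha\succ\alpha_0$ with $f_\alpha\in U$, and then
\[|\langle\mu,\,\phi - f*\phi\rangle| \leq |\langle\mu - f_\alpha,\,\phi\rangle| + |\langle f_\alpha,\, \phi - f*\phi\rangle| + |\langle f_\alpha - \mu,\, f*\phi\rangle| < 3\epsilon.\]
As $\epsilon$ was arbitrary, $\langle\mu,\,\phi - f*\phi\rangle = 0$; since $f$ ranged over the dense compactly supported subset of $P_1(G)$ and both sides are $\Lone$-continuous in $f$, this persists for every $f\in P_1(G)$, giving $\mu\in\TLIM(G)$. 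The only real obstacle is the possibly non-compact support of $f^\dag$, handled by the density reduction; the remainder is the routine translation-to-convolution transcription the paper already advertises for \cref{limit of TI-net}.
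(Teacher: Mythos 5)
Your proposal is correct and takes essentially the same route as the paper, whose proof of \cref{limit of TI-net} is literally ``same as \cref{limit of conv to invar is LIM}, but with $p\in P_1(G)$ in place of $x\in G$.'' The one point you rightly flag --- reducing to compactly supported $f$ so that strong topological invariance (which is only uniform on compacta) controls $\int f^\dag(x)\,\|f_\alpha - l_x f_\alpha\|_1 \dd{x}$ --- is the same density reduction the paper itself uses in \cref{Lemma lambda_gamma is left TI-net}.
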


\begin{prop}[{\cref{exists topological folner net}}]
If $G$ is amenable, \(\Means_{\Cp}(G)\) contains a net \(\net{\mu\sub{A_\alpha}}\)
	converging strongly to topological invariance.
In this case, both \(\net{\mu\sub{A_\alpha}}\) and \(\net{A_\alpha}\)
	are called topological F\o{}lner nets.
\end{prop}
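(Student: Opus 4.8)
The plan is to assemble the individual invariant sets supplied by \cref{exists topological folner net} into a single net, exactly as the discrete Følner \emph{condition} was earlier promoted to a discrete Følner \emph{net}. First I would fix, once and for all, a compact set $U$ with $\Haar{U} > 0$ (available since $G$ is locally compact), and take as index set $\Gamma$ the collection of all pairs $\gamma = (K,\epsilon)$ with $K\in\Cp$ and $0 < \epsilon \le \Haar{K}/2$, ordered by
\[(K,\delta) \preceq (J,\epsilon) \iff K\subseteq J \text{ and } \delta \ge \epsilon.\]
To see $\Gamma$ is directed, given $(K_1,\epsilon_1)$ and $(K_2,\epsilon_2)$ I set $K = K_1\cup K_2$ and $\epsilon = \min(\epsilon_1,\epsilon_2)$; then $K\in\Cp$ and $\epsilon \le \Haar{K_1}/2 \le \Haar{K}/2$, so $(K,\epsilon)\in\Gamma$ dominates both. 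For each $\gamma = (K,\epsilon)\in\Gamma$, \cref{exists topological folner net} provides a $(K,\epsilon)$-invariant set $A_\gamma\in\Cp$, and I put $\mu\sub{A_\gamma} = \Onebb\sub{A_\gamma}/\Haar{A_\gamma}$.

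Next I would verify that $\net{\mu\sub{A_\gamma}}$ converges strongly to topological invariance, that is, is eventually $(K_0,\epsilon_0)$-invariant for \emph{every} compact $K_0\subset G$ and every $\epsilon_0 > 0$ — including the cases the index set does not literally contain, namely $\Haar{K_0}=0$ or $\epsilon_0$ large. The device is cofinality: given $(K_0,\epsilon_0)$, put $K = K_0\cup U$ and $\epsilon = \min\paren*{\epsilon_0,\,\Haar{K}/2}$, so that $(K,\epsilon)\in\Gamma$. For every $\gamma = (J,\delta)\succeq(K,\epsilon)$ one has $K_0\subseteq K\subseteq J$ and $\delta\le\epsilon\le\epsilon_0$; hence $A_\gamma$, being $(J,\delta)$-invariant, is a fortiori $(K_0,\epsilon_0)$-invariant, since $\Haar{A_\gamma\Sdiff x A_\gamma}/\Haar{A_\gamma} < \delta \le \epsilon_0$ for each $x\in K_0$. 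Thus the net is eventually $(K_0,\epsilon_0)$-invariant, as required.

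Finally, \cref{limit of TI-net} guarantees that every accumulation point of $\net{\mu\sub{A_\gamma}}$ lies in $\TLIM(G)$, which justifies calling $\net{\mu\sub{A_\gamma}}$ (and $\net{A_\gamma}$) a topological Følner net. I expect the only step needing genuine care to be the cofinality argument: the hypotheses of \cref{exists topological folner net} confine attention to $K$ of positive measure and to $\epsilon\le\Haar{K}/2$, so one must check that enlarging $K_0$ by the fixed set $U$ and shrinking $\epsilon_0$ yields a legitimate index dominating the target pair. Everything else is the routine promotion of a pointwise existence statement to a net, parallel to the proof that the discrete Følner condition furnishes a discrete Følner net.
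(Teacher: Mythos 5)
Your proposal is correct and follows essentially the same route as the paper, which states this proposition as a direct citation of \cref{exists topological folner net} and leaves the promotion from pointwise existence to a net implicit (the same directed-set indexing over pairs $(K,\epsilon)$, resp.\ $(K,n)$, is spelled out in \cref{Definition_Folner_Net}). Your cofinality device for absorbing the constraints $\Haar{K}>0$ and $\epsilon\le\Haar{K}/2$ is exactly the routine step the paper omits, and it is carried out correctly.
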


For example, \(\seq*{[-n, n]}\) is a topological F\o{}lner sequence for \(\Rbb\).
By \cite[Theorem 3]{Emerson1968a}, it is impossible for a F\o{}lner sequence to be non-topological.

\begin{que}\label{Question non topological Folner net}
If $G$ is amenable and nondiscrete, does it admit a non-topological F\o{}lner net?

When $G$ is nondiscrete but amenable-as-discrete, there exists \(\mu \in \LIM(G) \setminus \TLIM(G)\),
see \cite{Rosenblatt76MathAnn}.
In this case, \cref{Theorem LIM0} yields
a F\o{}lner net \(\net{\mu\sub{X_\alpha S_\alpha}}\) converging to \(\mu\),
which is clearly non-topological.
Otherwise the question remains open.
\end{que}

\begin{que}
Let \(\TLIM_0(G)\) be the limit points of topological F\o{}lner nets.
Does \(\TLIM_0(G) = \TLIM(G)\)?

A partial answer was given by Chou, who showed in \cite{Chou70} that
	\(\TLIM(G)\) is the closed convex hull of \(\TLIM_0(G)\).
The main result of the present paper is to answer this question completely.
If $G$ is $\sigma$-compact non-unimodular,
	the answer is negative by \cref{Theorem Non-Unimodular}.
In all other cases the answer is affirmative:
by Proposition~\cref{Prop When G is compact} when \(G\) is compact,
by \cref{Theorem Unimodular Case} when \(G\) is unimodular,
and by \cref{Theorem when G is large} when \(G\) is larger than \(\sigma\)-compact.
\end{que}

\section*{Converging to invariance weakly but not strongly}
A neighborhood basis about $\mu\in\Means(G)$ is given by sets of the form
\begin{center}
\(\Ncal(\mu, \Fcal, \epsilon) =
\set*{ \nu\in\Means(G) :
	\paren*{\forall f\in\Fcal}\
	\abs*{\mu(f) - \nu(f)} < \epsilon
	}\)
\end{center}
where $\Fcal$ ranges over finite subsets of $\Linfty$ and $\epsilon$ ranges over $(0, 1)$.

\begin{lem}\label{Lemma Partition Basis}
Regard each $\mu\in\Means(G)$ as a finitely additive measure via \(\mu(E) = \mu(\Onebb_E)\).
Then a neighborhood basis about $\mu\in\Means(G)$ is given by sets of the form
\begin{center}
\(\Ncal(\mu, \Pcal, \epsilon) = \set*{
	\nu\in\Means(G) :
	\paren*{\forall E\in\Pcal}\
	|\mu(E) - \nu(E)| < \epsilon}\),
\end{center}
where $\epsilon$ ranges over $(0,1)$ and \(\Pcal\) ranges over finite measurable partitions of $G$.
\end{lem}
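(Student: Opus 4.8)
The plan is to establish a neighborhood basis by showing the two families are mutually cofinal. One inclusion is free: since every $\Onebb_E$ for $E$ in a finite measurable partition $\Pcal$ lies in $\Linfty(G)$, the set $\Ncal(\mu,\Pcal,\epsilon)$ coincides with the standard basic neighborhood $\Ncal(\mu,\Fcal,\epsilon)$ for $\Fcal = \set*{\Onebb_E : E\in\Pcal}$. Thus each $\Ncal(\mu,\Pcal,\epsilon)$ is a genuine $w^*$-neighborhood of $\mu$. Since the neighborhoods $\Ncal(\mu,\Fcal,\epsilon)$ already form a basis, it remains to show that every such standard basic neighborhood, with $\Fcal = \set*{\phi_1,\ldots,\phi_n}\subset\Linfty(G)$ and $\epsilon\in(0,1)$, contains some $\Ncal(\mu,\Pcal,\delta)$.

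For this remaining inclusion, first I would reduce the $\phi_i$ to simple functions over a common partition. Fix $\eta>0$ to be chosen later. Choosing bounded representatives, the combined map $\Phi = (\phi_1,\ldots,\phi_n)$ takes values, off a locally null set, in a bounded subset of $\Cbb^n$; I would cover that subset by finitely many Borel cells of diameter less than $\eta$ and let $\Pcal = \set*{E_1,\ldots,E_m}$ be the partition of $G$ by the preimages of these cells. This is the one spot where the hypothesis on $G$ must be checked: because each $\phi_i$ is locally measurable, each $E_j$ is a locally measurable set and $\Onebb_{E_j}\in\Linfty(G)$, so $\Pcal$ is a legitimate finite measurable partition even when $G$ is larger than $\sigma$-compact. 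Picking a value $c_{ij}$ in the cell corresponding to $\phi_i$ on $E_j$ and setting $s_i = \sum_{j} c_{ij}\Onebb_{E_j}$, one obtains $\norm*{\phi_i - s_i}_\infty \le \eta$ for every $i$.

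Finally I would push this approximation through an arbitrary pair of means. For $\mu,\nu\in\Means(G)$, the triangle inequality gives
\[
\abs*{\mu(\phi_i)-\nu(\phi_i)}
\le \abs*{\mu(\phi_i - s_i)} + \abs*{\mu(s_i)-\nu(s_i)} + \abs*{\nu(s_i - \phi_i)}.
\]
Since means have norm one, the outer terms are each at most $\norm*{\phi_i - s_i}_\infty \le \eta$, while the middle term expands as $\sum_j c_{ij}\,(\mu(E_j)-\nu(E_j))$ and is therefore at most $Mm\,\max_j \abs*{\mu(E_j)-\nu(E_j)}$, where $M = \max_{i,j}\abs*{c_{ij}}$ and $m = \card*{\Pcal}$. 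Taking $\eta = \epsilon/3$ and $\delta = \epsilon/(3(Mm+1))$, any $\nu\in\Ncal(\mu,\Pcal,\delta)$ satisfies $\abs*{\mu(\phi_i)-\nu(\phi_i)} < 2\eta + Mm\delta < \epsilon$ for each $i$, whence $\Ncal(\mu,\Pcal,\delta)\subseteq\Ncal(\mu,\Fcal,\epsilon)$. I do not expect a serious obstacle here beyond the bookkeeping of the three $\epsilon/3$ terms; the only conceptual points worth flagging are the uniform density of partition-simple functions in $\Linfty(G)$ and the verification that the partition sets stay admissible (locally measurable) in the non-$\sigma$-compact setting.
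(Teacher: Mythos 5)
Your proposal is correct and follows essentially the same route as the paper: reduce to simple functions (the paper does this by a WLOG appeal to density, you do it explicitly via a cell decomposition of the range), take the resulting common finite partition, and bound $|\mu(\phi_i)-\nu(\phi_i)|$ by an $\epsilon/3$-style estimate with constant $M\cdot\card{\Pcal}$. The only difference is that you spell out the approximation and the triangle inequality that the paper leaves implicit.
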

\begin{proof}
Pick $\Ncal(\mu, \Fcal, \epsilon)$.
For simplicity, suppose $\Fcal$ consists of simple functions.
Let $M = \max\{\|f\|_\infty : f\in \Fcal\}$.
Let $\Pcal$ be the atoms of the measure algebra generated by $\Fcal$.
Then $\Ncal\big( \mu, \Pcal, \frac{\epsilon}{\card{\Pcal}\cdot M} \big)
	\subset \Ncal(\mu, \Fcal, \epsilon)$.
\end{proof}

\begin{lem}\label{Lemma if E is infinite}
Let $E\subset G$ be any infinite subset, $n\in\Nbb$, and $x\in G\setminus \{e\}$.
Then there exists $S\subset E$ with $\card{S} = n$ and $S \cap xS = \varnothing$.
\end{lem}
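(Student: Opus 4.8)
The plan is to build $S$ one element at a time by a greedy induction, using only the hypothesis that $E$ is infinite to guarantee that admissible choices remain at every stage. The key preliminary observation is to restate the target condition set-theoretically: $S \cap xS = \varnothing$ holds precisely when there is no pair $a, b \in S$ with $a = xb$. Consequently, enlarging a set $S_0$ that already satisfies the condition by a new element $s$ preserves the condition as long as $s \notin S_0 \cup xS_0 \cup x^{-1}S_0$ and $s$ is distinct from the elements of $S_0$.

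First I would set up the induction. Suppose for some $0 \le k < n$ we have already selected distinct elements $s_1, \dots, s_k \in E$ with $\{s_1,\dots,s_k\} \cap x\{s_1,\dots,s_k\} = \varnothing$; the base case $k = 0$ is vacuous. I would then identify exactly which values of a candidate $s_{k+1}\in E$ must be avoided to keep the property. Inspecting the three families of pairs $(s_{k+1}, s_i)$, $(s_i, s_{k+1})$, and $(s_{k+1}, s_{k+1})$ shows that the only genuinely new constraints are $s_{k+1} \neq x s_i$ and $s_{k+1} \neq x^{-1} s_i$ for $i \le k$, together with distinctness $s_{k+1} \neq s_i$. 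The self-pair constraint $s_{k+1} \neq x s_{k+1}$ is automatic precisely because $x \neq e$.

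Next I would conclude by a counting argument. The forbidden set is contained in $\{s_i,\, xs_i,\, x^{-1}s_i : i \le k\}$, which has at most $3k \le 3(n-1)$ elements. Since $E$ is infinite, removing finitely many points from $E$ still leaves a nonempty set, so an admissible $s_{k+1}\in E$ exists. Iterating until $k = n$ produces $S = \{s_1,\dots,s_n\}$ with $\card{S} = n$ and $S \cap xS = \varnothing$, as required.

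I do not anticipate a serious obstacle: the only step requiring a moment's care is the bookkeeping of forbidden values — in particular, remembering to exclude \emph{both} $xs_i$ and $x^{-1}s_i$, since the disjointness condition is not symmetric in the two elements of a pair, and noting that the hypothesis $x \neq e$ is exactly what rules out the degenerate self-pair. The whole argument then rests on the simple tension between the finiteness of the exclusion set at each stage and the infinitude of $E$, which is what makes the greedy step always succeed.
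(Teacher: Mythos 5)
Your proof is correct and is essentially the same greedy induction the paper uses: at each step the paper picks a new point of $E$ outside the finite set $\{x,x^{-1}\}R$, exactly your exclusion set. If anything, you are slightly more careful than the paper, which omits the explicit requirement that the new point also avoid $R$ itself (needed for the cardinality count).
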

\begin{proof}
If $n=0$, take \(S = \varnothing\).
Inductively, suppose there exists $R \subset E$ with $\# R = n-1$ and $R \cap xR = \varnothing$.
Pick any $y\in E \setminus \set{x, x^{-1}}R$, and let $S = R \cup \{y\}$.
\end{proof}

\begin{thm}\label{Theorem weak but not strong discrete}
Suppose $G$ is discrete, $\mu\in\Means(G)$ vanishes on finite sets, and $x\in G\setminus \{e\}$.
Then there exists a net \(\net*{S_{\Pcal}}\) in \(\Cp\) so \(\mu\sub{S_{\Pcal}} \to \mu\),
	but \(S_{\Pcal} \cap x S_{\Pcal} = \varnothing\) for all \(\Pcal\).
\end{thm}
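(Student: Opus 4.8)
The plan is to reduce the convergence statement to hitting a neighborhood basis, and then to build each approximating set greedily so that the disjointness condition $S\cap xS=\varnothing$ is preserved at every stage. By \cref{Lemma Partition Basis}, a neighborhood basis at $\mu$ consists of the sets $\Ncal(\mu,\Pcal,\epsilon)$, where $\Pcal$ is a finite measurable partition of $G$ and $\epsilon\in(0,1)$. It therefore suffices to show that each such neighborhood contains $\mu_S$ for some $S\in\Cp$ with $S\cap xS=\varnothing$: taking the directed set of these basic neighborhoods (ordered by reverse inclusion, equivalently the pairs $(\Pcal,\epsilon)$) and choosing one such $S$ for each gives a net $\{S_\Pcal\}$ with $\mu_{S_\Pcal}\to\mu$, every member of which satisfies the required disjointness. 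The tolerance $\epsilon$ is suppressed in the notation $S_\Pcal$, but the genuine index is the pair $(\Pcal,\epsilon)$.

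So fix $\Pcal=\{E_1,\ldots,E_m\}$ and $\epsilon>0$, and write $p_i=\mu(E_i)$, so that $\sum_i p_i=\mu(G)=1$ by finite additivity. Because $\mu$ vanishes on finite sets, every cell with $p_i>0$ is infinite. Choose a large integer $N$ and set $n_i=\lfloor N p_i\rfloor$; since $n_i/N\to p_i$ and $\sum_j n_j/N\to 1$, for $N$ large enough we have $\sum_i n_i\geq 1$ and $\big|n_i/\sum_j n_j-p_i\big|<\epsilon$ for every $i$. I will then construct $S=\bigsqcup_i S_i$ with $S_i\subset E_i$ and $|S_i|=n_i$, placing no points in the cells where $p_i=0$. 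This forces $\mu_S(E_i)=n_i/\sum_j n_j$, hence $|\mu_S(E_i)-p_i|<\epsilon$, so $\mu_S\in\Ncal(\mu,\Pcal,\epsilon)$; and $S\neq\varnothing$, so $S\in\Cp$.

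The selection is greedy, extending the one-cell argument of \cref{Lemma if E is infinite} to several cells at once. Enumerate the $\sum_i n_i$ points to be chosen, recording for each the cell $E_i$ from which it must come. Suppose a partial set $S'$ has been chosen with $S'\cap xS'=\varnothing$, and I must add a new point from an infinite cell $E_i$. It suffices to pick $g\in E_i$ avoiding the finite set $S'\cup xS'\cup x^{-1}S'$: avoiding $xS'$ prevents creating a pair $g=xs'$, avoiding $x^{-1}S'$ prevents a pair $s'=xg$, and $g=xg$ is impossible since $x\neq e$. As $E_i$ is infinite and the forbidden set is finite, such a $g$ exists, so the induction runs to completion and the resulting $S$ satisfies $S\cap xS=\varnothing$ with the prescribed cell counts.

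I do not anticipate a serious obstacle; the only point requiring care is the simultaneous matching of the $\mu$-distribution across $\Pcal$ and the freeness of $S$ from $x$-pairs. These are reconciled by the greedy step, which is available precisely because $\mu$ assigns measure zero to finite sets, so each positively weighted cell is infinite and deleting finitely many forbidden points never exhausts it. The remaining details—that large $N$ makes the displayed ratios close to $p_i$, and that the neighborhood-filter net converges to $\mu$—are routine.
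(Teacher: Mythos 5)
Your proof is correct and follows essentially the same route as the paper's: reduce to the partition neighborhood basis via \cref{Lemma Partition Basis}, fix integer counts $n_i$ approximately proportional to $\mu(E_i)$ (possible since each positively weighted cell is infinite), and greedily select points while excluding the finitely many $x$- and $x^{-1}$-translates already used. The only differences are cosmetic — you index by pairs $(\Pcal,\epsilon)$ where the paper indexes by $\Pcal$ alone with tolerance $1/p$, and you run the greedy selection point-by-point rather than cell-by-cell via \cref{Lemma if E is infinite}.
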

\begin{proof}
Let $\Pscr$ be the directed set of all finite partitions of $G$, ordered by refinement.
Pick \(\Pcal = \set*{E_1, \hdots, E_p} \in \Pscr\).
For $1\leq i\leq p$, we will choose $S_i\subset E_i$ such that
\(\abs*{\frac{\card{S_i} }{\card{S_1} + \cdots + \card{S_p} } - \mu(E_i)} < \frac1p\),
	and take \(S_{\Pcal} = S_1 \cup \hdots \cup S_p\).
Then \(\abs*{\mu\sub{S_{\Pcal}}(E_i) - \mu(E_i)} < \frac1p\),
	and \(\mu\sub{S_{\Pcal}}\to\mu\) by \cref{Lemma Partition Basis}.

We begin by establishing the values \(n_i = \card{S_i}\).
If $\mu(E_i) = 0$, let \(n_i = 0\).
Otherwise $\mu(E_i) > 0$, hence $E_i$ is infinite.
In this case, let $n_i\geq 0$ be an integer such that
	\(\abs*{\frac{n_i}{2p^2} - \mu(E_i)} < \frac{1}{2p^2}\).
Let \(N = \sum_{i=1}^p n_i\).
Now \(\abs*{\frac{n_i}{2p^2} - \frac{n_i}{N}}
	= \frac{n_i}{N} \cdot \abs*{\frac{N}{2p^2} - 1}
	\leq \abs*{\frac{N}{2p^2} - 1}
	= \abs*{\sum_i \frac{n_i}{2p^2} - \mu(E_i)}
	< \frac{1}{2p}\),
so \(\abs*{\mu(E_i) - \frac{n_i}{N}} < \frac1p\).

Apply \cref{Lemma if E is infinite} to choose \(S_1 \subset E_1\)
	with \(S_1 \cap x S_1 = \varnothing\).
For \(k <p\), inductively choose
	\(S_{k+1} \subset E_{k+1} \setminus \set{x, x^{-1}} (S_1 \cup \hdots \cup S_k)\)
	with \(\card{S_{k+1}} = n_k\) and \(S_{k+1} \cap x S_{k+1} = \varnothing\).
Let $S_{\Pcal} = S_1 \cup \hdots \cup S_p$.
Now $\mu\sub{S_{\Pcal}}(E_i) = \frac{n_i}{N}$ and $S_{\Pcal} \cap xS_{\Pcal} = \varnothing$, as desired.
\end{proof}

\begin{point}
The hypothesis ``\(\mu\) vanishes on finite sets'' is necessary in \cref{Theorem weak but not strong discrete}.
For example, pick \(x, y\in G\) and define \(\mu\in\Means(G)\) by
	\(\mu(\{x\}) = \frac23\) and \(\mu(\{y\}) = \frac13\).
For each \(F\in\Cp\), \(\mu\sub{F}(\{x\}) \in \set*{1, \frac12, \frac13, \hdots}\),
hence \(\abs*{\mu(\{x\}) - \mu\sub{F}(\{x\})} \geq \frac16\).
This foreshadows \cref{Theorem Non-Unimodular}.
\end{point}

\begin{lem}\label{Lemma Very Small S}
Suppose $G$ is not discrete.
Pick $E\subset G$ with positive measure, and $X = \{x_1, \hdots, x_n\} \subset G$.
For any \(c > 0\), there exists $S \subset E$ such that $0 < \Haar{S} \leq c$
	and $\{x_1 S, \hdots, x_n S\}$ are mutually disjoint.
\end{lem}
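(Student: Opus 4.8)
The plan is to reduce everything to placing $S$ inside a single small right translate of a symmetric neighborhood of $e$ on which the $x_i$ already pull things apart. First I would set $Y = \{x_i^{-1} x_j : i \neq j\}$, a finite set of elements, none of which equals $e$ since the $x_i$ are distinct. By continuity of multiplication together with the Hausdorff property, I can choose a symmetric open neighborhood $V$ of $e$ small enough that $V^2 \cap Y = \varnothing$. A one-line check then shows $x_i V \cap x_j V = \varnothing$ for $i \neq j$: any common point would give $v, v' \in V$ with $x_i v = x_j v'$, whence $x_i^{-1} x_j \in V V^{-1} = V^2$, contradicting the choice of $V$. Because right translation is a bijection, $\{x_1 Vg, \ldots, x_n Vg\}$ is still a disjoint family for every $g \in G$, so any $S$ contained in a single translate $Vg$ automatically has $\{x_i S\}$ disjoint.

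Next I would produce such a translate meeting $E$ in positive measure. By inner regularity of Haar measure there is a compact $K \subset E$ with $\Haar{K} > 0$. Since $e \in V$, the family $\{Vg : g \in K\}$ covers $K$, so compactness yields finitely many $g_1, \ldots, g_m \in K$ with $K \subset \bigcup_{j} V g_j$. Then $\sum_{j} \Haar{K \cap V g_j} \geq \Haar{K} > 0$, so $\Haar{K \cap V g_j} > 0$ for some $j$. I would set $S_0 = K \cap V g_j$, a positive-measure subset of $E$ lying in the single translate $V g_j$, so that $\{x_1 S_0, \ldots, x_n S_0\}$ are pairwise disjoint.

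Finally I would shrink $S_0$ to the required size. Since $G$ is non-discrete, its Haar measure is non-atomic, so the intermediate-value property of non-atomic measures supplies a measurable subset $S \subset S_0$ with $0 < \Haar{S} \leq c$. As a subset of $S_0$, this $S$ inherits the disjointness of its translates, so $\{x_1 S, \ldots, x_n S\}$ are mutually disjoint and $0 < \Haar{S} \leq c$, which is exactly the assertion.

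The only genuinely non-formal ingredient is the existence of a translate of $V$ meeting $E$ in positive measure, and I expect the compact-covering step above to be where a little care is needed. Once that translate is in hand, both the reduction to a single translate and the non-atomic shrinking are routine; in particular, the fact that disjointness of $\{x_i V\}$ survives right translation and passage to subsets is immediate, so I do not anticipate any delicate estimate.
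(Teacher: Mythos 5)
Your proof is correct and follows essentially the same route as the paper's: pick a compact $K\subset E$ of positive measure, choose a neighborhood $V$ of $e$ small enough that $\{x_1V,\hdots,x_nV\}$ are disjoint (a property preserved under right translation), cover $K$ by finitely many translates $Vg_j$, and pigeonhole to find one meeting $K$ in positive measure. The only difference is cosmetic: the paper secures $\Haar{S}\leq c$ up front by also requiring $\max_{k\in K}\Haar{Uk}\leq c$ in the choice of the neighborhood, whereas you obtain it afterwards from the (standard) non-atomicity of Haar measure on a nondiscrete group; both are fine.
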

\begin{proof}
Let $K\subset E$ be any compact set with positive measure.
Let $U$ be a small neighborhood of $e$, so $U U^{-1} \cap X^{-1} X = \{e\}$
	and $\max_{k\in K} \Haar{Uk} \leq c$.
Pick \(k_1, \hdots, k_n\in K\) such that \(K\subset U k_1 \cup \hdots \cup U k_n\).
Now $0 < \Haar{K} \leq \sum_{i=1}^n \Haar{U k_i \cap K}$, hence $0 < \Haar{U k_i \cap K}$ for some $i$.
Take $S = U k_i \cap K$.
\end{proof}

\begin{thm}\label{Theorem weak but not strong nondiscrete}
Suppose $G$ is not discrete.
Given any \(\mu\in\Means(G)\) and \(x\in G\setminus\{e\}\),
	there exists a net \(\net*{S_{\Pcal}}\) in \(\Cp\)
	so that \(\mu\sub{S_{\Pcal}}\to \mu\), but
	\(S_{\Pcal} \cap x S_{\Pcal} = \varnothing\) for all \(\Pcal\).
\end{thm}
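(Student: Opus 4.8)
The plan is to mirror the discrete construction in \cref{Theorem weak but not strong discrete}, replacing the counting lemma with its measure-theoretic counterpart \cref{Lemma Very Small S}, and replacing ``choose a subset of prescribed cardinality'' by ``choose a compact subset of prescribed Haar measure,'' which is available because a nondiscrete locally compact group has non-atomic Haar measure. I would index the net by the directed set $\Pscr$ of finite measurable partitions of $G$, ordered by refinement. For $\Pcal = \set*{E_1, \ldots, E_p}\in\Pscr$ I will build a compact $S_{\Pcal} = \bigcup_i S_i$ with $S_i \subset E_i$, arranged so that $S_{\Pcal}\cap x S_{\Pcal} = \varnothing$ and $\mu\sub{S_{\Pcal}}$ agrees with $\mu$ on $\Pcal$ up to small total error; \cref{Lemma Partition Basis} then yields $\mu\sub{S_{\Pcal}}\to\mu$.

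First I would discard the parts that do not matter. Let $I = \set*{i : \mu(E_i) > 0}$. If $i\in I$ then $E_i$ is not locally null (otherwise $\Onebb_{E_i} = 0$ in $\Linfty(G)$ by \cref{Defn Linfty}, forcing $\mu(E_i)=0$), so $\Haar{E_i} > 0$; for $i\notin I$ I simply set $S_i = \varnothing$. Since $\sum_i \mu(E_i) = \mu(G) = 1$, the set $I$ is nonempty, which will guarantee $\Haar{S_{\Pcal}} > 0$. Next I would run the inductive selection exactly as in the discrete proof, but measure-theoretically. Fix a uniform bound $c < \tfrac{1}{2p}\min_{i\in I}\Haar{E_i}$. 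Enumerating $I$, and having already chosen compact sets $S'_j$ (for $j$ preceding $i$) with $\Haar{S'_j}\le c$, set $R = \set*{x, x^{-1}}\big(\bigcup_j S'_j\big)$, a compact set of measure at most $2pc < \min_{i\in I}\Haar{E_i}$, so that $E_i \setminus R$ still has positive measure. Applying \cref{Lemma Very Small S} to $E_i\setminus R$ with $X = \set*{e, x}$ and constant $c$ produces a set of positive measure at most $c$ disjoint from its own $x$-translate; shrinking to a compact subset of positive measure (inner regularity) gives $S'_i$.

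As in the discrete argument, removing $\set*{x, x^{-1}}\big(\bigcup_j S'_j\big)$ guarantees $S'_i \cap x S'_j = \varnothing$ and $x S'_i \cap S'_j = \varnothing$ for every earlier $j$, while $S'_i \cap x S'_i = \varnothing$ comes from \cref{Lemma Very Small S}; since the $E_i$ are pairwise disjoint, this produces $S'_i \cap x S'_j = \varnothing$ for all $i,j\in I$, hence $\big(\bigcup_i S'_i\big)\cap x\big(\bigcup_i S'_i\big) = \varnothing$. Finally I would fix the proportions without disturbing disjointness. Because Haar measure is non-atomic, each compact $S'_i$ contains compact subsets of measure arbitrarily close to any target in $[0,\Haar{S'_i}]$; choosing $S_i\subset S'_i$ with $\Haar{S_i}$ proportional to $\mu(E_i)$ makes $\mu\sub{S_{\Pcal}}(E_i) = \Haar{S_i}/\sum_j \Haar{S_j}$ approximate $\mu(E_i)$, and I would push this far enough that the \emph{total} error $\sum_i \abs*{\mu\sub{S_{\Pcal}}(E_i) - \mu(E_i)}$ falls below $1/p$. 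Passing to subsets preserves every disjointness relation, so $S_{\Pcal} = \bigcup_{i\in I} S_i$ is compact, has positive measure, and satisfies $S_{\Pcal}\cap x S_{\Pcal} = \varnothing$.

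For convergence, given a single measurable $E$ and $\epsilon > 0$, any $\Pcal$ refining $\set*{E, E^c}$ with $p > 1/\epsilon$ parts writes $E$ as a union of parts, so the total-error bound gives $\abs*{\mu\sub{S_{\Pcal}}(E) - \mu(E)} < 1/p < \epsilon$; by \cref{Lemma Partition Basis} this is precisely $\mu\sub{S_{\Pcal}}\to\mu$. I expect the main obstacle to be the continuous bookkeeping in the inductive step: one must keep the selected pieces small enough that removing all previous $x^{\pm1}$-translates never exhausts a part $E_i$, while still being free to tune their measures to the prescribed proportions. This is exactly the point where non-atomicity and inner regularity, together with \cref{Lemma Very Small S}, take over the role played by elementary counting in the discrete case; controlling the aggregate (rather than merely per-part) proportion error is what makes the refinement argument for convergence go through cleanly.
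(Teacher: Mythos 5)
Your proposal is correct and follows essentially the same route as the paper's proof: the same directed set of partitions, the same inductive selection via \cref{Lemma Very Small S} after removing $\set{x,x^{-1}}$-translates of the previously chosen pieces, the same final shrinking of each piece to fix the proportions, and convergence via \cref{Lemma Partition Basis}. The only differences are cosmetic — you approximate the proportions to within a total error of $1/p$ and enforce compactness inline via inner regularity, whereas the paper hits the proportions exactly and defers compactness to a one-line remark.
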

\begin{proof}
The following construction yields sets \(\net*{S_{\Pcal}}\) that may not be compact.
This suffices to prove the theorem, since each \(S_{\Pcal}\)
can be approximated from within by a compact set.

Let \(\Pscr\) be the directed set of all finite measurable partitions of \(G\),
	ordered by refinement.
Pick \(\Pcal=\{E_1, \hdots, E_p\}\in \Pscr\).
For \(1\leq i\leq p\), we will choose \(S_i'\subset E_i\) such that
$\Haar{S_i'} / \big(\Haar{S_1} + \hdots + \Haar{S_k'}\big) = \mu(E_i)$,
	then take $S_{\Pcal} = S_1' \cup \hdots \cup S_p'$.
Thus $\mu\sub{S_{\Pcal}}(E_i) = \mu(E_i)$,
	and $\mu\sub{S_{\Pcal}}\to\mu$ by \cref{Lemma Partition Basis}.

If $\mu(E_i) = 0$ we can take $S_i = \varnothing$, so assume
$0 < m=\min\{1, \Haar{E_1},\hdots,\Haar{E_p}\}$ and let $c = m / 2p$.
By \cref{Lemma Very Small S},
choose \(S_1 \subset E_1\) with \(0 <|S_1| \leq c\) and \(S_1 \cap xS_1 = \varnothing\).
For \(k < p\), inductively choose
\(S_{k+1} \subset E_{k+1} \setminus \set{x, x^{-1}}(S_1 \cup \hdots \cup S_k)\)
with \(0<\Haar{S_{k+1}} \leq c\) and \(S_{k+1} \cap xS_{k+1} = \varnothing\).
This is possible, since
\[\Haar*{E_{k+1} \setminus \set{x, x^{-1}}(S_1 \cup \hdots \cup S_k)} \geq m - 2kc > 0.\]
Finally, let \(m' = \min\set*{\Haar{S_1}, \Haar{S_2}, \hdots, \Haar{S_p} }\).
For each $i$, choose \(S_i' \subset S_i\) with \(\Haar{S_i'} = m'\cdot \mu(E_i)\).
Now $\mu\sub{S_{\Pcal}}(E_i) = \mu(E_i)$ and \(S_{\Pcal} \cap xS_{\Pcal} = \varnothing\), as desired.
\end{proof}

\section*{\texorpdfstring{$\kappa$-Compactness}{kappa-Compactness}}

\begin{defn}
For the rest of the paper, triples of the form \((\Pcal, K, \epsilon)\)
	are always understood to range over
	finite measurable partitions \(\Pcal\) of \(G\),
	compact sets \(K\subset G\), and \(\epsilon \in (0,1)\).
Recall that \(\Cp(K,\epsilon)\) is the set of all compact
	\((K,\epsilon)\)-invariant sets with positive measure.
In these terms, we can give the formal definition:
\begin{center}
\(\TLIM_0(G) = \big\{
	\mu\in\TLIM(G) : \paren*{\forall \paren*{\Pcal, K, \epsilon}}\ 
	\paren*{\exists A\in\Cp(K,\epsilon)}\ 
	\mu\sub{A} \in \Ncal(\mu, \Pcal, \epsilon)\big\}\).
\end{center}
\end{defn}

For $S\subset G$, let $\kappa(S)$ denote the smallest cardinal such that there exists $\Kcal$,
a collection of compact subsets of $G$ with $\card{\Kcal} = \kappa(S)$ and $S\subset \bigcup \Kcal$.
Notice that either \(\kappa(G) = 1\), \(\kappa(G) = \Nbb\), or \(\kappa(G) > \Nbb\).

\begin{prop}
When $\kappa(G) = 1$, $\TLIM_0(G) = \TLIM(G)$.
\end{prop}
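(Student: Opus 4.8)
The plan is to observe that $\kappa(G)=1$ forces $G$ to be compact, and then to exhibit the single obvious witness set. First I would note that if $G$ can be covered by one compact set, that set must be $G$ itself, so $G$ is compact; in particular $\Haar{G}=1$ after normalization. By \cref{Prop When G is compact}, the set $\TLIM(G)$ is then the singleton $\{\mu\}$, where $\mu$ is the Haar integral. Since $\TLIM_0(G)\subseteq\TLIM(G)$ directly from its definition, it remains only to check that $\mu\in\TLIM_0(G)$, that is, that $\mu$ genuinely arises as a limit of a topological F\o{}lner net.

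The key step is to take $A=G$ as the witness for every triple $(\Pcal,K,\epsilon)$. Because $xG=G$ for every $x\in G$, we have $\Haar{G\Sdiff xG}=0$, so $G$ is $(K,\epsilon)$-invariant for all $K$ and $\epsilon$; thus $G\in\Cp(K,\epsilon)$. Moreover $\mu_G=\Onebb_G/\Haar{G}$ induces exactly the Haar functional, namely $\mu_G(\phi)=\int_G\phi$, so $\mu_G=\mu$ as an element of $\Means(G)$. Consequently $\abs*{\mu(E)-\mu_G(E)}=0<\epsilon$ for every $E\in\Pcal$, which places $\mu_G$ in every basic neighborhood $\Ncal(\mu,\Pcal,\epsilon)$.

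Matching this against the definition of $\TLIM_0(G)$, the choice $A=G$ satisfies both requirements simultaneously for every $(\Pcal,K,\epsilon)$, so $\mu\in\TLIM_0(G)$. Combined with the inclusion $\TLIM_0(G)\subseteq\TLIM(G)=\{\mu\}$, this yields $\TLIM_0(G)=\TLIM(G)$. There is essentially no obstacle here: the entire content is that the compactness hypothesis lets $G$ serve as its own F\o{}lner set, so the constant net $\mu_G=\mu$ trivially converges to the Haar integral. The only point requiring a word of care is that compactness, i.e.\ $\Haar{G}<\infty$, is exactly what makes $\mu_G$ a legitimate element of $P_1(G)$.
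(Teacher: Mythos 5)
Your proof is correct and follows the same route as the paper, which simply cites \cref{Prop When G is compact} and notes that both sets reduce to the singleton consisting of the Haar integral; your only addition is to spell out explicitly that $A=G$ serves as the $(K,\epsilon)$-invariant witness in every case, which the paper leaves implicit. No issues.
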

\begin{proof}
By \cref{Prop When G is compact}, both sets are just $\{\mu_G\}$.
\end{proof}

\begin{lem}\label{Lemma if Kappa S is small then measure 0}
If $\mu\in\LIM(G)$ and $\kappa(S) < \kappa(G)$, then $\mu(S) = 0$.
\end{lem}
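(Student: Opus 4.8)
The plan is to exploit left-invariance together with the fact that $S$, being coverable by strictly fewer than $\kappa(G)$ compact sets, admits arbitrarily large finite families of pairwise disjoint left-translates; a mean cannot give positive mass to a set with arbitrarily many disjoint copies.

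First I would dispose of the trivial case: if $S=\varnothing$ then $\kappa(S)=0$ and $\mu(S)=0$ is automatic. Otherwise $\kappa(S)\ge 1$, and since a finite union of compact sets is compact, the covering number of any set is either $1$ or infinite; hence $\kappa(G)>\kappa(S)\ge 1$ forces $\kappa(G)\ge\Nbb$.

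The central claim is that for every $n\in\Nbb$ there exist $x_1,\dots,x_n\in G$ with $x_1S,\dots,x_nS$ pairwise disjoint. I would prove this by induction on $n$, starting from $x_1=e$. Given $x_1,\dots,x_k$, observe that $x_{k+1}S\cap x_iS=\varnothing$ holds precisely when $x_{k+1}\notin x_iSS^{-1}$; so it suffices to choose $x_{k+1}\in G\setminus\bigcup_{i\le k}x_iSS^{-1}$. The point is that this excluded set is small: if $S\subseteq\bigcup_{\alpha<\kappa(S)}K_\alpha$ with each $K_\alpha$ compact, then $SS^{-1}\subseteq\bigcup_{\alpha,\beta}K_\alpha K_\beta^{-1}$ is covered by $\kappa(S)\cdot\kappa(S)=\kappa(S)$ compact sets (a single compact set when $\kappa(S)=1$), whence $\bigcup_{i\le k}x_iSS^{-1}$ is covered by $k\cdot\kappa(S)$ compact sets. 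Whether $\kappa(S)$ equals $1$ or is infinite, $k\cdot\kappa(S)<\kappa(G)$, so by the very definition of $\kappa(G)$ as the least number of compact sets covering $G$, the set $\bigcup_{i\le k}x_iSS^{-1}$ is a proper subset of $G$ and the required $x_{k+1}$ exists. This completes the induction.

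Finally I would conclude as follows. Since $l_x\Onebb_S=\Onebb_{xS}$ and $\mu\in\LIM(G)$, we have $\mu(x_iS)=\mu(S)$ for each $i$; and since the $x_iS$ are pairwise disjoint, finite additivity gives $n\,\mu(S)=\mu\big(\bigcup_{i=1}^n x_iS\big)\le\mu(1)=1$. Thus $\mu(S)\le 1/n$ for every $n$, whence $\mu(S)=0$. The only genuine subtlety, and the step I would check most carefully, is the cardinal bookkeeping for $SS^{-1}$: one must verify that passing from $S$ to a finite union of translates of $SS^{-1}$ never inflates the covering number all the way up to $\kappa(G)$, and this splits cleanly into the finite case $\kappa(S)=1$ (giving a bound $k<\Nbb\le\kappa(G)$) and the infinite case (giving the bound $\kappa(S)<\kappa(G)$).
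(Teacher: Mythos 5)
Your proposal is correct and follows essentially the same route as the paper: use the bound $\kappa(S)<\kappa(G)$ to produce arbitrarily many pairwise disjoint left-translates of $S$ (the paper compresses your $SS^{-1}$ bookkeeping into the one-line observation that $\Nbb\cdot\kappa(S)\leq\kappa(G)$), then contradict $\|\mu\|=1$ via left-invariance and finite additivity. Your write-up merely supplies the details the paper leaves implicit.
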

\begin{proof}
Since \(\Nbb \cdot \kappa(S) \leq \kappa(G)\), we can find disjoint translates
\(\set*{x_1 S, x_2 S, \hdots}\).
If $\mu(S) > 0$, then $\mu(x_1 S \cup \hdots \cup x_n S) = n \cdot \mu(S)$
is eventually greater than 1, contradicting \(\norm{\mu} = 1\).
\end{proof}

\begin{point}
Throughout the paper, we make free use of the following formulas:
\\For \(A,B \in\Cp\),
\(\norm{\mu\sub{A} - \mu\sub{B}}_1
	= \frac{\Haar{A\setminus B}}{\Haar{A}} + \frac{\Haar{B \setminus A}}{\Haar{B}}
	+ \Haar{A\cap B} \cdot \abs*{\frac{1}{\Haar{A}} - \frac{1}{\Haar{B}} }\).
\\If \(A\subset B\),
this becomes
\(\norm{\mu\sub{A} - \mu\sub{B}}_1
	= 0
	+ \frac{\Haar{B \setminus A}}{\Haar{B}}
	+ \Haar{A} \cdot \paren*{\frac{1}{|A|} - \frac{1}{|B|}}
	= 2 \frac{\Haar{B \setminus A}}{\Haar{B}}.\)
\end{point}

\begin{lem}\label{Lemma many disjoint sets}
Pick \((\Pcal, K, \epsilon)\) and $\mu\in\TLIM_0(G)$.
Let \(\kappa = \kappa(G)\).
There exists a family of mutually disjoint sets
\(\set*{A_\alpha : \alpha < \kappa} \subset \Cp(K,\epsilon)\)
such that \(\set*{\mu\sub{A_\alpha} : \alpha < \kappa} \subset \Ncal(\mu, \Pcal, \epsilon)\).
\end{lem}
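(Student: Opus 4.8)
The plan is to construct the family $\{A_\alpha : \alpha < \kappa\}$ by transfinite recursion on $\alpha$, at each stage selecting a fresh witness that is disjoint from all earlier sets. Suppose $\{A_\beta\}_{\beta<\alpha}$ have been built (pairwise disjoint, each in $\Cp(K,\epsilon)$, each with $\mu_{A_\beta}\in\Ncal(\mu,\Pcal,\epsilon)$), and write $U=\bigcup_{\beta<\alpha}A_\beta$. Since each $A_\beta$ is compact, $\kappa(U)\le\card{\alpha}<\kappa=\kappa(G)$; and because $\mu\in\TLIM(G)\subset\LIM(G)$ by \cref{Tlim subset Lim}, \cref{Lemma if Kappa S is small then measure 0} shows $U$ is $\mu$-null, once it is viewed inside a measurable set. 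So I would first fix a measurable $\mu$-null set $V\supseteq U$ (the existence of which is discussed in the last paragraph) and refine the given partition to $\Pcal'=\Pcal\vee\{V,V^c\}$.

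Next I would invoke the hypothesis $\mu\in\TLIM_0(G)$ on the refined data $(\Pcal',K,\epsilon')$, for a tolerance $\epsilon'$ to be fixed small relative to $\epsilon$. By definition this produces a compact, $(K,\epsilon')$-invariant set $A$ of positive measure with $\mu_A\in\Ncal(\mu,\Pcal',\epsilon')$. Since $V$ is a union of cells of $\Pcal'$ and $\mu(V)=0$, this forces $\mu_A(V)=\Haar{A\cap V}/\Haar{A}$ to be small, say $<\epsilon'$ after adjusting constants for the number of cells. Thus $A$ overlaps $U\subseteq V$ only in a set of relative measure less than $\epsilon'$.

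To pass from small overlap to genuine disjointness, I would trim $A$. By inner regularity of Haar measure, choose a compact $A_\alpha\subseteq A\setminus V$ with $\Haar{A_\alpha}\ge(1-2\epsilon')\Haar{A}$; then $A_\alpha\cap U\subseteq A_\alpha\cap V=\varnothing$. The explicit identity $\norm{\mu_A-\mu_{A_\alpha}}_1=2\,\Haar{A\setminus A_\alpha}/\Haar{A}\le 4\epsilon'$ keeps $\mu_{A_\alpha}$ inside $\Ncal(\mu,\Pcal,\epsilon)$, and the perturbation estimate $\Haar{A_\alpha\Sdiff xA_\alpha}\le\Haar{A\Sdiff xA}+2\,\Haar{A\setminus A_\alpha}$ keeps $A_\alpha$ in $\Cp(K,\epsilon)$; both hold once $\epsilon'$ is small enough (e.g.\ $\epsilon'<\epsilon/5$ in the unimodular case, with a routine modification otherwise). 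This gives $A_\alpha\in\Cp(K,\epsilon)$ disjoint from every $A_\beta$, completing the recursion step, and the family $\{A_\alpha:\alpha<\kappa\}$ results.

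The hard part is the parenthetical step: producing a measurable $\mu$-null envelope $V\supseteq U$ at every stage. When $\alpha$ is countable this is free, since $U$ is then $\sigma$-compact, hence measurable, and \cref{Lemma if Kappa S is small then measure 0} applies at once. The delicate case is uncountable $\alpha$ (which arises only when $\kappa>\Nbb$), because an arbitrary union of measurable sets need not be measurable. Here I would exploit that the $A_\beta$ are pairwise disjoint of positive measure, so only countably many of them meet any fixed compact set in positive measure; maintaining an increasing chain of measurable null envelopes $\{V_\alpha\}$ (with $V_{\alpha+1}=V_\alpha\cup A_\alpha$ at successors and unions at limits) keeps $\kappa(V_\alpha)<\kappa$, so \cref{Lemma if Kappa S is small then measure 0} forces $\mu(V_\alpha)=0$ whenever $V_\alpha$ is measurable. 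Securing that measurability at limit ordinals of uncountable cofinality is the real content of the argument, and is exactly where the disjoint positive-measure structure of the family must be used with care.
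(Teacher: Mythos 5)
Your overall strategy --- transfinite recursion, invoking the defining property of $\TLIM_0(G)$ against a partition refined by the union of the previously-built sets, then trimming away the small overlap --- is the same as the paper's, and your successor-stage estimates (the $2\Haar{A\setminus A_\alpha}/\Haar{A}$ identity and the perturbation of $(K,\epsilon)$-invariance) are essentially correct. But there is a genuine gap exactly where you flag one: you never actually produce the measurable $\mu$-null envelope $V\supseteq\bigcup_{\beta<\alpha}A_\beta$ when $\alpha$ is uncountable. Your fallback of an increasing chain of measurable null envelopes ``with unions at limits'' does not close it, because at a limit ordinal of uncountable cofinality you are again taking an uncountable union of measurable sets, which need not be measurable; and the observation that only countably many $A_\beta$ meet a fixed compact set in positive measure does not help, since uncountably many can still meet it in a null set whose union is non-measurable (uncountably many singletons can form a non-measurable set). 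So as written the recursion cannot pass uncountable limit stages --- and $\kappa>\Nbb$ is precisely the case in which the lemma is needed for \cref{Theorem when G is large}.

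The paper's resolution is a small but essential extra idea that your proposal is missing: fix a precompact open set $U$ once and for all, and strengthen the induction hypothesis to say that the thickened sets $A_\alpha U$ (not merely the $A_\alpha$) are mutually disjoint. At stage $\beta$ one puts $B=\bigcup_{\alpha<\beta}A_\alpha UU^{-1}$, which is a union of open sets and hence open and measurable with no restriction on $\card{\beta}$; since $\kappa(B)\leq\card{\beta}<\kappa$, \cref{Lemma if Kappa S is small then measure 0} gives $\mu(B)=0$. One then refines $\Pcal$ by adjoining $B$, picks $A\in\Cp(K,\epsilon/4)$ with $\mu_{A}\in\Ncal(\mu,\Pcal_B,\epsilon/4)$, and sets $A_\beta=A\setminus B$, which is automatically compact (a closed set minus an open set) and satisfies $A_\beta U\cap A_\alpha U=\varnothing$ for all $\alpha<\beta$; no separate inner-regularity trimming is needed. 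If you graft this open thickening onto your recursion, the rest of your argument goes through. One further small point: your aside about needing unimodularity in the invariance estimate is unnecessary, since left translation always preserves left Haar measure.
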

\begin{proof}
Choose any precompact open set \(U\).
For \(\beta < \kappa(G)\),
suppose \(\set*{A_\alpha : \alpha < \beta} \subset \Cp(K,\epsilon)\)
have been chosen so that \(\set*{A_\alpha U : \alpha < \beta}\) are mutually disjoint
and \(\big\{\mu\sub{A_\alpha} : \alpha < \beta\big\} \subset \Ncal(\mu, \Pcal, \epsilon)\).
Once \(A_\beta\) is constructed, the result follows by transfinite induction.

Define \(B = \bigcup_{\alpha < \beta} A_\alpha UU^{-1}\), which is open and thus measurable.
Notice \(\kappa(B) = \beta < \kappa(G)\).
If \(\Pcal = \set*{E_1, \hdots, E_p}\),
let $\Pcal_B = (E_1 \setminus B, \hdots, E_p \setminus B, B)$.
Let \(\delta = \epsilon/4 \),
and pick \(A\in\Cp(K,\delta)\) with \(\mu\sub{A} \in \Ncal(\mu, \Pcal_B, \delta)\).
Define \(A_\beta = A \setminus B\),
which ensures \(\set*{A_\alpha U : \alpha\leq \beta}\) are mutually disjoint.
\cref{Lemma if Kappa S is small then measure 0} tells us $\mu(B) = 0$,
hence \(\mu\sub{A}(B) < \delta\).
Thus \(\big\|\mu\sub{A} - \mu\sub{A_\beta}\big\|_1
	= 2 \frac{\Haar{A \setminus A_\beta} }{\Haar{A}}
	= 2\frac{\Haar{A \cap B}}{\Haar{A}}
	= 2 \mu\sub{A}(B)
	< 2 \delta\).
By the triangle inequality, \(\mu\sub{A_\beta} \in \Ncal(\mu, \Pcal, 3\delta)\).
If \(x\in K\),
\begin{align*}
\norm{xA_\beta \Sdiff A_\beta}_1
&	\leq \Haar{xA_\beta \Sdiff xA} + \Haar{xA \Sdiff A} + \Haar{A\Sdiff A_\beta}
\\&	< 3\delta \Haar{A} < 4\delta \Haar{A_\beta},\ \ \text{which shows } A_\beta\in\Cp(K,4\delta).
\qedhere
\end{align*}
\end{proof}

\section*{The method of Hindman and Strauss}

\begin{lem}\label{Lemma TLIM0 is total in TLIM}
The closed convex hull of $\TLIM_0(G)$ is all of $\TLIM(G)$.
\end{lem}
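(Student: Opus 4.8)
The statement I need to prove is Lemma~\ref{Lemma TLIM0 is total in TLIM}: the closed convex hull of $\TLIM_0(G)$ is all of $\TLIM(G)$. Let me think about what's available. We have $\TLIM_0(G) \subseteq \TLIM(G)$ by definition, and $\TLIM(G)$ is $w^*$-compact and convex. So $\cl(\conv(\TLIM_0(G))) \subseteq \TLIM(G)$ automatically. The content is the reverse inclusion. The standard tool for "a compact convex set is the closed convex hull of a subset" is a separation argument: if $\cl(\conv(\TLIM_0(G)))$ were a proper subset, Hahn-Banach (Lemma~\ref{Lemma Hahn Banach}) would give a $T \in \Linfty(G)$ strictly separating some $m \in \TLIM(G)$ from the whole set $\TLIM_0(G)$. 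I then want to derive a contradiction.

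**The plan.** The plan is to argue by contradiction using separation. Suppose $m \in \TLIM(G) \setminus \cl(\conv(\TLIM_0(G)))$. By Lemma~\ref{Lemma Hahn Banach} there is $T \in \Linfty(G)$, which (taking real parts and adding a multiple of the identity as in the proof of Lemma~\ref{Lemma P is Dense in M}) we may take to be real-valued, with
\[
\langle m, T\rangle > \sup_{\nu \in \TLIM_0(G)} \langle \nu, T\rangle.
\]
The goal is to produce an element of $\TLIM_0(G)$ whose value on $T$ is at least $\langle m, T\rangle$, contradicting the strict inequality. The natural candidate is a limit point of a topological F\o{}lner net. By Proposition~\ref{exists topological folner net}, for each compact $K$ with $|K|>0$ and small $\epsilon$ there exists a $(K,\epsilon)$-invariant set, so topological F\o{}lner nets exist; each such net $\{\mu_{A_\gamma}\}$ converges (along a subnet / ultrafilter) to some element of $\TLIM_0(G)$ by Proposition~\ref{limit of TI-net}. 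The crucial point is that I can control the value on $T$: I want to choose the approximating $(K,\epsilon)$-invariant sets $A$ so that $\mu_A(T)$ is close to $\langle m, T\rangle$.

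**Connecting $m$ to F\o{}lner sets.** Here is where I expect the main obstacle. Since $m \in \TLIM(G)$, for any $f \in P_1(G)$ we have $\langle m, T\rangle = \langle m, fT\rangle \le \|fT\|_\infty = \sup_t \langle R_t f, T\rangle$, exactly the estimate exploited in the proof of Lemma~\ref{Lemma_cl_conv_Xp}. This shows that $\langle m, T\rangle$ is dominated by the supremum over right-translates of the value $\langle R_t f, T\rangle$; choosing $f = \lambda_\gamma$ a left TI-net and selecting good translation parameters $t_\gamma$, Lemma~\ref{Lemma_cl_conv_Xp} already tells us $\cl(\conv(X_p)) = \TLIM(G)$ where $X_p$ consists of $p$-limits of right-translates of the F\o{}lner net. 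So in fact the heavy lifting is Lemma~\ref{Lemma_cl_conv_Xp}: the right-translates $R_{t_\gamma}\lambda_\gamma = \mu_{F_\gamma t_\gamma}$ are themselves normalized indicators of $(K,\epsilon)$-invariant sets (translation preserves $(K,\epsilon)$-invariance since Haar measure is left-invariant and these are left-invariance conditions), hence their $p$-limits lie in $\TLIM_0(G)$. Therefore $X_{\Gamma^*} \subseteq \TLIM_0(G)$, and Lemma~\ref{Lemma_cl_conv_Xp} gives $\TLIM(G) = \cl(\conv(X_{\Gamma^*})) \subseteq \cl(\conv(\TLIM_0(G)))$, which is precisely the reverse inclusion I need.

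**Summary of the argument.** Thus the cleanest route avoids a fresh separation argument entirely and instead invokes Lemma~\ref{Lemma_cl_conv_Xp} directly. The only genuine verification is that each limit point in $X_{\Gamma^*}$ belongs to $\TLIM_0(G)$ rather than merely to $\TLIM(G)$: I must check that $\{F_\gamma t_\gamma\}$ forms a topological F\o{}lner net, i.e.\ that each $F_\gamma t_\gamma$ is $(K,\epsilon)$-invariant for the appropriate $(K,\epsilon)$, so that its $p$-limit satisfies the defining approximation condition of $\TLIM_0(G)$. Since $\|l_x \mu_{F_\gamma t_\gamma} - \mu_{F_\gamma t_\gamma}\|_1 = |xF_\gamma t_\gamma \Sdiff F_\gamma t_\gamma|/|F_\gamma t_\gamma| = |xF_\gamma \Sdiff F_\gamma|/|F_\gamma| = \|l_x\lambda_\gamma - \lambda_\gamma\|_1$ by right-invariance of the symmetric difference under right-translation, the invariance is genuinely preserved, and the membership in $\TLIM_0(G)$ follows from Proposition~\ref{limit of TI-net} together with the definition of $\TLIM_0$. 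I expect this bookkeeping — matching the $w^*$-neighborhood control in the definition of $\TLIM_0(G)$ with the $p$-limit construction — to be the fussiest part, but no new idea beyond Lemma~\ref{Lemma_cl_conv_Xp} is required.
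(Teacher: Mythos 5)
Your proposal is correct and follows essentially the same route as the paper, whose proof of this lemma is precisely a reduction to \cref{Lemma_cl_conv_Xp}. You additionally make explicit the (necessary but easy) verification that the right-translates $F_\gamma t_\gamma$ remain $(K,\epsilon)$-invariant — the $\Delta(t_\gamma)$ factors cancel in the ratio $|xF_\gamma t_\gamma \Sdiff F_\gamma t_\gamma|/|F_\gamma t_\gamma|$ — so that $X_{\Gamma^*}\subseteq\TLIM_0(G)$; the paper leaves this implicit.
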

\begin{proof}
Chou originally proved this for $\sigma$-compact groups, see \cite[Theorem 3.2]{Chou70}.
In \cite{milnes}, Milnes points out that the result is valid even when $G$ is not $\sigma$-compact,
although his construction of a F\o{}lner-net \(\net*{U_\alpha}\) has a small problem:
For each index \(\alpha\), he asks us to choose a compact set \(U_\alpha\),
such that \(\bigcup_{\beta \prec \alpha}U_\beta \subset U_\alpha\).
However, \(\alpha\) may have infinitely many predecessors,
in which case \(\bigcup_{\beta \prec \alpha} U_\beta\) has no reason to be precompact!
For a proof in full generality, see \cref{Lemma_cl_conv_Xp}.
\end{proof}

The following deceptively simple lemma is due to Hindman and Strauss,
see \cite[Proof of Theorem 4.5]{Hindman2009}.
\begin{lem}\label{Main Idea}
Suppose
$\tfrac12(\mu + \nu)$ is in $\TLIM_0(G)$ whenever $\mu$ and $\nu$ are.
Then $\TLIM(G)_0 = \TLIM(G)$.
\end{lem}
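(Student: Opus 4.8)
The plan is to deduce the lemma from \cref{Lemma TLIM0 is total in TLIM}, which already gives $\cl(\conv(\TLIM_0(G))) = \TLIM(G)$. Since $\TLIM_0(G) \subseteq \TLIM(G)$ by definition, it suffices to show that $\TLIM_0(G)$ is itself $w^*$-closed and convex; for then $\TLIM_0(G) = \cl(\conv(\TLIM_0(G))) = \TLIM(G)$. The midpoint hypothesis is exactly what is needed to upgrade closedness to convexity, so I would split the argument into two tasks: (i) prove $\TLIM_0(G)$ is closed, and (ii) combine closedness with the midpoint hypothesis to obtain full convexity.

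First I would verify that $\TLIM_0(G)$ is $w^*$-closed. Let $\net*{\mu_i}$ be a net in $\TLIM_0(G)$ converging to some $\mu\in\Means(G)$. For each $f\in P_1(G)$ and $\phi\in\Linfty$ the condition $\langle \mu, \phi - f*\phi\rangle = 0$ is $w^*$-closed, so $\TLIM(G)$ is $w^*$-closed and $\mu\in\TLIM(G)$. Now fix a triple $(\Pcal, K, \epsilon)$. By \cref{Lemma Partition Basis} the set $\Ncal(\mu, \Pcal, \nicefrac{\epsilon}{2})$ is a $w^*$-neighborhood of $\mu$, so there is an index $i$ with $\mu_i\in\Ncal(\mu, \Pcal, \nicefrac{\epsilon}{2})$. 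Because $\mu_i\in\TLIM_0(G)$, there is a set $A\in\Cp(K,\epsilon)$ with $\mu\sub{A}\in\Ncal(\mu_i, \Pcal, \nicefrac{\epsilon}{2})$. The estimate $|\mu(E) - \mu\sub{A}(E)| \le |\mu(E) - \mu_i(E)| + |\mu_i(E) - \mu\sub{A}(E)| < \epsilon$ for each $E\in\Pcal$ then shows $\mu\sub{A}\in\Ncal(\mu, \Pcal, \epsilon)$. Since $(\Pcal, K, \epsilon)$ was arbitrary, $\mu\in\TLIM_0(G)$, establishing closedness. Throughout this step one must argue with nets rather than sequences, since the $w^*$-topology on $\Means(G)$ need not be metrizable.

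Next I would use the hypothesis to pass from midpoint convexity to full convexity. Iterating the assumption that $\tfrac12(\mu+\nu)\in\TLIM_0(G)$ whenever $\mu,\nu\in\TLIM_0(G)$ shows that $s\mu + (1-s)\nu\in\TLIM_0(G)$ for every dyadic rational $s\in[0,1]$. For fixed $\mu,\nu$ the map $s\mapsto s\mu + (1-s)\nu$ is $w^*$-continuous from $[0,1]$ into $\Means(G)$, because each evaluation $s\mapsto \langle s\mu+(1-s)\nu, \phi\rangle$ is affine in $s$ and the $w^*$-topology is the initial topology of these evaluations. As the dyadic rationals are dense in $[0,1]$, for arbitrary $t\in[0,1]$ I may take dyadics $s_n\to t$ and conclude $t\mu + (1-t)\nu = \lim_n (s_n\mu + (1-s_n)\nu)\in\TLIM_0(G)$, using the closedness just proved. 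Hence $\TLIM_0(G)$ is convex.

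Combining the two steps, $\TLIM_0(G)$ is closed and convex, so $\cl(\conv(\TLIM_0(G))) = \TLIM_0(G)$, and \cref{Lemma TLIM0 is total in TLIM} finishes the proof. I expect the only genuine subtlety to be the closedness step: it is where the explicit defining structure of $\TLIM_0(G)$ enters, and it must be executed with nets and the partition neighborhood basis of \cref{Lemma Partition Basis} rather than with sequences. The midpoint-to-convexity passage, by contrast, is a routine density argument once closedness and the $w^*$-continuity of the segment map are in hand.
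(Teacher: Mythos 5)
Your proposal is correct and follows essentially the same route as the paper: establish that $\TLIM_0(G)$ is $w^*$-closed, upgrade the midpoint hypothesis to full convexity via density of the dyadic rationals, and conclude from \cref{Lemma TLIM0 is total in TLIM}. The only difference is that you supply the details of the closedness step (which the paper asserts without proof), and your argument there, using the partition neighborhoods of \cref{Lemma Partition Basis} and an $\epsilon/2$ triangle inequality, is sound.
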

\begin{proof}
Since the dyadic rationals are dense in \([0,1]\) and \(\TLIM_0(G)\) is closed,
the hypothesis implies \(\TLIM_0(G)\) is convex.
The result follows by \cref{Lemma TLIM0 is total in TLIM}.
\end{proof}

\cref{Main Idea} is useful because
\(\frac12\paren*{\mu\sub{A} + \mu\sub{B}} = \mu\sub{A\cup B}\)
when \(A,B \in \Cp\) are disjoint and equal in measure.
\cref{Lemma Mean Approximation} tells us what happens when \(A\) and \(B\)
are \textit{approximately} disjoint and equal in measure.

\begin{lem}\label{Lemma Mean Approximation}
Pick \(A, B \in\Cp\) and \(\delta \in \bigl(0, \frac12\bigr)\).
Suppose \(\mu\sub{B}(A) < \delta\) and
\(\Haar{A}/\Haar{B} \in \bigl( (1-\delta)^2, (1+\delta)^2\bigr)\).
Then \(\norm{\mu\sub{A\cup B} - \frac12(\mu\sub{A} + \mu\sub{B})}_1 < 3\delta\).
\end{lem}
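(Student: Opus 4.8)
The plan is to reduce to the case of two \emph{disjoint} sets of nearly equal measure, where the computation collapses to a single clean quantity, and then dispatch that quantity with an elementary one‑variable estimate.

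First I would set $\tilde B = B\setminus A$, so that $A$ and $\tilde B$ are disjoint with $A\cup\tilde B = A\cup B$. Since $\tilde B\subset B$, the subset formula recorded just before the lemma gives
\[\norm{\mu\sub{\tilde B} - \mu\sub{B}}_1 = 2\frac{\Haar{B\setminus\tilde B}}{\Haar{B}} = 2\frac{\Haar{A\cap B}}{\Haar{B}} = 2\,\mu\sub{B}(A) < 2\delta.\]
By the triangle inequality it therefore suffices to bound $\norm{\mu\sub{A\cup B} - \tfrac12(\mu\sub{A} + \mu\sub{\tilde B})}_1$ by $2\delta$, because
\[\norm{\mu\sub{A\cup B} - \tfrac12(\mu\sub{A} + \mu\sub{B})}_1 \le \norm{\mu\sub{A\cup B} - \tfrac12(\mu\sub{A} + \mu\sub{\tilde B})}_1 + \tfrac12\norm{\mu\sub{\tilde B} - \mu\sub{B}}_1.\]

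Next I would compute the disjoint term exactly. Writing $U = A\cup B = A\sqcup\tilde B$, the difference $\mu\sub{U} - \tfrac12\mu\sub{A} - \tfrac12\mu\sub{\tilde B}$ is constant on each of $A$ and $\tilde B$ and vanishes off $U$; integrating, the contributions from $A$ and $\tilde B$ are $\tfrac{\Haar{A}}{\Haar{U}} - \tfrac12$ and $\tfrac{\Haar{\tilde B}}{\Haar{U}} - \tfrac12$, which are negatives of one another since $\Haar{A} + \Haar{\tilde B} = \Haar{U}$. Hence
\[\norm{\mu\sub{U} - \tfrac12\mu\sub{A} - \tfrac12\mu\sub{\tilde B}}_1 = \frac{\bigl|\Haar{A} - \Haar{\tilde B}\bigr|}{\Haar{A} + \Haar{\tilde B}} = \frac{|\rho - 1|}{\rho + 1}, \qquad \rho = \Haar{A}/\Haar{\tilde B}.\]
No absolute‑value case split is needed inside this computation, which is precisely the payoff of having passed to disjoint sets.

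Finally I would bound $\frac{|\rho-1|}{\rho+1}$ by $2\delta$. From $\Haar{A}/\Haar{B}\in\bigl((1-\delta)^2,(1+\delta)^2\bigr)$ and $\Haar{\tilde B}/\Haar{B} = 1 - \mu\sub{B}(A) \in (1-\delta,\,1]$ one gets $\rho\in\bigl((1-\delta)^2,\ (1+\delta)^2/(1-\delta)\bigr)$. The map $\rho\mapsto\frac{|\rho-1|}{\rho+1}$ decreases on $(0,1]$ and increases on $[1,\infty)$, so on this interval it is maximized at an endpoint; evaluating the two endpoints reduces the claim to the elementary inequalities $3\delta + \delta^2 < 2\delta(2+\delta+\delta^2)$ and $2\delta - \delta^2 < 2\delta(2 - 2\delta + \delta^2)$, both of which hold for every $\delta\in(0,\tfrac12)$. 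I expect the only genuine obstacle to be this last endpoint verification: it is routine algebra, but it is exactly where the precise exponents in the hypothesis $\Haar{A}/\Haar{B}\in((1-\delta)^2,(1+\delta)^2)$ are consumed, and it is worth confirming that the stated $3\delta$ leaves slack (the argument in fact produces a bound strictly below $3\delta$).
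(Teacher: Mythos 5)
Your proposal is correct and follows essentially the same route as the paper: both pass to $B' = B\setminus A$, reduce the disjoint-union term to the quantity $\lvert \rho-1\rvert/(\rho+1)$ with $\rho = \Haar{A}/\Haar{B'}$ ranging over $\bigl((1-\delta)^2,\ (1+\delta)^2/(1-\delta)\bigr)$, bound it by $2\delta$ via the same endpoint algebra, and absorb the $\mu\sub{B'}$-versus-$\mu\sub{B}$ correction ($<2\delta$, contributing $\delta$ after the factor $\tfrac12$) by the triangle inequality. The only cosmetic difference is that you evaluate the disjoint-union norm exactly (using that the difference is constant on each piece) where the paper uses a triangle inequality that happens to be an equality there.
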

\begin{proof}
Let $B' = B\setminus A$ and $r = \frac{\Haar{A}}{\Haar{B'}} = \frac{\Haar{A}}{\Haar{B}} \cdot \frac{\Haar{B}}{\Haar{B'}}$.
Since
\[\textstyle 1\geq \frac{\Haar{B'}}{\Haar{B}}
= \frac{\Haar{B} - \Haar{B\cap A} }{\Haar{B}} = 1 - \mu\sub{B}(A) > 1 - \delta,\]
we see \( r\in \bigl((1-\delta)^2, \frac{(1 + \delta)^2}{1 - \delta}\bigr)\).
In these terms,
\[\textstyle \mu\sub{A\cup B} = \mu\sub{A \cup B'}
	= \frac{\Haar{A}}{\Haar{A}+\Haar{B'}} \mu\sub{A} + \frac{\Haar{B'}}{\Haar{A}+\Haar{B'}}\mu\sub{B'}
	= \big(\frac{1}{1+r^{-1}}\big)\mu\sub{A} + \big(\frac{1}{1 + r}\big) \mu\sub{B'}.\]
Now we can compute
\[\textstyle \big\| \mu\sub{A\cup B} - \frac12 \big(\mu\sub{A} + \mu\sub{B'}\big) \big\|_1
	\leq \abs*{ \frac12 - \frac{1}{1 + r^{-1}} } \cdot \norm{\mu\sub{A}}_1
		+ \abs*{\frac12 - \frac{1}{1 + r} } \cdot \norm{\mu\sub{B'}}
	= \abs*{\frac{1-r}{1+r}} < 2\delta.\]
On the other hand,
\(\big\|\mu\sub{B} - \mu\sub{B'}\big\|_1
	= 2\frac{|B \setminus B'|}{|B|}
	< 2\delta\),
so \[\textstyle \big\|\frac12 \paren*{\mu\sub{A} + \mu\sub{B'}}
	- \frac12 \paren*{\mu\sub{A} + \mu\sub{B}}\big\|_1 < \delta.\]
The result follows by the triangle inequality.
\end{proof}

\begin{lem}\label{Lemma Invariance of Union}
Suppose $A,B \in \Cp(K, \delta)$. Then $A\cup B\in \Cp(K, 2\delta)$.
If $A\cap B = \varnothing$, then $A\cup B\in \Cp(K, \delta)$.
\end{lem}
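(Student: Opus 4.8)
The plan is to reduce the $(K,\epsilon)$-invariance of the union to the hypothesized invariance of the two pieces, using the elementary fact that the symmetric difference of a pair of unions is contained in the union of the two symmetric differences. Since $x(A\cup B) = xA\cup xB$ for every $x\in G$, I would first record the set inclusion
\[(A\cup B)\Sdiff x(A\cup B) \subseteq (A\Sdiff xA)\cup(B\Sdiff xB).\]
This is verified pointwise: a point lying in $A\cup B$ but outside $xA\cup xB$ belongs to $A\setminus xA$ or to $B\setminus xB$ according to which of $A,B$ contains it, and symmetrically for the reverse containment.

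Taking Haar measures and invoking $A,B\in\Cp(K,\delta)$, for each $x\in K$ I then obtain
\[\Haar{(A\cup B)\Sdiff x(A\cup B)} \leq \Haar{A\Sdiff xA} + \Haar{B\Sdiff xB} < \delta\,\Haar{A} + \delta\,\Haar{B} = \delta\bigl(\Haar{A}+\Haar{B}\bigr).\]
For the first assertion, since $\Haar{A}\leq\Haar{A\cup B}$ and $\Haar{B}\leq\Haar{A\cup B}$, the right-hand side is at most $2\delta\,\Haar{A\cup B}$, which gives $A\cup B\in\Cp(K,2\delta)$; that $A\cup B$ lies in $\Cp$ at all is immediate, being compact and of positive measure. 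For the second assertion, disjointness yields $\Haar{A}+\Haar{B}=\Haar{A\cup B}$, so the same estimate reads $\delta\,\Haar{A\cup B}$, giving $A\cup B\in\Cp(K,\delta)$.

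The only genuine content is the set-theoretic inclusion displayed above; everything after it is a one-line measure estimate, and the factor-of-two loss in the general case is precisely the overcounting of $\Haar{A}+\Haar{B}$ against $\Haar{A\cup B}$ that disjointness eliminates. I do not anticipate any real obstacle here. The one point to watch is to justify the inclusion for both directions of the symmetric difference, not merely for $(A\cup B)\setminus x(A\cup B)$.
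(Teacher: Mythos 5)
Your proof is correct and follows exactly the same route as the paper's: the pointwise inclusion $(A\cup B)\Sdiff x(A\cup B)\subseteq(A\Sdiff xA)\cup(B\Sdiff xB)$, followed by the measure estimate $\delta(\Haar{A}+\Haar{B})$, bounded by $2\delta\Haar{A\cup B}$ in general and by $\delta\Haar{A\cup B}$ when $A$ and $B$ are disjoint.
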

\begin{proof}
Pick \(x\in K\).
Notice $x(A\cup B) \Sdiff (A \cup B) \subset (xA \Sdiff A) \cup (xB \Sdiff B)$.
\\It follows that \(\frac{\Haar{x(A\cup B) \Sdiff (A\cup B)} }{\Haar{A\cup B}}
	\leq \frac{\Haar{xA\Sdiff A} }{\Haar{A\cup B}} + \frac{\Haar{xB\Sdiff B} }{\Haar{A\cup B}}
	< \frac{\delta\Haar{A}}{\Haar{A\cup B}} + \frac{\delta\Haar{B}}{\Haar{A\cup B}}
	< 2\delta\).
\\If \(A\cap B = \varnothing\),
then \(\Haar{A\cup B} = \Haar{A} + \Haar{B}\),
hence $\frac{\delta\Haar{A}}{\Haar{A\cup B}} + \frac{\delta\Haar{B}}{\Haar{A\cup B}} = \delta$.
\end{proof}

\begin{thm}\label{Theorem Hindman and Strauss Technique}
Let \(G\) be noncompact.
Then the following statement implies \(\TLIM_0(G) = \TLIM(G)\):
\\[-5mm]\begin{align*}
&	\paren*{\forall (p, K, \epsilon)}\ 
	\paren*{\exists M > 0}\ 
	\paren*{\forall \mu\in\TLIM_0(G)}\ 
	\paren*{\forall \Pcal\ \text{with}\ \#\Pcal=p}
\\&	\exists A\in\Cp(K,\epsilon)
	\text{ with } \Haar{A}/M \in (1-\epsilon, 1+\epsilon)
	\text{ and } \mu\sub{A}\in\Ncal(\mu, \Pcal, \epsilon)
\end{align*}
The crux of the statement is that \(M\) is allowed to depend on
\((\#\Pcal, K, \epsilon)\), but not on \(\Pcal\) itself.
\end{thm}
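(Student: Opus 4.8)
The plan is to verify the hypothesis of \cref{Main Idea}: granting the displayed condition (call it $(\star)$), I will show that $\tfrac12(\mu+\nu) \in \TLIM_0(G)$ whenever $\mu,\nu \in \TLIM_0(G)$, and \cref{Main Idea} will then deliver $\TLIM_0(G) = \TLIM(G)$. Since $\TLIM(G)$ is convex, $\tfrac12(\mu+\nu)$ is automatically a topological left-invariant mean, so the real task is: given a triple $(\Pcal,K,\epsilon)$ with $p_0 = \#\Pcal$, produce a single set $C \in \Cp(K,\epsilon)$ with $\mu\sub{C} \in \Ncal\bigl(\tfrac12(\mu+\nu),\Pcal,\epsilon\bigr)$. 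I fix a small tolerance $\delta$ (pinned down at the end) and apply $(\star)$ \emph{once}, to the triple $(p_0+1,K,\delta)$, obtaining a value $M>0$. The crux, as the theorem's own remark flags, is that this same $M$ must serve both $\mu$ and $\nu$.

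First I would run $(\star)$ for the mean $\mu$ against a refinement of $\Pcal$ into $p_0+1$ cells (possible since $G$ is noncompact, so its measure algebra is infinite). This yields $A \in \Cp(K,\delta)$ with $\Haar{A}/M \in (1-\delta,1+\delta)$ and $\mu\sub{A} \in \Ncal(\mu,\Pcal,2\delta)$. The key observation is that $A$ is compact of positive measure, so $\kappa(A)=1 < \Nbb \le \kappa(G)$; since $\nu \in \TLIM(G) \subset \LIM(G)$ by \cref{Tlim subset Lim}, \cref{Lemma if Kappa S is small then measure 0} forces $\nu(A)=0$. This is exactly what lets me make the \emph{second} set nearly disjoint from the first: I run $(\star)$ for $\nu$ against the partition $\Pcal' = \{A\} \cup \{E \setminus A : E \in \Pcal\}$ (of size at most $p_0+1$, refined to exactly $p_0+1$ if need be), obtaining $B \in \Cp(K,\delta)$ with $\Haar{B}/M \in (1-\delta,1+\delta)$ and $\mu\sub{B} \in \Ncal(\nu,\Pcal',\delta)$. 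Because $\nu(A)=0$, the cell $A \in \Pcal'$ gives $\mu\sub{B}(A) < \delta$, and comparing the remaining cells gives $\mu\sub{B} \in \Ncal(\nu,\Pcal,2\delta)$.

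Now set $C = A \cup B$. Since $\Haar{A}/M$ and $\Haar{B}/M$ both lie in $(1-\delta,1+\delta)$, the shared $M$ keeps $\Haar{A}/\Haar{B}$ inside the tolerance required by \cref{Lemma Mean Approximation} once $\delta$ is small; together with $\mu\sub{B}(A)<\delta$ this gives $\bigl\|\mu\sub{C} - \tfrac12(\mu\sub{A}+\mu\sub{B})\bigr\|_1 < 3\delta$, while \cref{Lemma Invariance of Union} gives $C \in \Cp(K,2\delta)$. Accumulating the errors on each cell $E \in \Pcal$,
\begin{align*}
\bigl|\mu\sub{C}(E) - \tfrac12(\mu+\nu)(E)\bigr|
&\le \bigl\|\mu\sub{C} - \tfrac12(\mu\sub{A}+\mu\sub{B})\bigr\|_1
\\&\quad + \tfrac12\bigl|\mu\sub{A}(E)-\mu(E)\bigr|
+ \tfrac12\bigl|\mu\sub{B}(E)-\nu(E)\bigr|
< 5\delta.
\end{align*}
Choosing $\delta < \epsilon/5$ (and small enough for the ratio bound above) then yields $C \in \Cp(K,\epsilon)$ with $\mu\sub{C} \in \Ncal\bigl(\tfrac12(\mu+\nu),\Pcal,\epsilon\bigr)$, completing the verification.

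The hard part is precisely the tension \cref{Lemma Mean Approximation} demands: two sets that are simultaneously almost disjoint \emph{and} almost equal in measure, for two means chosen independently. Equal measure is bought by the uniformity of $M$ over all means of a fixed partition-size — this is why $(\star)$ insists $M$ depend only on $\#\Pcal$. Approximate disjointness is bought by inserting $A$ into the partition used for $\nu$ and exploiting $\nu(A)=0$, which holds only because $A$ is compact and $G$ is noncompact. Once these two features are in place, the remainder is routine tracking of the constant $\delta$.
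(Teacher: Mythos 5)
Your proposal is correct and follows essentially the same route as the paper's own proof: apply the hypothesis once at partition-size $p+1$ to get a shared $M$, insert $A$ into the partition used for $\nu$ so that $\nu(A)=0$ (via \cref{Lemma if Kappa S is small then measure 0}) forces $\mu\sub{B}(A)<\delta$, then combine \cref{Lemma Mean Approximation}, \cref{Lemma Invariance of Union}, and \cref{Main Idea}. The only cosmetic difference is that the paper pads $\Pcal$ with the empty set to reach size $p+1$ rather than refining it, and its bookkeeping lands at $4\delta$ instead of your $5\delta$.
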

\begin{proof}
Pick $(\Pcal, K, \epsilon)$ and $\mu,\nu \in \TLIM_0(G)$.
Say \(\Pcal = \paren*{E_1, \hdots, E_p}\) and \(\delta = \epsilon/4\).
By hypothesis, obtain \(M>0\) for \( (p+1, K, \delta)\).
Let \(\Pcal_{\varnothing} = \paren*{E_1, \hdots, E_p, \varnothing}\),
	so \(\#\Pcal_\varnothing = p+1\).
Pick \(A\in\Cp(K,\delta)\) with \(\mu\sub{A}\in\Ncal(\mu, \Pcal_\varnothing, \delta)\)
	and \(\Haar{A}/M \in (1-\delta, 1+\delta)\).
Let \(\Pcal_A = (E_1 \setminus A, \hdots, E_p \setminus A, A)\).
Pick \(B\in\Cp(K,\delta)\)
	with \(\mu\sub{B} \in \Ncal\paren{\nu, \Pcal_A, \delta}\)
	and \(\Haar{B}/M\in (1-\delta, 1+\delta)\).
Now \(\Haar{A} / \Haar{B} \in \bigl((1-\delta)^2, (1+\delta)^2\bigr)\).
Since \(A\) is compact but \(G\) is not,
\cref{Lemma if Kappa S is small then measure 0} tells us
\(\nu(A) = 0\), hence \(\mu\sub{B}(A) < \delta\).
Applying \cref{Lemma Mean Approximation} to the previous two statements, we conclude
\(\norm{\frac12\paren*{\mu\sub{A} + \mu\sub{B}} - \mu\sub{A\cup B}}_1 < 3\delta\).
By the triangle inequality,
\(\mu\sub{A\cup B} \in \Ncal\big(\frac12(\mu+\nu), \Pcal, 4\delta\big)\).
By \cref{Lemma Invariance of Union}, \(A\cup B \in \Cp(K,2\delta)\).
We conclude \(\frac12(\mu + \nu)\in\TLIM_0(G)\).
By \cref{Main Idea}, we are done.
\end{proof}

\section*{Ornstein-Weiss quasi-tiling}

\begin{point}\label{Ci definitions}
We require the following notions of \((K,\epsilon)\)-invariance.
\\\(\Cp_0(K,\epsilon) = \set*{A\subset G :
	A\text{ is compact and }\ (\forall x\in K)\ \Haar{xA\Sdiff A} / \Haar{A} < \epsilon}\).
\\This is just \(\Cp(K,\epsilon)\) above.
\\
\(\Cp_1(K,\epsilon) = \{A\subset G : A \text{ is compact and }
	\Haar{KA \Sdiff A} / \Haar{A} < \epsilon\}\).
\\
	\(\Cp_2(K,\epsilon) = \set*{A\subset G : A \text{ is compact and }
	\Haar{\partial_K(A)} / \Haar{A} < \epsilon}\),
\\In this definition, \(\partial_K(A) = K A \setminus \bigcap_{x\in K} x A\).
This differs slightly from \cite{Ornstein1987},
where ``the \(K\)-boundary of \(A\)''
is defined as \(K^{-1} A \setminus \bigcap_{x\in K^{-1}} x A\).\medskip

\noindent For \(j\in\set{0,1,2}\), define
\begin{center}
\(\TLIM_j(G) = \big\{
	\mu\in\TLIM(G) : \paren*{\forall \paren*{\Pcal, K, \epsilon}}\ 
	\big(\exists A\in\Cp_j(K,\epsilon) \big)\ 
	\mu\sub{A} \in \Ncal(\mu, \Pcal, \epsilon)\big\}\).
\end{center}
We shall say ``\(\Cp_i\) is asymptotically contained in \(\Cp_j\)'' to mean the following:
\begin{center}
\(\paren*{\forall(K,\epsilon)}\
	\paren*{\exists(K',\epsilon')}\
	\paren*{\forall A\in\Cp_i(K', \epsilon')}\
	\exists B\in\Cp_j(K,\epsilon) \text{ with } \norm{\mu\sub{A}-\mu\sub{B}}_1 < \epsilon\).
\end{center}
\end{point}

\begin{lem}
To prove \(\TLIM_i(G) \subset \TLIM_j(G)\), it suffices to show
\(\Cp_i\) is asymptotically contained in \(\Cp_j\).
\end{lem}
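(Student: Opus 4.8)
The plan is to unwind the two definitions and chain two approximations together, keeping careful track of the tolerances. Fix $\mu\in\TLIM_i(G)$; to place it in $\TLIM_j(G)$ I fix an arbitrary triple $(\Pcal,K,\epsilon)$ and must produce $B\in\Cp_j(K,\epsilon)$ with $\mu\sub{B}\in\Ncal(\mu,\Pcal,\epsilon)$. The whole argument is quantifier management, so the delicate point is the order in which the parameters get chosen.

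First I would invoke the asymptotic-containment hypothesis at the parameters $(K,\epsilon/2)$: this produces a pair $(K',\epsilon')$ such that every $A\in\Cp_i(K',\epsilon')$ admits a companion $B\in\Cp_j(K,\epsilon/2)$ with $\|\mu\sub{A}-\mu\sub{B}\|_1<\epsilon/2$. Only after $(K',\epsilon')$ is fixed do I appeal to the defining property of $\mu\in\TLIM_i(G)$. Setting $\eta=\min(\epsilon',\epsilon/2)$ and applying that property to the triple $(\Pcal,K',\eta)$ yields $A\in\Cp_i(K',\eta)$ with $\mu\sub{A}\in\Ncal(\mu,\Pcal,\eta)$. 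Because all tolerances are strict, $\Cp_i(K',\eta)\subseteq\Cp_i(K',\epsilon')$, so $A$ is a legitimate input to asymptotic containment and delivers the companion $B$ above.

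It remains to check that this $B$ witnesses membership for the triple $(\Pcal,K,\epsilon)$. The invariance is immediate from monotonicity, $B\in\Cp_j(K,\epsilon/2)\subseteq\Cp_j(K,\epsilon)$. For the neighborhood condition, the one genuinely substantive step is to convert the $L_1$-closeness of $\mu\sub{A}$ and $\mu\sub{B}$ into closeness on each finite partition cell: for $E\in\Pcal$, reading $\mu\sub{A},\mu\sub{B}$ as elements of $\Lone(G)\subset\LinftyS(G)$ gives $|\mu\sub{A}(E)-\mu\sub{B}(E)|=|\langle\mu\sub{A}-\mu\sub{B},\Onebb_E\rangle|\leq\|\mu\sub{A}-\mu\sub{B}\|_1<\epsilon/2$, since $\|\Onebb_E\|_\infty\leq 1$. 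Then by the triangle inequality $|\mu(E)-\mu\sub{B}(E)|\leq|\mu(E)-\mu\sub{A}(E)|+|\mu\sub{A}(E)-\mu\sub{B}(E)|<\eta+\epsilon/2\leq\epsilon$, so $\mu\sub{B}\in\Ncal(\mu,\Pcal,\epsilon)$. As $(\Pcal,K,\epsilon)$ was arbitrary, $\mu\in\TLIM_j(G)$.

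I do not expect a serious obstacle here; the lemma is essentially bookkeeping. The only place demanding care is the interplay of tolerances just described: $\eta$ must be chosen simultaneously fine enough to feed asymptotic containment ($\eta\leq\epsilon'$) and fine enough to leave room on the neighborhood side ($\eta\leq\epsilon/2$), and it is the initial halving of $\epsilon$ that creates that room. The monotonicity of $\Cp_i$, $\Cp_j$, and $\Ncal$ in their tolerance parameters, which makes every containment above go through, follows at once from the strict inequalities in \cref{Ci definitions}.
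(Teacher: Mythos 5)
Your proof is correct and follows the same route as the paper's: apply the definition of $\TLIM_i$ at the parameters $(K',\epsilon')$ supplied by asymptotic containment, transfer to $B$ via the $L_1$ bound, and finish with the triangle inequality. The paper simply absorbs your explicit $\epsilon/2$ bookkeeping into a final ``since $\epsilon$ was arbitrary,'' ending at tolerance $2\epsilon$ rather than $\epsilon$; the content is identical.
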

\begin{proof}
Pick \((\Pcal, K, \epsilon)\) and \(\mu\in\TLIM_i(G)\).
By hypothesis, obtain \((K', \epsilon')\).
Obtain \(A\in\Cp_i(K', \epsilon')\) with \(\mu\sub{A}\in\Ncal(\mu, \Pcal, \epsilon)\).
Obtain \(B\in\Cp_j(K, \epsilon)\) with \(\norm{\mu\sub{A} - \mu\sub{B}}_1 < \epsilon\).
By the triangle inequality, \(\mu\sub{B}\in\Ncal(\mu, \Pcal, 2\epsilon)\).
Since \(\epsilon\) was arbitrary, we conclude \(\mu\in\TLIM_j(G)\).
\end{proof}

\begin{lem}\label{Lemma no TLIMi left behind}
\(\TLIM_0(G) = \TLIM_1(G) = \TLIM_2(G).\)
\end{lem}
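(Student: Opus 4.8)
The plan is to invoke the preceding lemma and establish a cycle of asymptotic containments among $\Cp_0$, $\Cp_1$, $\Cp_2$. Throughout I may assume $e\in K$ and $K=K^{-1}$, since enlarging $K$ only strengthens each invariance condition and these classes remain cofinal. Everything hinges on the exact boundary identity, valid when $e\in K$,
\[\partial_K(A) = KA\setminus\bigcap_{x\in K}xA = (KA\setminus A)\sqcup\bigl(A\cap K(G\setminus A)\bigr),\]
which splits the $K$-boundary into an \emph{outer} collar $KA\setminus A$ and an \emph{inner} collar $A\cap K(G\setminus A) = A\setminus\bigcap_{x\in K}xA$.

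First I would record the two free inclusions. Since $e\in K$ gives $A\subseteq KA$ and $\bigcap_{x\in K}xA\subseteq A$, we get $KA\setminus A\subseteq\partial_K(A)$, so $\Haar{KA\Sdiff A}=\Haar{KA\setminus A}\le\Haar{\partial_K(A)}$ and hence $\Cp_2(K,\epsilon)\subseteq\Cp_1(K,\epsilon)$. For symmetric $K$ and $x\in K$, left-invariance of Haar measure gives $\Haar{A\setminus xA}=\Haar{x^{-1}A\setminus A}\le\Haar{KA\setminus A}$, so $A\in\Cp_1(K,\epsilon)$ forces $\Haar{xA\Sdiff A}\le 2\Haar{KA\setminus A}<2\epsilon\Haar{A}$; thus $\Cp_1(K,\epsilon)\subseteq\Cp_0(K,2\epsilon)$. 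Taking $B=A$, the preceding lemma yields $\TLIM_2(G)\subseteq\TLIM_1(G)\subseteq\TLIM_0(G)$. For the reverse passage $\TLIM_0\subseteq\TLIM_1$ I would apply \cref{Emerson Greenleaf Lemma}: given $(K,\epsilon)$ with $K$ compact of nonempty interior and $e\in K$, put $K'=K^2K^{-1}$; the lemma produces from any $(K',\epsilon')$-invariant $F'$ a subset $F\subseteq F'$ that is $(K,M\sqrt{\epsilon'})$-absorbent with $\Haar{F'\setminus F}<\sqrt{\epsilon'}\Haar{F}$. Choosing $\epsilon'$ small makes $F\in\Cp_1(K,\epsilon)$ and, using $\norm{\mu\sub{F'}-\mu\sub{F}}_1 = 2\Haar{F'\setminus F}/\Haar{F'}<2\sqrt{\epsilon'}<\epsilon$, this is exactly asymptotic containment of $\Cp_0$ in $\Cp_1$. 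Hence $\TLIM_0=\TLIM_1$.

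It remains to close the cycle by showing $\Cp_1$ is asymptotically contained in $\Cp_2$, and this is the main obstacle. By the identity above the outer collar $\Haar{KA\setminus A}$ is immediately controlled by absorbency (take $K\subseteq K'$), but absorbency governs outward expansion, not the inner collar. The geometric input I would use is the reflection
\[A\cap K(G\setminus A)\subseteq K\bigl(K^{-1}A\setminus A\bigr),\]
since any inner-boundary point $z=xw$ with $x\in K$, $w\notin A$ has $w=x^{-1}z\in K^{-1}A\setminus A$; this sends the inner collar onto the outer collar of $A$ at the cost of a dilation by $K$. The crux is then to bound $\Haar{A\cap K(K^{-1}A\setminus A)}$ by a small multiple of $\Haar{A}$. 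Because dilation by the infinite compact set $K$ can inflate measure, this cannot be done by set algebra alone and is precisely where the Ornstein--Weiss quasi-tiling estimates of this section are needed, controlling the measure of the collar after dilation for sufficiently invariant $A$. Granting a bound $\Haar{A\cap K(G\setminus A)}<C\epsilon'\Haar{A}$, one obtains $A\in\Cp_2\bigl(K,(1+C)\epsilon'\bigr)$ with $B=A$, which establishes $\Cp_1\subseteq\Cp_2$ asymptotically and hence $\TLIM_1\subseteq\TLIM_2$. Combined with the inclusions above, all three sets coincide.
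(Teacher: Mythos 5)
Your treatment of three of the four containments is correct and is essentially the paper's: $\Cp_2(K,\epsilon)\subset\Cp_1(K,\epsilon)$ is free, $\Cp_1(K,\epsilon)\subset\Cp_0(K,2\epsilon)$ follows from $\Haar{xA\Sdiff A}=2\Haar{xA\setminus A}\leq 2\Haar{KA\setminus A}$, and the asymptotic containment of $\Cp_0$ in $\Cp_1$ is exactly \cref{Emerson Greenleaf Lemma}. The genuine gap is in the fourth step, where you propose to show that a sufficiently absorbent set $A$ is \emph{itself} in $\Cp_2$ by bounding the inner collar $A\cap K(G\setminus A)$, and you ultimately only ``grant'' the needed bound, deferring it to Ornstein--Weiss quasi-tiling. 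That bound is false in general, so no amount of quasi-tiling will deliver it with $B=A$: take $A$ to be a large ball with a union of tiny holes removed, the holes having small total measure but spaced more finely than $K$. Then $KA\setminus A$ is small (so $A$ is very absorbent, even for much larger $K'$), yet $\bigcap_{x\in K}xA$ can be empty, so $\partial_K(A)=KA$ has measure comparable to $\Haar{A}$. The inner collar of a given set simply is not controlled by any amount of outward invariance; the asymptotic containment of $\Cp_1$ in $\Cp_2$ genuinely requires replacing $A$ by a different set.

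The paper's fix is elementary and does not touch quasi-tiling (which enters only later, in the unimodular theorem). Set $J=K\cup K^{-1}\cup\{e\}$, assume $A\in\Cp_1(J^2,\epsilon)$, and pass to $B=JA$. Since $J$ is symmetric and contains $e$, for every $x\in J$ one has $xB=xJA\supset A$, hence $A\subset\bigcap_{x\in J}xB$ and therefore
\[\partial_J(B)\;=\;JB\setminus\bigcap_{x\in J}xB\;\subset\;J^2A\setminus A,\]
whose measure is less than $\epsilon\Haar{A}\leq\epsilon\Haar{B}$; so $B\in\Cp_2(J,\epsilon)\subset\Cp_2(K,\epsilon)$. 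Moreover $A\subset B\subset J^2A$ gives $\Haar{B}<(1+\epsilon)\Haar{A}$, hence $\norm{\mu\sub{A}-\mu\sub{B}}_1=2\Haar{B\setminus A}/\Haar{B}<2\epsilon$. In other words, thickening $A$ to $JA$ fills in the holes and converts outward absorbency into control of the full boundary; this is the one idea your proposal is missing.
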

\begin{proof}
Trivially, \(\Cp_2(K,\epsilon)\subset \Cp_1(K,\epsilon)\).

To prove \(\Cp_1(K, \epsilon)\subset \Cp_0(K, 2\epsilon)\), suppose \(x\in K\) and \(A\in\Cp_1(K,\epsilon)\).
Then $\Haar{xA \Sdiff A}  = 2\Haar{xA \setminus A} \leq 2\Haar{KA \setminus A} < 2\epsilon\Haar{A}.$

\cref{Emerson Greenleaf Lemma} shows that \(\Cp_0\) is asymptotically contained in \(\Cp_1\).

To prove \(\Cp_1\) is asymptotically contained in \(\Cp_2\),
pick \((K,\epsilon)\) and let \(J = K \cup K^{-1} \cup \set{e}\).
Suppose \(A\in\Cp_1\paren{J^2, \epsilon}\), and let \(B = JA\).
Then \(A \subset \partial_{J}(B)\), and \(JB \setminus \partial_J(B) \subset J^2 A \setminus A\),
which shows \(B\in\Cp_2(J, \epsilon) \subset \Cp_2(K, \epsilon)\).
Also, since \(B \subset J^2 A\), we see \(|B| < (1+\epsilon)|A|\), hence
\(\norm{\mu\sub{A} - \mu\sub{B}}_1
	= 2\frac{|A\setminus B|}{|A|}
	< 2\epsilon\).
\end{proof}

\begin{lem}\label{Lemma many disjoint sets 2}
Suppose \(G\) is noncompact, and pick \((\Pcal, K, \epsilon)\) and $\mu\in\TLIM_0(G)$.
There exists a family of mutually disjoint sets \(\set*{B_n: n\in\Nbb} \subset \Cp_1(K,\epsilon)\)
with \(\set*{\mu\sub{B_n} : n\in\Nbb} \subset \Ncal(\mu, \Pcal, \epsilon)\).
\end{lem}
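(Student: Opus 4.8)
The plan is to mirror the transfinite construction of \cref{Lemma many disjoint sets}, but to run it as an ordinary countable induction and to draw the approximating sets from $\Cp_1$ rather than $\Cp_0$. Two observations make this work. First, \cref{Lemma no TLIMi left behind} gives $\mu\in\TLIM_0(G)=\TLIM_1(G)$, so for every refinement of $\Pcal$ and every $(K,\delta)$ I can locate some $A\in\Cp_1(K,\delta)$ with $\mu\sub{A}$ in the corresponding partition-neighborhood of $\mu$. Second---and this is what replaces the cardinality bookkeeping of \cref{Lemma many disjoint sets}---at each finite stage the union of the sets chosen so far is a finite union of compact sets, hence compact, so it has $\kappa$-value $1$. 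Since $G$ is noncompact we have $\kappa(G)\geq\Nbb>1$, so \cref{Lemma if Kappa S is small then measure 0} (applicable because $\mu\in\LIM(G)$ by \cref{Tlim subset Lim}) forces that union to have $\mu$-measure zero.

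Concretely, I would first enlarge $K$ so that $e\in K$ (harmless, by the remark following the definition of $(K,\epsilon)$-absorbent) and put $\delta=\epsilon/4$. Writing $\Pcal=(E_1,\dots,E_p)$, suppose mutually disjoint sets $B_0,\dots,B_{n-1}\in\Cp_1(K,\epsilon)$ with $\mu\sub{B_m}\in\Ncal(\mu,\Pcal,\epsilon)$ have already been chosen, and set $B=\bigcup_{m<n}B_m$. This $B$ is compact, so $\mu(B)=0$ by the observation above. Refine the partition to $\Pcal_B=(E_1\setminus B,\dots,E_p\setminus B,B)$, use $\mu\in\TLIM_1(G)$ to pick $A\in\Cp_1(K,\delta)$ with $\mu\sub{A}\in\Ncal(\mu,\Pcal_B,\delta)$, and define $B_n=A\setminus B$. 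Disjointness from the earlier sets is immediate because $B_n\cap B=\varnothing$.

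The quantitative checks become routine once one notes that $\mu\sub{A}(B)<\delta$ (since $\mu(B)=0$ and $\mu\sub{A}\in\Ncal(\mu,\Pcal_B,\delta)$), which is really an estimate on \emph{Haar} measure: $\Haar{A\cap B}<\delta\Haar{A}$, whence $\Haar{B_n}>(1-\delta)\Haar{A}$. For the mean, $B_n\subset A$ gives $\norm{\mu\sub{A}-\mu\sub{B_n}}_1=2\Haar{A\cap B}/\Haar{A}<2\delta$, and the same triangle-inequality step as in \cref{Lemma many disjoint sets} places $\mu\sub{B_n}\in\Ncal(\mu,\Pcal,3\delta)\subset\Ncal(\mu,\Pcal,\epsilon)$.

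The one place I expect friction is verifying $B_n\in\Cp_1(K,\epsilon)$, since excising $B$ could a priori spoil the absorbent condition; this is the essential difference from \cref{Lemma many disjoint sets}, which only needed the weaker $\Cp_0$-invariance. Here I would use the inclusion $KB_n\setminus B_n\subset(KA\setminus A)\cup(A\cap B)$, valid because $B_n\subset A$. Both pieces on the right have measure below $\delta\Haar{A}$---the first because $A\in\Cp_1(K,\delta)$ and $e\in K$ (so $\Haar{KA\Sdiff A}=\Haar{KA\setminus A}<\delta\Haar{A}$), the second by the bound on $\Haar{A\cap B}$ above---so $\Haar{KB_n\setminus B_n}<2\delta\Haar{A}$. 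Dividing by $\Haar{B_n}>(1-\delta)\Haar{A}$ and again using $e\in K$ to replace the symmetric difference by $KB_n\setminus B_n$, I get $\Haar{KB_n\Sdiff B_n}/\Haar{B_n}<2\delta/(1-\delta)<\epsilon$, so $B_n\in\Cp_1(K,\epsilon)$. This closes the induction and produces the desired sequence $\set*{B_n:n\in\Nbb}$.
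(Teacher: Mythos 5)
Your argument reaches the correct conclusion by a genuinely different route. The paper's proof is a two-step reduction: it invokes \cref{Lemma many disjoint sets} to produce countably many mutually disjoint $A_n\in\Cp_0(K',\epsilon')$ with $\mu\sub{A_n}\in\Ncal(\mu,\Pcal,\epsilon/2)$, and then applies \cref{Emerson Greenleaf Lemma} to each $A_n$ to extract $B_n\in\Cp_1(K,\epsilon)$ with $\norm{\mu\sub{A_n}-\mu\sub{B_n}}_1<\epsilon/2$; disjointness of the $B_n$ comes for free because Emerson--Greenleaf produces a \emph{subset} $B_n\subset A_n$. You instead rerun the inductive construction of \cref{Lemma many disjoint sets} directly inside $\Cp_1$, using $\TLIM_0(G)=\TLIM_1(G)$ from \cref{Lemma no TLIMi left behind} as the supply of approximants, and you supply the one new ingredient this requires: a stability estimate showing that excising a piece of relative Haar measure less than $\delta$ from an element of $\Cp_1(K,\delta)$ keeps you in $\Cp_1(K,\cdot)$ after adjusting constants. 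Your inclusion $KB_n\setminus B_n\subset(KA\setminus A)\cup(A\cap B)$ is correct and is exactly the point where the absorbent condition could have broken. What your route buys is that this lemma no longer leans on the quantitative content of Emerson--Greenleaf; what the paper's route buys is brevity and automatic disjointness.

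Two repairs are needed, neither fatal. First, $B_n=A\setminus B$ with $B$ a finite union of compact sets is $A$ minus a \emph{closed} set, hence relatively open in $A$ and in general not compact, so as written $B_n\notin\Cp_1$. The paper's own \cref{Lemma many disjoint sets} dodges precisely this by deleting an open thickening $\bigcup_{m<n}B_mUU^{-1}$ (for a precompact open $U$) rather than the bare union; the closure of that thickening is still compact, so \cref{Lemma if Kappa S is small then measure 0} still forces its $\mu$-measure to vanish, and all of your estimates go through verbatim. Second, after enlarging $K$ to contain $e$ you must eventually convert $(K\cup\{e\})$-absorbency back into $K$-absorbency, which costs a factor of $2$ by the lemma following the definition of $(K,\epsilon)$-absorbent; with $\delta=\epsilon/4$ your final bound becomes $4\delta/(1-\delta)$, which slightly exceeds $\epsilon$, so take $\delta=\epsilon/8$. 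Neither point affects the soundness of the approach.
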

\begin{proof}
By \cref{Emerson Greenleaf Lemma}, obtain $(K', \epsilon')$ so
for each $A\in\Cp_0(K',\epsilon')$,
there exists \(B\in\Cp_1(K, \epsilon)\)
with \(\norm{\mu\sub{A} - \mu\sub{B}}_1 < \epsilon/2\).
By \cref{Lemma many disjoint sets},
choose a family of mutually disjoint sets \(\set*{A_n : n\in\Nbb}\subset\Cp_0(K', \epsilon')\)
with \(\set{\mu\sub{A_n} : n\in\Nbb}\subset\Ncal\paren*{\mu,\Pcal, \epsilon/2}\).
For each \(n\), choose \(B_n \in \Cp_1(K, \epsilon)\)
with \(\norm{\mu\sub{A_n} - \mu\sub{B_n}}_1 < \epsilon/2\).
By the triangle inequality, \(\mu\sub{B_n} \in \Ncal(\mu, \Pcal, \epsilon)\).
\end{proof}

\begin{lem}\label{Lemma A can be large}
Suppose $G$ is noncompact unimodular.
Pick $(\Pcal, K, \epsilon)$, $\mu\in\TLIM_0(G)$, and $M>0$.
Then there exists $B\in\Cp_2(K,\epsilon)$ with $|B| \geq M$
	and $\mu\sub{B}\in\Ncal(\mu,\Pcal,\epsilon)$.
\end{lem}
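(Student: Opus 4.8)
The plan is to build $B$ as a large disjoint union of comparatively small invariant sets, each of which already approximates $\mu$, and then enlarge slightly to pass from $\Cp_1$ to $\Cp_2$. Fix a compact symmetric neighborhood $V$ of $e$ and set $J = K\cup K^{-1}\cup V$, so that $J$ is compact and symmetric, $K\subseteq J$, $e\in V\subseteq J$, and $\Haar{J^2} > 0$. Put $\delta = \epsilon/3$. Since $G$ is noncompact and $\mu\in\TLIM_0(G)$, \cref{Lemma many disjoint sets 2} applied to $(\Pcal, J^2, \delta)$ produces mutually disjoint sets $\{B_n : n\in\Nbb\}\subset\Cp_1(J^2,\delta)$ with every $\mu\sub{B_n}\in\Ncal(\mu,\Pcal,\delta)$. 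I will union finitely many of these.

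The key observation --- and the only place unimodularity is used --- is that every set in $\Cp_1(J^2,\delta)$ has measure bounded below by a constant independent of the set. Indeed, if $b\in B_n$ then $J^2 b\subseteq J^2 B_n$ and $\Haar{J^2 b} = \Haar{J^2}$ because $G$ is unimodular, while $e\in J^2$ gives $B_n\subseteq J^2 B_n$ and hence $\Haar{J^2 B_n} < (1+\delta)\Haar{B_n}$; combining these yields $\Haar{B_n} > \Haar{J^2}/(1+\delta) =: c > 0$. Now choose $N$ with $Nc > M$ and set $A = B_1\cup\cdots\cup B_N$. Because the $B_n$ are disjoint, $\mu\sub{A} = \sum_{n=1}^N \tfrac{\Haar{B_n}}{\Haar{A}}\,\mu\sub{B_n}$ is a convex combination of the $\mu\sub{B_n}$, so $\mu\sub{A}\in\Ncal(\mu,\Pcal,\delta)$; and the containment $J^2 A\setminus A\subseteq\bigcup_{n}(J^2 B_n\setminus B_n)$ shows $A\in\Cp_1(J^2,\delta)$, exactly as in the disjoint case of \cref{Lemma Invariance of Union}. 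Finally $\Haar{A} = \sum_{n=1}^N\Haar{B_n} > Nc > M$.

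It remains to convert the large set $A\in\Cp_1(J^2,\delta)$ into a $\Cp_2$-set without losing its size or its approximation of $\mu$, and this is precisely the asymptotic containment of $\Cp_1$ in $\Cp_2$ established in the proof of \cref{Lemma no TLIMi left behind}: setting $B = JA$ one has $B\in\Cp_2(J,\delta)\subseteq\Cp_2(K,\delta)\subseteq\Cp_2(K,\epsilon)$, while $B\supseteq A$ gives $\Haar{B}\geq\Haar{A} > M$ and $\Haar{B} < (1+\delta)\Haar{A}$ gives $\norm{\mu\sub{A} - \mu\sub{B}}_1 < 2\delta$. For each $E\in\Pcal$ we then have $\abs{\mu\sub{B}(E) - \mu(E)}\leq\abs{\mu\sub{A}(E)-\mu(E)} + \norm{\mu\sub{A}-\mu\sub{B}}_1 < \delta + 2\delta = \epsilon$, so $\mu\sub{B}\in\Ncal(\mu,\Pcal,\epsilon)$ and the construction is complete. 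The main obstacle is the uniform lower bound on $\Haar{B_n}$: in a nonunimodular group $\Haar{J^2 b} = \Haar{J^2}\,\Delta(b)$ can be arbitrarily small, so the invariant pieces supplied by \cref{Lemma many disjoint sets 2} may have rapidly vanishing measure, their total can converge, and no large good set need exist --- which is exactly the obstruction isolated by \cref{Theorem Non-Unimodular}.
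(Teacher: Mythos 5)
Your proof is correct and follows essentially the same route as the paper: apply \cref{Lemma many disjoint sets 2} to get countably many disjoint invariant pieces approximating $\mu$, use unimodularity to get a uniform lower bound on their measures (so a finite disjoint union can be made as large as desired while staying invariant and, by convexity, close to $\mu$), and then pass from $\Cp_1$ to $\Cp_2$ by thickening to $B = JA$ as in \cref{Lemma no TLIMi left behind}. Your derivation of the lower bound $\Haar{B_n} > \Haar{J^2}/(1+\delta)$ is in fact slightly more careful than the paper's corresponding inequality, and the closing remark about the failure in the non-unimodular case correctly identifies the obstruction exploited in \cref{Theorem Non-Unimodular}.
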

\begin{proof}
Without loss of generality, assume \(|K| > 0\).
Let \(\delta = \epsilon/3\).
By \cref{Lemma many disjoint sets 2}, choose disjoint sets
$\set{A_n : n\in\Nbb} \subset \Cp_1\paren{K^2,\epsilon}$
with \(\set{\mu\sub{A_n} : n\in\Nbb}\subset\Ncal\paren{\mu,\Pcal,\delta}\).
For each \(n\), $\Haar{A_n} > (1-\epsilon)\Haar{K^2 A_n}> \Haar{K^2}$,
where the second inequality holds because \(G\) is unimodular.
For sufficiently large \(m\), \(\Haar{A_1 \cup \hdots \cup A_m}_1 \geq M\).
Define \(A = A_1 \cup \hdots \cup A_m\).
Since this union is disjoint, it is easy to check \(A\in \Cp_1(K^2, \epsilon)\).
Notice \(\mu\sub{A}= \frac{\Haar{A_1}}{\Haar{A}} \mu\sub{A_1}
	+  \hdots + \frac{\Haar{A_m} }{\Haar{A}}\mu\sub{A_m} \in \Ncal\paren*{\mu,\Pcal,\delta}\),
as the latter set is convex.
Finally, let $B = KA$.
It follows, as in the proof of \cref{Lemma no TLIMi left behind},
that \(B\in\Cp_2(K, \epsilon)\) and \(\norm{\mu\sub{A} - \mu\sub{B}}_1 < 2\delta\),
hence \(\mu\sub{B}\in\Ncal(\mu,\Pcal, 3\delta)\) by the triangle inequality.
\end{proof}

Recall that \(\sqcup\) denotes a disjoint union.
\begin{defn}\label{Definition Quasitiling}
Let \(B\subset G\) and \(T_1 \subset \hdots \subset T_n \subset G\) have finite positive measure.
\(\Tcal = \set*{T_1, \hdots, T_n}\) is said to
\(\epsilon\)-quasi-tile \(B\) if there exists
\(R = S_1 \cup \hdots \cup S_N = R_1 \sqcup \hdots \sqcup R_N \subset B\) satisfying the following:
\\\phantom{ }
(1) For \(i\in\set{1, \hdots, N}\), \(S_i = Tx\) for some $T\in\Tcal$ and $x\in G$.
\\\phantom{ }
(2) For \(i\in\set{1, \hdots, N}\), \(R_i \subset S_i\)
	with \(\norm{\mu\sub{R_i} - \mu\sub{S_i}}_1 < \epsilon\).
\\\phantom{ }
(3) \(\norm{\mu\sub{B} - \mu\sub{R}}_1 < \epsilon\).
\\Ornstein and Weiss \cite{Ornstein1987} give the following weaker conditions in place of (2) and (3):
\\\phantom{ }
\((2')\) For \(i\in\set{1, \hdots, N}\), \(R_i \subset S_i\)
	with \(\Haar{R_i} > (1-\epsilon) \Haar{S_i}\).
\\\phantom{ }
\((3')\) \(|R| > (1-\epsilon) |B|\).
\\Of course \((2')\) implies \(\norm{ \mu\sub{S_i} - \mu\sub{R_i} }_1
	= 2\frac{\Haar{S_i \setminus R_i}}{\Haar{S_i}}
	< 2 \epsilon\),
and \((3')\) implies \(\norm{ \mu\sub{A} - \mu\sub{R} }_1
	= 2 \frac{\Haar{B\setminus R}}{\Haar{B}}
	< 2\epsilon\).
Hence the definitions become equivalent
when we quantify over all \(\epsilon \in (0,1)\), as in \cref{Ornstein Weiss Lemma}.
\end{defn}

\begin{lem}\label{Ornstein Weiss Lemma}
Let \(G\) be unimodular. Pick \((K,\epsilon)\).
Then there exists \(\epsilon' >0\) and \(\Tcal = \set*{T_1, \hdots, T_n} \subset\Cp_2(K,\epsilon)\)
such that \(\Tcal\) \(\epsilon\)-quasi-tiles any
\(B\in\Cp_2(T_{\mathrlap{n}}{\mathstrut}^{-1} T_n, \epsilon')\).
\end{lem}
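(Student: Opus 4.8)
The plan is to carry out the Ornstein--Weiss quasi-tiling construction in the unimodular locally compact setting, taking advantage of the fact that, by \cref{Lemma no TLIMi left behind} and \cref{Emerson Greenleaf Lemma}, the invariance notions $\Cp_0,\Cp_1,\Cp_2$ are mutually asymptotically contained, so that I am free to produce tiles carrying whatever flavor of invariance each estimate demands. First I would fix the number of tiles: choose $n\in\Nbb$ so large that $(1-\eta)^n < \epsilon$, where $\eta = \eta(\epsilon)\in(0,1)$ is the per-pass covering density furnished by the packing lemma below. Next I would construct the nested family $\Tcal = \set*{T_1,\dots,T_n}$ by recursion, building up from $T_1$ and using \cref{exists topological folner net} to choose each $T_i\in\Cp_2(K,\epsilon)$ so strongly invariant that it simultaneously lies in whatever auxiliary class $\Cp_0(L_i,\delta_i)$ the covering estimate requires, with $T_1\subseteq\dots\subseteq T_n$ and each finer tile small relative to the coarser ones. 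Finally I would take $\epsilon'$ small enough that every $B\in\Cp_2(T_n^{-1}T_n,\epsilon')$ is invariant enough to run the argument.

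The technical heart is a single packing lemma, whose proof uses unimodularity in an essential way through the Fubini identity
\[\int_G \Haar{Tx\cap E}\dd{x} = \Haar{T}\,\Haar{E},\qquad E\subset G\ \text{measurable},\]
valid because $\Delta\equiv 1$ forces $\Haar{T^{-1}y} = \Haar{T}$. The lemma asserts: if $T$ is sufficiently invariant and $C\subseteq B$ is the region covered so far, then a maximal $\epsilon$-disjoint family $\set*{Tx_i}$ of left-translates, each contained in $B$ and each $\epsilon$-disjoint from $C$, covers a definite proportion $\eta$ of the uncovered part $B\setminus C$. The point is that maximality forces $\Haar{Tx\cap(C\cup U)} > \epsilon\Haar{T}$ for every admissible center $x$ (those with $Tx\subseteq B$), where $U=\bigcup_i Tx_i$; integrating this inequality against the identity above over the set of admissible centers, which has measure comparable to $\Haar{B}$ when $B$ is invariant, bounds $\Haar{C\cup U}$ from below, and a more careful accounting of how much of this bound is genuinely new shows that a fixed fraction of $B\setminus C$ gets filled.

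With the packing lemma in hand, I would run the greedy procedure from the coarsest tile to the finest: set $C_{n+1}=\varnothing$, and for $i=n,n-1,\dots,1$ adjoin a maximal $\epsilon$-disjoint family of translates of $T_i$, contained in $B$ and $\epsilon$-disjoint from the running cover $C_{i+1}$, to obtain $C_i$. Each pass multiplies the uncovered proportion by at most $1-\eta$, so after all $n$ passes the accumulated translates $S_1,\dots,S_N$ leave uncovered at most $(1-\eta)^n<\epsilon$ of $B$. This yields condition (1) of \cref{Definition Quasitiling} outright. To obtain genuine disjointness I would disjointify: since the family is $\epsilon$-disjoint I can choose pairwise disjoint $R_i\subseteq S_i$ with $\Haar{R_i} > (1-\epsilon)\Haar{S_i}$, giving $(2')$, while the coverage estimate gives $\Haar{R} > (1-\epsilon)\Haar{B}$, which is $(3')$. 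By the remarks in \cref{Definition Quasitiling}, $(2')$ and $(3')$ are equivalent to (2) and (3) once we quantify over all $\epsilon$, and since each $S_i$ is a translate of some $T_i\in\Cp_2(K,\epsilon)$, the tiling lies in the required class.

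The main obstacle is precisely the step that makes Ornstein--Weiss nontrivial: after the first pass the leftover region $B\setminus C$ is no longer invariant, so the packing lemma cannot simply be iterated on it. I would resolve this the way Ornstein and Weiss do, by always packing into the fixed, genuinely invariant set $B$ while only requiring $\epsilon$-disjointness from the current cover. This keeps the Fubini estimate anchored to $B$, so that each newly placed translate contributes mostly new measure, while the nested multiscale family $T_1\subseteq\dots\subseteq T_n$ ensures that the finer tiles can reach into the gaps left by the coarser ones. Keeping the invariance demands on the successive tiles (and on $B$, which must be invariant relative to $T_n^{-1}T_n$) consistent with a uniform per-pass density bound $\eta$ is the delicate bookkeeping that the full proof must manage, and it is where I expect essentially all of the difficulty to lie.
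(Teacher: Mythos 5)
The paper offers no proof of this lemma: it is quoted directly from \cite[p.~30, Theorem 3 and Remark]{Ornstein1987}, with only a translation of notation ($F_i$ versus $T_i$, and the inverted convention for $\partial_K(A)$ noted in \cref{Ci definitions}). Your proposal is a correct outline of the argument in that reference --- the maximality-plus-Fubini packing lemma (which is indeed exactly where unimodularity enters, via $\int_G |Tx\cap E|\,\text{d}x = |T|\,|E|$), the greedy multiscale iteration from the coarsest tile down while always packing into the fixed invariant set $B$, and the final disjointification yielding $(2')$ and $(3')$ of \cref{Definition Quasitiling} --- so in substance you are following the same route, merely writing out what the paper delegates to a citation. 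The one caution is that your per-pass claim (a maximal $\epsilon$-disjoint family covers a definite proportion $\eta$ of $B\setminus C$) is only needed, and only provable, while the uncovered region remains a definite fraction of $B$ (say $|B\setminus C|\ge\epsilon|B|$, after which one simply stops), and keeping $\eta$ uniform over all $n$ passes while the invariance demands on $T_1\subseteq\cdots\subseteq T_n$ and on $B$ cascade is precisely the bookkeeping you defer; that is where the cited proof spends essentially all of its effort.
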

\begin{proof}
This is \cite[p.\ 30, Theorem 3 and Remark]{Ornstein1987},
although Ornstein and Weiss write \(\set*{F_1, \hdots, F_K}\)
instead of \(\set*{T_1, \hdots, T_n}\),
and invert the definition of \(\partial_K(A)\) as described in \cref{Ci definitions}.
\end{proof}

\begin{lem}\label{Lemma disjointified are still invariant}
If \(S_i \in \Cp_1(K,\epsilon)\)
and \(R_i \subset S_i\) with \(\norm{\mu\sub{R_i} - \mu\sub{S_i}}_1 < \epsilon\),
then \(R_i \in \Cp_1(K,3\epsilon)\).
\end{lem}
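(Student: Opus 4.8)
The plan is to turn both hypotheses into plain measure inequalities and then estimate \(\Haar{K R_i \Sdiff R_i}\) directly. Writing \(R = R_i\) and \(S = S_i\), the inclusion \(R\subseteq S\) lets me invoke the formula \(\norm{\mu\sub{R} - \mu\sub{S}}_1 = 2\Haar{S\setminus R}/\Haar{S}\) recorded earlier in the paper, so the hypothesis \(\norm{\mu\sub{R}-\mu\sub{S}}_1 < \epsilon\) becomes \(\Haar{S\setminus R} < \tfrac{\epsilon}{2}\Haar{S}\), and in particular \(\Haar{R} > (1-\tfrac\epsilon2)\Haar{S}\). The hypothesis \(S\in\Cp_1(K,\epsilon)\) is simply \(\Haar{KS\Sdiff S} < \epsilon\Haar{S}\).

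First I would reduce to the case \(e\in K\), which is legitimate by the remark preceding the definition of the absorbent condition (one may always quantify over pairs \((K,\epsilon)\) with \(e\in K\)). This reduction is the crucial one: when \(e\in K\) we have \(R\subseteq KR\), so \(KR\Sdiff R = KR\setminus R\), and the potentially troublesome term \(R\setminus KR\) disappears. Without it, \(R\setminus KR\) cannot be controlled by \(\Haar{S\setminus R}\), since passing a small set through \(K\) can inflate its measure arbitrarily; this is exactly why I expect the \(e\in K\) reduction to be the main obstacle, and everything downstream to be routine.

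Next I would decompose \(KR\setminus R\) according to whether a point lies in \(S\): a point of \(KR\setminus R\) lying outside \(S\) belongs to \(KS\setminus S\subseteq KS\Sdiff S\) (because \(KR\subseteq KS\)), while a point of \(KR\setminus R\) lying inside \(S\) belongs to \(S\setminus R\). Hence \(KR\setminus R\subseteq (KS\Sdiff S)\cup(S\setminus R)\), which yields \(\Haar{KR\Sdiff R} < \epsilon\Haar{S} + \tfrac\epsilon2\Haar{S} = \tfrac{3\epsilon}{2}\Haar{S}\).

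Finally I would divide by \(\Haar{R}\) and use \(\Haar{S} < \Haar{R}/(1-\tfrac\epsilon2)\) to obtain \(\Haar{KR\Sdiff R}/\Haar{R} < \tfrac{3\epsilon/2}{1-\epsilon/2} = \tfrac{3\epsilon}{2-\epsilon}\), which is less than \(3\epsilon\) because \(\epsilon < 1\) forces \(2-\epsilon > 1\). This is precisely the assertion \(R\in\Cp_1(K,3\epsilon)\). The comfortable slack between \(\tfrac{3\epsilon}{2-\epsilon}\) and \(3\epsilon\) confirms that the constant \(3\) in the statement is generous rather than sharp, so no delicate optimization of constants is needed.
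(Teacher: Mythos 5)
Your proof is correct and follows essentially the same route as the paper's: the same decomposition $KR_i\setminus R_i\subseteq(KS_i\Sdiff S_i)\cup(S_i\setminus R_i)$, the same identity $\|\mu_{R_i}-\mu_{S_i}\|_1=2|S_i\setminus R_i|/|S_i|$, and the same division by $|R_i|>(1-\tfrac{\epsilon}{2})|S_i|$. The paper's one-line proof silently bounds only $|KR_i\setminus R_i|$ (i.e.\ it tacitly works in the case $e\in K$, where $KR_i\Sdiff R_i=KR_i\setminus R_i$), so your explicit reduction to $e\in K$ merely makes visible an assumption the paper leaves implicit.
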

\begin{proof}
\(\frac{\Haar{K R_i \setminus R_i}}{\Haar{R_i}}
	\leq \frac{\Haar{S_i}}{\Haar{R_i}}\cdot \frac{\Haar{KS_i \setminus S_i}}{\Haar{S_i}}
		+ \frac{\Haar{S_i \setminus R_i}}{\Haar{R_i}}
	< (1+\epsilon)\cdot\epsilon + \norm{ \mu\sub{R_i}  - \mu\sub{S_i}}_1
	< 3\epsilon\).
\end{proof}

\begin{thm}\label{Theorem Unimodular Case}
If $G$ is noncompact unimodular, $\TLIM_0(G) = \TLIM(G)$.
\end{thm}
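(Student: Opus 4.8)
The plan is to verify the hypothesis of \cref{Theorem Hindman and Strauss Technique}: for each triple $(p,K,\epsilon)$ I must produce a single $M>0$, depending only on $p$, $K$, $\epsilon$, so that every $\mu\in\TLIM_0(G)$ can be approximated on every $p$-part partition $\Pcal$ by some $(K,\epsilon)$-invariant set whose measure lies within a factor $(1\pm\epsilon)$ of $M$. The whole difficulty is the uniformity: the target size $M$ may not depend on $\Pcal$ itself. To manufacture invariant sets of a prescribed size, I will chop a large approximating set into \emph{uniformly bounded} pieces via Ornstein--Weiss quasi-tiling, and then reassemble a sub-collection of those pieces to hit the target size while preserving the partition profile.

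Fix $(p,K,\epsilon)$ and a small $\delta>0$. First I would apply \cref{Ornstein Weiss Lemma} to obtain tiles $\Tcal=\{T_1,\dots,T_n\}\subset\Cp_2(K,\delta)$ and a threshold $\epsilon'$ so that $\Tcal$ $\delta$-quasi-tiles any set in $\Cp_2(T_n^{-1}T_n,\epsilon')$; set $L=\max_j|T_j|$, a constant depending only on $(K,\epsilon)$. Given $\mu\in\TLIM_0(G)$ and $\Pcal$, \cref{Lemma A can be large} (this is the step that uses noncompact unimodularity) supplies a set $B\in\Cp_2(T_n^{-1}T_n,\epsilon')$ with $|B|$ as large as I like and $\mu_B\in\Ncal(\mu,\Pcal,\delta)$. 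Quasi-tiling $B$ by $\Tcal$ yields $R=R_1\sqcup\dots\sqcup R_N\subset B$ with $\mu_R\in\Ncal(\mu,\Pcal,2\delta)$, where each $R_i$ sits inside a translate $S_i=T_{j(i)}x_i$ and $\mu_{R_i}\approx\mu_{S_i}$. Since $G$ is unimodular, right translation preserves Haar measure, so each $S_i\in\Cp_1(K,\delta)$, and then \cref{Lemma disjointified are still invariant} gives $R_i\in\Cp_1(K,3\delta)$; crucially $|R_i|\le|T_{j(i)}|\le L$, a bound independent of $\mu$, $\Pcal$, and $B$.

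The heart of the argument is a purely combinatorial selection. Record each piece by its profile vector $v_i=(|R_i\cap E_1|,\dots,|R_i\cap E_p|)$, so that $|v_i|_1=|R_i|\le L$ while $\sum_i v_i/|R|$ is within $2\delta$ of $(\mu(E_1),\dots,\mu(E_p))$ in each coordinate. Assigning every piece the common fractional weight $M/|R|$ sends $\sum_i (M/|R|)v_i$ to within $2\delta M$ of the target $(M\mu(E_k))_k$; an elementary rounding of these weights to a $0/1$ choice $J\subset\{1,\dots,N\}$ (a probabilistic selection works equally well) perturbs each coordinate of $\sum_{i\in J}v_i$ by at most $O(pL)$. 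Because $|B|$, hence $N$, is taken enormous and each $|v_i|\le L$, choosing $M$ large relative to $L$, $p$, and $\epsilon$ — a choice depending only on $(p,K,\epsilon)$, exactly as \cref{Theorem Hindman and Strauss Technique} demands — makes this error negligible, so $A=\bigsqcup_{i\in J}R_i$ satisfies $|A|/M\in(1-\epsilon,1+\epsilon)$ and $|A\cap E_k|\approx M\mu(E_k)$, i.e.\ $\mu_A\in\Ncal(\mu,\Pcal,\epsilon)$. Finally $A$ is a disjoint union of sets in $\Cp_1(K,3\delta)$, so $A\in\Cp_1(K,3\delta)$ (as in \cref{Lemma Invariance of Union}), whence $A\in\Cp(K,\epsilon)$ for $\delta$ small via the asymptotic comparisons of \cref{Lemma no TLIMi left behind}. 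This verifies the hypothesis, and the theorem follows. The main obstacle — and the reason quasi-tiling is indispensable — is precisely this size control with $M$ independent of $\Pcal$: \cref{Lemma A can be large} produces sets that are merely large, and \cref{Lemma many disjoint sets 2} produces disjoint approximants of uncontrolled measure, so neither alone can feed the averaging step of Hindman and Strauss.
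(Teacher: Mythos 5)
Your proof is correct and follows the same architecture as the paper's: verify the hypothesis of \cref{Theorem Hindman and Strauss Technique} by feeding a large set from \cref{Lemma A can be large} into the Ornstein--Weiss quasi-tiling of \cref{Ornstein Weiss Lemma}, exploit the uniform bound $|R_i|\le|T_n|$ (which is where unimodularity enters, via right-translates having the same measure as the tiles), and then select a sub-collection of the pieces whose total measure is pinned near a target $M$ depending only on $(p,K,\epsilon)$. The one place you genuinely diverge is the final combinatorial selection. The paper fixes a finite $\epsilon$-dense subset $D$ of the simplex of profile vectors, buckets the pieces $R_i$ by which $d\in D$ approximates $v(\mu_{R_i})$, and greedily fills each bucket up to the fraction $M/|R|$ of its weight, taking $M=\tfrac{|D|}{\epsilon}|T_n|$ so that the greedy overshoot per bucket is at most $\epsilon/|D|$. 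You instead round the uniform fractional weights $M/|R|$ directly to a $0/1$ selection controlling all $p$ coordinates of the unnormalized profile at once. That works and is arguably cleaner (no $\epsilon$-net, and the dependence of $M$ on $p$ is more transparent), but the claim that an ``elementary rounding'' perturbs each coordinate by only $O(pL)$ is the one non-routine step you leave unjustified: it needs either an iterated-rounding argument (a vertex of the polytope $\{x\in[0,1]^N:\sum_i x_i v_{i,k}=c_k\}$ has at most $p$ fractional coordinates, each contributing at most $L$ per coordinate when rounded) or a Beck--Fiala-type discrepancy bound. With that sentence filled in, your argument is complete; everything else (the passage between $\Cp_0$, $\Cp_1$, $\Cp_2$ via \cref{Lemma no TLIMi left behind} and \cref{Lemma disjointified are still invariant}, and the closing appeal to \cref{Theorem Hindman and Strauss Technique}) matches the paper.
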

\begin{proof}
Pick \((\Pcal, K, \epsilon)\) and \(\mu\in\TLIM_0(G)\),
say \(\Pcal = \set*{E_1, \hdots, E_p}\).
Let \(V = \{x\in\Rbb^p : \|x\|_1 = 1\}\).
For \(m\in\Means(G)\), let \(v(m) = \begin{bmatrix} m(E_1) & \hdots & m(E_p) \end{bmatrix} \in V\).
Thus \(m\in\Ncal(\mu, \Pcal,\epsilon)\) if and only if \(\|v(m) - v(\mu)\|_\infty < \epsilon\).
Let \(D\subset V\) be a finite \(\epsilon\)-dense subset, \ie\ 
for each \(v\in V\) there exists \(d\in D\) with \(\norm{v-d}_\infty < \epsilon\).
By \cref{Ornstein Weiss Lemma}, obtain
\(\Tcal = \set*{T_1, \hdots, T_n} \subset \Cp_2(K,\epsilon)\) and \(\epsilon' > 0\).
Let \(M = \tfrac{|D|}{\epsilon}\cdot |T_n|\).
Notice that \(M\) depends only on \((p, K, \epsilon)\).
By \cref{Lemma A can be large},
there exists \(B\in\Cp_2(T_n^{-1} T_n,\epsilon')\)
with \(\Haar{B} \geq M\) and \(\mu\sub{B}\in\Ncal(\mu,\Pcal,\epsilon)\).
Our goal is to construct \(A\in\Cp_1(K,3\epsilon)\) such that 
\(\Haar{A}/M \in (1-\epsilon, 1]\) and
\(\norm{v\paren*{\mu\sub{A}} - v\paren*{\mu\sub{B}}}_\infty < 5\epsilon\).
This implies \(\mu\sub{A}\in\Ncal(\mu,\Pcal,6\epsilon)\).
Since \(\epsilon\) was arbitrary, \(\TLIM_0(G) = \TLIM(G)\) follows
by \cref{Theorem Hindman and Strauss Technique}.

As in \cref{Definition Quasitiling},
let \(R = \bigsqcup_{i=1}^N R_i\) with \(\norm{\mu\sub{B} - \mu\sub{R}}_1 < \epsilon\).
Clearly \[\|v\paren*{\mu\sub{B}} - v\paren*{\mu\sub{R}}\|_\infty < \epsilon.\]
By \cref{Lemma disjointified are still invariant}, each \(R_i\) is in \(\Cp_1(K,3\epsilon)\).
Since the $R_i$'s are disjoint, any union of them is in \(\Cp_1(K,3\epsilon)\) as well.
Notice that each $R_i$ has measure at most $|T_n|$.
Let \(r_i = \Haar{R_i}/\Haar{R}\), so \(\sum r_i = 1\) and \(\mu\sub{R} = \sum r_i \cdot \mu\sub{R_i}\).

For each \(R_i\), pick \(d_i \in D\) with
	\(\norm{d_i - v\paren*{\mu\sub{R_i}}}_\infty < \epsilon\).
For each \(d\in D\), let \(C_d = \set*{i : d_i = d}\) and \(c_d = \sum \set*{r_i : d_i = d}\).
Thus
\[\big\| v\paren*{\mu\sub{R}} - \sum_{d\in D} c_d \cdot d\big\|_\infty < \epsilon.\]
Let \(S_d \subset C_d\) be a maximal subset satisfying
$\textstyle s_d = \sum \set*{r_i : i \in S_d} \leq \tfrac{M}{\Haar{R}} c_d.$
This guarantees
\[0 \leq \big( c_d - \tfrac{\Haar{R}}{M} s_d\big) < \tfrac{\epsilon}{|D|},\ \ \ 
\text{hence}\ \ \ 
\Big| \sum_{d\in D}\big(c_d - \tfrac{\Haar{R}}{M} s_d \big) \Big| < \epsilon.\]
Let \(A = \bigcup_{d\in D} \bigcup_{i \in S_d} R_i\).
Notice the following:
\[ \Haar{A} = \sum_{d\in D} \sum_{i\in S_d} \Haar{R_i} = \sum_{d\in D} \Haar{R} s_d,
\ \ \ \text{hence}\ \ \ 
\tfrac{\Haar{A}}{M} = \sum_{d\in D} \tfrac{\Haar{R}}{M}s_d \in (1-\epsilon, 1].\]
\[\mu\sub{A} = \sum_{d\in D} \sum_{i\in S_d} \tfrac{\Haar{R}}{\Haar{A}} r_i \cdot \mu\sub{R_i},\ \ \ 
\text{hence}\ \ \ 
v(A) = \sum_{d\in D} \sum_{i\in S_d} \tfrac{\Haar{R}}{\Haar{A}}r_i \cdot v(\mu\sub{R_i}).\]
Finally, we apply the triangle inequality:
\begin{align*}
\bigl\|v\paren*{\mu\sub{B}} - v\paren*{\mu\sub{A}} \bigr\|_\infty
&	\leq \bigl\|v\paren*{\mu\sub{B}} - v\paren*{\mu\sub{R}} \bigr\|_\infty
	+ \bigl\|v\paren*{\mu\sub{R}} - \sum_{d\in D} c_d \cdot d \bigr\|_\infty
\\&	+ \big\|\sum_{d\in D} \big(c_d - \tfrac{\Haar{R}}{M} s_d\big) \cdot d\big\|_\infty
	+ \big\|\sum_{d\in D} \big(\tfrac{\Haar{R}}{M} - \tfrac{\Haar{R}}{\Haar{A}}\big) s_d \cdot d\big\|_\infty
\\&	+ \big\|\sum_{d\in D}\sum_{i\in S_d} \tfrac{\Haar{R}}{\Haar{A}} r_i \cdot
		\paren*{d - v\paren*{\mu\sub{R_i}}}\big\|_\infty
	< 5\epsilon.\
\qedhere
\end{align*}
\end{proof}
\clearpage

\section*{Easy cases}
\begin{thm}\label{Theorem when G is large}
If $\kappa(G)>\Nbb$, $\TLIM_0(G) = \TLIM(G)$.
\end{thm}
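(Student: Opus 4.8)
The plan is to prove that $\TLIM_0(G)$ is convex and invoke \cref{Main Idea}: by that lemma it suffices to show $\tfrac12(\mu+\nu)\in\TLIM_0(G)$ whenever $\mu,\nu\in\TLIM_0(G)$, which yields $\TLIM_0(G)=\TLIM(G)$. The overall architecture mirrors the averaging argument behind \cref{Theorem Hindman and Strauss Technique}: given a target $(\Pcal,K,\epsilon)$ with $\Pcal=(E_1,\dots,E_p)$ and $\delta=\epsilon/4$, I want sets $A,B\in\Cp(K,\delta)$ with $\mu\sub{A}\in\Ncal(\mu,\Pcal,\delta)$, $\mu\sub{B}\in\Ncal(\nu,\Pcal,\delta)$, $\mu\sub{B}(A)<\delta$, and $\Haar{A}/\Haar{B}\in((1-\delta)^2,(1+\delta)^2)$. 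Then \cref{Lemma Mean Approximation} gives $\norm{\mu\sub{A\cup B}-\tfrac12(\mu\sub{A}+\mu\sub{B})}_1<3\delta$; since $\tfrac12(\mu\sub{A}+\mu\sub{B})\in\Ncal(\tfrac12(\mu+\nu),\Pcal,\delta)$, the triangle inequality places $\mu\sub{A\cup B}\in\Ncal(\tfrac12(\mu+\nu),\Pcal,\epsilon)$, while \cref{Lemma Invariance of Union} shows $A\cup B\in\Cp(K,2\delta)\subset\Cp(K,\epsilon)$, witnessing $\tfrac12(\mu+\nu)\in\TLIM_0(G)$.

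The only nontrivial point is producing $A$ and $B$ of nearly equal measure, and this is where $\kappa(G)>\Nbb$ enters. First I would apply \cref{Lemma many disjoint sets} to $\mu$ to obtain a mutually disjoint family $\set{A_\alpha:\alpha<\kappa}\subset\Cp(K,\delta)$ with every $\mu\sub{A_\alpha}\in\Ncal(\mu,\Pcal,\delta)$. Partitioning $(0,\infty)$ into the countably many dyadic intervals $[2^j,2^{j+1})$ and using that the index set is uncountable, some interval contains the measures of infinitely many $A_\alpha$; relabel such a subfamily as $\set{A_n:n\in\Nbb}$, so the increments $\Haar{A_n}$ are bounded by a constant $L_A$. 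Finite disjoint unions of the $A_n$ lie in $\Cp(K,\delta)$ by \cref{Lemma Invariance of Union} and, being convex combinations of the $\mu\sub{A_n}$, still approximate $\mu$; so for any large threshold $T$ I may take $A=A_1\cup\dots\cup A_m$ with $T\le\Haar{A}<T+L_A$. Since $A$ is compact, $\kappa(A)=1<\kappa(G)$, so $\nu(A)=0$ by \cref{Lemma if Kappa S is small then measure 0} (as $\nu\in\TLIM(G)\subset\LIM(G)$). I then repeat the construction for $\nu$ using the refined partition $\Pcal_A=(E_1\setminus A,\dots,E_p\setminus A,A)$: pigeonholing again yields a disjoint subfamily with increments bounded by some $L_B$, and $\nu(A)=0$ forces $\mu\sub{B_n}(A)<\delta$, hence $\mu\sub{B}(A)<\delta$ for any union $B$. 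Choosing $m'$ so that $T\le\Haar{B}<T+L_B$, and finally fixing $T>\max(L_A,L_B)/\delta$, forces $\Haar{A}/\Haar{B}\in(1-\delta,1+\delta)\subset((1-\delta)^2,(1+\delta)^2)$, exactly the hypothesis \cref{Lemma Mean Approximation} requires.

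The main obstacle, and the reason this is genuinely easier than the unimodular case of \cref{Theorem Unimodular Case}, is the measure-matching step. One cannot simply quote \cref{Theorem Hindman and Strauss Technique}, since that would demand a single $M$ valid for all $\mu$ and all $\Pcal$, whereas the dyadic scale $2^j$ produced by the pigeonhole depends on $\mu$ and $\Pcal$; there is no uniform bound on the measures of invariant approximating sets for a non-unimodular group, which is precisely what Ornstein--Weiss quasi-tiling supplies in the unimodular setting. Going directly through \cref{Main Idea} sidesteps this: both scales $L_A$ and $L_B$ are known before $T$ is chosen, so the two totals can be driven past a common large $T$ with bounded overshoot and matched a posteriori. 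I expect the pigeonhole---which needs the index set to be uncountable, i.e.\ $\kappa(G)>\Nbb$---to be the crux, and it is exactly what fails when $G$ is merely $\sigma$-compact, explaining why the cases $\kappa(G)=\Nbb$ must be handled separately. Notably the modular function never appears, so no unimodularity hypothesis is needed.
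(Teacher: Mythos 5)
Your architecture is the same as the paper's: reduce to midpoint-convexity via \cref{Main Idea}, use $\kappa(G)>\Nbb$ together with \cref{Lemma many disjoint sets} to produce uncountably many disjoint invariant approximants, pigeonhole their measures into countably many intervals, and match the measures of two finite disjoint unions so that \cref{Lemma Mean Approximation} and \cref{Lemma Invariance of Union} apply. The one place your write-up breaks is the order of quantifiers in the measure-matching step. You form $A$ by filling up to a threshold $T$, then build the family $\set{B_\alpha}$ relative to the refined partition $\Pcal_A$ (which depends on $A$, hence on $T$), extract the dyadic bound $L_B$ from that family, and only then require $T>\max(L_A,L_B)/\delta$. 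Since the $B$-family is constructed after $A$, the scale $L_B$ is a function of $T$, and --- as you yourself observe --- there is no a priori bound on the measures of $(K,\delta)$-invariant sets, so nothing prevents $L_B(T)$ from outrunning $T$. Your sentence ``both scales $L_A$ and $L_B$ are known before $T$ is chosen'' is exactly what your construction fails to deliver.

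The fix is small and is what the paper does: run the transfinite recursion of \cref{Lemma many disjoint sets} once to produce a single mutually disjoint family $\set{A_\alpha, B_\alpha : \alpha<\kappa(G)}\subset\Cp(K,\epsilon)$ with $\mu\sub{A_\alpha}\in\Ncal(\mu,\Pcal,\epsilon)$ and $\mu\sub{B_\alpha}\in\Ncal(\nu,\Pcal,\epsilon)$, so that neither family depends on a union formed from the other. Then both pigeonhole scales are available before any union is formed; the paper matches $\Haar{A}/\Haar{B}$ by putting the measures into multiplicative intervals $[r^n,r^{n+1})$ with $r=(1+\epsilon)^{1/2}$ and choosing multiplicities $M,N$ with $M/N$ in the appropriate interval, while your additive ``fill to a common threshold with bounded overshoot'' device works equally well once the ordering is repaired. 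A side benefit of the joint construction is that $A$ and $B$ are genuinely disjoint, so $\mu\sub{B}(A)=0$ and you no longer need the refined partition $\Pcal_A$ or the appeal to $\nu(A)=0$ via \cref{Lemma if Kappa S is small then measure 0}; disjointness also gives $A\cup B\in\Cp(K,\epsilon)$ with no loss of constant.
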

\begin{proof}
Pick \(\mu,\nu\in\TLIM_0(G)\) and \((\Pcal, K, \epsilon)\).
By the same technique as \cref{Lemma many disjoint sets}, obtain mutually disjoint sets
\(\set*{A_\alpha, B_\alpha : \alpha < \kappa(G)} \subset \Cp(K,\epsilon)\),
such that \(\mu\sub{A_\alpha}\in \Ncal(\mu,\Pcal, \epsilon)\),
and \(\mu\sub{B_\alpha} \in \Ncal(\nu,\Pcal,\epsilon)\) for each \(\alpha\).

Let $r = (1+\epsilon)^{1/2}$, and let $I_n = [r^n, r^{n+1})$ for $n\in\Zbb$.
By the pigeonhole principle, there exist \(m,n\in\Zbb\) such that
\(\set*{\alpha : |A_\alpha| \in I_m}\) and \(\set*{\alpha : |B_\alpha| \in I_m}\) are infinite.
Pick \(M,N\in\Nbb\) so that \(M/N \in I_{m-n}\).
Pick \(A_1, \hdots, A_N\) from \(\set*{A_\alpha : |A_\alpha|\in I_m}\)
and \(B_1, \hdots, B_M\) from \(\{B_\alpha : |B_\alpha| \in I_n\}\).
Let \(A = A_1 \cup \hdots \cup A_N\) and \(B = B_1 \cup \hdots \cup B_M\).
Thus $|A| / |B| \in (1-\epsilon, 1+\epsilon)$.
Since the $A_i$'s and $B_i$'s are mutually disjoint, we have $A,B, A\cup B\in\Cp(K,\epsilon)$.
Clearly \(\mu\sub{A}\in\Ncal(\mu,\Pcal,\epsilon)\) and
\(\mu\sub{B}\in\Ncal(\nu,\Pcal, \epsilon)\).
By \cref{Lemma Mean Approximation} and the triangle inequality,
\(\mu\sub{A\cup B} \in \Ncal\bigl(\frac12(\mu + \nu), \Pcal, 4\epsilon\bigr)\).
Since \(\epsilon\) was arbitrary, we conclude \(\frac12(\mu + \nu) \in \TLIM_0(G)\).
\end{proof}

\begin{thm}\label{Theorem Non-Unimodular}
If $G$ is $\sigma$-compact non-unimodular, $\TLIM_0(G) \neq \TLIM(G)$.
\end{thm}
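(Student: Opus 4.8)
The strategy is to attack \(\TLIM_0(G) = \TLIM(G)\) through convexity. Since \(\TLIM_0(G)\) is closed and its closed convex hull is all of \(\TLIM(G)\) by \cref{Lemma TLIM0 is total in TLIM}, the two sets coincide as soon as \(\TLIM_0(G)\) is convex; this is exactly the content of \cref{Main Idea}. So to prove \(\TLIM_0(G) \neq \TLIM(G)\) it suffices to exhibit two means \(\mu, \nu \in \TLIM_0(G)\) whose midpoint \(\tfrac12(\mu + \nu)\) is \emph{not} in \(\TLIM_0(G)\); that is, I will show \(\TLIM_0(G)\) fails to be convex. Non-unimodularity enters at the very start: fix \(a \in G\) with \(\delta = \Delta(a) \neq 1\), say \(\delta > 1\).

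The role of \(\delta\) is to couple invariance with a rescaling of Haar measure. Right translation of means, \(\mu \mapsto \mu \circ r_a\), preserves both \(\TLIM(G)\) and \(\TLIM_0(G)\): one checks \(r_a(f*\phi) = f*(r_a\phi)\), and on the level of Følner sets it sends \(\mu\sub{A}\) to \(\mu\sub{Aa}\), where the invariance condition \(\Haar{xAa \Sdiff Aa}/\Haar{Aa} = \Haar{xA \Sdiff A}/\Haar{A}\) is unchanged while \(\Haar{Aa} = \delta \Haar{A}\). Thus right translation rescales the measures of approximating invariant sets by the factor \(\delta\) without disturbing their \((K,\epsilon)\)-invariance. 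The plan is to build \(\mu\) from a F\o{}lner net and \(\nu\) from a modular right-translate of it living at a different scale, chosen so that realizing the midpoint by a single \(C \in \Cp(K,\epsilon)\) would force \(C\) to split into two pieces of \emph{equal} Haar measure, one approximating \(\mu\) and one approximating \(\nu\) on a suitable finite partition \(\Pcal\) — pieces whose measures the factor \(\delta\) obstructs from being equalized. This is the non-unimodular analog of the discrete \(\mu(\{x\}) = \tfrac23,\ \mu(\{y\}) = \tfrac13\) example foreshadowed earlier, where the achievable values \(\mu\sub{F}\) are quantized and so cannot approach the target.

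The main obstacle is making that rigidity precise, and in particular isolating why \(\sigma\)-compactness is essential. In the unimodular case one enlarges invariant sets to any prescribed measure (\cref{Lemma A can be large}), and in the case \(\kappa(G) > \Nbb\) one produces \emph{uncountably} many mutually disjoint invariant sets approximating each of \(\mu, \nu\) and then forces a measure collision by pigeonholing their measures into geometric intervals (\cref{Theorem when G is large}); either way the two halves can be matched in measure. For \(\sigma\)-compact \(G\) only countably many disjoint invariant approximants are available, and under non-unimodularity their measures can be driven to escape every bounded scale, so no pigeonhole collision is available. The delicate step — and the crux of the whole argument — is to prove that for the witnessing partition \(\Pcal\) the values \(\mu\sub{C}\) attainable by invariant sets \(C\) are bounded away from \(\tfrac12(\mu+\nu)\) on \(\Pcal\), together with the explicit choice of \(\mu, \nu\), and \(\Pcal\) that exhibits this gap.
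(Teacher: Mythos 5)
You have correctly identified the paper's high-level strategy: exhibit $\mu,\nu\in\TLIM_0(G)$ whose midpoint lies in $\TLIM(G)\setminus\TLIM_0(G)$, with $\mu$ and $\nu$ built from F\o{}lner nets pushed by right translations to different modular scales. Your verification that right translation preserves invariance while rescaling Haar measure by $\Delta(a)$ is also correct and is indeed the engine of the paper's construction.

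However, there is a genuine gap, and it sits exactly where you place the ``crux'': the proposed obstruction mechanism would not work as stated. A \emph{fixed} rescaling factor $\delta=\Delta(a)$ between the scales of the $\mu$-approximants and the $\nu$-approximants is no obstruction at all to matching measures, because one can take disjoint unions of several invariant approximants of each mean to adjust the ratio of the two halves --- this is precisely how \cref{Theorem when G is large} succeeds via the pigeonhole on geometric intervals, and \cref{Lemma many disjoint sets 2} supplies countably many disjoint approximants even in the $\sigma$-compact case. The paper's actual obstruction is more drastic and is not a failure to \emph{equalize} two measures: it chooses $y_n$ with $\Delta(y_n)$ so small that $|F_ny_n|\le 2^{-n}$, hence $Y=\bigcup_n F_ny_n$ has \emph{finite total measure} $|Y|\le 1$. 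Then for the partition $\Pcal=\{X,Y\}$, any $A$ with $\mu_A(Y)>\tfrac13$ satisfies $|A|<3|A\cap Y|\le 3$, an absolute upper bound; while $\mu_A(X)>\tfrac13$ forces $A$ to meet $X\subset\{\Delta\ge 1\}$, so that for $A\in\Cp_1(K,\tfrac16)$ (absorbency, via \cref{Lemma no TLIMi left behind}) one gets $|K|\le|Ka|\le|KA|<(1+\tfrac16)|A|$, a lower bound on $|A|$ that can be made to exceed $3$ by choosing $|K|\ge\tfrac72$. Your plan contains neither the summability trick that bounds $|Y|$ (and hence $|A|$ from above) nor the absorbency argument that bounds $|A|$ from below, and these two bounds in tension are the entire proof; without them the claim that ``the values $\mu_C$ are bounded away from $\tfrac12(\mu+\nu)$'' remains an unproven assertion.
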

\begin{proof}
Recall that the modular function \(\Delta\) is defined by \(|Ex| = |E|\Delta(x)\).

Let \(K_1 \subset K_2 \subset \hdots\) be a sequence of compacta with \(\bigcup_n K_n = G\).
For each $n$, pick \(F_n \in \Cp\big(K_n, \frac1n\big)\),
then pick $x_n, y_n\in G$ such that $F_n x_n \subset \{x : \Delta(x) \geq 1\}$
and \(F_n y_n \subset \big\{x : \Delta(x) \leq \min\paren*{1, |F_n|^{-1} 2^{-n}}\big\}\).
Let $X = \bigcup_n F_n x_n$ and $Y = \bigcup_n F_n y_n$.
By construction, \(X\cap Y = \varnothing\) and  \(|Y| \leq 1\).
Let $\mu$ be an accumulation point of $\seq*{\mu\sub{F_n x_n}}$
and $\nu$ an accumulation point of $\seq*{\mu\sub{F_n y_n}}$.
Thus $\mu,\nu\in \TLIM_0(G)$.
Let $m = \frac12(\mu + \nu)$.
Notice $\mu(X) = \nu(Y) = 1$ and $\mu(Y) = \nu(X) = 0$, hence $m(X) = m(Y) = \frac12$.

Suppose \(m\in \TLIM_0(G)\).
Let $K$ be any compact set with $|K| \geq \frac72$.
Define \(\epsilon = \frac16\) and \(\Pcal = \{X, Y\}\).
By \cref{Lemma no TLIMi left behind}, there exists $A\in\Cp_1(K, \epsilon)$
with $\mu\sub{A} \in \Ncal(m, \Pcal, \epsilon)$.
Now \(\frac{1}{|A|}
	\geq \frac{|Y|}{|A|}
	\geq \frac{|Y\cap A|}{|A|}
	= \mu\sub{A}(Y) > m(Y) - \epsilon = \frac13\), so \(|A| < 3\).
On the other hand, $\mu\sub{A}(X) > \frac13$,
so $|A \cap X| \neq 0$. Say $a \in A \cap X$.
Now $\frac72 \leq |K| \leq |Ka| \leq |KA| \leq |A| + |KA \setminus A| < (1+\epsilon)|A|$,
	so $3 < |A|$, a contradiction.
Hence $m\in\TLIM(G) \setminus \TLIM_0(G)$.
\end{proof}
\clearpage

\section*{Non-topological invariant means}
In this section, triples of the form \((\Pcal, K, \epsilon)\) range over
	finite measurable partitions \(\Pcal\) of \(G\),
	finite sets \(K\subset G\),
	and \(\epsilon \in (0,1)\).
In these terms, we define
\begin{center}
\(\LIM_0(G) = \set*{
	\mu\in\LIM(G) : \big(\forall \paren*{\Pcal, K, \epsilon}\big)\ 
	\big(\exists A\in\Cp(K,\epsilon)\big)\ 
	\mu\sub{A} \in \Ncal(\mu, \Pcal, \epsilon)}\).
\end{center}

\begin{lem}[{\cite[Theorem 2.12]{Hindman2009}}]\label{Lemma LIM0 is total in LIM}
The closed convex hull of $\LIM_0(G)$ is all of $\LIM(G)$.
\end{lem}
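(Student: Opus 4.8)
The plan is to prove the two inclusions separately, handling the nontrivial one by a Hahn--Banach separation argument that routes through Day's theorem rather than through any convolution-smoothing, which is unavailable here since a merely left-invariant mean need not be topological left-invariant. The easy inclusion is immediate: every accumulation point of a F\o{}lner net lies in \(\LIM(G)\) by \cref{limit of conv to invar is LIM}, so \(\LIM_0(G)\subset\LIM(G)\); and \(\LIM(G)\) is a \(w^*\)-closed convex subset of the \(w^*\)-compact set \(\Means(G)\), whence \(\cl(\conv(\LIM_0(G)))\subset\LIM(G)\).

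For the reverse inclusion I argue by contradiction. Suppose some \(m\in\LIM(G)\) lies outside \(\cl(\conv(\LIM_0(G)))\). Applying \cref{Lemma Hahn Banach} to the disjoint \(w^*\)-compact convex sets \(\{m\}\) and \(\cl(\conv(\LIM_0(G)))\), I obtain \(T\in\Linfty(G)\), which I may take real-valued and, after adding a constant (harmless, since every mean agrees on constants), to satisfy \(0\leq T\leq C\) for some \(C>0\). Separation gives \(\langle m,T\rangle > s + 2\delta\) for some \(\delta>0\), where \(s=\sup\{\langle\nu,T\rangle:\nu\in\LIM_0(G)\}\). The goal is to manufacture a single \(\nu\in\LIM_0(G)\) with \(\langle\nu,T\rangle > s\), the desired contradiction.

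The construction proceeds one index at a time. Fix a pair \((K,\epsilon)\) with \(K\Subset G\), and set \(\epsilon'=\delta\epsilon/(C+1)\). By \cref{Days theorem} there is a \((K,\epsilon')\)-invariant \(f\in P_1(G)\) lying in a \(w^*\)-neighborhood of \(m\) small enough that \(\langle f,T\rangle>s+2\delta\); approximating \(f\) in \(\Lone\) by a simple function as in \cref{exists pointwise folner net} and writing that function in layer-cake form \(\sum_i c_i\mu_{A_i}\) with \(A_1\subset\cdots\subset A_n\), \(c_i>0\), \(\sum_i c_i=1\), I still have \(\sum_i c_i\langle\mu_{A_i},T\rangle>s+2\delta\). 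The averaging estimate of \cref{exists pointwise folner net} shows that the total weight carried by those \(A_i\) failing to be \((K,\epsilon)\)-invariant is below \(\epsilon'/\epsilon\); since \(0\leq T\leq C\) forces each \(\langle\mu_{A_i},T\rangle\in[0,C]\), a short Markov-type computation then yields at least one \((K,\epsilon)\)-invariant \(A_i\), call it \(A_{(K,\epsilon)}\), with \(\langle\mu_{A_{(K,\epsilon)}},T\rangle>s+\delta\). Letting \((K,\epsilon)\) range over its directed set, the family \(\{A_{(K,\epsilon)}\}\) is a F\o{}lner net, so by \(w^*\)-compactness it has an accumulation point \(\nu\). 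As the net converges strongly to invariance, \(\nu\in\LIM(G)\) by \cref{limit of conv to invar is LIM}, and the accumulation property verifies the approximation condition defining \(\LIM_0(G)\), so \(\nu\in\LIM_0(G)\); yet \(\langle\mu_{A_{(K,\epsilon)}},T\rangle>s+\delta\) at every index forces \(\langle\nu,T\rangle\geq s+\delta>s\), contradicting \(\nu\in\LIM_0(G)\).

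The step to flag as the crux is exactly what distinguishes this from the topological analogue \cref{Lemma TLIM0 is total in TLIM}: there one writes \(\langle m,T\rangle=\langle m,\lambda_\gamma T\rangle\leq\sup_t\langle R_t\lambda_\gamma,T\rangle\), using topological invariance to smooth \(T\) directly, whereas here left-invariance supplies no such identity. The genuine work is therefore twofold: invoking \cref{Days theorem} to approximate \(m\) by honest F\o{}lner \emph{functions}, and then using the layer-cake-plus-averaging extraction to convert one such function into a bona fide \((K,\epsilon)\)-invariant \emph{set} that still carries \(T\)-mass exceeding \(s+\delta\). One must also take care to confirm that the accumulation points of the resulting net meet the formal definition of \(\LIM_0(G)\), and not merely that they are left-invariant.
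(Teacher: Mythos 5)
Your argument is correct, and it is worth spelling out because the paper supplies no internal proof of this lemma: it defers entirely to \cite[Theorem 2.12]{Hindman2009}, whose setting is discrete left-amenable semigroups, so a self-contained argument for locally compact $G$ has real value. Your route --- separate a putative $m\in\LIM(G)\setminus\cl(\conv(\LIM_0(G)))$ by some real $0\leq T\leq C$ via \cref{Lemma Hahn Banach}, use \cref{Days theorem} to find a $(K,\epsilon')$-invariant $f\in P_1(G)$ with $\langle f,T\rangle$ above the separating level, and then run the layer-cake decomposition of \cref{exists pointwise folner net} together with a Chebyshev-type weight estimate to extract a single $(K,\epsilon)$-invariant level set still carrying $T$-mass above the supremum --- is exactly the right substitute for the smoothing identity $\langle m,T\rangle=\langle m,\lambda_\gamma T\rangle$ that drives the topological analogue \cref{Lemma_cl_conv_Xp} and that fails for merely left-invariant means. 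You also verify the point most easily skipped: that an accumulation point of the resulting net satisfies the approximation clause in the definition of $\LIM_0(G)$, not just left-invariance.

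Two pieces of bookkeeping need adjusting, neither fatal. First, the total weight of the indices $i$ for which $A_i$ fails to be $(K,\epsilon)$-invariant is bounded by $\#(K)\,\epsilon''/\epsilon$ rather than $\epsilon''/\epsilon$, where $\epsilon''$ is the invariance constant of the simple function $g$: a bad $A_i$ only guarantees $|A_i\Sdiff xA_i|/|A_i|\geq\epsilon$ for \emph{some} $x\in K$, so one must sum $\|g-l_xg\|_1=\sum_i c_i\,|A_i\Sdiff xA_i|/|A_i|$ over all $x\in K$ before applying the Chebyshev bound; taking $\epsilon'=\delta\epsilon/\bigl((C+1)\#(K)\bigr)$ closes the computation. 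Second, the layer-cake sets $A_i$ are merely measurable of finite positive measure, whereas the definition of $\LIM_0(G)$ quantifies over compact sets in $\Cp(K,\epsilon)$; replace each selected $A_i$ by a compact subset carrying all but an arbitrarily small fraction of its measure, which perturbs both the invariance constant and $\langle\mu_{A_i},T\rangle$ by as little as desired.
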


\begin{thm}\label{Theorem LIM when G is large}
If $\kappa(G) > \Nbb$, then $\LIM_0(G) = \LIM(G)$.
\end{thm}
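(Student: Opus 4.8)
The plan is to reuse the machinery of the topological case wholesale and transcribe the proof of \cref{Theorem when G is large}, substituting $\LIM$ for $\TLIM$ and letting $K$ range over \emph{finite} subsets of $G$ rather than compact ones. The engine is again the Hindman--Strauss midpoint trick, so first I would record the non-topological analogue of \cref{Main Idea}: $\LIM_0(G) = \LIM(G)$ follows once $\tfrac12(\mu+\nu) \in \LIM_0(G)$ whenever $\mu,\nu\in\LIM_0(G)$. Its proof is identical. The set $\LIM_0(G)$ is closed, since if $\mu$ lies in its closure and $(\Pcal, K, \epsilon)$ is given, one picks $\mu'\in\LIM_0(G)\cap\Ncal(\mu,\Pcal,\tfrac\epsilon2)$ (possible by \cref{Lemma Partition Basis}) and then $A\in\Cp(K,\tfrac\epsilon2)\subset\Cp(K,\epsilon)$ with $\mu\sub{A}\in\Ncal(\mu',\Pcal,\tfrac\epsilon2)$, so $\mu\sub{A}\in\Ncal(\mu,\Pcal,\epsilon)$ by the triangle inequality. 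Hence midpoint-convexity plus density of the dyadic rationals makes $\LIM_0(G)$ convex, and then \cref{Lemma LIM0 is total in LIM} forces $\LIM_0(G)=\LIM(G)$.

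With that reduction in hand, I would fix $\mu,\nu\in\LIM_0(G)$ and a triple $(\Pcal, K, \epsilon)$ with $K$ finite, aiming to show $\tfrac12(\mu+\nu)\in\LIM_0(G)$. The first step is to manufacture mutually disjoint sets $\set*{A_\alpha, B_\alpha : \alpha<\kappa(G)}\subset\Cp(K,\epsilon)$ with $\mu\sub{A_\alpha}\in\Ncal(\mu,\Pcal,\epsilon)$ and $\mu\sub{B_\alpha}\in\Ncal(\nu,\Pcal,\epsilon)$, exactly the transfinite-induction construction of \cref{Lemma many disjoint sets} run simultaneously for the two means. That construction goes through verbatim here, because its only substantive input is \cref{Lemma if Kappa S is small then measure 0}, which holds for \emph{every} element of $\LIM(G)$: at stage $\beta$ the chosen translates fill an open set $B$ with $\kappa(B)\le\abs{\beta}<\kappa(G)$, so $\mu(B)=\nu(B)=0$, and deleting $B$ from a freshly chosen approximant disturbs neither the relevant partition values nor the invariance (the estimate $\Haar{xA_\beta\Sdiff A_\beta}\le \Haar{xA_\beta\Sdiff xA}+\Haar{xA\Sdiff A}+\Haar{A\Sdiff A_\beta}$ is pointwise in $x\in K$ and indifferent to whether $K$ is finite or compact).

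The second step is the pigeonhole-plus-approximation argument. Setting $r=(1+\epsilon)^{1/2}$ and $I_n=[r^n, r^{n+1})$, the $\kappa(G)$ measures $\Haar{A_\alpha}$ fall into the countably many buckets $\set*{I_n}$; since $\kappa(G)>\Nbb$ is uncountable, some bucket holds infinitely many of the $A_\alpha$, and likewise some bucket holds infinitely many of the $B_\alpha$. Choosing integers $N,M$ with $M/N$ in the appropriate interval and forming disjoint unions $A$ of $N$ of the $A_\alpha$ and $B$ of $M$ of the $B_\alpha$ yields $\Haar{A}/\Haar{B}\in(1-\epsilon,1+\epsilon)\subset\bigl((1-\epsilon)^2,(1+\epsilon)^2\bigr)$; disjointness keeps $A,B,A\cup B\in\Cp(K,\epsilon)$ by \cref{Lemma Invariance of Union}, and convexity of the neighborhoods gives $\mu\sub{A}\in\Ncal(\mu,\Pcal,\epsilon)$ and $\mu\sub{B}\in\Ncal(\nu,\Pcal,\epsilon)$. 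Since all the $A_\alpha, B_\alpha$ are mutually disjoint, $\mu\sub{B}(A)=0<\epsilon$, so \cref{Lemma Mean Approximation} and the triangle inequality give $\mu\sub{A\cup B}\in\Ncal(\tfrac12(\mu+\nu),\Pcal,4\epsilon)$. As $(\Pcal,K,\epsilon)$ was arbitrary, $\tfrac12(\mu+\nu)\in\LIM_0(G)$, completing the reduction.

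I expect no genuine obstacle: this theorem is essentially a re-reading of \cref{Theorem when G is large}, and the only thing to verify is that relaxing ``compact $K$'' to ``finite $K$'' breaks nothing. It does not, because $K$ enters both the disjoint-family construction and the union estimate solely through conditions imposed pointwise over $x\in K$. The one load-bearing hypothesis is $\kappa(G)>\Nbb$, which powers the pigeonhole step (countably many geometric buckets cannot exhaust an uncountable family of measures); this is precisely why the $\sigma$-compact case $\kappa(G)=\Nbb$ cannot be handled this way and is left open.
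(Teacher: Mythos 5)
Your proposal is correct and is exactly the paper's argument: the paper's own proof is the one-line remark that the proof of \cref{Theorem when G is large} carries over verbatim with \cref{Lemma LIM0 is total in LIM} in place of \cref{Lemma TLIM0 is total in TLIM}, and your transcription (finite $K$, the disjoint-family construction via \cref{Lemma if Kappa S is small then measure 0}, the pigeonhole on geometric buckets, and \cref{Lemma Mean Approximation}) is precisely what that remark asks the reader to check. No gaps.
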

\begin{proof}
The proof is identical to that of \cref{Theorem when G is large},
	but with \cref{Lemma LIM0 is total in LIM}
	in place of \cref{Lemma TLIM0 is total in TLIM}.
\end{proof}

\begin{point}\label{Refinement}
Fix a finite measurable partition \(\Pcal = \set*{E_1, \hdots, E_p}\),
	and a finite set \(X = \{x_1, \hdots, x_n\}\), with \(x_1 = e\).
For \(C = (c_1, \hdots, c_n) \in \{1, \hdots, p\}^n\), define
	\(E(C) = \big\{y \in G : x_1 y \in E_{c_1}, \ldots,x_n y \in E_{c_n}\big\}
		= \bigcap_{k=1}^n x_k^{-1} E_{c_k}\).
Thus \(\Qcal = \big\{E(C) : C\in \{1,\hdots,p\}^n\big\}\) is a refinement of \(\Pcal\).
Notice \(E_i = \bigcup\{E(C) : c_1 = i\}\)
and \(x_k^{-1} E_i = \bigcup\{E(C) : c_k = i\}\).

The idea of refining $\Pcal$ this way is due to \cite{WeakNotStrong}.
\end{point}

\begin{thm}\label{Theorem LIM0}
If $G$ is nondiscrete but amenable-as-discrete, then $\LIM_0(G) = \LIM(G)$.
\end{thm}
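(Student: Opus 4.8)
The plan is to prove the nontrivial inclusion $\LIM(G)\subseteq\LIM_0(G)$, the reverse being immediate from the definition of $\LIM_0(G)$. Fix $\mu\in\LIM(G)$ and a triple $(\Pcal,K,\epsilon)$ with $\Pcal = \{E_1,\ldots,E_p\}$ and $K$ finite; I must produce a single $A\in\Cp(K,\epsilon)$ with $\mu\sub{A}\in\Ncal(\mu,\Pcal,\epsilon)$. The set I will build has the form $A = XS$, where $X\Subset G$ is a finite \emph{discrete} Følner set and $S\subset G$ is a tiny set of positive measure that ``thickens'' the points of $X$. Since $G$ is amenable-as-discrete, \cref{discrete Folner condition} supplies a finite $X = \{x_1,\ldots,x_n\}$ that is $(K,\tfrac\epsilon2)$-invariant in the discrete sense, i.e.\ $\card{xX\Sdiff X} < \tfrac\epsilon2\card{X}$ for each $x\in K$. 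The thickening $S$ will be chosen so as to record the values of $\mu$ on $\Pcal$ exactly.

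The values of $\mu$ enter through the refinement of \cref{Refinement}: for $C=(c_1,\ldots,c_n)\in\{1,\ldots,p\}^n$ set $E(C)=\bigcap_k x_k^{-1}E_{c_k}$, so that $\{E(C)\}$ partitions $G$, $x_kE(C)\subset E_{c_k}$, and $x_k^{-1}E_i = \bigcup\{E(C):c_k=i\}$. Left-invariance of $\mu$ gives $\mu(x_k^{-1}E_i)=\mu(E_i)$, hence $\sum_{C:c_k=i}\mu(E(C)) = \mu(E_i)$ for every $k$. I will take $S=\bigsqcup_C S_C$ with $S_C\subset E(C)$ and $\Haar{S_C} = \mu(E(C))\,t$ for a common small scale $t>0$ (only cells with $\mu(E(C))>0$ contribute, and such cells have $\Haar{E(C)}>0$ since $\mu$ vanishes on locally null sets), arranged so that $\{yS : y\in Y\}$ is a disjoint family, where $Y=(K\cup\{e\})X$. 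Granting this, left-invariance of Haar measure yields $\Haar{x_kS_C}=\Haar{S_C}$ together with $x_kS_C\subset E_{c_k}$, so $\Haar{x_kS\cap E_i} = \sum_{C:c_k=i}\Haar{S_C} = \mu(E_i)\,t$, while $\Haar{XS}=n\,\Haar{S}=nt$; therefore $\mu\sub{XS}(E_i)=\mu(E_i)$ \emph{exactly}, which places $\mu\sub{XS}$ well inside $\Ncal(\mu,\Pcal,\epsilon)$. Likewise, disjointness of $\{yS:y\in Y\}$ combined with $\card{xX\Sdiff X}<\tfrac\epsilon2\card{X}$ and $\Haar{xXS}=\Haar{XS}$ forces $\Haar{xXS\Sdiff XS}<\epsilon\Haar{XS}$, so $XS\in\Cp(K,\epsilon)$.

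The one substantive point is the construction of $S$, and this is exactly where nondiscreteness is used. I would choose a symmetric neighborhood $U$ of $e$ with $UU^{-1}\cap(Y^{-1}Y)=\{e\}$, so that any subset of a single translate $Ua$ automatically has pairwise disjoint $Y$-translates. For each of the finitely many cells $C$ with $\mu(E(C))>0$, fix $a_C$ with $\Haar{Ua_C\cap E(C)}>0$ and process the cells one at a time: having chosen the earlier pieces with union $T$, select $S_C\subset (Ua_C\cap E(C))\setminus\bigcup_{g\in Y^{-1}Y}gT$ of measure exactly $\mu(E(C))\,t$. This is possible for all sufficiently small $t$ because Haar measure is non-atomic and $\Haar{\bigcup_g gT}\leq \card{Y^{-1}Y}\,\Haar{T}\leq \card{Y^{-1}Y}\,t$ shrinks with $t$; containment in the single translate $Ua_C$ gives disjoint $Y$-translates of $S_C$, while the excision of $\bigcup_g gT$ guarantees $yS_C\cap y'S_{C'}=\varnothing$ for every earlier cell $C'$ and all $y,y'\in Y$, so that $\{yS:y\in Y\}$ is disjoint as required (\cref{Lemma Very Small S} is the prototype of this separation trick). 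Running over all $(\Pcal,K,\epsilon)$ then shows $\mu\in\LIM_0(G)$. I expect this disjoint-thickening step to be the main obstacle: every other step is an exact computation, whereas here one must simultaneously control the measures $\Haar{S_C}$ (to reproduce $\mu$ on $\Pcal$) and keep all $Y$-translates disjoint (to make $XS$ genuinely $(K,\epsilon)$-invariant), and it is precisely the non-atomicity afforded by nondiscreteness that lets both demands be met at once.
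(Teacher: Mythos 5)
Your proposal is correct and follows essentially the same route as the paper's proof: a finite discrete F\o{}lner set $X$ (from amenability-as-discrete), the refinement $\Qcal$ of $\Pcal$ by $X$-translates from \cref{Refinement}, and a thin set $S$ assembled from pieces $S_C$ in the positive-$\mu$ cells with measures proportional to $\mu(E(C))$ and with translates kept disjoint via \cref{Lemma Very Small S}, so that $\mu\sub{XS}$ reproduces $\mu$ on $\Pcal$ exactly while $XS$ inherits $(K,\epsilon)$-invariance from $X$. The only deviations are cosmetic: you enforce disjointness of the $(K\cup\{e\})X$-translates of $S$ rather than just the $X$-translates (yielding equality where the paper needs only an inequality), and you fix $\abs{S_C}=\mu(E(C))\,t$ directly instead of the paper's choose-then-shrink normalization.
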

\begin{proof}
Pick \((\Pcal, K, \epsilon)\) and \(\mu\in\LIM(G)\).
Let $X = \{x_1, \hdots, x_n\}$ be $(K, \epsilon)$-invariant, with $x_1 = e$.
Let $\Qcal$ refine $\Pcal$ as in \cref{Refinement},
and let \(\set{F_1, \hdots, F_q} = \set{F\in \Qcal : |F| > 0}\).
Let $m=\min\{1, |F_1|, \hdots, |F_q|\}$ and $c = m / (n^2 q)$.
By \cref{Lemma Very Small S}, pick $S_1 \subset F_1$
such that $0 <|S_1| \leq c$ and $S_1 S_1^{-1} \cap X^{-1} X = \{e\}$.
For \(k < q\), inductively choose
\(S_{k+1} \subset F_{k+1} \setminus X^{-1} X (S_1 \cup \hdots \cup S_k)\)
with \(0<|S_{k+1}| \leq c\) and \(S_{k+1} S_{k+1}^{-1} \cap X^{-1} X = \{e\}\).
This is possible, since
\(|F_{k+1} \setminus X^{-1} X (S_1 \cup \hdots \cup S_k)| \geq m - n^2kc > 0\).

Now let \(m' = \min\set*{|S_1|, |S_2|, \hdots, |S_q|}\).
For each $i$, choose $S_i' \subset S_i$ with \(|S_i'| = m'\cdot \mu(E_i)\),
then let $S = S_1' \cup \hdots \cup S_q'$.
By construction, $XS = x_1 S \sqcup \hdots \sqcup x_n S$.
For $y\in K$, $|yXS \Sdiff XS| / |XS| = \#(yX \Sdiff X) / \# (X) < \epsilon$,
	hence \(XS \in \Cp(K, \epsilon)\).
Notice $\mu\sub{XS} = \frac{1}{n} \sum_{k=1}^n \mu\sub{x_k S}$.
For each $i$ and $k$, $\mu\sub{x_k S}(E_i)
	= \mu\sub{S}(x_k^{-1} E_i)
	= \sum \{\mu\sub{S}(E(C)) : c_k = i\}
	= \sum \{\mu(E(C)) : c_k = i\}
	= \mu(x_k^{-1} E_i) = \mu (E_i)$.
Hence $\mu\sub{XS}(E_i) = \mu(E_i)$.
\end{proof}
\chapter{A bijection of invariant means on an amenable group with those on a lattice subgroup}\label{lattice paper}
The following chapter is based on my paper \cite{hopfenspergerLattice}.
\setcounter{equation}{0}

\medskip

\noindent\textbf{\large Abstract}

\medskip\noindent
Suppose $G$ is an amenable locally compact group, with lattice subgroup $\Gamma$.
Then there is a natural affine injection $\iota: \LIM(\Gamma)\to \TLIM(G)$, as demonstrated by Grosvenor.
He was able to prove $\iota$ is a surjection essentially in the case $G=\Rbb^d$, $\Gamma=\Zbb^d$.
In the present chapter, I prove $\iota$ is a surjection if and only if $G/\Gamma$ is compact.

\section*{Introduction}
Let $H$ be a closed subgroup of the locally compact group $G$.
$H$ is called \textit{cofinite} if $G/H$ carries a $G$-invariant probability measure,
and \textit{cocompact} if $G/H$ is compact.
After the recent paper of Bader \textit{et al}.\ \cite{Bader2019},
$G$ is said to have property $(M)$ if every cofinite subgroup is cocompact.
The letter $M$ is for Mostow, who proved that solvable Lie groups have property $(M)$.

A discrete subgroup $\Gamma$ is called a lattice if it is cofinite.
Cocompact lattices are better known as \textit{uniform} lattices.
Non-uniform lattices are not too hard to come by, but the most famous examples are non-amenable.
For example, $\text{SL}_2(\Zbb)$ is a non-uniform lattice in $\text{SL}_2(\Rbb)$
which is not amenable, because
$\begin{psmallmatrix}1&2\\0&1\end{psmallmatrix}$ and
$\begin{psmallmatrix}1&0\\2&1\end{psmallmatrix}$ generate a free subgroup.
In fact, \cite[Theorem 1.7]{Bader2019} states that
every finite direct product of amenable linear groups has property $(M)$.
Hence, every lattice in such a group is uniform.
(A locally compact group $G$ is said to be linear if there exists a continuous monomorphism of $G$ into
$GL_d(k)$ for some $d> 0$ and some locally compact field $k$.)

A paper of Grosvenor \cite{Grosvenor1985} constructs a natural affine injection $\iota$
from the invariant means on a lattice subgroup $\Gamma$ into the topological invariant means on the whole group $G$.
Grosvenor proved that $\iota$ is surjective when $G$ is an abelian Lie group
with finitely many connected components.
In light of the results of \cite{Bader2019}, it seemed natural to suppose
$\iota$ might be surjective when $\Gamma$ is uniform.
This is precisely what we shall prove.

\section*{Definitions}
Let $G$ be a locally compact amenable group.
Associate $\TLIM(G)$ with $\LIM(\LUC(G))$ via \cref{TlimG is LimLucG}.
Since $G$ is amenable, so are all its closed subgroups.\footnote{
	To prove this in general is a bit technical,
	see \cite[Theorem 2.3.2]{greenleaf} or \cite[Proposition 1.12]{paterson-Book}.\linebreak
	On the other hand, suppose $\Gamma < G$ is a discrete subgroup.
	In this case, it is easy to construct a Borel transversal $T$ for the right-coset space $\Gamma\backslash G$.
	Define $\Phi: \Linfty(\Gamma) \to \Linfty(G)$ by $\Phi f(\gamma t) = f(\gamma)$,
	where $t\in T$ and $\gamma\in\Gamma$. If $\mu\in\LIM(G)$, then $\mu\circ \Phi \in \LIM(\Gamma)$.
}

\begin{point}
Let $\Cscr(G)$ be the compact symmetric neighborhoods of $e$ in $G$.
For $F\in\Cscr(G)$, define the mean $\mu\sub{F}$ on $\Linfty(G)$ by $\mu\sub{F}(f) = \frac{1}{|F|}\int_F f(x) \dd{x}$,
where $|F|$ is the Haar measure of $F$.
For $E,F \in \Cscr(G)$
\begin{equation}\label{norm mu F}\textstyle
\norm{\mu\sub{E} - \mu\sub{F}}
	= \norm*{ \frac{\Onebb\sub{E}}{|E|} - \frac{\Onebb\sub{F}}{|F|}}_1
	= \frac{|E\setminus F|}{|E|} + \frac{|F \setminus E|}{|F|} + \abs*{ \frac1{|E|} - \frac1{|F|}}.
\end{equation}
The pair $(K,\epsilon)$ always signifies $K\in\Cscr(G)$ and $\epsilon > 0$.
$F\in\Cscr(G)$ is said to be $(K,\epsilon)$-invariant if $|KF\setminus F| < \epsilon |F|$.
A net $\set*{F_\alpha}$ in $\Cscr(G)$ is called a F\o{}lner net if it is eventually $(K,\epsilon)$-invariant
for any $(K,\epsilon)$.
By \cref{Emerson Greenleaf Lemma}, every amenable group admits a F\o{}lner net satisfying this definition.
\end{point}

\begin{lem}[{\cref{Theorem Unimodular Case}}]\label{Every TLIM is limit of Folner net}
Suppose $G$ is unimodular.
For each $\mu\in\TLIM(G)$, there exists a F\o{}lner net $\set*{F_\alpha}$
with $\lim_\alpha \mu\sub{F_\alpha} = \mu$.
\end{lem}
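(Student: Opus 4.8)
\emph{Overall approach.} The plan is to read the statement off from \cref{Theorem Unimodular Case}, which identifies $\TLIM_0(G)$ with $\TLIM(G)$ for noncompact unimodular $G$, and then to package the defining property of $\TLIM_0(G)$ into a genuinely convergent net. First I would dispose of the compact case: if $G$ is compact then $\TLIM(G)=\{\mu\sub{G}\}$ by \cref{Prop When G is compact}, the whole group lies in $\Cscr(G)$ (it is compact, symmetric, and a neighborhood of $e$), and the constant net $F_\alpha\equiv G$ is vacuously $(K,\epsilon)$-invariant with $\mu\sub{G}=\mu$. So henceforth assume $G$ is noncompact, whence \cref{Theorem Unimodular Case} gives $\mu\in\TLIM_0(G)$.

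\emph{The net.} Unwinding the definition of $\TLIM_0(G)$, for every triple $(\Pcal,K,\epsilon)$ there is a set $A\in\Cp(K,\epsilon)$ with $\mu\sub{A}\in\Ncal(\mu,\Pcal,\epsilon)$. Since the sets $\Ncal(\mu,\Pcal,\epsilon)$ form a neighborhood basis at $\mu$ by \cref{Lemma Partition Basis}, I would index the selections by this basis \emph{directed by reverse inclusion}, paired with Følner data $(K,\epsilon)$; the basis property makes this directed. At an index whose underlying basic neighborhood is $N=\Ncal(\mu,\Pcal,\epsilon')$, I pick, via the triple $(\Pcal,K,\min(\epsilon,\epsilon'))$, a set $F$ with $\mu\sub{F}\in N$ and $F\in\Cp(K,\epsilon)$. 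Choosing reverse inclusion of the neighborhoods is exactly what upgrades accumulation to convergence: beyond any target $N_0$ the underlying neighborhood satisfies $N\subseteq N_0$, so $\mu\sub{F}\in N\subseteq N_0$ eventually, giving $\mu\sub{F}\to\mu$; and because $K$ exhausts $G$ while $\epsilon\to0$, the net is eventually $(K_0,\epsilon_0)$-invariant for every $(K_0,\epsilon_0)$, hence F\o{}lner.

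\emph{Main obstacle: matching the two notions of F\o{}lner net.} I expect the delicate part to be purely definitional. \cref{Theorem Unimodular Case} produces sets that are merely compact and left-$(K,\epsilon)$-invariant, whereas the present chapter requires a net in $\Cscr(G)$ — compact \emph{symmetric neighborhoods of} $e$ — obeying the absorbent bound $\Haar{KF\setminus F}<\epsilon\Haar{F}$. The absorbent form is essentially free: \cref{Lemma no TLIMi left behind} gives $\TLIM_0(G)=\TLIM_1(G)=\TLIM_2(G)$, so the selection above may be taken in $\Cp_1(K,\epsilon)$ (equivalently, apply \cref{Emerson Greenleaf Lemma} to each chosen set). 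To force membership in $\Cscr(G)$ I would use unimodularity twice. By the unimodular symmetrization of \cite[Theorem 4.4]{Chou70} the chosen sets may in addition be taken symmetric, and by \cref{Lemma A can be large} each chosen $A$ may be taken with $\Haar{A}$ as large as desired while remaining invariant and keeping $\mu\sub{A}$ close to $\mu$. Fixing once and for all a single $V\in\Cscr(G)$ and replacing $A$ by $A\cup V$ then yields a compact symmetric neighborhood of $e$ differing from $A$ in measure by at most $\Haar{V}$, so both the absorbent ratio and $\mu\sub{A}$ are perturbed by an amount tending to $0$ as $\Haar{A}\to\infty$. Absorbing these vanishing errors into the $\epsilon$'s of the index, the resulting net lies in $\Cscr(G)$, is F\o{}lner, and still converges to $\mu$. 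The one point requiring genuine care is arranging invariance, closeness to $\mu$, symmetry, and largeness \emph{simultaneously}; this is where I would spend the bulk of the verification.
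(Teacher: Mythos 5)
Your overall route is the paper's own: the lemma is stated as a corollary of \cref{Theorem Unimodular Case}, and the paper offers nothing beyond that citation, so the definitional bookkeeping you supply is exactly the intended content. The compact case via \cref{Prop When G is compact}, the net indexed by basic neighborhoods (ordered by reverse inclusion) paired with invariance data, and the passage from the pointwise condition $\Cp_0$ to the absorbent condition via \cref{Lemma no TLIMi left behind} (or \cref{Emerson Greenleaf Lemma}) are all correct. The $A\cup V$ device for making the sets neighborhoods of $e$, powered by \cref{Lemma A can be large}, also works: both the absorbent ratio and $\|\mu_{A\cup V}-\mu_A\|_1$ are perturbed by $O(1/\Haar{A})$.

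The genuine gap is the symmetrization step. The claim that ``the chosen sets may in addition be taken symmetric'' by \cite[Theorem 4.4]{Chou70} is false: that theorem produces \emph{some} symmetric F\o{}lner net, not symmetric sets whose normalized characteristic functions approximate a \emph{prescribed} $\mu\in\TLIM(G)$, and symmetrizing a given $A$ by $A\cup A^{-1}$ destroys the closeness to $\mu$. No argument can close this gap, because the assertion is false: take $G=\Rbb$ and let $\mu$ be a $w^*$-accumulation point of $\{\mu_{[0,n]}\}$, so $\mu\in\TLIM(\Rbb)$ and $\mu(\Onebb_{[0,\infty)})=1$; but every symmetric $F$ of finite positive measure satisfies $\mu_F(\Onebb_{[0,\infty)})\leq\tfrac12$ by reflection invariance of Lebesgue measure, so no net of symmetric sets can converge to $\mu$. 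Consequently the lemma is literally false if ``F\o{}lner net'' is read as a net in $\Cscr(G)$; it must be (and in its later applications implicitly is) understood with the symmetry requirement on $F_\alpha$ dropped --- note that \cref{Falpha to Dalpha} and the surjectivity theorem only test absorbent invariance of $F_\alpha$ against sets in $\Cscr(G)$ and never use symmetry of $F_\alpha$ itself. With that reading, delete the symmetrization sentence and the rest of your argument is complete.
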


\begin{point}
The following well-known result is due to Weil \cite[Section 9]{Weil1953}.
Suppose $H < G$ is any closed subgroup.
The coset space $G/H$ admits a $G$-invariant Radon measure $\lambda$ if and only if $\Delta_H = \Delta_G|_H$.
In this case, $\lambda$ is determined up to normalization by
\begin{equation}\label{Standard Normalization}\textstyle
\int_G f(g) \dd{g} = \int_{G/H} \int_{H} f(xh) \dd{h} \dd\lambda(xH),
\ \ \ f\in C_0(G).
\end{equation}
If \cref{Standard Normalization} holds,
$\lambda$ is said to satisfy the \textit{standard normalization}.
\end{point}

\begin{point}
Henceforth, let $\Gamma < G$ be a lattice subgroup,
and let $\pi: G \to G/\Gamma$ be the quotient map.
By definition of lattice, $\Gamma$ is discrete, and the coset space $G/\Gamma$ 
admits a $G$-invariant Radon probability measure $\lambda$.
Assume that $\ddd\gamma$ is the counting measure on $\Gamma$,
and that $\lambda$ satisfies the standard normalization.
The following lemma is elementary in the study of lattices.
\end{point}

\begin{lem}
Because $G$ admits a lattice subgroup $\Gamma$, it must be unimodular.
\end{lem}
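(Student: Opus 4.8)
The plan is to show that the existence of a lattice $\Gamma < G$ forces the modular function $\Delta = \Delta_G$ to be trivial. The key tool is the Weil integration formula recalled just above: since $G/\Gamma$ carries a $G$-invariant Radon probability measure $\lambda$, we have $\Delta_\Gamma = \Delta_G|_\Gamma$. But $\Gamma$ is \emph{discrete}, hence its Haar measure is counting measure, which is both left- and right-invariant; thus $\Gamma$ is unimodular and $\Delta_\Gamma \equiv 1$. Combining these, $\Delta_G \equiv 1$ on the subgroup $\Gamma$. This is the first and most routine step.

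The substance of the argument is to promote $\Delta_G|_\Gamma \equiv 1$ to $\Delta_G \equiv 1$ on all of $G$. The mechanism I would use is the invariance of $\lambda$ together with the fact that $\Gamma$ is cofinite. Concretely, consider how right translation by an element $t \in G$ interacts with the standard normalization \eqref{Standard Normalization}. Right translation $r_t$ scales left Haar measure on $G$ by $\Delta_G(t)$. On the other hand, the push-forward structure of $\lambda$ under the quotient map is insensitive to this scaling precisely because $\lambda(G/\Gamma) = 1$ is finite and $G$-invariant. The plan is to write, for $f \in C_0(G)$,
\[
\Delta_G(t^{-1}) \int_G f(g)\dd{g} = \int_G f(gt)\dd{g} = \int_{G/\Gamma}\int_\Gamma f(xht)\dd{h}\dd\lambda(x\Gamma),
\]
and then exploit the finiteness of $\lambda$ to conclude the left-hand scaling factor must be $1$. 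The cleanest route is the standard one: a locally compact group admitting a finite invariant measure on a quotient by a \emph{discrete} cofinite subgroup must be unimodular, because one integrates the constant function $\Onebb$ over a fundamental domain and observes that its total mass is simultaneously finite and scaled by $\Delta_G(t)$ under right translation, forcing $\Delta_G(t) = 1$.

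The main obstacle I anticipate is being careful about fundamental domains and the interchange of the inner sum over $\Gamma$ with right translation. Right-translating the argument $g \mapsto gt$ permutes the cosets $x\Gamma$ in a way that is governed by the $G$-action on $G/\Gamma$, under which $\lambda$ is invariant; the delicate point is that the inner integral $\int_\Gamma f(xh)\dd{h}$ transforms with a factor of $\Delta_G(t)$ coming from the reindexing of Haar measure on $G$, while the outer integral against $\lambda$ contributes no compensating factor because $\lambda$ is genuinely invariant rather than merely relatively invariant. Matching the single global factor $\Delta_G(t^{-1})$ against a transformation that manifestly preserves total mass yields $\Delta_G(t) = 1$ for every $t \in G$, which is the claim. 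I would phrase the final comparison by evaluating both sides on an approximate identity or a suitable $f$ with $\int_G f = 1$, so that the scaling factor appears in isolation and cannot be absorbed elsewhere.
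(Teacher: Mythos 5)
Your first step---Weil's condition gives $\Delta_G|_\Gamma = \Delta_\Gamma$, and $\Delta_\Gamma \equiv 1$ because counting measure on a discrete group is bi-invariant---is correct and is exactly how the paper begins. The gap is in the promotion of $\Delta_G|_\Gamma \equiv 1$ to $\Delta_G \equiv 1$. Your mechanism rests on the claim that right translation $g\mapsto gt$ ``permutes the cosets $x\Gamma$'' and is therefore controlled by the $G$-invariance of $\lambda$. It does not: $x\Gamma t = xt(t^{-1}\Gamma t)$ is a left coset of the conjugate lattice $t^{-1}\Gamma t$, not of $\Gamma$, so right translation induces no transformation of $G/\Gamma$ at all, and the invariance of $\lambda$ (which is invariance under the \emph{left} action) says nothing about it. Concretely, taking $f = \Onebb_T$ for a precompact transversal $T$, your displayed identity reduces to $\Delta_G(t)^{-1} = \int_{G/\Gamma} \#(x\Gamma t\cap T)\dd\lambda(x\Gamma)$, and the integrand is not identically $1$; the identity is consistent for every value of $\Delta_G(t)$ and yields no contradiction. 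The same problem sinks the fundamental-domain version: $Tt$ does have measure $|T|\Delta_G(t)$, but it is a fundamental domain for $t^{-1}\Gamma t$ rather than for $\Gamma$, so there is no second computation of its measure to compare against---asserting that conjugate lattices have equal covolume is essentially equivalent to the unimodularity you are trying to prove.

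The missing idea is to use the fact that $\Delta_G$, being trivial on $\Gamma$, descends through the quotient. The paper takes $K = \ker\Delta_G$, notes $\Gamma\subset K$, pushes $\lambda$ forward to a finite $G$-invariant measure on the group $G/K$---that is, a finite Haar measure---and observes that $G/K$ maps injectively and continuously into $\Rbb^\times$, which has no nontrivial subgroup of finite Haar measure; hence $K = G$. Equivalently: $\Delta_G$ descends to a continuous map $G/\Gamma\to\Rbb^{>0}$, and the pushforward of $\lambda$ under this map is a finite measure on $\Rbb^{>0}$ invariant under multiplication by the subgroup $\Delta_G(G)$; a nontrivial subgroup of $\Rbb^{>0}$ is unbounded, and a finite nonzero measure cannot be invariant under an unbounded family of translations. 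Either formulation supplies the step your sketch is missing.
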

\begin{proof}
Let $K$ be the kernel of the continuous homomorphism $\Delta_G: G \to \Rbb^\times$.
By Weil's theorem, $\Delta_G|_\Gamma = \Delta_\Gamma = 1$, hence $\Gamma \subset K$.
The probability measure $\lambda$ on $G / \Gamma$ pushes forward to a Haar measure on $G / K$.
Since $\Rbb^\times$ has no nontrivial subgroups with finite Haar measure, $K = G$.
\end{proof}

\section*{Results}

\begin{point}
Given $m\in\LIM(\Gamma)$, define $P_m: \LUC(G) \to C(G/\Gamma)$ by
\begin{equation}\textstyle\label{Equation define Pm}
P_mf(x\Gamma) = \inner*{m, \paren*{l_x f}|_\Gamma},
\ \ \ f\in\LUC(G).
\end{equation}
$P_mf$ is well-defined because $m$ is left-invariant,
and it is continuous because $f\in\LUC(G)$.
It satisfies $P_m(l_y f) = l_y (P_m(f))$ because $l_x l_y = l_{yx}$, and it is onto because
\begin{equation}\textstyle\label{Pm onto}
P_m (h\circ \pi) = h,
\ \ \ h\in C(G/\Gamma).
\end{equation}
\end{point}

\begin{point}
Define $\iota: \LIM(\Gamma) \to \TLIM(G)$ by
\begin{equation}\label{Equation Iota}\textstyle
\iota m(f)
= \int_{G/\Gamma} P_m f \dd\lambda,
\ \ \ f\in\LUC(G).
\end{equation}
The map $\iota m$ is linear, positive, unital, and left-invariant because $m$ and $\lambda$ are.
Grosvenor \cite[Theorem 3.2]{Grosvenor1985} shows that $\iota$ is injective,
although the following proof is somewhat simpler.
\end{point}


\begin{lem} \label{Iota is Injective}
$\iota$ is injective.
\end{lem}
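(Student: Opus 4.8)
The plan is to show that the map $\iota$ is injective by proving that if $\iota m = \iota m'$ for $m, m' \in \LIM(\Gamma)$, then $m = m'$. The key observation is that two left-invariant means on $\Gamma$ are equal if and only if they agree on all functions in $\linfty(\Gamma)$, and since $\Gamma$ is discrete, it suffices to recover the value of $m$ on an arbitrary bounded function on $\Gamma$ from the data of $\iota m$. The natural approach is to exhibit, for each $g \in \linfty(\Gamma)$, a function $f \in \LUC(G)$ whose ``fiberwise'' restriction encodes $g$, in such a way that $\iota m(f)$ reads off $\langle m, g\rangle$.

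First I would fix a suitable $f \in \LUC(G)$ built from $g$. The cleanest choice uses the structure of the formula \eqref{Equation Iota}: since $\iota m(f) = \int_{G/\Gamma} P_m f \dd\lambda$ where $P_m f(x\Gamma) = \langle m, (l_x f)|_\Gamma\rangle$, I want to arrange that $P_m f$ is (approximately) the constant function with value $\langle m, g\rangle$, so that integrating against the probability measure $\lambda$ returns $\langle m, g\rangle$ exactly. To do this, I would take a compactly supported $\phi \in P_1(G)$ supported near $e$ and form a convolution-type average that smooths $g$ (extended to $G$ via a Borel transversal for $\Gamma \backslash G$, as in the footnote on amenability of closed subgroups) into an $\LUC(G)$ function. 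Because $m$ is left-invariant on $\Gamma$, the dependence of $\langle m, (l_x f)|_\Gamma\rangle$ on the coset representative $x$ should wash out, making $P_m f$ constant in $x\Gamma$.

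The main step is therefore to check that $P_m f(x\Gamma)$ is independent of $x$ (or can be made uniformly close to $\langle m, g\rangle$) for the $f$ I construct, and this is where left-invariance of $m$ does the essential work: translating $x$ by an element of $\Gamma$ leaves $\langle m, (l_x f)|_\Gamma\rangle$ unchanged by definition of $P_m f$ being well-defined, but I also need invariance across the continuous directions, which I would control by choosing the smoothing window small and exploiting that $m$ is a mean (hence norm-one and positive). Once $P_m f$ is the constant $\langle m, g\rangle$, equation \eqref{Equation Iota} gives $\iota m(f) = \langle m, g\rangle$ directly, and symmetrically $\iota m'(f) = \langle m', g\rangle$; equality of $\iota m$ and $\iota m'$ on this $f$ forces $\langle m, g\rangle = \langle m', g\rangle$ for all $g$, whence $m = m'$.

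The hard part will be constructing $f \in \LUC(G)$ that faithfully carries the discrete data $g \in \linfty(\Gamma)$ while landing in $\LUC(G)$ and making $P_m f$ genuinely constant rather than merely approximately so. A slicker alternative, which I would pursue if the smoothing argument proves delicate, is to avoid building $f$ from a general $g$ and instead test $\iota$ against the special functions $h \circ \pi$ for $h \in C(G/\Gamma)$ together with translates: the identity \eqref{Pm onto} says $P_m(h \circ \pi) = h$, so $\iota m(h \circ \pi) = \int_{G/\Gamma} h \dd\lambda$ carries no information about $m$, confirming that the injectivity must come from functions that vary along the fibers. This pinpoints that the real content is to separate means on $\Gamma$ using fiber-varying $\LUC(G)$ functions, and I expect the argument to reduce to choosing, for a given pair of distinct means $m \neq m'$, a single $g \in \linfty(\Gamma)$ with $\langle m, g\rangle \neq \langle m', g\rangle$ and promoting it to such an $f$.
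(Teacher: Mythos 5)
Your high-level strategy is the same as the paper's: injectivity must come from testing $\iota m$ against functions that vary along the fibers of $G \to G/\Gamma$, so one wants an embedding of $\linfty(\Gamma)$ into $\LUC(G)$ under which $\iota m$ reads off $m$. Your observation that the functions $h\circ\pi$ carry no information about $m$ is correct and correctly motivates this. But there is a genuine gap: you never actually produce the test function, and the property you aim for --- that $P_m f$ be the \emph{constant} function $\langle m,g\rangle$ on $G/\Gamma$ --- is both unachievable by your proposed construction and unnecessary. Any $f$ built by smoothing with a compactly supported kernel over the fibers near a fixed fundamental domain has $(l_x f)|_\Gamma = 0$ once $x$ leaves a set of the form $\Gamma V$ for compact $V$, so $P_m f$ is compactly supported on $G/\Gamma$; when $\Gamma$ is non-uniform (and the lemma must cover that case, since it precedes the uniformity hypothesis) such a $P_m f$ cannot be a nonzero constant. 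Chasing constancy, or even uniform approximate constancy, is therefore a dead end.

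What the paper does instead is choose a neighborhood $U$ of $e$ with $U^{-1}U\cap\Gamma=\{e\}$ and a continuous $h\ge 0$ supported in $U$ with $\|h\|_1=1$, and send $\phi\in\linfty(\Gamma)$ to $\Phi$ with $\Phi(u\gamma)=h(u)\phi(\gamma)$ on $U\Gamma$ and $\Phi=0$ elsewhere. Here $P_m\Phi$ is nowhere near constant --- it is supported on the image of $U^{-1}$ in $G/\Gamma$ --- but that does not matter: left-invariance of $m$ collapses $\langle m,(l_x\Phi)|_\Gamma\rangle$ to a multiple of $m(\phi)$ by a bump-function factor, and the standard (Weil) normalization of $\lambda$ together with $\|h\|_1=1$ makes the integral over $G/\Gamma$ come out to exactly $m(\phi)$. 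That normalization identity is the computational engine of the proof, and it is absent from your proposal. To repair your argument, replace the goal ``$P_m f$ is constant'' with the goal ``$\int_{G/\Gamma} P_m f\,\ddd\lambda = \langle m, g\rangle$,'' carry out the fiberwise bump construction explicitly, and verify membership in $\LUC(G)$ via the estimate $\|l_t\Phi-\Phi\|_\infty\le\|l_t h-h\|_\infty\cdot\|\phi\|_\infty$ for $t$ near $e$.
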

\begin{proof}
Since $\Gamma$ is discrete, let $U\subset G$ be a sufficiently small neighborhood of $e$
such that $U^{-1}U\cap \Gamma = \{e\}$.
By Urysohn's lemma, there exists a continuous $h\geq 0$ with support contained in $U$, such that $\|h\|_1=1$.
Define an embedding $\ell_\infty(\Gamma)\into\LUC(G)$ by $\phi\mapsto\Phi$, where
$\Phi(x) = \begin{Bmatrix*}[l]
	0								&	x\not\in U\Gamma\\
	h(x\gamma^{-1}) \phi(\gamma)	&	x \in U\gamma\end{Bmatrix*}$.
For $t\in U$, $\|l_t \Phi - \Phi\| \leq \|l_t h - h\| \cdot \|\phi\|$, which shows $\Phi\in\LUC(G)$.
A direct computation shows $\iota m(\Phi) = m(\phi)$.
\end{proof}

\begin{thm}
If $\Gamma$ is not uniform, $\iota$ is not onto.
\end{thm}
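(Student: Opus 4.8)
The plan is to exhibit a single topological left-invariant mean lying outside the image of $\iota$, working through the identification $\TLIM(G)\cong\LIM(\LUC(G))$ of \cref{TlimG is LimLucG}. The key observation is that every mean in the image of $\iota$ behaves like $\lambda$ on $\Gamma$-periodic functions: if $h\in C(G/\Gamma)$ is such that $h\circ\pi\in\LUC(G)$, then \cref{Pm onto} gives $P_m(h\circ\pi)=h$, so by \cref{Equation Iota},
\[
\iota m(h\circ\pi)=\int_{G/\Gamma}P_m(h\circ\pi)\dd\lambda=\int_{G/\Gamma}h\dd\lambda
\]
for every $m\in\LIM(\Gamma)$, independent of $m$. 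Hence it suffices to produce one $\mu\in\TLIM(G)$ and one periodic $h\circ\pi$ with $\mu(h\circ\pi)\neq\int_{G/\Gamma}h\dd\lambda$.

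Second, I would build $\mu$ as an invariant mean ``concentrated at infinity'' in the $G/\Gamma$ direction, which is exactly where noncompactness enters. Let $A_0=\{h\circ\pi : h\in C_0(G/\Gamma),\ h\circ\pi\in\LUC(G)\}$; this is a $G$-invariant subspace of $\LUC(G)$ (invariance is routine, since $L_g(h\circ\pi)=(L_g h)\circ\pi$ and left translations preserve both $\LUC(G)$ and $C_0(G/\Gamma)$). Because $G/\Gamma$ is noncompact, I can choose a net $(y_\alpha)$ in $G/\Gamma$ leaving every compact set, lift it to $g_\alpha\in G$, and take a $w^*$-accumulation point $\mu_\infty$ of the evaluation means $\{\delta_{g_\alpha}\}$; since $h(y_\alpha)\to 0$ for $h\in C_0(G/\Gamma)$, the mean $\mu_\infty$ vanishes on $A_0$. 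Thus $M_0=\{\mu\in M(\LUC(G)) : \mu|_{A_0}=0\}$ is a nonempty, $w^*$-compact, convex, $G$-invariant subset of $\LUC(G)^*$. Applying Day's fixed point theorem to the affine $w^*$-continuous action $\mu\mapsto l_g\mu$ of the amenable group $G$ on $M_0$ yields a left-invariant mean $\mu^*\in M_0$, that is, $\mu^*\in\LIM(\LUC(G))$ with $\mu^*|_{A_0}=0$.

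Finally, I would test $\mu^*$ against a periodization. Let $f_0\in C_c(G)$ be nonnegative with $\int_G f_0>0$, and let $h\in C_c(G/\Gamma)$ be its periodization $h(x\Gamma)=\sum_{\gamma\in\Gamma}f_0(x\gamma)$; then $h\circ\pi\in\LUC(G)$ (the right translates $r_\gamma f_0$ share a modulus of continuity and the sum is locally finite), and $h\in C_0(G/\Gamma)$, so $h\circ\pi\in A_0$. By the standard normalization \cref{Standard Normalization}, $\int_{G/\Gamma}h\dd\lambda=\int_G f_0>0$. Comparing the two computations, $\iota m(h\circ\pi)=\int_G f_0>0$ for every $m\in\LIM(\Gamma)$, whereas $\mu^*(h\circ\pi)=0$; under the isometric identification of \cref{TlimG is LimLucG}, $\mu^*$ corresponds to a mean in $\TLIM(G)$ differing from every $\iota m$, so $\iota$ is not onto. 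I expect the main obstacle to be the construction in the second paragraph — producing a genuinely $G$-invariant mean that still annihilates $A_0$ — for which the combination of the escaping net (furnishing one mean that kills $A_0$) with Day's fixed point theorem (upgrading to invariance while remaining inside the $G$-invariant set $M_0$) is the crucial device; the verifications that $A_0$ is $G$-invariant and that the periodization lies in $\LUC(G)$ are routine.
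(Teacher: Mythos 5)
Your proposal is correct, and it rests on the same two pillars as the paper's proof: first, that $\iota m(h\circ\pi)=\int_{G/\Gamma}h\dd\lambda$ for \emph{every} $m\in\LIM(\Gamma)$, so the image of $\iota$ is completely rigid on $\Gamma$-periodic functions; second, that noncompactness of $G/\Gamma$ permits a topological left-invariant mean concentrated ``at infinity modulo $\Gamma$,'' which therefore vanishes on a periodic function of positive $\lambda$-integral. Where you differ is in the machinery for the second step. The paper pushes a F\o{}lner net $\{F_\alpha\}$ into $G/\Gamma$ along escaping points $y_\alpha$ (chosen so that $F_\alpha y_\alpha$ misses a fixed neighborhood $U$ of a set of positive $\lambda$-measure), obtains a $G$-invariant mean $\lambda_2$ on $C(G/\Gamma)$ with $\lambda_2(h)=0$ for an Urysohn function $h$ supported in $U$, and then transfers $\lambda_2$ back to a mean $\nu=\lambda_2\circ P_m$ on $\LUC(G)$; this is explicit and produces a mean that is already visibly of the ``fibered'' form $\lambda_2\circ P_m$, only with $\lambda_2$ in place of $\lambda$. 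You instead work directly in $\LUC(G)^*$: the escaping point masses show that the annihilator $M_0$ of the $G$-invariant subspace $A_0$ of periodic $C_0$-functions is a nonempty compact convex $G$-invariant set, and Day's fixed point theorem (legitimate here, since the $G$-action on means of $\LUC(G)$ is jointly $w^*$-continuous) supplies an invariant mean in $M_0$. Your route is less constructive but kills \emph{all} of $A_0$ at once rather than a single test function, and it avoids having to verify that the F\o{}lner-averaged point masses converge to $G$-invariance on $C(G/\Gamma)$; the paper's route avoids the fixed-point theorem and keeps everything at the level of nets. Your test function (the periodization of a nonnegative $f_0\in C_c(G)$, evaluated via the standard normalization) is a clean substitute for the paper's Urysohn function and is correctly shown to lie in $\LUC(G)$ by the uniform local finiteness of $\sum_\gamma r_\gamma f_0$.
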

\begin{proof}
Pick compact sets $K, U\subset G/\Gamma$ with $\lambda(K) > 0$
and $K \subset U^\circ$.
Let $\set*{F_\alpha} \subset \Cscr(G)$ be a F\o{}lner net for $G$.
For each $\alpha$, pick $y_\alpha \in G/\Gamma$
outside the compact set $F_\alpha U$.
Thus  $F_\alpha y_\alpha \cap U = \varnothing$.

Let $\delta(y)\in C(G/\Gamma)^*$ denote the point mass at $y$.
Define $\mu_\alpha$ by the weak integral
\begin{equation}\textstyle
\mu_\alpha = \frac{1}{|F_\alpha|}\int_{F_\alpha} \delta(ty_\alpha) \dd{t}.\end{equation}
Thus $\{\mu_\alpha\}$ is a net of means on $C(G/\Gamma)$ converging to $G$-invariance.
Fix an accumulation point $\lambda_2$ of this net, so $\lambda_2$ is a $G$-invariant mean on $C(G/\Gamma)$.
Choose $h\in C(G/\Gamma)$ with $\Onebb_K \leq h \leq \Onebb_{U}$.
By construction, $\lambda_2(h) = 0$.

Pick any $m\in\LIM(\Gamma)$.
With $P_m$ as in \cref{Equation define Pm},
define $\nu\in\TLIM(G)$ by
\begin{equation}\textstyle
\nu(f) = \langle \lambda_2, P_m f \rangle,
\ \ \ f\in\LUC(G).
\end{equation}
By \cref{Pm onto}, $\nu(h \circ \pi) = 0$,
whereas $\iota \mu(h\circ \pi)  = \lambda(h) > 0$ for each $\mu\in\LIM(\Gamma)$.
\end{proof}

\begin{point}
Although amenable non-uniform lattices are hard to come by, they do exist.
Suppose $\set*{q_n}_{n\geq 0}$ is a sequence of prime powers with $\sum_n 1/q_n < \infty$.
Let $F\sub{q_n}$ be the finite field of order $q_n$, with multiplicative group $F^\times\sub{q_n}$.
Form the groups $\Lambda = \bigoplus_n F\sub{q_n}$, $S = \prod_n F\sub{q_n}^\times$,
and $G = \Lambda \rtimes S$.
$G$ is abelian-by-abelian, hence amenable.
\cite[Theorem 1.11]{Bader2019} states that $G$ contains uncountably many pairwise non-commensurable non-uniform lattices.
\end{point}

\begin{point}
Henceforth, let $\Gamma$ be a uniform lattice.
\end{point}

\begin{lem}
$G/\Gamma$ has a transversal $T$ that is a precompact neighborhood of $e$.
\end{lem}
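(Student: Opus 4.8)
The plan is to build the transversal by hand from a single precompact chart near $e$, using discreteness of $\Gamma$ to obtain an injective chart and compactness of $G/\Gamma$ (uniformity) to finish with finitely many translates. First I would fix an open precompact neighborhood $V$ of $e$ small enough to inject into $G/\Gamma$: since $\Gamma$ is discrete there is an open $W\ni e$ with $W\cap\Gamma=\{e\}$, and by continuity of multiplication together with local compactness I can choose $V$ precompact with $V^{-1}V\subseteq W$, so that $V^{-1}V\cap\Gamma=\{e\}$. Then $\pi|_V$ is injective, because $\pi(x)=\pi(y)$ for $x,y\in V$ forces $x^{-1}y\in V^{-1}V\cap\Gamma=\{e\}$; the identical computation shows $\pi|_{gV}$ is injective for every $g\in G$.

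Next I would invoke compactness. Since $\pi$ is an open map and $e\in V$, each coset $x\Gamma=\pi(x)$ lies in the open set $\pi(xV)$, so $\{\pi(gV):g\in G\}$ is an open cover of the compact space $G/\Gamma$. I extract a finite subcover $\pi(g_1 V),\dots,\pi(g_n V)$, arranging $g_1=e$. Now I disjointify on the base: put $B_1=\pi(V)$ and $B_i=\pi(g_iV)\setminus(B_1\cup\cdots\cup B_{i-1})$ for $2\le i\le n$, so that the $B_i$ partition $G/\Gamma$. Pulling each piece back through the relevant injective chart, I set $T_i=(\pi|_{g_iV})^{-1}(B_i)$ and $T=\bigcup_{i=1}^n T_i$.

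It then remains to check the three required properties, each of which should be immediate. The sets $T_i$ are disjoint because their images $B_i$ are, and $\pi$ carries each $T_i$ bijectively onto $B_i$; since the $B_i$ partition $G/\Gamma$, the map $\pi|_T$ is a bijection onto $G/\Gamma$, i.e.\ $T$ is a transversal. Because $g_1=e$ and $B_1=\pi(V)$, one gets $T_1=(\pi|_V)^{-1}(\pi(V))=V$, so $V\subseteq T$ and $T$ is a neighborhood of $e$. Finally $T\subseteq\bigcup_{i=1}^n g_iV$, a finite union of precompact sets, whence $\cl(T)$ is compact and $T$ is precompact.

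I do not expect any single step to be a serious obstacle; the only points needing care are verifying that the chosen $V$ genuinely injects (the $V^{-1}V$ bookkeeping rather than $VV^{-1}$) and that the disjointification on the base keeps $T_1$ equal to \emph{all} of $V$ and not a proper subset, which is exactly what guarantees $T$ is a neighborhood of $e$ rather than merely a transversal happening to contain $e$. The use of the hypotheses is clean: discreteness of $\Gamma$ supplies the injective chart $V$, and uniformity of $\Gamma$ (compactness of $G/\Gamma$) supplies the finite subcover that forces $T$ to be precompact.
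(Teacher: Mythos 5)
Your proof is correct and follows essentially the same route as the paper's: choose a precompact neighborhood $V$ of $e$ with $V^{-1}V\cap\Gamma=\{e\}$ so that every translate $gV$ injects into $G/\Gamma$, take a finite subcover by compactness of $G/\Gamma$ with $g_1=e$, and disjointify. Your disjointification downstairs in $G/\Gamma$ followed by pulling back through the injective charts is the same construction as the paper's inductive step $T_{k+1}=T_k\cup[t_{k+1}U\setminus T_k\Gamma]$, since $t_{k+1}U\setminus T_k\Gamma$ is exactly the preimage under $\pi|_{t_{k+1}U}$ of $\pi(t_{k+1}U)\setminus\pi(T_k)$.
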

\begin{proof}
Pick any compact neighborhood $U\subset G$ about $e$ small enough that $U^{-1} U \cap \Gamma = \{e\}$.
Thus $\pi|_{tU}$ is injective for any $t\in G$.
$G$ is covered by $\{t U : t\in G\}$, hence $G/\Gamma$ is covered by $\{\pi(tU) : t\in G\}$.
Let $\{\pi(t_1U), \hdots, \pi(t_nU)\}$ be a finite subcover.
Without loss of generality, suppose $t_1 = e$.
Let $T_1 = t_1 U$.
Inductively let $T_{k+1} = T_k \cup [t_{k+1} U \setminus T_k \Gamma]$.
The desired transversal is $T_n$.
\end{proof}

\begin{point}
Suppose $S\subseteq T$.
We have assumed $\lambda$ satisfies the standard normalization, hence
\begin{equation}\label{lambda comes from Haar}\textstyle
|S| = \int_G \Onebb_S
	= \int_{G/\Gamma} \int_\Gamma \Onebb_S(t\gamma) \dd{\gamma} \dd\lambda(t\Gamma)
	= \int_{G/\Gamma} \Onebb_{\pi(S)} \dd\lambda
	= \lambda(\pi(S)).
\end{equation}
In particular, $|T|= 1$.
Let $\#(D)$ denote the cardinality of $D\subset \Gamma$.
Since $G$ is unimodular,
\begin{equation}\label{F vs TF}\textstyle
\#(D) = \sum_{\gamma\in D} |T| =\sum_{\gamma\in D}|T \gamma| = |TD|.
\end{equation}
\cref{lambda comes from Haar} also implies
\begin{equation}\label{Integral over T}\textstyle
\int_{G/\Gamma} F \dd\lambda = \int_T F(\pi(t)) \dd{t}, \ \ \ F\in \Linfty(G/\Gamma).
\end{equation}
When $m\in\LIM(\Gamma)$, we can plug the function
$P_m f$ into \cref{Integral over T} 
to conclude that the following definition of $\iota: \linftyS(\Gamma)\to\LUC(G)^*$ extends
the definition given in \cref{Equation Iota}:
\begin{equation}\label{extension of iota}\textstyle
\iota m (f) = \int_T \inner*{m, \paren*{l_t f}|_\Gamma} \dd{t},
\ \ \ f\in\LUC(G).
\end{equation}
For $D\subset \Gamma$, we observe that
\begin{equation}\label{iota of muF}\textstyle
\iota \mu\sub{D}(f)
= \frac1{\#(D)}\sum_{\gamma\in D} \int_T f(t\gamma) \dd{t}
= \frac{1}{|TD|} \int_{TD} f(t) \dd{t}
= \mu_{TD}(f).
\end{equation}
\end{point}

\begin{lem}\label{Lemma iota is continuous}
$\iota: \linftyS(\Gamma) \to \LUC(G)^*$ as defined in \cref{extension of iota} is $w^*$-to-$w^*$ continuous.
\end{lem}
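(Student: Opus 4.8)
The plan is to exhibit $\iota$ as the adjoint of a bounded operator between the appropriate preduals. Recall that for Banach spaces $X,Y$, the adjoint $S^*\colon X^* \to Y^*$ of any bounded operator $S\colon Y\to X$ is automatically $w^*$-to-$w^*$ continuous, since for each fixed $y\in Y$ the map $\xi \mapsto \inner*{S^*\xi, y} = \inner*{\xi, Sy}$ is evaluation against the fixed element $Sy\in X$. Taking $X = \linfty(\Gamma)$ and $Y = \LUC(G)$, it therefore suffices to produce a bounded $S\colon \LUC(G)\to\linfty(\Gamma)$ with $\iota = S^*$. The natural candidate, read off from \eqref{extension of iota}, is
\[\textstyle (Sf)(\gamma) = \int_T f(t^{-1}\gamma)\dd{t} = \int_T (l_t f)(\gamma)\dd{t}, \qquad \gamma\in\Gamma.\]
Boundedness is immediate: since $\Haar{T} = 1$ by \eqref{lambda comes from Haar}, one has $\norm{Sf}_\infty \leq \Haar{T}\,\norm{f}_\infty = \norm{f}_\infty$, so $\norm{S}\leq 1$.

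The content of the lemma is thus the identity $\iota = S^*$, which unwinds to the interchange $\int_T \inner*{m, (l_t f)|_\Gamma}\dd{t} = \inner*{m, Sf}$ for all $m\in\linftyS(\Gamma)$ and $f\in\LUC(G)$. I would justify this via the Bochner integral. First I would check that $t\mapsto (l_t f)|_\Gamma$ is norm-continuous from $G$ into $\linfty(\Gamma)$: because $f\in\LUC(G)$, the isometry property of $l_t$ gives $\norm{l_t f - l_s f}_\infty = \norm{f - l_{t^{-1}s}f}_\infty \to 0$ as $s\to t$, and restriction of a continuous function to $\Gamma$ is a contraction $\LUC(G)\to\linfty(\Gamma)$. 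Restricted to the precompact set $T$, this norm-continuous Banach-valued map is Bochner integrable, so $\int_T (l_t f)|_\Gamma\dd{t}$ exists as an element of $\linfty(\Gamma)$.

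Two standard properties of the Bochner integral then finish the argument. Pairing the integral with the point mass $\delta_\gamma\in\ell_1(\Gamma)$ computes its value at $\gamma$ as $\int_T f(t^{-1}\gamma)\dd{t} = (Sf)(\gamma)$, so the Bochner integral is precisely $Sf$. And because Bochner integrals commute with every bounded functional,
\[\textstyle \inner*{m, Sf} = \inner*{m, \int_T (l_t f)|_\Gamma\dd{t}} = \int_T \inner*{m, (l_t f)|_\Gamma}\dd{t} = \iota m(f),\]
which is exactly $\iota = S^*$. Hence $\iota$ is $w^*$-to-$w^*$ continuous.

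The only genuinely delicate point is the interchange of the possibly merely finitely additive $m$ with the integral over $T$; the device that makes it routine is to package the integrand as a \emph{norm}-continuous $\linfty(\Gamma)$-valued function and use the Bochner integral rather than a weak-$*$ integral, since commuting with an arbitrary element of $\linfty(\Gamma)^* = \linftyS(\Gamma)$ is then automatic. One should only take care to represent $f$ by its continuous version, so that its restriction to the null set $\Gamma$ is well defined and the restriction map is contractive.
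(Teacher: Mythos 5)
Your proof is correct, but it takes a genuinely different route from the paper's. The paper argues directly with nets: it partitions the transversal $T$ into pieces $T_k$ on which $t\mapsto l_t f$ varies by less than $\epsilon$ in norm (using $f\in\LUC(G)$), approximates $\iota m(f)$ by the finite Riemann sum $\sum_k |T_k|\,\langle m, (l_{t_k}f)|_\Gamma\rangle$ uniformly over $m$ in the unit ball, and observes that each such finite sum is manifestly $w^*$-continuous in $m$. You instead realize $\iota$ as the adjoint $S^*$ of the contraction $S:\LUC(G)\to\linfty(\Gamma)$, $Sf(\gamma)=\int_T f(t^{-1}\gamma)\dd t$, with the interchange of the finitely additive $m$ and the integral justified by packaging $t\mapsto (l_tf)|_\Gamma$ as a Bochner integral of a norm-continuous $\linfty(\Gamma)$-valued map. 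Both arguments hinge on exactly the same analytic fact --- norm continuity of $t\mapsto (l_tf)|_\Gamma$, which is where left uniform continuity enters --- so in substance your Bochner integral is the limit of the paper's Riemann sums. What your version buys: it yields $w^*$-to-$w^*$ continuity on all of $\linftyS(\Gamma)$ as an immediate consequence of being an adjoint, whereas the paper's error estimate is really $\epsilon\,\norm{m_\alpha}$ and so, as written, only gives continuity on norm-bounded sets (which is all that is used, since the lemma is applied to nets of means). What the paper's version buys: it is elementary and self-contained, requiring no vector-valued integration theory. You are also right to flag the two genuine subtleties --- choosing the continuous representative of $f$ so that restriction to the locally null set $\Gamma$ is well defined and contractive, and not attempting to pull a merely finitely additive $m$ through a pointwise-defined weak integral --- both of which the paper handles implicitly.
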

\begin{proof}
Suppose $\lim_\alpha m_\alpha = 0 \in \linftyS(\Gamma)$.
Pick $f\in \LUC(G)$ and $\epsilon > 0$.
Partition $T$ into precompact sets $\set*{T_k}_{k=1}^n$
such that $\sup_{t,s\in T_k}|f(t) - f(s)| < \epsilon$.
For each $k$, pick $t_k \in T_k$.
Thus
\begin{equation}\textstyle
\big|\iota m_\alpha(f)
- \sum_{k=1}^n |T_k| \: \langle m_\alpha, (l_{t_k} f)|_\Gamma\rangle\big|
< \epsilon.
\end{equation}
This implies  $\lim_\alpha |\iota m_\alpha(f)| \leq \epsilon$.
Since $\epsilon$ was arbitrary, $\lim_\alpha m_\alpha (f) = 0$.
\end{proof}

\begin{lem}\label{Falpha to Dalpha}
Suppose $\{F_\alpha\}$ is a F\o{}lner net for $G$,
and let
\[D_\alpha = \{\gamma\in\Gamma : F_\alpha \cap T\gamma \neq \varnothing\}.\]
Then $\{D_\alpha\}$ is a F\o{}lner net for $\Gamma$,
and $\lim_\alpha \norm{ \mu\sub{F_\alpha} - \mu\sub{T D_\alpha} } = 0$.
\end{lem}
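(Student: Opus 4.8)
The plan is to exploit that, since $T$ is a transversal for $G/\Gamma$ containing $e$ and $G$ is unimodular with $|T| = 1$, the left tiles $\{T\gamma\}_{\gamma\in\Gamma}$ form a measurable partition of $G$ into sets of measure one. In particular the definition of $D_\alpha$ gives $F_\alpha\subseteq TD_\alpha$, while \eqref{F vs TF} gives $\#(D_\alpha) = |TD_\alpha|$; combining these, $\#(D_\alpha)\geq|F_\alpha|$. Throughout I would write $K = \overline{T}\,\overline{T}^{-1}$, which is compact since $T$ is precompact.

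First I would dispatch the norm statement. If $\gamma\in D_\alpha$, then $T\gamma$ meets $F_\alpha$, and any two points of $T\gamma$ differ by left multiplication by an element of $\overline{T}\,\overline{T}^{-1}$, so $T\gamma\subseteq KF_\alpha$; hence $T\gamma\setminus F_\alpha\subseteq KF_\alpha\setminus F_\alpha$. Taking the union over $\gamma\in D_\alpha$ gives $TD_\alpha\setminus F_\alpha\subseteq KF_\alpha\setminus F_\alpha$. Because $\{F_\alpha\}$ is a F\o{}lner net, $|KF_\alpha\setminus F_\alpha|/|F_\alpha|\to 0$, hence also $|TD_\alpha\setminus F_\alpha|/|F_\alpha|\to 0$. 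Applying \eqref{norm mu F} to the nested pair $F_\alpha\subseteq TD_\alpha$ yields $\|\mu\sub{F_\alpha}-\mu\sub{TD_\alpha}\| = 2|TD_\alpha\setminus F_\alpha|/|TD_\alpha| \leq 2|TD_\alpha\setminus F_\alpha|/|F_\alpha|\to 0$, which is the second assertion.

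For the first assertion I must show $\#(\gamma_0 D_\alpha\triangle D_\alpha)/\#(D_\alpha)\to 0$ for each fixed $\gamma_0\in\Gamma$. The crucial observation is a whole-tile containment: if $\delta\in\gamma_0 D_\alpha\setminus D_\alpha$, then on one hand $\gamma_0^{-1}\delta\in D_\alpha$ forces $T\gamma_0^{-1}\delta$ to meet $F_\alpha$, whence the entire tile $T\delta\subseteq \overline{T}\gamma_0\overline{T}^{-1}F_\alpha =: K_1 F_\alpha$; on the other hand $\delta\notin D_\alpha$ means $T\delta$ is disjoint from $F_\alpha$, so $T\delta\subseteq F_\alpha^c$. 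Thus the whole tile $T\delta$ lies in $K_1 F_\alpha\setminus F_\alpha$. Since distinct $\delta$ give disjoint tiles of measure one, summing gives $\#(\gamma_0 D_\alpha\setminus D_\alpha)\leq |K_1 F_\alpha\setminus F_\alpha|$. The same argument with $\gamma_0^{-1}$ in place of $\gamma_0$, together with the fact that left multiplication by $\gamma_0^{-1}$ is a bijection of $\Gamma$ (so $\#(D_\alpha\setminus\gamma_0 D_\alpha)=\#(\gamma_0^{-1}D_\alpha\setminus D_\alpha)$), bounds the other half by $|K_2 F_\alpha\setminus F_\alpha|$ with $K_2 = \overline{T}\gamma_0^{-1}\overline{T}^{-1}$. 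Dividing by $\#(D_\alpha)\geq|F_\alpha|$ and invoking the F\o{}lner property of $\{F_\alpha\}$ for the compact sets $K_1,K_2$ gives $\#(\gamma_0 D_\alpha\triangle D_\alpha)/\#(D_\alpha)\to 0$, so $\{D_\alpha\}$ is a F\o{}lner net for $\Gamma$.

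The main obstacle is precisely that whole-tile containment. A naive route---bounding the lattice symmetric difference by $|T\gamma_0 D_\alpha\triangle TD_\alpha|$ and comparing tile-by-tile with $\gamma_0 TD_\alpha$---fails, because in the non-abelian case each tile contributes a fixed positive error $|T\gamma_0\triangle\gamma_0 T|$, and summing these over the $\#(D_\alpha)$ tiles produces a quantity of full volume order rather than boundary order. The point of the argument above is to bypass this entirely: a boundary index $\delta$ drags its \emph{entire} tile into the group-level boundary $K_1 F_\alpha\setminus F_\alpha$, which the F\o{}lner condition controls directly. The remaining steps---finiteness and nonemptiness of $D_\alpha$ (the set $\Gamma\cap T^{-1}F_\alpha$ is a discrete subset of a compact set, and nonempty since the tiles cover $F_\alpha$), disjointness and measure of the tiles, and the elementary set inclusions---are routine.
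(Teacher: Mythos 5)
Your proof is correct and follows essentially the same route as the paper: both arguments reduce the lattice boundary to the group-level boundary via the tiling, bounding $\#(\gamma_0 D_\alpha\setminus D_\alpha)=|T(\gamma_0 D_\alpha\setminus D_\alpha)|$ by $|T\gamma_0T^{-1}F_\alpha\setminus F_\alpha|$ and using $F_\alpha\subseteq TD_\alpha\subseteq TT^{-1}F_\alpha$ together with \eqref{norm mu F} for the norm estimate. The only cosmetic difference is that you track the full symmetric difference element-by-element where the paper works with the absorbent quantity $\#(KD_\alpha\setminus D_\alpha)$ directly at the level of sets.
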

\begin{proof}
Pick $K\in \Cscr(\Gamma)$ and $\epsilon > 0$.
If $\alpha$ is large enough that $F_\alpha$ is $(TKT^{-1}, \epsilon)$-invariant, then
\begin{equation}\textstyle
\#(KD_\alpha \setminus D_\alpha)
= \abs{TKD_\alpha \setminus TD_\alpha}
\leq \abs{TKT^{-1} F_\alpha \setminus F_\alpha}
< \epsilon \abs{F_\alpha}
\leq \epsilon \#(D_\alpha).
\end{equation}
In other words, $D_\alpha$ is $(K, \epsilon)$-invariant.
This shows $\set*{D_\alpha}$ is a F\o{}lner net for $\Gamma$.

If $\alpha$ is large enough that $F_\alpha$ is $(TT^{-1}, \epsilon)$-invariant, then
\begin{equation}\textstyle
	|TD_\alpha \setminus F_\alpha|
	\leq |T T^{-1} F_\alpha \setminus F_\alpha| < \epsilon|F_\alpha|.
\end{equation}
Since $F_\alpha \subseteq TD_\alpha$, we apply \cref{norm mu F} to see
\begin{equation}\textstyle
\norm{\mu\sub{TD_\alpha} - \mu\sub{F_\alpha}}
	= \frac{|TD_\alpha\setminus F_\alpha|}{|TD_\alpha|}
	+ \paren*{\frac1{|F_\alpha|} - \frac1{|TD_\alpha|}}
	< \epsilon + \paren*{1 - \frac1{1+\epsilon}}
	< 2 \epsilon.
\end{equation}
This proves $\lim_\alpha \norm{\mu\sub{TD_\alpha} - \mu\sub{F_\alpha}} = 0$.
\end{proof}

\begin{thm}
If $\Gamma$ is a uniform lattice, $\iota: \LIM(\Gamma) \to \TLIM(G)$ is surjective.
\end{thm}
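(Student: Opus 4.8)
The plan is to take an arbitrary $\mu\in\TLIM(G)$, realize it as the limit of a F\o{}lner net on $G$, push that net down to a F\o{}lner net on $\Gamma$ through the transversal $T$, and let $m$ be an accumulation point of the associated means on $\Gamma$. The $w^*$-continuity of $\iota$ together with the norm estimate of \cref{Falpha to Dalpha} will then force $\iota m = \mu$, and since $m$ turns out to be left-invariant, this exhibits $\mu$ as a value of $\iota$.

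First I would fix $\mu\in\TLIM(G)$. Because $G$ carries a lattice it is unimodular, so \cref{Every TLIM is limit of Folner net} supplies a F\o{}lner net $\set*{F_\alpha}$ for $G$ with $\mu\sub{F_\alpha}\to\mu$ in the $w^*$-topology. I then form $D_\alpha = \set*{\gamma\in\Gamma : F_\alpha\cap T\gamma \neq \varnothing}$ as in \cref{Falpha to Dalpha}; that lemma tells me $\set*{D_\alpha}$ is a F\o{}lner net for $\Gamma$ and that $\norm{\mu\sub{F_\alpha} - \mu\sub{TD_\alpha}}\to 0$. Next, regarding $\set*{\mu\sub{D_\alpha}}\subset P_1(\Gamma)\subset \Means(\Gamma)$ and using $w^*$-compactness of $\Means(\Gamma)$, I pass to a subnet along which $\mu\sub{D_\alpha}\to m$ for some $m\in\Means(\Gamma)$. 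Since $\set*{D_\alpha}$ is F\o{}lner, $\set*{\mu\sub{D_\alpha}}$ converges to invariance in norm, so $m\in\LIM(\Gamma)$: concretely, for each $\gamma\in\Gamma$ one has $\abs*{\inner*{\mu\sub{D_\alpha},\, \phi - l_\gamma\phi}} \leq \norm{\phi}_\infty\, \#(D_\alpha \Sdiff \gamma D_\alpha)/\#(D_\alpha)\to 0$, which is the discrete instance of \cref{limit of conv to invar is LIM}.

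It then remains to identify $\iota m$ with $\mu$, working in $\LUC(G)^*$ under the identification $\TLIM(G) = \LIM(\LUC(G))$ of \cref{TlimG is LimLucG}. By the $w^*$-continuity of $\iota$ (\cref{Lemma iota is continuous}), $\iota\mu\sub{D_\alpha}\to\iota m$ along the chosen subnet. By \cref{iota of muF}, $\iota\mu\sub{D_\alpha} = \mu\sub{TD_\alpha}$ as functionals on $\LUC(G)$. Since the $\Lone$-norm dominates the operator norm of restrictions to $\LUC(G)$, the estimate $\norm{\mu\sub{F_\alpha} - \mu\sub{TD_\alpha}}\to 0$ forces $\mu\sub{TD_\alpha}$ and $\mu\sub{F_\alpha}$ to share the same $w^*$-limit on $\LUC(G)$; but $\mu\sub{F_\alpha}\to\mu$. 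Hence $\iota m = \mu$, proving surjectivity.

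The argument is essentially an assembly of the preceding lemmas, so no single hard computation is involved. The point requiring the most care is the bookkeeping of topologies and subnets: one must ensure that the $w^*$-limit computed through $\iota$ — which only sees $\LUC(G)$ — genuinely recovers $\mu$, and that the norm-convergence of \cref{Falpha to Dalpha}, a priori an $\Lone$-statement about elements of $P_1(G)$, transfers to $w^*$-convergence of the restrictions to $\LUC(G)$. Checking that $m$ is left-invariant and that all the relevant limits survive passage to the common subnet is the most delicate bit, though each step is routine.
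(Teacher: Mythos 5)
Your proposal is correct and is essentially identical to the paper's proof: both pass from a F\o{}lner net $\{F_\alpha\}$ converging to $\mu$ (via \cref{Every TLIM is limit of Folner net}) to the lattice sets $D_\alpha$ of \cref{Falpha to Dalpha}, extract a $w^*$-convergent subnet with limit $m\in\LIM(\Gamma)$, and conclude $\iota m = \mu$ by combining \cref{iota of muF}, \cref{Lemma iota is continuous}, and the norm estimate of \cref{Falpha to Dalpha}. Your extra care about why the $\Lone$-estimate transfers to $w^*$-convergence on $\LUC(G)$ is a sound elaboration of what the paper leaves implicit.
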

\begin{proof}
Pick $\nu\in\TLIM(G)$.
By \cref{Every TLIM is limit of Folner net},
$G$ admits a F\o{}lner net $\set*{F_\alpha}$ such that $\nu = \lim_\alpha \mu\sub{F_\alpha}$.
Define $\set*{D_\alpha}$ as in \cref{Falpha to Dalpha}.
Since the unit ball of $\linftyS(\Gamma)$ is compact,
$\{\mu\sub{D_\alpha}\}$ admits a convergent subnet $\{\mu\sub{D_\beta}\}$.
Evidently $m = \lim_\beta \mu\sub{D_\beta} \in \LIM(\Gamma)$.
Applying \cref{iota of muF}, \cref{Lemma iota is continuous}, and \cref{Falpha to Dalpha}
\begin{equation}\textstyle
\iota m = \lim_\beta \iota \mu\sub{D_\beta} = \lim_\beta \mu_{TD_\beta} = \lim_\beta \mu_{F_\beta} = \nu.
\qedhere
\end{equation}
\end{proof}
\chapter{Some remarks on non-topological invariant means}\label{Chapter Open Problems}
In \cref{Banach Rudin Granirer}, we showed that, if the infinite compact group $G$ is amenable-as-discrete,
the Haar integral is not the unique element of $\LIM(G)$.
By \cref{Prop When G is compact}, this is equivalent to saying $G$ admits a non-topological invariant mean.
This is generalized by the following result:

\begin{prop}[{\cite[Theorem 4.1]{Rudin72}}]\label{basic rudin 72 result}
If the nondiscrete group $G$ is amenable-as-discrete, then $\LIM(G) \setminus \TLIM(G)$ is nonempty.
\end{prop}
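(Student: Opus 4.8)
The plan is to reduce the claim to constructing a single left-invariant mean that is \emph{not} recovered from its restriction to $\LUC(G)$. By \cref{TlimG is LimLucG} the restriction map is a bijection of $\TLIM(G)$ onto $\LIM(\LUC(G))$, and restricting any $\mu\in\LIM(G)$ to the left-translation invariant subspace $\LUC(G)$ produces an element of $\LIM(\LUC(G))$; since $\TLIM(G)\subseteq\LIM(G)$ by \cref{Tlim subset Lim}, this restriction map $\LIM(G)\to\LIM(\LUC(G))$ is onto and agrees on $\TLIM(G)$ with the bijection above. Hence it suffices to produce $\mu\in\LIM(G)$, $\phi\in\Linfty(G)$, and $f\in P_1(G)$ with $\mu(\phi)\neq\mu(f*\phi)$: then $\mu$ disagrees on $\phi$ with the unique topological mean sharing its $\LUC$-restriction, so $\mu\in\LIM(G)\setminus\TLIM(G)$.

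To build such a $\mu$ I would follow Rudin's idea of averaging over the action on the spectrum, mentioned after \cref{Banach Rudin Granirer}. Identify $\Linfty(G)$ with $C(\Acal)$ via \cref{Gelfand transform for Cstar}, where $\Acal$ is the Gelfand spectrum (the ultrafilters on the Boolean algebra of characteristic functions), and note that left translation dualizes to a $G$-action on $\Acal$. Since $G$ is amenable-as-discrete (\cref{defn amenable}), fix a left-invariant mean $m$ on $\linfty(G)$. For $h\in\Acal$ define
\[
\mu_h(\phi)=m_x\!\big[\,h(l_{x^{-1}}\phi)\,\big],\qquad \phi\in\Linfty(G),
\]
the $m$-average of the bounded function $x\mapsto h(l_{x^{-1}}\phi)\in\linfty(G)$. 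Unitality is $h(1)=1$; positivity follows because $h$ is a state and $m$ is positive; and left-invariance holds because $x\mapsto h(l_{x^{-1}}l_a\phi)$ is a translate of $x\mapsto h(l_{x^{-1}}\phi)$, on which $m$ is invariant. Thus $\mu_h\in\LIM(G)$ for every $h$.

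The heart of the matter is to choose $h$ and $\phi$ so that $\mu_h$ detects a discontinuity, and this is exactly where the non-measurability of $x\mapsto h(l_x\phi)$ enters. The open dense subsets of $G$ form a proper filter on the Boolean algebra of characteristic functions — finite intersections are again open dense, hence of positive measure, by the Baire property of the locally compact space $G$ — so I may take $h$ to be any ultrafilter refining it, i.e.\ with $h(\Onebb_D)=1$ for every open dense $D$. Using \cref{small open dense set}, choose an open dense $E\subset G$ of small Haar measure and set $\phi=\Onebb_E$. Each translate $x^{-1}E$ is again open dense, so $h(l_{x^{-1}}\Onebb_E)=h(\Onebb_{x^{-1}E})=1$ for every $x$, giving $\mu_h(\Onebb_E)=m(1)=1$. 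On the other hand $l_{x^{-1}}(f*\Onebb_E)=(l_{x^{-1}}f)*\Onebb_E$ is continuous and nonnegative, and when $G$ is unimodular its sup-norm is at most $\|f\|_\infty\,\Haar{E}$, which is small; as $h$ is a state this bounds $\mu_h(f*\Onebb_E)$ by the same small quantity. Choosing $\Haar{E}$ small enough yields $\mu_h(\Onebb_E)-\mu_h(f*\Onebb_E)\geq\tfrac12$, so $\mu_h\in\LIM(G)\setminus\TLIM(G)$ and the proof is complete in this case.

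I expect two points to carry the real difficulty. First, for \emph{non-unimodular} $G$ the smoothing $f*\Onebb_E$ is no longer uniformly small merely because $\Haar{E}$ is, since $\Haar{E^{-1}}$ involves the modular function; the plan is to choose $E$ with measure decaying fast across the level sets of $\Delta$, so that $f*\Onebb_E$ is small on an open dense set and the category choice of $h$ still forces $\mu_h(f*\Onebb_E)$ small. Second, \cref{small open dense set} requires $\sigma$-compactness, so for larger $G$ I would localize to the open $\sigma$-compact subgroup $H$ generated by a compact neighborhood of $e$, taking $f$ and $E$ inside $H$, and then transport the resulting non-topological mean on $H$ to $G$ by averaging over the transitive $G$-space $G/H$ with a mean furnished by the amenability of $G$ (\cref{amenable-as-discrete implies amenable}); the work is to check this induction keeps the mean left-invariant and preserves the strict separation. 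The compact prototype \cref{Banach Rudin Granirer}, where the Haar integral plays the role of the forced topological value, is the model throughout.
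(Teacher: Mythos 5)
Your construction is sound, and it is genuinely a different route from the one this thesis takes. The statement itself is only cited here (to Rudin's Theorem 4.1); the in-house argument, \cref{Banach Rudin Granirer}, covers the compact case and follows Granirer: one shows the closed translation-difference subspace $H$ coincides with the space $S$ of functions with small averages, checks directly that $\Onebb_E - \lambda$ lies outside $H$ for an open dense $E$ of small measure, and extends a functional on $H + \Cbb + \Cbb\Onebb_E$ by Hahn--Banach. You instead follow the second historical route, the one described in the paragraph after \cref{Banach Rudin Granirer}: push a discrete invariant mean $m$ through the non-measurable action of $G$ on the spectrum $\Acal$, setting $\mu_h(\phi) = m_x\bigl[h(l_{x^{-1}}\phi)\bigr]$ for an ultrafilter $h$ refining the filter of open dense sets. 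Both arguments consume the same inputs (amenability-as-discrete plus \cref{small open dense set}), but yours produces an explicit witness with $\mu_h(\Onebb_E)=1$ rather than an abstract extension, and separates it from every topological mean at once via $\mu_h(f*\Onebb_E)\leq\|f\|_\infty\,\Haar{E^{-1}}$. The individual verifications --- positivity, unitality and left-invariance of $\mu_h$, properness of the filter generated by the open dense idempotents, $h(\Onebb_{x^{-1}E})\equiv 1$, and the sup-norm bound on $f*\Onebb_E$ --- all check out. (Your opening detour through $\LUC(G)$ and \cref{TlimG is LimLucG} is unnecessary: $\mu(\phi)\neq\mu(f*\phi)$ already says $\mu\notin\TLIM(G)$ by definition.)

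What you have actually proved, however, is the unimodular $\sigma$-compact case, which subsumes \cref{Banach Rudin Granirer}; the two extensions you flag are genuine remaining work, not details. For non-unimodular $G$ the relevant quantity is $\Haar{E^{-1}}=\int_E \Delta(x)^{-1}\dd{x}$ by \cref{f to fdag}, and your plan --- exhaust $G$ by compacta $K_n$ and force $\Haar{E\cap K_n}$ to decay faster than $\sup_{K_n}\Delta^{-1}$ grows, while keeping $E$ open and dense --- does close this, but needs to be written out. For $\kappa(G)>\Nbb$, inducing from an open $\sigma$-compact subgroup $H$ by averaging $gH\mapsto \mu_H\bigl((L_g\phi)|_H\bigr)$ against an invariant mean on $G/H$ is standard and yields an element of $\LIM(G)$, but you must still verify that the induced mean remains non-topological --- for instance by testing against $\Onebb_{TE}$ for a transversal $T$ of $G/H$ and checking that the convolution bound survives the passage between cosets. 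Since the thesis itself defers the general statement to Rudin, these gaps leave you no worse off than the text, but they are exactly the parts a referee would ask to see.
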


\begin{rem}
The assumption that $G$ be amenable-as-discrete is not superfluous.
For example, the compact group $\text{SO}(n)$ has a unique left-invariant mean for $n\geq3$.\footnote{
	The famous Banach-Ruziewicz problem asks whether Lebesgue measure
	is the unique finitely-additive $\text{O}(n+1)$-invariant measure on $\Sbb^n$ for $n\geq 2$.
	\textit{(For $n=1$, Banach showed Lebesgue measure is not unique
	by an argument similar to \cref{Banachs theorem}.)}
	A fortiori, it has been shown for $n\geq 2$ that Lebesgue measure is
	the unique finitely-additive $\text{SO}(n+1)$-invariant measure on $\Sbb^n$.
	The same tool used to prove this theorem also proves Haar integral is the unique
	left-invariant mean on $\text{SO}(n+1)$, see \cite[Theorem 3.4.5]{Lubotzky} and the remarks thereafter.}
It is a classical result that $\text{SO}(3)$
contains a subgroup isomorphic to the free group $\Fbb_2$, hence it is not amenable-as-discrete.
If the converse of \cref{basic rudin 72 result} were true,
it would not be so difficult to prove $\text{SO}(3)$ has a unique left-invariant mean!
\end{rem}

\begin{point}
We would like to determine the exact cardinality of the set of non-topological left-invariant means,
under the assumption that $G$ is amenable-as-discrete.
To keep the discussion manageable, we will only try to determine the cardinality of the entire set $\LIM(G)$.
This suffices to determine the cardinality of the subset of non-topological left-invariant means
in cases when $\#(\LIM(G)) > \#(\TLIM(G))$, in particular when $G$ is compact and nondiscrete.
\end{point}

\begin{point}\label{What is Acal}
Let $\Acal \subset \LinftyS(G)$ be the spectrum of $\Linfty(G)$.
Then $G$ acts on $\LinftyS(G)$ by
\[\langle x\mu,\, \phi\rangle = \langle \mu,\, L_x\phi\rangle\ \ 
\text{for}\ \ x\in G,\ \mu\in\Linfty(G),\ \phi\in\Linfty(G).\]
The Gelfand isomorphism\ \ $\widehat{\ }: \Linfty(G) \to C(\Acal)$ is $G$-equivariant:
\[l_x \widehat{\phi}(h)
=\widehat{\phi}(x^{-1}h)
= \langle x^{-1}h,\, \phi\rangle
= \langle h,\, l_x\phi\rangle
= \widehat{l_x \phi}(h).\]
Associate $C(\Acal)^*$ with the set $M(\Acal)$ of (regular Borel) measures on $\Acal$ via \cref{Riesz Kakutani}.
The dual-isomorphism\ \ $\widehat{\ }: \LinftyS(G) \to M(\Acal)$ is defined by
\[\langle \widehat{\mu},\, \widehat{\phi}\rangle = \langle \mu,\, \phi\rangle\ \ \text{for}\ \ 
\mu\in\LinftyS(G),\ \phi\in\Linfty(G).\]
Thus the action of $G$ on $M(\Acal)$ is given by $x\widehat{\mu} = \widehat{x\mu}$.
\end{point}

\begin{point}
$K\subset\Acal$ is said to be \textit{invariant}
if it is closed, nonempty, and invariant under the action of $G$.
Likewise, an ideal $I\normal \Linfty(G)$ is sad to be \textit{invariant}
if it is closed, proper, and invariant under the action of $G$.
For $K\subset \Acal$, define
\[K^\perp = \{f\in\Linfty(G) : (\forall h\in K)\ h(f)=0\}.\]
For $I\normal \Linfty(G)$, define
\[I^\perp = \{h\in \Acal : (\forall f\in I)\ h(f)=0\}.\]
We see $I\mapsto I^\perp$ is a bijection of invariant ideals with invariant subsets of $\Acal$,
with inverse map $K \mapsto K^\perp$.
Define $\LIM(G, K) = \{\mu\in\LIM(G) : \supp \widehat{\mu} \subset K\}$.
Equivalently, $\LIM(G,K)$ is the set of left-invariant means that vanish on the ideal $K^\perp.$
\end{point}

\begin{prop}
If $G$ is amenable-as-discrete and $K\subset \Acal$ is invariant, then $\LIM(G, K)$ is nonempty.
\end{prop}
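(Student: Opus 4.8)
The plan is to exhibit a $G$-invariant regular Borel probability measure supported on $K$, and then transport it back to a left-invariant mean through the dual-isomorphism $\widehat{\ }\colon \LinftyS(G) \to M(\Acal)$ of \cref{What is Acal}. Throughout, recall that a mean $\mu$ is left-invariant exactly when it is fixed by the $G$-action, i.e.\ $x\mu = \mu$ for every $x\in G$; dually, $\widehat{\mu}$ must be invariant under the pushforward action $x\cdot\nu = x_*\nu$ coming from the homeomorphisms $h\mapsto xh$ of $\Acal$. Since $K$ is closed and $G$-invariant, any probability measure on $K$ extends by zero to such a measure on $\Acal$ with support contained in $K$, so it suffices to produce a $G$-invariant probability measure on the compact Hausdorff $G$-space $K$.

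First I would fix any $h_0\in K$ (possible since $K$ is nonempty) and a left-invariant mean $m\in\linftyS(G)$, which exists because $G$ is amenable-as-discrete. For $\psi\in C(K)$, define $F_\psi\colon G\to\Cbb$ by $F_\psi(y) = \psi(y h_0)$ using the action of $G$ on $K$, and set $\nu(\psi) = m(F_\psi)$. The crucial point is that $F_\psi$ is a genuine element of $\linfty(G)$: it is bounded because $\psi$ is continuous on the compact set $K$, and it is automatically an element of $\linfty(G)$ because $G$ carries the discrete topology. This is precisely where the \emph{discrete} amenability hypothesis enters rather than mere amenability of $G$ — the orbit map $y\mapsto y h_0$ into $\Acal$ need not be continuous or Haar-measurable (cf.\ Rudin's observation), yet as a bounded function on the discrete group $G$ it still lies in $\linfty(G)$.

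Next I would check that $\nu$ is a $G$-invariant state on $C(K)$. Positivity and $\nu(1)=1$ are immediate from the corresponding properties of $m$. For invariance under the pushforward, observe $(x_*\nu)(\psi) = \nu(\psi\circ x) = m\bigl(y\mapsto \psi(xy\,h_0)\bigr) = m(L_x F_\psi) = m(F_\psi) = \nu(\psi)$, where the penultimate equality is the left-invariance of $m$ with respect to the contravariant translation $L_x$ (which agrees with $l_x$-invariance since $x$ ranges over all of the group $G$). By the Riesz-Kakutani theorem, \cref{Riesz Kakutani}, $\nu$ is integration against a $G$-invariant regular Borel probability measure on $K$.

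Finally I would transport $\nu$ back. Viewing $\nu$ as a measure on $\Acal$ supported in $K$, let $\mu = \widehat{\ }^{-1}(\nu)\in\LinftyS(G)$, so that $\mu(\phi) = \int_K \widehat{\phi}\,d\nu$. Since $\widehat{\ }$ is an isometric $*$-isomorphism onto $C(\Acal)$, we have $\widehat{\phi}\geq 0$ whenever $\phi\geq 0$ and $\widehat{1}=1$, so $\mu$ is a mean; and the $G$-invariance of $\nu$ passes through $x\widehat{\mu} = \widehat{x\mu}$ to give $x\mu = \mu$, hence $\mu\in\LIM(G)$. Because $\supp\widehat{\mu} = \supp\nu \subset K$, we conclude $\mu\in\LIM(G,K)$. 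The main obstacle here is bookkeeping rather than depth: one must keep the variances straight ($L_x$ versus $l_x$, and the direction of the pushforward) and justify that $F_\psi\in\linfty(G)$ in spite of the pathology of the $G$-action on $\Acal$ — the latter being the entire reason the hypothesis of amenability-as-discrete is required.
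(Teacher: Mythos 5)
Your proof is correct, but it takes a genuinely different route from the paper. The paper disposes of the statement in one line by citing Day's fixed point theorem for discrete amenable groups (\cite[Theorem 1.14]{paterson-Book}) applied to the compact convex set $P(K)\subset M(\Acal)$ of probability measures supported on $K$; the only substantive remark it adds is the same one you make, namely that $G$ must be treated as a discrete group because its action on $\Acal$ is not continuous. You instead unwind that fixed point theorem in this concrete instance: you average the orbit of a single point mass $\delta_{h_0}$ against a discrete invariant mean $m$, realizing the resulting state on $C(K)$ as a $G$-invariant Radon probability measure via Riesz--Kakutani and then pulling it back through the Gelfand dual-isomorphism. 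Your bookkeeping is right throughout: $F_\psi$ lies in $\linfty(G)$ precisely because no measurability is demanded of bounded functions on a discrete group (this is where amenability-as-discrete is genuinely used); the identity $x(yh_0)=(xy)h_0$ together with $L_x$-invariance of $m$ gives invariance of $\nu$ under the pushforward action; and the paper's own identification $x\widehat{\mu}=\widehat{x\mu}$ of the action on $M(\Acal)$ with the pushforward under $h\mapsto xh$ translates invariance of $\nu$ into left-invariance of $\mu$. What your approach buys is self-containedness and an explicit construction of an element of $\LIM(G,K)$; what the paper's approach buys is brevity and the observation that the statement is an instance of a standard fixed-point principle. The trade-off is that your argument essentially reproves that principle for the special case of a simplex of measures, which is harmless here.
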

\begin{proof}
This is an instance of a general result about amenable group actions,
\cite[Theorem 1.14]{paterson-Book}:
If $G$ acts continuously by affine maps a compact convex subset $C$ of a locally convex vector space,
then $C$ contains a $G$-fixed point.
Simply take $C$ to be the set $P(K) \subset M(\Acal)$ of all probability measures supported on $K$.

What we wish to emphasize is that $G$ does \textit{not} act continuously on $\Acal$,
hence it does not act continuously on $M(\Acal)$.
This is why we are forced to consider the action of $G$ as a discrete group.
A classic paper about the unruliness of the action of $G$ on $\Acal$ is \cite{Rudin75}.
\end{proof}

\begin{point}
If $K, J \subset \Acal$ are distinct minimal invariant subsets, they are clearly disjoint.
With this observation, we have two ways to approach the cardinality of $\LIM(G)$:
\\(1) Study the cardinality of the family of minimal invariant subsets of $\Acal$.
\\(2) Study the cardinality of $\LIM(G,K)$ for a minimal invariant subset $K$.
\end{point}

\section*{\texorpdfstring{The cardinality of the minimal invariant subsets of $\Acal$}
{The cardinality of the minimal invariant subsets of A}}

Recall the definitions of $\kappa = \kappa(G)$ and $\mu = \mu(G)$ given in \cref{Defn Cardinal Invariants}.

\begin{prop}[Rosenblatt, {\cite[Theorem 3.5]{Rosenblatt76MathAnn}}]\label{Rosenblatt MathAnn}
If $\mu \geq \Nbb$ and $\kappa \leq \Nbb$, $\Acal$ has at least $2^{2^{\Nbb}}$ minimal invariant subsets.

Unfortunately, the proof of this proposition uses a Baire category-type theorem
which cannot be generalized to yield a better result when $\mu > \Nbb$.
Luckily, in another paper published the same year, Rosenblatt proved \cref{disjoint PP sets}.
\end{prop}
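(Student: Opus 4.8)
The plan is to exhibit $2^{2^{\Nbb}}$ left-invariant means on $\Linfty(G)$ whose supports in $\Acal$ are pairwise disjoint. This suffices by soft general nonsense: $\Acal$ is compact, so ordering the nonempty closed $G$-invariant subsets by reverse inclusion and using that a nested intersection of nonempty closed subsets of a compact space is nonempty, every nonempty closed invariant set contains a minimal invariant subset. If $\mu,\nu\in\LIM(G)$ have disjoint supports, then $\supp\widehat\mu$ and $\supp\widehat\nu$ are disjoint nonempty closed invariant sets, hence the minimal invariant subsets they contain are distinct. Thus a family of $2^{2^\Nbb}$ invariant means with pairwise disjoint supports produces at least $2^{2^\Nbb}$ distinct minimal invariant subsets of $\Acal$. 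The amenable-as-discrete standing hypothesis guarantees each minimal subset itself carries an invariant mean, but for the count only the means I construct are needed.

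To manufacture such a family I would transport the $\card{\Nbb^*}=2^{2^\Nbb}$ ultrafilters of \cref{Lemma Cardinality of Lambda*} into $\LinftyS(G)$. Choose a sequence of test sets $\set*{U_n}$ of positive finite measure and define $\pi\colon\Linfty(G)\to\linfty(\Nbb)$ by $\pi(f)(n)=\mu_{U_n}(f)$, where $\mu_{U_n}=\Onebb_{U_n}/\Haar{U_n}$; then $\pi^*\colon\linftyS(\Nbb)\to\LinftyS(G)$ sends means to means. Distinct ultrafilters are separated automatically: if $p\neq q$ in $\Nbb^*$, pick $E\in p$ with $E^c\in q$, and when the $U_n$ are mutually disjoint one has $\pi(\Onebb_A)=\Onebb_E$ for $A=\bigcup_{n\in E}U_n$, so $\pi^*p(\Onebb_A)=1$ while $\pi^*q(\Onebb_A)=0$, giving disjoint supports. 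The construction therefore reduces to choosing $\set*{U_n}$ mutually disjoint so that each $\pi^*p$ (for nonprincipal $p$) is genuinely \emph{left-invariant}, i.e.\ so that $\pi(\phi)-\pi(l_x\phi)\in c_0(\Nbb)$ for every $\phi\in\Linfty(G)$ and $x\in G$.

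The choice of $\set*{U_n}$ divides into two cases. When $G$ is noncompact $\sigma$-compact there is room to escape to infinity, and I would take $\set*{U_n}$ to be a mutually disjoint topological F\o{}lner sequence, obtained by translating a F\o{}lner sequence far out as in \cref{Folner construction of Banach limit}; then $\Haar{U_n\Sdiff xU_n}/\Haar{U_n}\to0$ forces $\pi(\phi)-\pi(l_x\phi)\in c_0(\Nbb)$, and no further work is needed. The delicate case is compact (more generally, non-escaping) nondiscrete $G$, where no translation-F\o{}lner sequence of small disjoint sets exists. Here I would instead use \cref{small open dense set} to produce dense open sets of arbitrarily small Haar measure and, exactly as in the proof of \cref{Banach Rudin Granirer}, invoke the amenable-as-discrete hypothesis to obtain invariant means concentrated on such small dense sets. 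Arranging a whole \emph{sequence} of such sets in independent position and extracting from it a simultaneously left-invariant family indexed by $\Nbb^*$ is what a Baire category argument accomplishes.

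The hard part is precisely this compact case: securing left-invariance of the transported means in the absence of a translation-F\o{}lner sequence. This is where the amenable-as-discrete hypothesis is essential (without it the relevant invariant means need not exist, as the $\text{SO}(n)$ example shows) and where the Baire category theorem does the real work. Because Baire category is a countable tool, it naturally yields only a copy of $\beta\Nbb$ and hence exactly $2^{2^\Nbb}$ means; this both explains the stated bound and accounts for the remark that the method cannot be pushed to $2^{2^\mu}$ when $\mu>\Nbb$, which is what forces the sharper construction of \cref{disjoint PP sets}.
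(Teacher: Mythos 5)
First, note that the paper does not actually prove this proposition; it is quoted verbatim from Rosenblatt \cite{Rosenblatt76MathAnn}, and the only proof-relevant content in the text is the remark that Rosenblatt's argument is a Baire-category argument, together with the PP-set machinery developed immediately afterward. So your proposal has to stand on its own, and it does not: the compact (more generally, non-escaping) case is left unproved. Your reduction is fine --- pairwise disjoint supports of invariant measures on $\Acal$ give pairwise disjoint nonempty closed invariant sets, each of which contains a minimal one by compactness and Zorn --- and your noncompact case is essentially Chou's argument from \cite{Chou70}. But in a compact group the whole difficulty is concentrated exactly where you stop. Disjointness forces $\Haar{U_n}\to 0$, and a set of small measure cannot be $(G,\epsilon)$-invariant in norm (on average $\Haar{xU\Sdiff U}/\Haar{U}\approx 2$ when $\Haar{U}$ is small), so the required convergence $\mu_{U_n}(\phi-l_x\phi)\to 0$ can only be weak and must be arranged \emph{simultaneously} for all $\phi$ and $x$. ``Arranging a whole sequence of such sets in independent position \ldots is what a Baire category argument accomplishes'' is a description of the shape of the missing argument, not the argument; that genericity step is the entire content of Rosenblatt's theorem, and you have deferred it rather than carried it out.

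Second, your route through invariant means imports an amenability hypothesis that the statement does not have: a minimal invariant subset of $\Acal$ is just a minimal nonempty closed $G$-invariant set, and these exist and can be counted for arbitrary $G$. The machinery the thesis builds for exactly this purpose avoids means entirely: a PP set $U$ yields the nonempty closed invariant set $\langle G\setminus U\rangle_G^\perp$, disjoint PP sets yield disjoint such sets, and given a sequence $\set{U_n}$ of pairwise disjoint PP sets (which is what \cref{disjoint PP sets} provides under the hypotheses $\mu\geq\Nbb$, $\kappa\leq\Nbb$), one sets $\Scal_p=\set{\bigcup_{n\in E}U_n : E\in p}$ for each $p\in\Nbb^*$ and applies \cref{lemma downward directed} to get $2^{2^{\Nbb}}$ pairwise disjoint nonempty closed invariant subsets --- precisely the pattern of \cref{theorem product of compact}, with the count coming from \cref{Lemma Cardinality of Lambda*}. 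If you replace ``invariant mean supported in'' by ``nonempty closed invariant subset of'' throughout your write-up, your ultrafilter bookkeeping survives intact, the amenability assumption evaporates, and what remains to be proved is exactly the existence of infinitely many disjoint PP sets --- which is the same hard kernel (Rosenblatt's Baire-category step, or \cref{disjoint PP sets}) that your proposal leaves unproved.
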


\begin{defn}
Say that a measurable set $A\subset G$ is PP (permanently positive) if
$\big| \bigcap_{x\in F} xA \big| > 0$ for any $F\Subset G$.
For example, open dense subsets of $G$ are PP,
a fact we exploited in the proof of \cref{Banach Rudin Granirer}.

The following definition of generalizes the previous one, allowing for PP sets that are locally measurable,
as in \cref{Defn Linfty}.
\end{defn}

\begin{defn}
For a locally measurable set $B\subset G$, let $\langle B\rangle_G$ denote
the smallest closed invariant ideal in $\Linfty(G)$ containing $\Onebb_B$.
Now $A\subset G$ is said to be PP if $\langle G\setminus A\rangle_G$ is a proper ideal.
\end{defn}

\begin{prop}[{\cite[Theorem 2.2]{Klawe77}}]
If $G$ is infinite discrete, it has a family $\{U_\alpha\}_{\alpha < |G|}$ of disjoint PP sets.
\end{prop}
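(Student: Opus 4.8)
The plan is to reduce the ideal-theoretic definition of PP to a purely combinatorial one in the discrete setting, and then to build the family $\{U_\alpha\}$ by a transfinite recursion of length $\kappa = \card{G}$, interleaving the requirements ``$U_\alpha$ absorbs a translate of $F$'' while keeping the sets disjoint by a cardinality count.

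First I would record the combinatorial reformulation. When $G$ is discrete, $\Linfty(G) = \linfty(G) = C(\beta G)$, and for $S \subset G$ the closure $\overline{S}$ (taken in $\beta G$) is clopen, with $\overline{S} \cap \overline{S'} = \overline{S\cap S'}$. Writing $B = G\setminus A$, the translates $L_x \Onebb_B = \Onebb_{x^{-1}B}$ generate $\langle B\rangle_G$, whose closure is the set of functions vanishing on $\bigcap_{g\in G}\overline{gA}$ (note $G\setminus gB = gA$). By compactness of $\beta G$ and the finite intersection property, this common zero set is nonempty --- equivalently $\langle B\rangle_G$ is a proper ideal --- if and only if $\bigcap_{g\in F} gA \neq \varnothing$ for every $F\Subset G$. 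Since $\Haar{\cdot}$ is counting measure here, this is exactly the first definition of PP. Finally, unwinding $y\in\bigcap_{g\in F} gA$ as $F^{-1}y\subset A$ shows that $A$ is PP precisely when $A$ contains a right translate of every finite subset of $G$.

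Then I would carry out the recursion. Enumerate the finite nonempty subsets of $G$ as $\{F_\xi : \xi < \kappa\}$ (there are exactly $\kappa$ of them, as $G$ is infinite), and fix a surjection $\eta \mapsto (a_\eta, \xi_\eta)$ of $\kappa$ onto $\kappa\times\kappa$, which exists since $\kappa\cdot\kappa = \kappa$. At stage $\eta < \kappa$, set $P_\eta = \bigcup_{\eta'<\eta} F_{\xi_{\eta'}}\, y_{\eta'}$, the union of all blocks placed so far. Because $\eta < \kappa$ and each block is finite, $\card{P_\eta} < \kappa$, hence $\card{F_{\xi_\eta}^{-1} P_\eta} < \kappa = \card{G}$, so I may choose $y_\eta \in G \setminus F_{\xi_\eta}^{-1} P_\eta$, which guarantees $F_{\xi_\eta} y_\eta \cap P_\eta = \varnothing$. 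After the recursion, define $U_\alpha = \bigcup\{F_{\xi_\eta} y_\eta : a_\eta = \alpha\}$. Each $U_\alpha$ is PP: given $F_\xi$, pick $\eta$ with $(a_\eta,\xi_\eta) = (\alpha,\xi)$, so $F_\xi y_\eta \subset U_\alpha$ is a right translate of $F_\xi$. The sets are pairwise disjoint because any two blocks placed at distinct stages are disjoint (the later one was chosen to miss $P$, which contains the earlier), and blocks feeding distinct $U_\alpha$'s come from distinct stages. Since every $\alpha < \kappa$ is hit by the surjection, all $\kappa$ of the $U_\alpha$ are nonempty, giving the required family.

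The main obstacle is the cardinality bookkeeping in the recursion: one must verify $\card{P_\eta} < \kappa$ uniformly across all stages $\eta < \kappa$, which is exactly where infiniteness of $G$ and the identity $\kappa\cdot\kappa = \kappa$ are used, and which is what permits a fresh, disjoint translate to be inserted at every step. The reformulation of PP as combinatorial thickness is routine but should be stated carefully, since the chapter's working definition of PP is the ideal-theoretic one.
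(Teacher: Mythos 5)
Your proof is correct. Note that the paper itself gives no argument for this proposition --- it is quoted verbatim from Klawe --- so there is nothing internal to compare against; your two steps (the reduction of the ideal-theoretic PP condition to ``$A$ contains a right translate of every finite subset,'' via the clopen-closure correspondence in $\beta G$, and the transfinite recursion with the cardinality bound $\card{P_\eta} \leq \card{\eta}\cdot\aleph_0 < \kappa$ to keep the blocks disjoint) are exactly the standard route to this fact about thick subsets of infinite groups, and the bookkeeping you flag as the main obstacle is handled correctly.
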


\begin{prop}[{\cite[Proposition 3.4]{Rosenblatt76JFA}}]\label{disjoint PP sets}
Suppose $\mu\geq \Nbb$ and $\kappa \leq \Nbb$.
If $M\subset G$ is PP, there exists $E\subset M$ such that both $E$ and $M\setminus E$ are PP.

We can make use of this proposition when $\kappa > \Nbb$, as follows.
\end{prop}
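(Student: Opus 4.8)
The plan is to unwind the definition of PP into its measure-theoretic form and then manufacture the splitting inside a metrizable quotient. Writing $M_F = \bigcap_{x\in F} xM$ for $F\Subset G$, the hypothesis that $M$ is PP says exactly that $\Haar{M_F} > 0$ for every $F$. Since $x M_F = M_{xF}$ and $M_{F\cup F'} \subseteq M_F \cap M_{F'}$, the family $\{M_F\}$ is left-translation invariant and closed under the relevant intersections; the task is to two-colour $M$, say $M = E \sqcup (M\setminus E)$, so that both colour classes inherit the same finite-intersection-positivity condition, i.e. $\Haar{\bigcap_{x\in F} xE} > 0$ and $\Haar{\bigcap_{x\in F} x(M\setminus E)} > 0$ for all $F$.

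First I would reduce to a metrizable group. Because $\kappa\leq\Nbb$, the Kakutani--Kodaira theorem (used already in \cref{small open dense set}) furnishes a compact normal subgroup $N\normal G$ with $\Haar{N}=0$ and $H = G/N$ metrizable; since $\Haar{N}=0$ the subgroup $N$ is not open, so $H$ is again nondiscrete, and it is $\sigma$-compact as a continuous image of $G$. Let $q\colon G\to H$ be the quotient map and normalise Haar measure on $N$ to a probability measure, so that Weil's formula (cf.\ \cref{Standard Normalization}) yields $\Haar{q^{-1}(C)} = \lambda(C)$ for Haar measure $\lambda$ on $H$. For any positive-measure $A\subset G$ the fibrewise integral $\phi_A(xN) = \int_N \Onebb_A(xn)\dd{n}$ is a well-defined element of $[0,1]$, and $\Haar{A \cap q^{-1}(S)} = \int_S \phi_A \dd\lambda$ for every measurable $S\subset H$. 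Taking $E = M\cap q^{-1}(T)$ for a set $T\subset H$ still to be chosen, and using $xq^{-1}(T) = q^{-1}(q(x)T)$, the colour-class intersections become $\Haar{\bigcap_{x\in F} xE} = \int_{T_{q(F)}} \phi_{M_F}\dd\lambda$, where $T_{F'} = \bigcap_{y\in F'} yT$. The complement is governed symmetrically by $H\setminus T$. Hence, setting $P_F = \{\,y\in H : \phi_{M_F}(y) > 0\,\}$ (a positive-measure set, since $\int_H \phi_{M_F} = \Haar{M_F} > 0$), the whole problem reduces to producing a single $T\subset H$ with
\[\lambda\big(T_{q(F)}\cap P_F\big) > 0 \quad\text{and}\quad \lambda\big((H\setminus T)_{q(F)}\cap P_F\big) > 0\]
for every $F\Subset G$.

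The construction of $T$ is where metrizability earns its keep, and it is the main obstacle. The difficulty is twofold: there are uncountably many configurations $F$, and positivity of an intersection is not stable under limits, so one cannot certify $T$ on a countable family and pass to closures. The plan is to exploit non-atomicity — $H$ is nondiscrete, so $\lambda$ is non-atomic and every positive-measure set splits into two positive halves, giving room to leave mass of each $P_F$ on both sides of $T$. Concretely, I would fix a countable neighbourhood basis of $H$ together with a compact exhaustion, and build $T$ by a countable recursion organised on a tree of nested positive-measure ``boxes'' adapted to the basis, assigning alternate boxes to $T$ and to $H\setminus T$ so that, at each compact stage, both $T_{q(F)}$ and $(H\setminus T)_{q(F)}$ retain boxes of positive measure. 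Rather than mere positivity I would aim for the quantity $\int_{T_{q(F)}} \phi_{M_F}\dd\lambda$ to be bounded below by a fixed positive number uniformly over each compact family of configurations; a uniform-continuity estimate of the type $\Haar{xA\Sdiff x'A}\to 0$ as $x'\to x$ (already used in \cref{convolution smooths}) shows that $F\mapsto \int_{T_{q(F)}}\phi_{M_F}\dd\lambda$ is continuous, so a uniform lower bound on a dense family of configurations upgrades to strict positivity for every $F$.

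The crux, and the step I expect to resist a short argument, is precisely this synchronisation: guaranteeing that one $T$ splits the entire uncountable family $\{P_F\}$ without some constraint eventually starving a later one of positive measure, and doing so with lower bounds uniform on compacta so that the continuity upgrade applies. I expect the cleanest route is to phrase the recursion entirely in terms of the nested boxes — verifying that every $P_F$, having positive measure, necessarily meets boxes of both colours inside each $T_{q(F)}$ — and this meeting property is the combinatorial heart of the proof; everything else (the quotient reduction, the fibre-density bookkeeping, and the continuity upgrade) is routine once it is in place.
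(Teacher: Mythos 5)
First, a point of order: the paper does not prove this proposition at all — it is quoted from Rosenblatt's 1976 JFA paper — so there is no internal argument to compare yours against, and what you are really attempting is a reconstruction of a cited result. Your reformulation of the PP condition and your reduction via Kakutani--Kodaira to a nondiscrete, $\sigma$-compact, metrizable quotient $H = G/N$, with the fibrewise densities $\phi_{M_F}$ and the ansatz $E = M\cap q^{-1}(T)$, are sound, and the instinct that separability plus local $L^1$-continuity of translation should let a countable dense family of configurations control all of them is the right one. Do note, however, that $\phi_{M_F}$ depends on $F$ itself and not only on $q(F)$: for $n\in N$ the set $F=\{e,n\}$ has $q(F)=\{eN\}$ but $M_F = M\cap nM$, so even after the quotient reduction the constraints remain indexed by the finite subsets of $G$, not of $H$, and your countable recursion in $H$ must still be shown to cover them.

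The genuine gap is the one you name yourself: the recursive two-colouring that is supposed to leave positive mass of every $P_F$ on both sides of $T$ is never constructed, only described as a plan (``a countable recursion organised on a tree of nested positive-measure boxes''), and you explicitly concede that the synchronisation — ensuring no later constraint is starved, with lower bounds uniform on compacta so that the continuity upgrade applies — is unresolved. That step is the entire content of the proposition; everything preceding it is bookkeeping. As written, a reader cannot check that the recursion closes: a piece of $P_F$ reserved for $T$ at one stage can be consumed by the later requirement that $(H\setminus T)_{q(F')}\cap P_{F'}$ have positive measure for some $F'\supset F$, and nothing in the sketch rules out the reserved masses shrinking to zero along a chain of configurations. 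Until the recursion is written down with an explicit invariant guaranteeing that each constraint permanently retains a quantitative amount of mass, this is a reduction plus a conjecture, not a proof; as it stands you should simply cite Rosenblatt, as the paper does.
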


\begin{cor}\label{disjoint PP sets in general}
If $\mu\geq \Nbb$, then $G$ has a sequence $\{U_n\}_{n\in\Nbb}$ of disjoint PP sets.
\end{cor}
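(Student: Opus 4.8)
The plan is to reduce to Rosenblatt's splitting result \cref{disjoint PP sets}, which applies only to $\sigma$-compact groups, by passing to an open $\sigma$-compact subgroup and then inducing the resulting sets back up to $G$. Since $\mu\geq\Nbb$, the group $G$ is nondiscrete; fix a compact symmetric neighbourhood $V$ of $e$ and set $H=\bigcup_{n}V^{n}$, an open $\sigma$-compact subgroup. Because $H$ is open it contains a neighbourhood of $e$, so $H$ is itself nondiscrete; thus $\mu(H)\geq\Nbb$ while $\kappa(H)\leq\Nbb$. Applying \cref{disjoint PP sets} repeatedly inside $H$ --- starting from the PP set $H$ and at each stage splitting the PP remainder $M_{k}$ into two PP pieces $A_{k+1}\sqcup M_{k+1}$ --- yields a countable family $\{A_{n}\}_{n\in\Nbb}$ of pairwise disjoint subsets of $H$, each PP in $H$.

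Next I would induce these up to $G$. Choose a transversal $T$ for the left cosets $G/H$, so that every $g\in G$ factors uniquely as $g=t_{g}h_{g}$ with $t_{g}\in T$ and $h_{g}\in H$; write $q(g)=h_{g}$ and put $U_{n}=q^{-1}(A_{n})=\bigcup_{t\in T}tA_{n}$. The $U_{n}$ are pairwise disjoint, being preimages of disjoint sets. Each $U_{n}$ is locally measurable in the sense of \cref{Defn Linfty}: a compact set meets only finitely many cosets $tH$, on each of which $U_{n}$ agrees with the translate $tA_{n}$ of a measurable set. (When $\kappa(G)\leq\Nbb$ the cosets are countable, so $U_{n}$ is genuinely measurable, and one may take $H=G$ when $G$ is already $\sigma$-compact.)

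The heart of the argument is to check that each $U_{n}$ is PP in $G$, and I would verify the finite-intersection condition on the identity coset alone. Fix $F\Subset G$; for $x\in F$ write $x^{-1}=t_{x}\eta_{x}$ with $t_{x}\in T$ and $\eta_{x}\in H$. For $g\in H$ one computes $q(x^{-1}g)=\eta_{x}g$, so that $g\in xU_{n}$ exactly when $\eta_{x}g\in A_{n}$, \ie\ $g\in\eta_{x}^{-1}A_{n}$. Hence, writing $F'=\{\eta_{x}^{-1}:x\in F\}\Subset H$,
\[\textstyle\big(\bigcap_{x\in F}xU_{n}\big)\cap H=\bigcap_{w\in F'}wA_{n},\]
which has positive Haar measure because $A_{n}$ is PP in $H$ and $H$ is open (so its Haar measure is the restriction of that of $G$). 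Thus $\big|\bigcap_{x\in F}xU_{n}\big|>0$ for every $F\Subset G$.

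The main obstacle is precisely this inducing step, because $H$ need not be normal, so naive conjugation would move $A_{n}$ out of $H$. Using the left-coset projection $q$ sidesteps this: left translation by $x$ acts on the $H$-coordinate of points in a fixed coset simply as left translation by an element of $H$, which is why the intersection on the identity coset collapses to $\bigcap_{w\in F'}wA_{n}$ with $F'\Subset H$. The only remaining point requiring care is the passage from this concrete positivity to the ideal-theoretic definition of PP for the locally measurable set $U_{n}$; since $G$ is amenable-as-discrete, the downward-directed family $\{\bigcap_{x\in F}xU_{n}\}_{F\Subset G}$ of positive-measure sets supports a left-invariant mean assigning $U_{n}$ full mass, and such a mean witnesses that $\langle G\setminus U_{n}\rangle_{G}$ is proper, \ie\ that $U_{n}$ is PP.
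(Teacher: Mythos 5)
Your argument is correct and is essentially the paper's own proof: pass to an open $\sigma$-compact (compactly generated) subgroup $H$, iterate \cref{disjoint PP sets} inside $H$ to get disjoint PP sets, and induce them up to $G$ along a coset transversal, verifying the finite-intersection condition by rewriting the translates $xU_n$ restricted to a single coset as translates of $A_n$ by elements of $H$. The only superfluous step is the closing appeal to amenability-as-discrete (a hypothesis the corollary does not assume): once each $\bigcap_{x\in F}xU_n$ meets the open subgroup $H$ in a set of positive Haar measure it is not locally null, so every element of the ideal generated by $\{\Onebb_{x(G\setminus U_n)}\}$ stays at distance $\geq 1$ from $1$ and $\langle G\setminus U_n\rangle_G$ is already proper.
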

\begin{proof}
Let $H < G$ be a compactly generated open subgroup.
Apply the previous proposition to produce a sequence $\{V_n\}_{n\in\Nbb}$ of disjoint sets
which are PP as subsets of $H$.
Suppose $X$ is a transversal for $G/H$.
For each $n$, let $U_n = X^{-1}V_n$.
It is easy to see $\{U_n\}_{n\in\Nbb}$ are disjoint and locally measurable.

If $A\subset H$ is PP as a subset of $H$, then $B = X^{-1}A$ is PP as a subset of $G$.
To see this, choose $\{t_1, \ldots, t_n\}\Subset G$.
Say $t_i \in x_i H$, equivalently $t_i x_i^{-1} \in H$.
We see $\bigcap_{i=1}^n t_i B \supset \bigcap_{i=1}^n t_i x_i^{-1} A$,
which has positive measure in $H$.
\end{proof}

\begin{lem}\label{lemma downward directed}
If $\Scal$ is a downward-directed family of PP sets, then
\linebreak $\bigcap_{S\in \Scal}\langle G\setminus S\rangle_G^\perp$
is an invariant subset of $\Acal$.
\end{lem}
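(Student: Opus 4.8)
The plan is to check the three defining conditions of an invariant subset of $\Acal$ for the set $K=\bigcap_{S\in\Scal}\langle G\setminus S\rangle_G^\perp$: that it is closed, $G$-invariant, and nonempty. The first two are formal. Each $S\in\Scal$ is PP, so $\langle G\setminus S\rangle_G$ is a proper closed invariant ideal, and under the bijection $I\mapsto I^\perp$ recorded above its annihilator $\langle G\setminus S\rangle_G^\perp$ is an invariant subset of $\Acal$ — in particular closed and $G$-invariant. An intersection of closed sets is closed and an intersection of $G$-invariant sets is $G$-invariant, so $K$ inherits both properties at once.

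The real content is nonemptiness, and here I would use that $\Acal$ is $w^*$-compact. It therefore suffices, by the finite intersection property, to show that every finite sub-intersection $\bigcap_{i=1}^{n}\langle G\setminus S_i\rangle_G^\perp$ is nonempty.

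To control a finite sub-intersection, the key step I would isolate is the monotonicity $B_1\subseteq B_2\Rightarrow\langle B_1\rangle_G\subseteq\langle B_2\rangle_G$. This is exactly where the multiplicative structure of $\Linfty(G)$ is used: from $B_1\subseteq B_2$ we get $\Onebb_{B_1}=\Onebb_{B_1}\Onebb_{B_2}$, and since $\langle B_2\rangle_G$ is an ideal containing $\Onebb_{B_2}$ it contains this product, so $\Onebb_{B_1}\in\langle B_2\rangle_G$; as $\langle B_1\rangle_G$ is by definition the smallest closed invariant ideal containing $\Onebb_{B_1}$, the inclusion follows. Passing to annihilators reverses inclusions, giving $\langle B_2\rangle_G^\perp\subseteq\langle B_1\rangle_G^\perp$. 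Now given $S_1,\dots,S_n\in\Scal$, downward-directedness produces $S\in\Scal$ with $S\subseteq S_i$ for each $i$; then $G\setminus S_i\subseteq G\setminus S$, so $\langle G\setminus S\rangle_G^\perp\subseteq\langle G\setminus S_i\rangle_G^\perp$ for each $i$, whence $\langle G\setminus S\rangle_G^\perp\subseteq\bigcap_{i=1}^{n}\langle G\setminus S_i\rangle_G^\perp$. Since $S$ is PP, the left side is a nonempty invariant subset, so the finite intersection is nonempty, and compactness then gives $K\neq\varnothing$.

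The main obstacle is this nonemptiness argument: the point to get right is that shrinking the index set downward corresponds, through $\langle\cdot\rangle_G$ and then the annihilator, to an increasing nest of nonempty closed subsets of the compact space $\Acal$, so that the finite intersection property applies. The remaining verifications — closedness, $G$-invariance, and the multiplicative identity $\Onebb_{B_1}=\Onebb_{B_1}\Onebb_{B_2}$ — are routine.
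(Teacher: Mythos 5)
Your proof is correct and follows essentially the same route as the paper's: reduce nonemptiness to finite sub-intersections via compactness of $\Acal$, use downward-directedness to find $T\in\Scal$ with $T\subseteq S_i$, and conclude from monotonicity of $B\mapsto\langle B\rangle_G$ and the PP property that $\langle G\setminus T\rangle_G^\perp$ is a nonempty set contained in the finite intersection. You merely spell out two steps the paper leaves implicit (the closed/invariant verification and the multiplicative argument for monotonicity), which is fine.
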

\begin{proof}
Pick $\{S_1, \ldots, S_n\}\subset \Scal$.
Since $\Acal$ is compact, it suffices to show
$\bigcap_{i=1}^n\langle G\setminus S_i\rangle_G^\perp$ is nonempty.
Since $\Scal$ is downward-directed, there exists $T\in \Scal$ with $T\subset S_i$ for each $i$.
Thus $G\setminus T \supset G\setminus S_i$,
and $\langle G\setminus T\rangle_G^\perp \subset \langle G\setminus S_i\rangle_G^\perp$.
Since $T$ is PP, $\langle G\setminus T\rangle_G^\perp$ is nonempty.
\end{proof}

The following result seems to be the first demonstration of more than $2^{2^{\Nbb}}$
non-topological left-invariant means on a group.

\begin{thm}\label{theorem product of compact}
Suppose $G = \prod_{\alpha \in \beta} G_\alpha$ is an infinite product of infinite compact groups.
Then $\Acal$ contains $2^\beta$ disjoint invariant subsets.
\end{thm}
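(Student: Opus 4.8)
The plan is to index $2^\beta$ disjoint invariant subsets by the functions $g\in\{0,1\}^\beta$, building each one as an intersection of annihilators of principal invariant ideals so that \cref{lemma downward directed} applies directly. First I would observe that each factor $G_\alpha$, being an infinite compact group, is nondiscrete, so $\mu(G_\alpha)\geq\Nbb$; by \cref{disjoint PP sets in general} applied to $G_\alpha$ I may fix two disjoint PP sets $U_\alpha^0, U_\alpha^1\subset G_\alpha$. Writing $\pi_\alpha\colon G\to G_\alpha$ for the coordinate projection, for each $g\in\{0,1\}^\beta$ and each $F\Subset\beta$ I set
\[ S_{g,F} = \bigcap_{\alpha\in F}\pi_\alpha^{-1}\bigl(U_\alpha^{g(\alpha)}\bigr), \]
and let $\Scal_g = \{S_{g,F} : F\Subset\beta\}$, which is downward directed since $S_{g,F\cup F'}\subset S_{g,F}\cap S_{g,F'}$.

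The main technical step, and the point where the product structure is essential, is to check that each $S_{g,F}$ is PP in $G$. Since $G$ is a product of compact groups, its Haar measure is the product of the normalized Haar measures on the factors. Using the identity $x\,\pi_\alpha^{-1}(V)=\pi_\alpha^{-1}(\pi_\alpha(x)V)$, for any $H\Subset G$ I would compute
\[ \bigcap_{x\in H} x\,S_{g,F} = \bigcap_{\alpha\in F}\pi_\alpha^{-1}\Bigl(\bigcap_{x\in H}\pi_\alpha(x)\,U_\alpha^{g(\alpha)}\Bigr). \]
Because $\pi_\alpha(H)$ is a finite subset of $G_\alpha$ and $U_\alpha^{g(\alpha)}$ is PP, each inner intersection has positive measure in $G_\alpha$; the displayed set is then a cylinder set whose measure is the finite product of these positive numbers, hence positive. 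Thus $S_{g,F}$ is PP, and by \cref{lemma downward directed} the set $K_g = \bigcap_{F\Subset\beta}\langle G\setminus S_{g,F}\rangle_G^\perp$ is a (nonempty, closed, invariant) subset of $\Acal$.

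Finally I would verify disjointness. Given $g\neq g'$, choose $\alpha$ with $g(\alpha)\neq g'(\alpha)$; then $S_{g,\{\alpha\}}=\pi_\alpha^{-1}(U_\alpha^{g(\alpha)})$ and $S_{g',\{\alpha\}}=\pi_\alpha^{-1}(U_\alpha^{g'(\alpha)})$ are disjoint, since $U_\alpha^0\cap U_\alpha^1=\varnothing$. Any $h\in K_g$ annihilates $\Onebb_{G\setminus S_{g,\{\alpha\}}}$, so $h(\Onebb_{S_{g,\{\alpha\}}})=1$, and likewise $h(\Onebb_{S_{g',\{\alpha\}}})=1$ for every $h\in K_{g'}$. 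As $h$ is multiplicative and the two indicators have pointwise product zero, no $h$ can lie in both $K_g$ and $K_{g'}$; hence $K_g\cap K_{g'}=\varnothing$. Letting $g$ range over $\{0,1\}^\beta$ produces $2^\beta$ pairwise disjoint invariant subsets of $\Acal$. The only delicate point is the measure computation transferring the PP property from the factors to $G$; the remaining ingredients are immediate from \cref{lemma downward directed} and the multiplicativity of elements of $\Acal$.
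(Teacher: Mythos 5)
Your proposal is correct and follows essentially the same route as the paper: the same cylinder sets $\bigcap_{\alpha\in F}\pi_\alpha^{-1}(U_\alpha^{g(\alpha)})$, the same product-measure computation to verify the PP property, and the same appeal to \cref{lemma downward directed}. The only cosmetic differences are that you justify the two disjoint PP sets in each factor explicitly via \cref{disjoint PP sets in general}, and you argue disjointness through multiplicativity of elements of $\Acal$ where the paper notes $\langle G\setminus S\rangle_G^\perp \cap \langle G\setminus T\rangle_G^\perp = \langle G\rangle_G^\perp = \varnothing$ — the same fact in two guises.
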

\begin{proof}
For each $\alpha\in \beta$, let $\pi_\alpha: G \to G_\alpha$ denote the canonical surjection.
Choose disjoint PP sets $A^\alpha_0, A^\alpha_1 \subset G_\alpha$.
Given $F\Subset \beta$ and $\epsilon\in\{0,1\}^\beta$,
define $S^F_\epsilon = \bigcap_{\alpha\in F} \pi_\alpha^{-1}\big(A^\alpha_{\epsilon(\alpha)}\big)$.
I claim $\Scal_\epsilon = \big\{S^F_\epsilon : F\Subset \beta\big\}$ is a downward-directed family of PP sets.
$\Scal_\epsilon$ is obviously downward-directed, since $S^F_\epsilon \cap S^E_\epsilon = S^{F\cup E}_\epsilon$,
so it suffices to show that each $S^F_\epsilon$ is PP.
It is easy enough to see
\[\big|S^F_\epsilon\big|
= \big|\bigcap_{\alpha \in F} \pi_\alpha^{-1}\big(A^\alpha_{\epsilon(\alpha)}\big) \big|
= \prod_{\alpha \in F}\big| A^\alpha_{\epsilon(\alpha)} \big|_\alpha,\]
where $|\cdot|_\alpha$ denotes the Haar measure in $G_\alpha$.
Likewise, for $X\Subset G$,
\[\big| \bigcap_{x\in X} x S_\epsilon^F \big|
= \prod_{\alpha\in F}\big| \bigcap_{x\in X} x A^\alpha_{\epsilon(\alpha)} \big|_\alpha > 0.\]
For each $\epsilon \in \{0,1\}^\beta$, \cref{lemma downward directed} guarantees that
$\Acal_\epsilon = \bigcap_{S\in \Scal_\epsilon} \langle G\setminus S\rangle_G^\perp$
is an invariant subset of $\Acal$.
If $\delta\neq \epsilon$, choose $S\in \Scal_{\epsilon}$ and $T\in \Scal_\delta$
with $S\cap T = \varnothing$.
Thus $\Acal_\delta \cap \Acal_\epsilon
\subset \langle G\setminus S\rangle_G^\perp \cap \langle G \setminus T\rangle_G^\perp
= \langle G\rangle_G^\perp = \varnothing$.
We conclude that $\big\{\Acal_\epsilon : \epsilon \in \{0,1\}^\beta\big\}$
is a family of disjoint invariant subsets.
\end{proof}

Suppose $\beta = \beta^{\Nbb}$,
and $G = \prod_{\alpha\in\beta} G_\alpha$ is a product of infinite compact metrizable groups.
Then the above theorem yields $2^\beta$ closed disjoint $G$-orbits in $\Acal$.
Since $\mu(G) = \beta$, \cref{Cardinality of B in general}
tells us the cardinality of $\Acal$ itself is $2^\beta$.
On the other hand, if $G = (\Zbb / 2\Zbb)^{\Nbb}$,
the above theorem yields only $2^{\Nbb}$ subsets,
whereas \cref{Rosenblatt MathAnn} guarantees $2^{2^{\Nbb}}$.

When $G$ is discrete, the results of \cite{ChouExactCard}
show that $\Acal$ has $\#(\Acal) = 2^{2^{|G|}}$ disjoint invariant subsets.
In summary, in every case where the exact cardinality of the minimal invariant subsets of $\Acal$ has been determined,
it has turned out to be $\#(\Acal)$.

\section*{\texorpdfstring
{The size of $\LIM(G,K)$}
{The size of LIM(G,K)}}

\begin{prop}[{\cite{Talagrand80}}]\label{Talagrands Theorem}
If $G$ is nondiscrete and amenable-as-discrete,
and $K\subset \Acal$ is invariant,
then $\LIM(G,K)$ is infinite-dimensional.\footnote{
	To quote the Talagrand of 1980:
\textit{Nous n'\'{e}crirons la preuve que dans le cas o\`{u} $G$ est compact;
les id\'{e}es essentielles sont les m\^{e}mes quand $G$ n'est pas compact,
et les quelques complications qui s'introduisent sont purement techniques.}

To quote the Talagrand of 2021:
\textit{The claim of the paper that the non-compact case differs from the compact one
only by ``technical details'' does not seem likely to me.
I have no idea what I had in mind at the time.
Certainly I would not have written that without precise views on how to do this,
but I don't know what they could have been.}

The proof almost certainly relies on the same structure theorem for locally compact groups as
Talagrand's monumentally difficult \cite{Talagrand79}.
This paper arguably represents the turning point for invariant means, from a subject of mainstream research
to a niche for highly specialized topological results.
Quoting Talagrand once more:
\textit{The only impact on mathematics it had was a review in the Math reviews
saying it had the distinction of being the most complicated paper ever written on the subject!
I simply had no mathematical taste at the time, and I now bitterly regret not having used my energy more wisely.}
}
\end{prop}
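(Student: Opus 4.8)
The plan is to show that for every $n$ the set $\LIM(G,K)$ contains $n$ linearly independent means; since $n$ is arbitrary this forces infinite-dimensionality. The cleanest way to package this is to construct a single sequence of pairwise disjoint locally measurable sets $\{A_j\}_{j\in\Nbb}$ together with means $\{\mu_j\}_{j\in\Nbb}\subset\LIM(G,K)$ forming a biorthogonal system $\langle\mu_i,\Onebb_{A_j}\rangle=\delta_{ij}$. Indeed, if $\mu_j(\Onebb_{A_j})=1$ then disjointness and positivity force $\mu_j(\Onebb_{A_i})=0$ for $i\neq j$, and applying any finite relation $\sum_j c_j\mu_j=0$ to $\Onebb_{A_i}$ gives $c_i=0$; so the $\mu_j$ are independent. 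The engine for producing a single such $\mu_j$ is the PP machinery already set up: recall that $\LIM(G,K)$ is exactly the set of left-invariant means annihilating the invariant ideal $I=K^\perp$, equivalently the $G$-invariant states on $\Linfty(G)/I$, and that this set is nonempty by the amenable fixed-point theorem quoted before \cref{disjoint PP sets}.

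First I would isolate the relevant relative notion of positivity. To obtain $\mu_j\in\LIM(G,K)$ with $\mu_j(\Onebb_{A_j})=1$ — that is, a $G$-invariant mean annihilating both $I$ and $\langle G\setminus A_j\rangle_G$ — it suffices that the closed invariant ideal $\overline{I+\langle G\setminus A_j\rangle_G}$ be proper, since the fixed-point theorem then supplies an invariant mean vanishing on it. Call $A_j$ \emph{PP modulo $K$} when this holds. So the entire problem reduces to producing a sequence of pairwise disjoint sets, each PP modulo $K$.

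The raw material is available in full generality: since $G$ is nondiscrete we have $\mu(G)\geq\Nbb$ (a nondiscrete group has no finite neighbourhood basis at $e$), so \cref{disjoint PP sets in general} already furnishes a sequence $\{U_j\}$ of pairwise disjoint PP sets in $G$. If each $U_j$ happened to be PP modulo $K$ we would be done by taking $A_j=U_j$. The hard part will be exactly this compatibility with the fixed invariant set $K$: an arbitrary invariant ideal $I=K^\perp$ may contain the characteristic functions of the $U_j$ produced abstractly by \cref{disjoint PP sets in general}, so that $\overline{I+\langle G\setminus U_j\rangle_G}$ collapses to all of $\Linfty(G)$ and no mean in $\LIM(G,K)$ witnesses $U_j$. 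To repair this, the disjoint PP family must be chosen \emph{adapted to $K$} rather than to all of $G$, and uniformly so across $j$.

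I expect this compatibility step to absorb essentially all of the genuine difficulty. Rosenblatt's splitting \cref{disjoint PP sets} and the Baire-category input behind \cref{Rosenblatt MathAnn} only address the ambient group and break down past $\kappa\leq\Nbb$, so a $K$-relative refinement seems to require descending, via the structure theory of locally compact groups, to a compactly generated open subgroup and a compact (or central) building block — as in the proof of \cref{disjoint PP sets in general} — and then splitting a PP set that is \emph{already} positioned inside the minimal part of $K$, iterating to produce infinitely many disjoint such pieces. This is consistent with Talagrand's own later assessment that the noncompact case is not merely ``technical'': the compact case should follow from the PP/category arguments in the orbit of \cref{Banach Rudin Granirer} and the product construction of \cref{theorem product of compact}, whereas the general invariant $K$ is where the heavy structural machinery becomes unavoidable.
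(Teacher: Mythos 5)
There is a genuine gap, and you have in fact named it yourself. Your reduction is sound as far as it goes: a family of pairwise disjoint locally measurable sets $\{A_j\}$, each of which is ``PP modulo $K$'' in your sense (i.e.\ $\overline{K^\perp + \langle G\setminus A_j\rangle_G}$ is a proper invariant ideal), does yield via the discrete-amenability fixed-point theorem a biorthogonal family $\mu_j(\Onebb_{A_i})=\delta_{ij}$ in $\LIM(G,K)$, and hence infinite dimension. But producing even \emph{two} disjoint sets that are PP modulo an arbitrary invariant $K$ is already essentially equivalent to showing $\dim\LIM(G,K)\geq 2$, which is the content of the theorem; \cref{disjoint PP sets in general} gives sets PP relative to the whole of $\Acal$, and nothing in the chapter controls their position relative to a fixed (say minimal) invariant $K$, whose annihilator $K^\perp$ may well contain all of the $\Onebb_{U_j}$ so constructed. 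Your closing paragraph concedes that this compatibility step ``absorbs essentially all of the genuine difficulty'' and gestures at structure theory without carrying it out, so the proposal is a correct reformulation of the problem rather than a proof.

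For the comparison you were asked to make: the thesis does not prove this proposition at all. It is quoted from Talagrand's 1980 paper, and the accompanying footnote records that Talagrand wrote out the argument only for compact $G$, that he himself now doubts his claim that the noncompact case is ``purely technical,'' and that the general case presumably requires the structure theory underlying \cite{Talagrand79}. So there is no proof in the paper to measure your attempt against; what your attempt confirms is precisely the diagnosis in that footnote, namely that the $K$-relative refinement of the PP machinery is where all the difficulty lives.
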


\begin{point}
When $G$ is countable, it was shown in \cite{ChouGeometric} that $\LIM(G,K)$ has no exposed points.
This result was extended to $\sigma$-compact groups in \cite{Miao1998}.
When $G$ is uncountable discrete,
it was shown in \cite{Yang1986} that $\LIM(G,\beta G)$ has no exposed points.
The following theorem combined with \cref{Talagrands Theorem} shows that
$\LIM(G,K)$ has no exposed points when $G$ is nondiscrete and amenable-as-discrete.
\end{point}

\begin{thm}
Suppose the dimension of $\LIM(G, K)$ is greater than one.
Then it has no exposed points.
\end{thm}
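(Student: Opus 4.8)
The plan is to argue by contradiction, producing from a putative exposed point a large family of distinct means at which the exposing functional attains the same maximal value. Suppose $\mu_0\in C:=\LIM(G,K)$ is exposed: there is $\phi\in\Linfty(G)$ with $\langle\mu_0,\phi\rangle>\langle\nu,\phi\rangle$ for every $\nu\in C\setminus\{\mu_0\}$. Replacing $\phi$ by its real part and rescaling, assume $\phi$ is real with $\langle\mu_0,\phi\rangle=0=\max_{\nu\in C}\langle\nu,\phi\rangle$ and $\langle\nu,\phi\rangle<0$ for all other $\nu\in C$. Thus the exposed face $\Fcal_\phi=\{\nu\in C:\langle\nu,\phi\rangle=0\}$ is the singleton $\{\mu_0\}$, and the entire task is to contradict this. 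The hypothesis $\dim C>1$ is necessary here: a single point is trivially exposed, and the two endpoints of a segment are exposed, so the conclusion can only hold once $C$ has affine dimension at least two.

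The mechanism I would use is the orthogonal-net technology of \cref{Lemma Orthogonal Implies Injection}. The key observation is that the constraint defining $\Fcal_\phi$ pins down only the single coordinate $\langle\cdot,\phi\rangle$, not the full mean; so to refute $\Fcal_\phi=\{\mu_0\}$ it suffices to populate $\Fcal_\phi$ with two orthogonal means, and there is no need to force convergence to $\mu_0$ itself. First choose a finite measurable partition $\Pcal$ fine enough (as in \cref{Lemma Partition Basis}) that membership in $\Ncal(\mu_0,\Pcal,\delta)$ controls $\langle\cdot,\phi\rangle$ to within $\delta$. Since $\mu_0$ is a limit of a F\o{}lner net (by the $\LIM_0=\LIM$ results \cref{Theorem LIM0} and \cref{Theorem LIM when G is large}, in the regime where the theorem has content), for each compact $L\subset G$ and small $\epsilon,\delta$ the family of $(L,\epsilon)$-invariant sets whose normalized indicators lie in $\Ncal(\mu_0,\Pcal,\delta)$ is nonempty. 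Running the disjointification of \cref{Lemma many disjoint sets} on this family produces, at each level $(\Pcal,L,\epsilon)$, a mutually disjoint system $\{A_\alpha\}$ of invariant sets with each $\mu\sub{A_\alpha}$ asymptotically maximizing, i.e. $\langle\mu\sub{A_\alpha},\phi\rangle\to 0$.

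Diagonalizing over the directed set of triples $(\Pcal,L,\epsilon)$ and carrying disjointness across levels by the transfinite selection of \cref{Lemma Disjoint Folner}, I would assemble a single net $\{\mu\sub{B_\gamma}\}_{\gamma\in\Gamma}\subset P_1(G)$ that converges to invariance, satisfies $\langle\mu\sub{B_\gamma},\phi\rangle\to 0$, and is indexed by a $\Gamma$ with large tails in the sense of \cref{Lemma Cardinality of Lambda*}. By \cref{Linfty is operators} the disjoint supports make $\{\mu\sub{B_\gamma}\}$ orthogonal, so by \cref{Lemma Orthogonal Implies Injection} the map $p\mapsto\plim_\gamma\mu\sub{B_\gamma}$ is injective on $\Gamma^*$. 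Each such limit is left-invariant, is supported in $K$ provided the $B_\gamma$ lie inside the region detecting $K$, and has $\phi$-value $\plim_\gamma\langle\mu\sub{B_\gamma},\phi\rangle=0$; hence each lies in $\Fcal_\phi$. Since $|\Gamma^*|\ge 2$ (indeed $=2^{2^\kappa}$), the face $\Fcal_\phi$ is not a singleton, which is the desired contradiction.

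The substantive difficulty, and the step where $\dim C>1$ must be spent, is the constrained disjointification: the sets $B_\gamma$ must be simultaneously approximately invariant, asymptotically $\phi$-maximizing, and supported so that their limits lie in the prescribed invariant set $K\subset\Acal$ — a condition on the ideal $K^\perp$ rather than on compact subsets of $G$. Adapting \cref{Lemma many disjoint sets}, which is stated for $\TLIM_0(G)$ with no $\Acal$-support constraint, to $\LIM(G,K)$ requires the PP-set and invariant-ideal technology of the final chapter: one must show that $K$ houses infinitely many disjointly supported invariant averages near the face $\Fcal_\phi$. This is exactly what fails when $\dim C\le 1$ — in a segment the two extreme invariant means are too rigid to disjointify into distinct maximizers — and what $\dim C\ge 2$ should be shown to supply, plausibly by exhibiting disjoint invariant subsets of $K$ in the spirit of \cref{theorem product of compact} and \cref{disjoint PP sets in general}. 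Making this passage precise in full generality is the crux of the argument.
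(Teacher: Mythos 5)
There is a genuine gap, and you acknowledge it yourself: the entire last paragraph — the ``constrained disjointification'' that is supposed to convert the hypothesis $\dim \LIM(G,K) > 1$ into two distinct members of the exposed face — is left unproven, and it is not a technical detail but the whole theorem. The machinery you invoke does not bear on it. \cref{Lemma many disjoint sets}, \cref{Lemma Disjoint Folner} and \cref{Lemma Orthogonal Implies Injection} produce F\o{}lner-type nets with disjoint supports in $G$, but disjointness of supports in $G$ gives no control whatsoever over where the limiting means live in the spectrum $\Acal$; there is no mechanism in your sketch forcing the limits to vanish on the ideal $K^\perp$, i.e.\ to lie in $\LIM(G,K)$ at all, let alone in the face $\Fcal_\phi$. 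Moreover the $\LIM_0 = \LIM$ results you cite are only proved under extra hypotheses ($\kappa(G)>\Nbb$, or nondiscrete and amenable-as-discrete), whereas the theorem as stated assumes nothing beyond $\dim\LIM(G,K)>1$; and producing $2^{2^\kappa}$ points of the face is vastly more than is needed and correspondingly much harder than producing two.

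The statement is in fact soft convex geometry and needs none of this. The paper's proof: let $I = K^\perp$, let $H$ be the closed span of $\{f - l_x f\}$, and let $J = \cl(H+I)$, so that $\LIM(G,K)$ is exactly the set of norm-one functionals vanishing on $J$ with value $1$ at the constants. The hypothesis $\dim\LIM(G,K)>1$ says precisely that $J + \Cbb + \Cbb f$ is a \emph{proper} closed subspace of $\Linfty(G)$ for every $f$; hence there is some $g\geq 0$ outside it, and one defines $\nu$ on $J+\Cbb+\Cbb f+\Cbb g$ by $\nu|_J = 0$, $\nu(1)=1$, $\nu(f)=\mu(f)$, and $\nu(g)$ any admissible value different from $\mu(g)$, then extends to a norm-one functional by Hahn--Banach. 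This $\nu$ is a second point of $\LIM(G,K)$ with $\nu(f)=\mu(f)$, so $f$ exposes nothing. That is where $\dim>1$ is ``spent'' — as a codimension statement enabling a Hahn--Banach extension — and it is the step your proposal never supplies.
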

\begin{proof}
Let $I = K^\perp$.
Let $H$ be the closed span of $\{f - l_x f : x\in G,\ f\in\Linfty(G)\}$.
A positive functional $\mu\in\LinftyS(G)$ is in $\LIM(G,K)$ if and only if it satisfies the following:
\begin{enumerate}
	\item $\mu = 0$ on $H$. (Thus $\mu$ is left-invariant.)
	\item $\mu = 0$ on $I$. (Thus $\supp \mu \subset K$.)
	\item $\mu(1) = 1$.
\end{enumerate}
Let $J$ be the closure of $H + I$.
Since $\LIM(G,K)$ is nonempty, we know $J \cap \Cbb = \{0\}$.
If $J + \Cbb = \Linfty(G)$, then there is a unique element $\mu\in \LIM(G, K)$.
Likewise, if $J + \Cbb + \Cbb f = \Linfty(G)$ for some $f$, then $\LIM(G, K)$ is one-dimensional.
By hypothesis, this is not the case.

Suppose $\mu\in \LIM(G,K)$ is exposed by $f\in \Linfty(G)$.
Since $J + \Cbb + \Cbb f$ is a proper closed subspace of $\Linfty(G)$, there exists $g\geq 0$ outside of it.
Define $\nu$ on $J + \Cbb + \Cbb f + \Cbb g$ by
\begin{enumerate}
	\item $\nu = 0$ on $J$.
	\item $\nu(1) = 1$.
	\item $\nu(f) = \mu(f)$.
	\item $\nu(g)$ is any number in $[0,\, \|g\|_\infty]$ other than $\mu(g)$.
\end{enumerate}
By Hahn-Banach, extend $\nu$ to a norm-one functional on $\Linfty(G)$.
Thus $\nu\in \LIM(G,K)$.
Since $\nu\neq \mu$ but $\nu(f) = \mu(f)$, we see $f$ does not expose $\mu$ after all.
\end{proof}
\bibliographystyle{amsalpha}
\bibliography{myBib}
\end{document}